\newtheorem{theorem}{Theorem}[section]
\newtheorem{lemma}[theorem]{Lemma}
\newtheorem{definition}[theorem]{Definition}
\newtheorem{notion}[theorem]{}
\newtheorem{proposition}[theorem]{Proposition}
\newtheorem{corollary}[theorem]{Corollary}
\theoremstyle{definition}
\newtheorem{remark}[theorem]{Remark}
\newtheorem{question}[theorem]{Question}
\newtheorem{example}[theorem]{Example}
\author{Qingnan An}
\address{School of Mathematics and Statistics, Northeast Normal University, Changchun, {\rm130024}, China}
\email{qingnanan1024@outlook.com}
\author{Zhichao Liu}
\address{School of Mathematical Sciences,
Dalian University of Technology,
Dalian, {\rm116024}, China }
\email{lzc.12@outlook.com}
\keywords{Total Cuntz semigroup; Total K-theory; Stable rank one; Invariant}
\subjclass[2000]{Primary 46L35, Secondary 46K80 19K35}
\begin{document}

\title[Total Cuntz semigroup] {A total Cuntz semigroup for $C^*$-algebras of stable rank one}

\begin{abstract}
In this paper, we show that %there exists two AD algebras has the same unitary Cuntz semigroup, but not isomorphic, i.e.,
%the unitary Cuntz semigroup is not a complete invariant for AD algebras of real rank zero. Moreover, we prove that
for unital, separable $C^*$-algebras of stable rank one and real rank zero,  the unitary Cuntz semigroup functor and the functor ${\rm K}_*$ are naturallly equivalent. Then we introduce a refinement of the  unitary Cuntz semigroup, say the total Cuntz semigroup, which is a new invariant for separable $C^*$-algebras of stable rank one, is a well-defined continuous functor from the category of $C^*$-algebras of stable rank one to the category ${\rm\underline{ Cu}}$.  We prove that this new functor and the functor ${\rm \underline{K}}$ are naturallly equivalent for  unital, separable,  K-pure $C^*$-algebras. Therefore, the total Cuntz semigroup is a complete invariant for a large class of $C^*$-algebras of real rank zero.
\end{abstract}

\maketitle
\section*{introduction}

%The Elliott Classification program for separable nuclear $C^*$-algebras by discrete invariants including the K-theory  together with the tracial data is one of the most important and successful reasearch areas in operator algebras. Until now, the Elliott invariant has provided satisfactory results for unital, simple, separable, nuclear,
%$\mathcal{Z}$-stable $C^*$-algebras satisfying the UCT assumption (see ).

The Cuntz semigroup is an invariant for $C^*$-algebras that is intimately relate to Elliott's classification program for simple, separable, nuclear $C^*$-algebras.
Its original construction $W(A)$ resembles the semigroup $V(A)$ of Murray-von Neumann equivalence classes of projections, is a positively ordered, abelian semigroup whose elements are equivalent classes of positive elements in matrix algebras over $A$ \cite{Cu}. This was remedied in \cite{CEI} by constructing an ordered semigroup, termed ${\rm Cu}(A)$, in terms of countably generated Hilbert modules. The semigroup ${\rm Cu}(A)$ is order isomorphic to $W(A\otimes \mathcal{K})$, where $\mathcal{K}$ is the $C^*$-algebra of compact operators on a separable Hilbert space, it can also be regarded as a completion of $W(A)$ \cite{APT,CEI}. Moreover, a Cuntz category was described to which the Cuntz semigroup belongs and as a functor into which it preserves inductive limits. There are also many interesting consequences for the Cuntz semigroup of $C^*$-algebras of stable rank one (\cite{APRT,T}).

 The Cuntz semigroup contains a great deal of the structure of $C^*$-algebras, but the main limitation is that it fails to capture the ${\rm K}_1$ information. Recently, Cantier introduced the unitary Cuntz semigroup, denoted by Cu$_1$, as an invariant which captures the ${\rm K}_1$ information \cite{L1}, and aimed to classify certain non-simple $C^*$-algebras. A unitary Cuntz category was also defined, as an analogy to the Cuntz category.

In this paper, we first investigate the unitary Cuntz semigroup and show the close links between ${\rm Cu}_{1,u}$ and ${\rm K}_*$ for the real rank zero case, this extends Theorem 5.20 of \cite{L1}.

\begin{theorem}
Upon restriction the class of unital, separable $C^*$-algebras of stable rank one and real rank zero, the functors ${\rm Cu}_{1,u}$ and ${\rm K}_*$ are naturally equivalent.%, where ${\rm Cu}_{1,u}$ and ${\rm K}_*$ are the functors from the category of $C^*$-algebras to the unitary Cuntz category and the category of  ordered groups with order-unit, respectively.
\end{theorem}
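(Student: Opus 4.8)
The plan is to construct, for each algebra $A$ in the class under consideration, an explicit isomorphism $\Phi_A \colon \mathrm{Cu}_{1,u}(A) \to \mathrm{K}_*(A)$ in the target category common to the two functors, and then to check that $A \mapsto \Phi_A$ is natural; the asserted equivalence follows. Recall that a generator of $\mathrm{Cu}_{1,u}(A)$ is (the class of) a pair $(a,u)$ with $a$ a positive element of $A \otimes \mathcal{K}$ and $u$ a unitary attached to $a$, that the assignment $(a,u) \mapsto [a]$ defines a forgetful $\mathrm{Cu}$-morphism $\pi_A \colon \mathrm{Cu}_{1,u}(A) \to \mathrm{Cu}(A)$, and that the content of the refinement is carried by the fibres of $\pi_A$. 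The map $\Phi_A$ is to send a generator $(a,u)$ to the pair consisting of the $\mathrm{K}_0$-datum determined by $a$ together with the $\mathrm{K}_1$-class of $u$; the point of the proof is that stable rank one and real rank zero force this assignment to be well defined, bijective, and an order isomorphism.

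The first step is a reduction to projections. Because $A$ has real rank zero, every hereditary sub-$C^*$-algebra of $A \otimes \mathcal{K}$ admits an approximate unit of projections, so that every element of $\mathrm{Cu}(A)$ is the supremum of a rapidly increasing sequence of classes of projections; lifting this through $\pi_A$ and using the interpolation and lifting properties available in the unitary Cuntz category, one sees that $\mathrm{Cu}_{1,u}(A)$ is generated, as an object of $\underline{\mathrm{Cu}}$, by pairs $(p,u)$ with $p$ a projection and $u \in U\big(p(A \otimes \mathcal{K})p\big)$. Because $A$ has stable rank one, projections in $A \otimes \mathcal{K}$ satisfy cancellation, so $V(A)$ embeds as the positive cone of $\mathrm{K}_0(A)$, and $\mathrm{K}_1(A) = U(A)/U_0(A)$, unitality being used here. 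These two facts identify the generating pairs $(p,u)$ with the corresponding elements of $\mathrm{K}_*(A)$, after which it is a routine if somewhat lengthy verification that $\Phi_A$ descends to equivalence classes, is additive, preserves the order and suprema of increasing sequences, and is compatible with the way-below relation --- that is, that it is a morphism in $\underline{\mathrm{Cu}}$ --- and that it is surjective.

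The substance of the argument is to show that $\Phi_A$ reflects the order, equivalently that it is injective and that the order on $\mathrm{Cu}_{1,u}(A)$ is exactly the one pulled back from $\mathrm{K}_*(A)$: one must see that two generators with equal $\mathrm{K}_0$- and $\mathrm{K}_1$-data are already equivalent, and more generally that $\mathrm{Cu}_{1,u}$ records neither more nor less than $\mathrm{K}_*$. For the positive coordinate this is again cancellation of projections. For the unitary coordinate one invokes the comparison theory of unitaries in real rank zero, stable rank one $C^*$-algebras: unitaries with finite spectrum are norm-dense, and two unitaries over cancellative projections carrying the same $\mathrm{K}_1$-class differ, after adding a unit, by an element of the connected component of the identity, which by real rank zero suffices to deform one into the other inside the relevant hereditary subalgebra --- a Weyl--von Neumann type argument that also disposes of any apparent ``soft'' ambiguity in the fibres of $\pi_A$. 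This order comparison is expected to be the main obstacle; the remaining points, including naturality of $\Phi$ (which follows from functoriality of $\mathrm{Cu}$, $\mathrm{K}_0$, $\mathrm{K}_1$ and of the maps $\pi_A$), are formal. Finally, since Theorem 5.20 of \cite{L1} already establishes the statement on a subclass, part of the verification above can be streamlined by quoting it and then propagating the conclusion by continuity of the functors involved and standard approximation arguments.
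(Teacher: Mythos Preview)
Your plan has a structural confusion and misses the key technical point.

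\textbf{Structural issue.} The functors $\mathrm{Cu}_{1,u}$ and $\mathrm{K}_*$ land in \emph{different} categories ($\mathrm{Cu}_u^\sim$ and $\mathrm{AbGp}_u$ respectively), so there is no ``target category common to the two functors'' and no literal isomorphism $\Phi_A:\mathrm{Cu}_{1,u}(A)\to\mathrm{K}_*(A)$. The paper's proof does not attempt this; instead it exhibits a bridging functor $H_*:\mathrm{Cu}_u^\sim\to\mathrm{AbGp}_u$, $(S,u)\mapsto(\mathrm{Gr}(S_c),\rho(S_c),u)$, and proves that its restriction to the image of $\mathrm{Cu}_{1,u}$ is full, faithful and dense, whence an equivalence. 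Concretely: since $A$ has real rank zero, $\mathrm{Cu}_1(A)$ is algebraic and determined by its compacts, so the content is an ordered-monoid isomorphism $\alpha:\mathrm{Cu}_1(A)_c\to\mathrm{K}_*^+(A)$; taking $\mathrm{Gr}$ and $\mathrm{Cu}^\sim$-completion in the two directions yields the equivalence. Your $\Phi_A$ is at best $\alpha$, not a map between the stated objects.

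\textbf{Wrong mechanism for injectivity.} The subtle step is injectivity of $\alpha$, and your diagnosis of it is off. A compact element of $\mathrm{Cu}_1(A)$ is $([p],[u]_{\mathrm{K}_1(I_p)})$ with the $\mathrm{K}_1$-class taken \emph{in the ideal $I_p$ generated by $p$}; the map $\alpha$ pushes this forward to $\mathrm{K}_1(A)$ via $\delta_{I_pA}:\mathrm{K}_1(I_p)\to\mathrm{K}_1(A)$. The obstruction to injectivity is therefore $\ker\delta_{I_pA}$, not anything about deforming two unitaries inside the same corner. The paper's Example~\ref{example sur not in} exhibits a stable-rank-one (but not real-rank-zero) algebra where $\delta_{I_pA}$ has nontrivial kernel and $\alpha$ genuinely fails to be injective. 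The correct argument is: for $I\lhd A$ with $rr(A)=rr(I)=rr(A/I)=0$, Proposition~\ref{lin inj}(i) gives $\delta_0\equiv 0$ in the six-term exact sequence for $0\to I\to A\to A/I\to 0$, hence $\mathrm{K}_1(I)\hookrightarrow\mathrm{K}_1(A)$; this is what makes $\alpha$ injective. Your ``Weyl--von Neumann type argument'' about unitaries with finite spectrum addresses a different question (equality of $\mathrm{K}_1$-classes in the \emph{same} hereditary subalgebra implies $\mathrm{Cu}_1$-equivalence, which is already built into the definition of $\mathrm{Cu}_1$) and does not see the ideal-versus-algebra discrepancy that is the actual issue.
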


Based on the above result, ${\rm Cu}_{1}$ is not a complete invariant for AD algebras of real rank zero, then we introduce a refinement of unitary Cuntz semigroup, denoted by ${\rm \underline {Cu}}$, and establish the functorial properties. We show that
 ${\rm \underline {Cu}}$ is a continuous functor from the category of unital, separable $C^*$-algebras of stable rank one, denoted by $C^*_{sr1}$, to a subcategory of ${\rm Cu}^\sim$. We have
\begin{theorem}
Let $A=\lim\limits_{\longrightarrow}(A_i,\phi_{ij})$ be an inductive system in the category $C^*_{sr1}$. Then
$$
{\rm \underline {Cu}-}\lim_{\longrightarrow}{\rm \underline{Cu}}(A_i)\cong
{\rm \underline{Cu}}(A).
$$
\end{theorem}

Then we show that the total K-theory can be recovered funtorially from  ${\rm \underline{Cu}}$. Furthermore, we have %prove that the total Cuntz semigroup can also be recovered from the total K-theory for a large class of $C^*$-algebras.
\begin{theorem}
  Upon restriction the class of unital, separable $\mathrm{K}$-$pure$ (or A$\mathcal{HD}$) algebras with stable rank one  and real rank zero, the functors  ${\rm \underline{Cu}}_{u}$ and ${\rm \underline{K}}$ are naturally equivalent. Therefore, for these algebras, ${\rm \underline{K}}$ is a classifying functor, if, and only if, so is ${\rm \underline{Cu}}_{u}$.
\end{theorem}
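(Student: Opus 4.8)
\medskip
\noindent\textbf{Proof strategy for the final theorem.}

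The last sentence is formal: once $\underline{\rm Cu}_u$ and $\underline{\rm K}$ are known to be naturally equivalent on the class $\mathcal C$ in question, a natural isomorphism transports the property of being a classifying functor in both directions. Indeed, if (after identifying the two target categories) $\eta:\underline{\rm Cu}_u\Rightarrow\underline{\rm K}$ is a natural isomorphism and $\alpha:\underline{\rm Cu}_u(A)\to\underline{\rm Cu}_u(B)$ is an isomorphism, then $\eta_B\circ\alpha\circ\eta_A^{-1}$ lifts to a $*$-isomorphism $A\to B$ exactly when $\alpha$ does, by naturality; the same bookkeeping handles existence and uniqueness of liftings of morphisms. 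Combined with the known classification of real rank zero A$\mathcal{HD}$ algebras by total K-theory, this in particular shows $\underline{\rm Cu}_u$ is classifying on that class. So the content is the natural equivalence, and the plan is to prove it by upgrading the comparison maps already built in the paper, using Theorem 1 to control the ``degree zero'' part.

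First I would invoke the functorial recovery of $\underline{\rm K}$ from $\underline{\rm Cu}$ established earlier: restricting to unital algebras and to the $\underline{\rm Cu}_u$-part it gives a natural transformation $\Gamma_A:\underline{\rm Cu}_u(A)\to\underline{\rm K}(A)$ valued in the ordered, scaled total K-theory with its $\Lambda$-module (Bockstein) structure. The goal becomes: $\Gamma_A$ is an isomorphism when $A$ is unital, separable, of stable rank one, real rank zero and $\mathrm{K}$-pure. Decompose $\underline{\rm Cu}_u(A)$ into its coefficient layers indexed by $n\geq 0$. The $n=0$ layer is $\mathrm{Cu}_{1,u}(A)$, which by Theorem 1 (this is where real rank zero and stable rank one enter) is naturally $\mathrm{K}_*(A)$ with its order and scale, so $\Gamma_A$ is already an isomorphism in degree zero. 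It remains to identify, naturally, the layer at each $n\geq 2$ with $\mathrm{K}_*(A;\mathbb Z/n)$ and the compatibility maps between layers with the Bockstein operations.

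For this, note that over a real rank zero, stable rank one algebra the mod-$n$ layer is essentially algebraic: it is built from Cuntz classes of projections and unitaries in dimension-drop matrix algebras over $A$ --- equivalently a Cuntz-type semigroup of $A\otimes C_n$, with $C_n$ the mapping cone of multiplication by $n$ --- and cancellation together with $\mathrm{K}_1$-injectivity, available under stable rank one, let one compute it. The computation produces a natural exact sequence realizing this layer as $\mathrm{K}_*(A;\mathbb Z/n)$ up to an extension term of $\mathrm{Tor}$/$\mathrm{Ext}$ type built from $\mathrm{K}_*(A)$. The hypothesis that $A$ is $\mathrm{K}$-pure is precisely the statement that the relevant universal-coefficient sequences are pure, so this extension term is canonically determined by $\mathrm{K}_*(A)$ and the Bocksteins and contributes no new data; hence each layer is naturally $\mathrm{K}_*(A;\mathbb Z/n)$ and the inter-layer maps are the Bockstein operations. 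Compatibility of $\Gamma_A$ with the order and with the $\mathrm{Cu}$-structure (suprema of increasing sequences, the way-below relation) is then checked by reducing, as usual for real rank zero stable rank one algebras, to the $\mathrm{K}_0$-level, where $\Gamma_A$ is already an order isomorphism. This yields that $\Gamma_A$ is an isomorphism, naturally in $A$.

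For A$\mathcal{HD}$ algebras one can argue alternatively, which also serves as a sanity check: verify the natural isomorphism directly on the homogeneous/dimension-drop building blocks, where both $\underline{\rm Cu}_u$ and $\underline{\rm K}$ are computed by hand, and then pass to the inductive limit using continuity of $\underline{\rm Cu}$ (Theorem 2) and the fact that $\underline{\rm K}$ commutes with inductive limits. The main obstacle is the identification of the higher layers: showing that the extra information carried by $\underline{\rm Cu}_u$ beyond $\mathrm{Cu}_{1,u}\cong\mathrm{K}_*$ is, under real rank zero together with $\mathrm{K}$-purity, exactly $\mathrm{K}_*(-;\mathbb Z/n)$ with its Bockstein structure and nothing more --- i.e.\ that the reassembly of $\underline{\rm Cu}_u$ out of $\underline{\rm K}$ is faithful. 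This rests on a careful analysis of projections in dimension-drop matrix algebras over a real rank zero, stable rank one $C^*$-algebra, and on the precise role of $\mathrm{K}$-purity in trivializing the resulting extension problem; making this compatible with the order and the $\mathrm{Cu}$-axioms is the remaining technical point.
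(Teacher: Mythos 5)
Your reduction of the final sentence to the natural equivalence is fine, and your overall plan (upgrade the comparison map $\underline{H}\circ\underline{\rm Cu}_u\Rightarrow\underline{\rm K}$ to an isomorphism, with real rank zero and stable rank one controlling the degree-zero part) is the right shape. But the central step is both misdirected and left unproven. In this paper $\underline{\rm Cu}(A)$ is \emph{defined} as $\coprod_{I}\mathrm{Cu}_f(I)\times\mathrm{K}_1(I)\times\bigoplus_{p}\mathrm{K}_*(I;\mathbb{Z}_p)$, so the ``mod-$n$ layers'' are already $\mathrm{K}_*(I;\mathbb{Z}_p)$ of the ideals by construction; there is no Cuntz semigroup of $A\otimes C_n$ to compute and no universal-coefficient extension term of Tor/Ext type to trivialize. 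What actually has to be shown is that the canonical surjection $\alpha:\underline{\rm Cu}(A)_c\to\underline{\rm K}(A)_+$, sending $([e],\mathfrak{u},\oplus_p(\mathfrak{s}_p,\mathfrak{s}^p))$ to $([e],\delta_{I_eA}(\cdots))$, is injective, i.e.\ that $\underline{\rm K}(I_e)\to\underline{\rm K}(A)$ is injective for the ideals $I_e$. For the $\mathrm{K}_1$-component this follows from stable rank one and real rank zero (Proposition \ref{lin inj}), but for the $\mathbb{Z}_p$-coefficient components it does not; the hypothesis of $\mathrm{K}$-purity --- pure exactness of $0\to\mathrm{K}_\delta(I)\to\mathrm{K}_\delta(A)\to\mathrm{K}_\delta(A/I)\to0$ --- is used exactly here, via Lemma \ref{k-pure lemma}, to force injectivity of $\mathrm{K}_*(\iota;\mathbb{Z}_p)$. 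Your gloss of $\mathrm{K}$-purity as purity of ``the relevant universal-coefficient sequences'' is not the hypothesis in play, and since you explicitly defer ``the precise role of $\mathrm{K}$-purity in trivializing the resulting extension problem'' as ``the remaining technical point,'' the core of the argument is missing rather than carried out.

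Separately, even granting object-wise isomorphisms $\Gamma_A$, that only yields $\underline{H}\circ\underline{\rm Cu}_u\simeq\underline{\rm K}$. The theorem (and the transport-of-classification argument you sketch) also requires the reverse equivalence $\underline{G}\circ\underline{\rm K}\simeq\underline{\rm Cu}_u$, hence that the restricted functor $\underline{H}$ is full and faithful, not merely dense. Faithfulness uses algebraicity (real rank zero) plus injectivity of $\rho$ on compacts; fullness requires lifting an arbitrary $\Lambda$-linear ordered morphism $\phi:\underline{\rm K}(A)\to\underline{\rm K}(B)$ to a $\underline{\rm Cu}_u$-morphism, which the paper does by applying the $\mathrm{Cu}^\sim$-completion functor to $\phi|_{\underline{\rm K}(A)_+}$ and invoking $\underline{\rm Cu}(A)\cong\mathrm{Cu}^\sim(\underline{\rm K}(A)_+)$. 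Your proposal does not address this step at all; without it the claimed equivalence of functors, and the ``if and only if'' about classification, do not follow.
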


Note that the total K-theory  is a complete invariant for A$\mathcal{HD}$ algebras (including AD algebras and AH algebras with slow dimension growth, see \cite{GJL}) of real rank zero, then ${\rm \underline{Cu}}_u$ is also a complete invariant for this class.
The question whether all the unital, separable $C^{*}$-algebras of stable rank one and real rank zero can be classified by the total K-theory is still open.

This paper is organized as follows: In the first two sections, we present definitions and basic tools, then from a functorial point of view, we prove that the unitary Cuntz semigroup and graded K-theory determine each other for the real rank zero case.

In Section 3, we construct a total Cuntz semigroup and  prove that it satisfies the axioms (O1)--(O4). Then, we find a suitable category, called the total Cuntz category. In Section 4, we use the eventually-increasing sequence to obtain the continuity of the functor ${\rm \underline {Cu}}$ from the category of unital, separable $C^*$-algebras of stable rank one to the total Cuntz category.

Finally, we show that this new invariant performs well in classification of certain non-simple $C^*$-algebras. We can recover the total Cuntz semigroup from the total K-theory for a large class of $C^*$-algebras, but the whole relation between these invariants remains unknown.

\section{Prelimaries}

\begin{notion}\rm
  Let $A$ be a unital $\mathrm{C}^*$-algebra. $A$ is said to have stable rank one, written $sr(A)=1$, if the set of invertible elements of $A$ is dense. $A$ is said to have real rank zero, written $rr(A)=0$, if the set of invertible self-adjoint elements is dense in the set $A_{sa}$ of self-adjoint elements of $A$. If $A$ is not unital, let us denote the minimal unitization of $A$ by $A^\sim$. A non-unital $\mathrm{C}^*$-algebra is said to have stable rank one (or real rank zero) if its unitization has stable rank one (or real rank zero). Denote by $C^*_{sr1}$ the category of unital, separable $C^*$-algebras with stable rank one.
\end{notion}

\begin{notion}[\cite{Ell}]\rm
An approximatively dimension-drop algebra, written AD algebra, is an inductive limit of finite direct sums of the form $M_n(\mathbb{I}_p^\sim)$ and $M_n(C(X))$, where
$$
  \mathbb{I}_p=\{f\in C([0,1],M_p(\mathbb{C}))\,|\,f(0)=0~~{\rm and}~~f(1)\in \mathbb{C}\cdot 1_p\}
  $$
is the Elliott-Thomsen dimension-drop interval algebra
and $X$ is one of the following finite connected CW complexes: $\{pt\},~\mathbb{T},~[0, 1].$ (In \cite[5.4]{L1}, AD algebra is called ${\rm AH}_d$-algebra.)
\end{notion}

\begin{notion}\rm
  ({\bf The Cuntz semigroup of a $C^*$-algebra}) Denote the cone of positive elements in $A$ by $A_+$. Let $a,b\in A_+$. One says that $a$ is $Cuntz$ $subequivalent$ to $b$, denoted by $a\lesssim_{Cu} b$, if there exists a sequence $(r_n)$ in $A$ such that $r_n^*br_n\rightarrow a$. One says that $a$ is  $Cuntz$ $equivalent$ to $b$, denoted by $a\sim_{Cu} b$, if $a\lesssim_{Cu} b$ and $b\lesssim_{Cu} a$. The $Cuntz$ $semigroup$ of $A$ is defined as ${\rm Cu}(A)=(A\otimes\mathcal{K})_+/\sim_{Cu}$. We will denote the class of $a\in (A\otimes\mathcal{K})_+$ in ${\rm Cu}(A)$ by $[a]$. ${\rm Cu}(A)$ is a positively ordered abelian semigroup when equipped with the addition: $[ a ]+[ b] =[ a \oplus b]$, and the relation:
  $$
  [ a]\leq[ b] \Leftrightarrow a\lesssim_{Cu} b,\quad a,b\in (A\otimes\mathcal{K})_+.
  $$
\end{notion}

\begin{notion}\rm\label{Cuaxiom}
  ({\bf The category Cu})  Let $(S, \leq)$ be a positively ordered semigroup such that the suprema of increasing sequences always exists in $S$. For $x$ and $y$ in $S$, let us say that $x$ is compactly contained in $y$ (or $x$ is way-below $y$), and denote it by  $x \ll y$, if for every increasing sequence $(y_n)$ in $S$ such that $y\leq\sup _{n \in \mathbb{N}} y_{n}$, then there exists $k$ such that $x\leq y_{k} .$ This is an auxiliary relation on $S$, called the compact containment relation.% and sometimes referred to as the way-below relation. In particular $x \ll y$ implies $x \leq y$ and we say that $x$ is a compact element whenever $x \ll x$.
  \, If $x\in S$ satisfies $x\ll x$, we say that $x$ is compact.

We say that $S$ is a Cu-semigroup of the Cuntz category Cu, if it has a 0 element and satisfies the following order-theoretic axioms:

(O1): Every increasing sequence of elements in $S$ has a supremum.

(O2): For any $x \in S$, there exists a $\ll$-increasing sequence $\left(x_{n}\right)_{n \in \mathbb{N}}$ in $S$ such that $\sup_{n \in \mathbb{N}} x_{n}=x$.

(O3): Addition and the compact containment relation are compatible.

(O4): Addition and suprema of increasing sequences are compatible.

A Cu-morphism between two Cu-semigroups is a positively ordered monoid morphism that preserves the compact containment relation and suprema of increasing sequences.

Let $S$ be a Cu-semigroup, we say that $M$ is an Cu-ideal of $S$, if $M$ is a sub-Cu-semigroup of $S$ and order-hereditary (for any $a,b\in S$, $a\leq b$ and $b\in M$ imply $a\in M$).

\end{notion}

%\section{The example}

\begin{notion}\rm
  Let $A$ be a $C^*$-algebra and let Lat$(A)$ denote the collection of ideals in $A$, equipped with the partial order given by inclusion of ideals. For any ideal $I$ in $A$, then ${\rm Cu}(I)$ is an ideal of ${\rm Cu}(A)$. (See \cite [5.1]{APT} and \cite[3.1]{Ciu} for more details.) For any $x\in {\rm Cu}(A)$, write
  $$
  I_x=\{y\in {\rm Cu}(A)\,\,|\,\, y\leq \infty x \}
  $$
  as the ideal of ${\rm Cu}(A)$ generated by $x$, where
  $$\infty x=[\bigoplus_{n=1}^\infty \frac{1}{2^n}a] \in {\rm Cu}(A),$$
  for any $a\in (A\otimes \mathcal{K})_+$ with $[a]=x$.

  For any  $C^*$-algebra $A$, $I\rightarrow {\rm Cu}(I)$ defines a lattice isomorphism between the lattice Lat$(A)$ of closed two-sided ideals of $A$ and the lattice ${\rm Lat}({\rm Cu}(A))$ of ideals of ${\rm Cu}(A)$, denote this isomorphism by $\Gamma$, i.e.,
  $$
  \Gamma: {\rm Lat}(A)\rightarrow{\rm Lat}({\rm Cu}(A)),\quad \Gamma(I)={\rm Cu}(I).
  $$
\end{notion}

There will be an abuse of notations from now on, for any $a\in A_+$, we write $I_a$, $I_{[a]}$ for the respective ideals in $A$ and ${\rm Cu}(A)$, but we will indistinguishably use $I_a$ or $I_{[a]}$, refering to one or the other.
\begin{notion}\label{Ell inc} \rm
  Let $A=\lim(A_n,\phi_{nm})$ be an inductive limit of $C^*$-algebras. It is shown in \cite[Theorem 2]{CEI} that the Cuntz semigroup functor preserves inductive limits of sequences (this was generalized to arbitrary inductive limits in \cite[Corollary 3.2.9]{APT}, see also \cite[Lemma 3.8]{TV}) and each element $s\in {\rm Cu}(A)$ can be represented by an increasing sequence $(s_1,s_2,\cdots)$, with $s_1\in {\rm Cu}(A_1),s_2\in {\rm Cu}(A_2),\cdots$-- increasing in the sense that for each $i$, the image of $s_i\in  {\rm Cu}(A_i)$ in $ {\rm Cu}(A_{i+1})$ is less than or equal to $s_{i+1}$. Recall that two sequences $(s_i)\leq (t_i)$ if for any $i$  and any $s\in S_i$ with $s\ll s_i$, eventually $s\ll t_j$ (in $S_j$). We say $(s_i)\sim(t_i)$, if $(s_i)\leq(t_i)$ and $(t_i)\leq(s_i)$. We denote the class of $(s_i)$ in ${\rm Cu}(A)$ by $[(s_i)]$, then
  $$
   {\rm Cu}(A)= {\rm Cu}-\lim_{\longrightarrow} {\rm Cu}(A_i).
  $$

Let $(s_1,s_2,\cdots)$  be an increasing sequence with $[(s_i)]=s$, where $s_i\in {\rm Cu}(A_i)$ for any $i$. Then
$
I_s\cong{\rm Cu}-\lim\limits_{\longrightarrow} I_{s_i},
$
where $I_{s_i}$ is the ideal generated by $s_i$ in ${\rm Cu}(A_i)$. For the corresponding ideals in $C^*$-algebras, we simply write
$
I_s={\rm C^*-}\lim\limits_{\longrightarrow} I_{s_i}.
$
\end{notion}

%The following lemma combines \cite[Theorem 2]{CEI} and \cite[Proposition 5.1.6]{APT}.

%The first part is from the definition of $\lim\limits_{\longrightarrow} I_{s_i}$. Every $y\in I_s$ is corresponding to an increasing sequence $(y_i)$ with $[(y_i)]=y$, and  $y\leq \infty\cdot s$ if and only if $[(y_i)]\leq \infty \cdot [(s_i)]$.

The following theorem is \cite[Theorem 2.8]{B}.
\begin{theorem}\label{herideal}
  If $B$ is a full hereditary $C^*$-subalgebra of $A$ and if each of $A$ and $B$ has a strictly positive element, then $B$ is stably isomorphic to $A$.
\end{theorem}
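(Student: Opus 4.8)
The plan is to recognize this as L.~G.~Brown's stabilization theorem, and to prove it by first exhibiting a strong Morita equivalence between $A$ and $B$ and then running the Brown--Green--Rieffel argument. It is enough to construct an isomorphism $A\otimes\mathcal{K}\cong B\otimes\mathcal{K}$. Fix a strictly positive element $b\in B$; since $B$ is hereditary and $\sigma$-unital we have $B=\overline{bAb}$, and since $B$ is full we have $\overline{AbA}=A$, so $b$ is a full positive element of $A$. First I would regard $X=\overline{bA}$ as a right Hilbert $A$-module with $\langle x,y\rangle_A=x^{*}y$; a direct computation shows that $X$ is full over $A$, since $\overline{\operatorname{span}}\langle X,X\rangle=\overline{Ab^{2}A}=\overline{AbA}=A$, and that $\mathcal{K}(X)\cong\overline{bAAb}=\overline{bAb}=B$, where the fact that $B$ is hereditary is used for the inclusion $bAb\subseteq B$. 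Thus $X$ implements a Morita equivalence of $A$ and $B$.

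Second, $X$ is countably generated: for a sequential approximate unit $(e_n)$ of the $\sigma$-unital algebra $A$, the set $\{be_n\}_{n}$ generates $X$. By Kasparov's stabilization theorem $X\oplus H_A\cong H_A$, where $H_A=\ell^{2}(\mathbb{N})\otimes A$; absorbing countably many copies (using $H_A\cong H_A^{\infty}$) gives $X^{\infty}\oplus H_A\cong H_A$, so in particular $X^{\infty}$ is isomorphic to a complemented submodule of $H_A$.

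Third, and this is the crux, I would prove $X^{\infty}\cong H_A$; granting this, $B\otimes\mathcal{K}\cong\mathcal{K}(X)\otimes\mathcal{K}=\mathcal{K}(X^{\infty})\cong\mathcal{K}(H_A)=A\otimes\mathcal{K}$, and the theorem follows. Here one invokes the technical core of \cite{B}: since $b^{2}$ is a full positive element of the $\sigma$-unital algebra $A$, there is a sequence $(w_n)$ in $A$ with $\sum_{n}w_n^{*}b^{2}w_n=1_{M(A)}$ in the strict topology. Then $\Xi=(bw_n)_n$ is an element of the multiplier module $M(X^{\infty})$ with $\langle\Xi,\Xi\rangle=1_{M(A)}$, so $a\mapsto\Xi a$ is an isometry $A\to X^{\infty}$ with complemented range; hence $A$, and therefore $H_A=A^{\infty}$, embeds as a complemented submodule of $X^{\infty}\cong(X^{\infty})^{\infty}$. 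Writing $X^{\infty}\cong A\oplus W$, we get $X^{\infty}\cong H_A\oplus W^{\infty}$, and the short swindle
$$
H_A\;\cong\;X^{\infty}\oplus H_A\;\cong\;(H_A\oplus W^{\infty})\oplus H_A\;\cong\;H_A\oplus W^{\infty}\;\cong\;X^{\infty}
$$
completes the argument.

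The main obstacle is exactly this third step: upgrading Kasparov's stable absorption $X\oplus H_A\cong H_A$ to the genuine isomorphism $X^{\infty}\cong H_A$. Both hypotheses are used essentially here. Fullness of $B$ is what makes $b$ full in $A$, and is what the statement fails without: for a non-full hereditary subalgebra the $\mathrm{K}$-theories of $B\otimes\mathcal{K}$ and $A\otimes\mathcal{K}$ need not agree. The assumption that $A$ and $B$ carry strictly positive elements is what makes $X$ countably generated and, more importantly, what yields the strictly convergent expansion $\sum_{n}w_n^{*}b^{2}w_n=1_{M(A)}$; the remaining manipulations are routine Hilbert-module bookkeeping.
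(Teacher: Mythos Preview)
The paper does not give a proof of this statement; it simply records it as \cite[Theorem~2.8]{B} and uses it as a black box. There is therefore no ``paper's own proof'' to compare against. Your proposal is a correct outline of the standard argument: it is essentially Brown's original proof, reorganized in the now-familiar Brown--Green--Rieffel language of strong Morita equivalence. The imprimitivity bimodule $X=\overline{bA}$, the identification $\mathcal{K}(X)\cong\overline{bAb}=B$, Kasparov stabilization, and the Eilenberg swindle are exactly the ingredients one expects; the only nontrivial input you defer to \cite{B} is the strict expansion $\sum_n w_n^{*}b^{2}w_n=1_{M(A)}$ for a full positive element in a $\sigma$-unital algebra, which is indeed the technical heart of Brown's paper.
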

 For any $a\in A_+$, the hereditary algebra her$(a)$ generated by $a$ and the ideal $I_a$ generated by $a$ are stably isomorphic, and hence, have the same K-theory. We will use this fact frequently.

\begin{notion}\rm
({\bf Unitary Cuntz semigroup})
  Recall the definition of unitary Cuntz semigroup introduced by Cantier in \cite[3.3]{L1}.
  Let $A$ be a $C^*$-algebra of stable rank one and let
$$
H(A):=\{(a,u)\,|\,a\in (A\otimes\mathcal{K})_+,u\in \mathcal{U}({{\rm her}(a)}^\sim)\}.
$$
Write $(a,u)\lesssim_1 (b,v) $ if $a\lesssim_{Cu} b$ and $[\theta_{ab,\alpha}(u)]=[v]$ in ${\rm K}_1({\rm her}(b))$, where $\theta_{ab,\alpha}$ is an explicit injection from ${\rm her}(a)$ to ${\rm her}(b)$ (see \cite[2.2]{L1}), we say $(a,u)\sim_1(b,v)$ if $(a,u)\lesssim_1 (b,v) $ and $(b,v)\lesssim_1 (a,u) $.

Define the unitary Cuntz semigroup of $A$ by
$$
{\rm Cu}_1 (A):=H(A)/\sim_1.
$$
The equivalent class of an element $(a,u)$ in $H(A)$ is denoted by $[(a,u)]$.

For any $[(a,u)], [(b,v)]\in {\rm Cu}_1(A)$, we write $[(a,u)]\leq[(b,v)]$, if $[(a,u)]\lesssim_1 [(b,v)]$, and we set $[(a,u)]+[(b,v)]=[(a\oplus b,u\oplus v)]$.

Then $({\rm Cu}_1(A),+,\leq)$ is a partially ordered monoid.

\end{notion}

Let $A$ be a $C^*$-algebra of stable rank one. Denote ${\rm Lat}_f(A)$ the subset of ${\rm Lat}(A)$ consisting of all the ideals of $A$ that contains a full positive element. Hence, any ideal in ${\rm Lat}_f(A)$ is singly-generated.
Define {\rm Cu}$_f(I):=\{[ a]\in {\rm Cu}(A)\,|\, I_a=I\}$. In other words, ${\rm Cu}_f(I)$ consists of the elements of ${\rm Cu}(A)$ that are full in ${\rm Cu}(I)$.

The following theorem illustrates the main structure of ${\rm Cu}_1(A)$.

\begin{theorem}{\rm (}\cite[Theorem 4.8]{L1}{\rm )}\label{mainstr}
  Let $A$ be a separable $C^*$-algebra of stable rank one, then there is a ${\rm Cu}^\sim$-isomorphism:
  $${\rm Cu}_1(A)\cong \coprod_{I\in {\rm Lat}_f(A)}{\rm Cu}_f(I)\times {\rm K}_1(I).$$
  The addition and order on $\coprod_{I\in {\rm Lat}_f(A)}{\rm Cu}_f(I)\times {\rm K}_1(I)$ are defined as follows:
  For any $(x, k) \in \mathrm{Cu}_{f}\left(I_{x}\right) \times \mathrm{K}_{1}\left(I_{x}\right)$ and $(y, l) \in \mathrm{Cu}_{f}\left(I_{y}\right) \times \mathrm{K}_{1}\left(I_{y}\right)$,
$$
\left\{\begin{array}{l}
(x, k) \leq(y, l), \,\,{if }\,\, x \leq y \,\, {  and }
\,\,\delta_{I_{x} I_{y}}(k)=l, \\
(x, k)+(y, l)=\left(x+y, \delta_{I_{x} I_{x+y}}(k)+\delta_{I_{y} I_{x+y}}(l)\right) ,
\end{array}\right.
$$
where $\delta_{IJ}={\rm K}_1 (I\xrightarrow{i} J)$, for any $I,J\in{\rm Lat}(A)$ such that $I\subset J$.
\end{theorem}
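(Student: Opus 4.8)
The plan is to construct the claimed ${\rm Cu}^\sim$-isomorphism by hand and then verify that it intertwines every piece of structure. Given $(a,u)\in H(A)$, the positive element $a$ is full in the ideal $I_a$ it generates, so $I_a\in{\rm Lat}_f(A)$, and $[a]\in{\rm Cu}_f(I_a)$ since $I_{[a]}$ corresponds to $I_a$ under $\Gamma$. As $A$ is separable, ${\rm her}(a)$ and $I_a$ are $\sigma$-unital, and ${\rm her}(a)$ is a full hereditary subalgebra of $I_a$; hence, by Theorem \ref{herideal}, the inclusion $\iota_a\colon{\rm her}(a)\hookrightarrow I_a$ induces an isomorphism $\kappa_a:={\rm K}_1(\iota_a)\colon{\rm K}_1({\rm her}(a))\xrightarrow{\ \sim\ }{\rm K}_1(I_a)$. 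I would then set
$$
\Phi\colon{\rm Cu}_1(A)\longrightarrow\coprod_{I\in{\rm Lat}_f(A)}{\rm Cu}_f(I)\times{\rm K}_1(I),\qquad
\Phi\big([(a,u)]\big)=\big([a],\,\kappa_a([u])\big),
$$
and first check that $\Phi$ is well defined. If $(a,u)\sim_1(b,v)$ then $a\sim_{Cu}b$, so $[a]=[b]$ in ${\rm Cu}(A)$ and $I_a=I_b$, while $[\theta_{ab,\alpha}(u)]=[v]$ in ${\rm K}_1({\rm her}(b))$. The point is that the square
$$
\begin{array}{ccc}
{\rm K}_1({\rm her}(a)) & \xrightarrow{\ (\theta_{ab,\alpha})_*\ } & {\rm K}_1({\rm her}(b))\\
\kappa_a\downarrow & & \downarrow\kappa_b\\
{\rm K}_1(I_a) & \xrightarrow{\ \ \delta_{I_aI_b}\ \ } & {\rm K}_1(I_b)
\end{array}
$$
commutes: by its construction, $\theta_{ab,\alpha}$ followed by ${\rm her}(b)\hookrightarrow I_b$ agrees at the level of $K$-theory with the inclusion ${\rm her}(a)\hookrightarrow I_a\hookrightarrow I_b$, so $(\theta_{ab,\alpha})_*$ induces $\delta_{I_aI_b}\circ\kappa_a$, independently of the auxiliary parameter $\alpha$. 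Since here $I_a=I_b$ and the bottom arrow is the identity, $\kappa_a([u])=\kappa_b([v])$, so $\Phi$ descends to $\sim_1$-classes.

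For injectivity, equality $\Phi([(a,u)])=\Phi([(b,v)])$ forces the two pairs into the same component, so $I_a=I_b$; then $[a]=[b]$ gives $a\sim_{Cu}b$, and $\kappa_a([u])=\kappa_b([v])$ gives, via the same square, $[\theta_{ab,\alpha}(u)]=[v]$ in ${\rm K}_1({\rm her}(b))$, whence $(a,u)\sim_1(b,v)$. For surjectivity, take $I\in{\rm Lat}_f(A)$ and $(x,k)\in{\rm Cu}_f(I)\times{\rm K}_1(I)$, and write $x=[a]$ with $I_a=I$. Stable rank one enters decisively here: it passes to matrix algebras, to the inductive limit $A\otimes\mathcal K=\varinjlim_nM_n(A)$, and to hereditary subalgebras, so ${\rm her}(a)$ has stable rank one; by Rieffel's $K_1$-picture in that case, the map $\mathcal U({\rm her}(a)^\sim)/\mathcal U_0({\rm her}(a)^\sim)\to{\rm K}_1({\rm her}(a))$ is an isomorphism, so $\kappa_a^{-1}(k)=[u]$ for some $u\in\mathcal U({\rm her}(a)^\sim)$, and then $\Phi([(a,u)])=(x,k)$.

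It remains to check that $\Phi$ is an isomorphism in ${\rm Cu}^\sim$, i.e. that $\Phi$ and $\Phi^{-1}$ respect the monoid operation, the order, suprema of increasing sequences, and compact containment. The monoid and order statements are a direct translation of the definitions of $+$ and $\lesssim_1$: one uses $a\lesssim_{Cu}b\Leftrightarrow[a]\le[b]$ in ${\rm Cu}(A)$ (which forces $I_a\subseteq I_b$), and the additive analogue of the square above, applied to the embeddings ${\rm her}(a),{\rm her}(b)\hookrightarrow{\rm her}(a\oplus b)$, which yields $\kappa_{a\oplus b}([u\oplus v])=\delta_{I_aI_{a\oplus b}}(\kappa_a([u]))+\delta_{I_bI_{a\oplus b}}(\kappa_b([v]))$; together these reproduce precisely the stated formulas for $\le$ and $+$ on $\coprod_I{\rm Cu}_f(I)\times{\rm K}_1(I)$. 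For suprema, an increasing sequence $\big([(a_n,u_n)]\big)_n$ makes $[a_n]$ increase in ${\rm Cu}(A)$ with supremum $x=[a]$ and the ideals $I_{a_n}$ increase with $I_a=\overline{\bigcup_nI_{a_n}}$; one checks ${\rm K}_1(I_a)=\varinjlim_n{\rm K}_1(I_{a_n})$ and that the classes $\kappa_{a_n}([u_n])$ form a compatible family whose colimit is the second coordinate of $\Phi(\sup_n[(a_n,u_n)])$. Preservation of $\ll$ then follows from the corresponding fact in ${\rm Cu}(A)$, the discreteness of the groups ${\rm K}_1(I)$ contributing no extra obstruction.

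I expect the main obstacle to be the compatibility square for $\theta_{ab,\alpha}$, together with its additive and inductive-limit versions: one must show that Cantier's explicit stable-rank-one embedding ${\rm her}(a)\hookrightarrow{\rm her}(b)$ induces on $K_1$, after identifying with ${\rm K}_1(I_a)$ and ${\rm K}_1(I_b)$ via the stable isomorphisms, exactly the canonical inclusion-induced map $\delta_{I_aI_b}$, independently of $\alpha$. This is the place where separability and stable rank one do the real work — the density of invertibles is what makes $\theta_{ab,\alpha}$ canonical up to approximate unitary equivalence, hence canonical on $K_1$. Everything else reduces to bookkeeping with the ideal lattice of ${\rm Cu}(A)$ and with Rieffel's description of $K_1$ for algebras of stable rank one.
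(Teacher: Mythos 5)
This theorem is quoted by the paper from \cite[Theorem 4.8]{L1} without proof, so there is no in-paper argument to compare against; your reconstruction follows the same route as Cantier's original one — decompose over the ideal lattice, use Brown's theorem (Theorem \ref{herideal}) to identify ${\rm K}_1({\rm her}(a))$ with ${\rm K}_1(I_a)$ via the full hereditary inclusion, and use stable rank one (Rieffel's picture of ${\rm K}_1$) to realize every class in ${\rm K}_1(I_a)$ by a unitary in ${\rm her}(a)^\sim$, which gives surjectivity. The outline is correct, but note that the one step you assert rather than prove — commutativity of the square intertwining $(\theta_{ab,\alpha})_*$ with $\delta_{I_aI_b}$, together with its additive and inductive-limit versions — is precisely the technical heart of Cantier's proof: one must show that $\theta_{ab,\alpha}$ is canonical up to approximate unitary equivalence (hence canonical on ${\rm K}_1$, independent of $\alpha$), compatible with compositions $a\lesssim b\lesssim c$ and with direct sums, and that under the Brown identifications it induces exactly the inclusion-induced map of ideals. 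You correctly flag this as the main obstacle, but as written it is assumed, not established. Likewise, the claim that preservation of $\ll$ ``follows with no extra obstruction from discreteness'' is too quick: one needs the characterization that $(x,k)\ll(y,l)$ iff $x\ll y$ and $\delta_{I_xI_y}(k)=l$, whose proof requires the continuity of ${\rm K}_1$ along the increasing ideals $I_{y_n}\nearrow I_y$ — exactly the argument the paper later writes out for $\underline{\rm Cu}$ in Theorem \ref{jin bijiao}.
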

If there is an $I\in{\rm Lat}(A)\setminus{\rm Lat}_f(A)$, we have ${\rm Cu}_f(I)=\varnothing$.
%Thus, one can replace the ``${\rm Lat}(A)$'' in the above theorem by ``${\rm Lat}_f(A)$''.
We mention that whenever $A$ is separable, then ${\rm Lat}(A)={\rm Lat}_f(A)$.
\begin{notion}\label{RE AND CU}\rm \cite[5.5]{APT}
Denote Mon$_\leq$ the category of ordered monoids and POM the category consisting of positively ordered monoids. Given  $M\in$ POM, denote its Cu-completion by Cu$(M)$ (the set of any increasing sequence in $M$ modulo equivalent relation) (see more details in \cite[3.1.6]{APT}), Thus, we obtain a functor
$$
{\rm Cu}:\, {\rm POM}\to {\rm Cu},\quad M\longmapsto {\rm Cu}(M),\quad {\rm for~~ all~~}M\in{\rm POM},
$$
mapping the POM-morphism $f:\,M\to N$ into the induced Cu-morphism ${\rm Cu}(f):\,{\rm Cu}(M)\to {\rm Cu}(N)$.

Conversely, given a Cu-semigroup $S$, we denote by $S_c$ the set of compact elements in $S$.
It is easy to see that $S_c$ is a submonoid of $S$ and we equip it with the order induced by $S$. Then we get another functor
$${\mathfrak{C}}:\,  {\rm Cu}\to{\rm POM},\quad S\longmapsto S_c,\quad {\rm for~~ all~~}S\in {\rm Cu},
$$

It is obvious that the two functors Cu and ${\mathfrak{C}}$ establish an equivalence of the category POM and the full subcategory of Cu consisting of algebraic Cu-semigroups (every
element is the supremum of an increasing sequence of compact elements).
\end{notion}
\begin{notion}\rm
  The $unitary\,\,Cuntz\,\, category$, written ${\rm Cu}^\sim$, is the subcategory of Mon$_\leq$ whose objects are ordered monoids satisfying the axioms (O1)- (O4) and such that $0\ll 0$ and morphisms are Mon$_\leq$-morphisms that respect suprema of increasing sequences and the compact containment relation.

  Let $S,\,T\in {\rm Cu}^\sim$ and $f:\,S\rightarrow T$ be a ${\rm Cu}^\sim$-morphism,
  %let $S_c$ denote the set of compact elements in $S$,
then $S_c$, $T_c$ are ordered submonoids of $S,~T$, respectively.
  Denote the restriction map of $f$ by $f_c:\,S_c\rightarrow T_c$, and denote the induced Grothendieck map of $f_c$ by
  $$
  {\rm Gr}(f_c): {\rm Gr}(S_c)\rightarrow {\rm Gr}(T_c).
  $$
\end{notion}
\begin{definition}\rm %[\cite{APT}, definition 5.5.1]
Give $S\in {\rm Cu}^{\sim}$, we say that $S$ is an $algebraic$ ${\rm Cu}^{\sim}$-semigroup if every
element in $S$ is the supremum of an increasing sequence of compact elements, %Let $S\in$ {\rm Cu}, we say that $S$ is an $algebraic$ {\rm Cu}$^\sim$-semigroup if every
%,
that is, an increasing sequence in $S_c$. %We denote by ${\rm Cu}_{alg}$ the full subcategory of Cu consisting of algebraic Cu-semigroups.
\end{definition}
%\begin{theorem}[\cite{CEI}, Corollay 5]
%If $A$ is a $C^{*}$-algebra with $rr(A)=0$, then {\rm Cu}$(A)$ is algebaic. For $C^{*}$-algebras with $sr(A)=1$, the converse holds.
%\end{theorem}

\begin{corollary}{\rm (}\cite[Corollary 3.25]{L1}{\rm )}\label{algebraic cor}
Let $A\in C^*_{sr1}$. Then $A$ has real rank zero if and only if ${ \rm Cu}_1(A)$ is algebraic if and only if ${ \rm Cu}(A)$ is algebraic.
\end{corollary}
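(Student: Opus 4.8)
The plan is to split the statement into the two equivalences
(a) $rr(A)=0 \iff {\rm Cu}(A)$ is algebraic, and (b) ${\rm Cu}(A)$ is algebraic $\iff {\rm Cu}_1(A)$ is algebraic, and to treat them separately. Equivalence (a) is essentially classical, so I would only sketch it. For the forward implication: if $rr(A)=0$ then $A\otimes\mathcal{K}$ also has real rank zero, so for $a\in(A\otimes\mathcal{K})_+$ the hereditary subalgebra $\overline{a(A\otimes\mathcal{K})a}$ possesses an increasing approximate unit $(p_n)$ of projections; one then checks that $[p_n]\le[a]$ for all $n$ and $(a-\epsilon)_+\lesssim_{Cu}p_n$ for $n$ large, so that $[a]=\sup_n[p_n]$ realizes $[a]$ as a supremum of compact elements (every projection class being compact in ${\rm Cu}(A)$). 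For the converse I would use that, as $sr(A)=1$, the compact elements of ${\rm Cu}(A)$ are precisely the classes of projections; then an algebraic presentation $[a]=\sup_n[p_n]$ with the $p_n$ projections can be upgraded, by a standard stable-rank-one cancellation argument, to a genuine approximate unit of projections inside $\overline{aAa}$, which yields $rr(A)=0$.

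For equivalence (b) the key input is the structure theorem (Theorem~\ref{mainstr}), which gives ${\rm Cu}_1(A)\cong\coprod_{I\in{\rm Lat}_f(A)}{\rm Cu}_f(I)\times{\rm K}_1(I)$ with ${\rm Lat}(A)={\rm Lat}_f(A)$ since $A$ is separable. First I would pin down the relevant order-theoretic data inside this twisted sum. The forgetful assignment $\pi\colon{\rm Cu}_1(A)\to{\rm Cu}(A)$, $(x,k)\mapsto x$, is a surjective ${\rm Cu}^\sim$-morphism, so it sends suprema to suprema and compact elements to compact elements; in particular $(x,k)$ compact forces $x$ compact in ${\rm Cu}(A)$. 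Next I would record that suprema of an increasing sequence $(y_m,l_m)$ in ${\rm Cu}_1(A)$ are computed coordinatewise: the first coordinate is $\sup_m y_m$, and the second is the class of $(l_m)$ in ${\rm K}_1(I_{\sup_m y_m})=\varinjlim_m{\rm K}_1(I_{y_m})$ --- here one uses that the $C^*$-ideals $I_{y_m}$ form an increasing sequence whose inductive limit is the $C^*$-ideal associated to $\sup_m y_m$ (compare \ref{Ell inc}) together with the continuity of ${\rm K}_1$. From this I would deduce the converse: if $x$ is compact in ${\rm Cu}(A)$, then $(x,k)$ is compact in ${\rm Cu}_1(A)$ for every $k\in{\rm K}_1(I_x)$. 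Hence the compact elements of ${\rm Cu}_1(A)$ are exactly the pairs $(x,k)$ with $x$ compact in ${\rm Cu}(A)$.

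Granting this, both directions of (b) are short. If ${\rm Cu}_1(A)$ is algebraic, then for $x\in{\rm Cu}(A)$ write $(x,0)=\sup_n(x_n,k_n)$ with the $(x_n,k_n)$ compact and increasing; applying $\pi$ gives $x=\sup_n x_n$ with each $x_n$ compact, so ${\rm Cu}(A)$ is algebraic. Conversely, suppose ${\rm Cu}(A)$ is algebraic and fix $(x,k)\in{\rm Cu}_1(A)$. Choose an increasing sequence of compact elements $x_n$ with $\sup_n x_n=x$; then $\overline{\bigcup_n I_{x_n}}$ is the $C^*$-ideal associated to $x$, so ${\rm K}_1(I_x)=\varinjlim_n{\rm K}_1(I_{x_n})$ and therefore $k=\delta_{I_{x_{n_0}}I_x}(k_0)$ for some index $n_0$ and some $k_0\in{\rm K}_1(I_{x_{n_0}})$. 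Putting $k_n:=\delta_{I_{x_{n_0}}I_{x_n}}(k_0)$ for $n\ge n_0$, the sequence $(x_n,k_n)_{n\ge n_0}$ is increasing, consists of compact elements, and has supremum $(x,k)$ by the coordinatewise description above; so ${\rm Cu}_1(A)$ is algebraic.

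I expect the main obstacle to lie in equivalence (b): precisely identifying the compact-containment relation and the suprema in ${\rm Cu}_1(A)$ through the twisted sum of Theorem~\ref{mainstr} --- in particular reconciling the K-theoretic connecting maps $\delta_{IJ}$ with the equivalence relation on representing sequences from \ref{Ell inc} (one must be careful that a representing sequence for an element of $\varinjlim_m{\rm K}_1(I_{y_m})$ need only be compatible eventually, so some initial truncation of the chosen sequences may be required), and confirming that compactness of $(x,k)$ is detected purely on the ${\rm Cu}(A)$-coordinate. In equivalence (a) the subtle step is the converse implication, where passing from ``$[a]$ is a supremum of projection classes'' to an actual approximate unit of projections in $\overline{aAa}$ genuinely uses stable rank one.
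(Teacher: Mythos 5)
The paper offers no proof of this corollary: it is quoted directly from \cite[Corollary~3.25]{L1}, so there is no in-paper argument to compare against. Your proposal is nevertheless correct, and for equivalence (b) it follows exactly the strategy the paper itself uses to prove the analogous statement for $\underline{\mathrm{Cu}}$ (Lemma~\ref{O 0}, Corollary~\ref{O 1}, Theorem~\ref{jin bijiao}, Corollaries~\ref{O 2} and~\ref{alg total cu}): suprema in the twisted sum are computed coordinatewise using $I_x=\mathrm{C}^*\text{-}\lim I_{x_n}$ and continuity of $\mathrm{K}_1$, compactness of $(x,k)$ is detected on the $\mathrm{Cu}(A)$-coordinate, and a class $k\in\mathrm{K}_1(I_x)$ is lifted to a finite stage $I_{x_{n_0}}$ and pushed forward. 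Equivalence (a) is the classical fact (your sketch of both directions, via approximate units of projections in hereditary subalgebras and the identification of compact elements with projection classes under stable rank one, is the standard argument), so there is nothing to object to there either.
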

\begin{definition}
  Let $\mathcal{C},\mathcal{D}$ be arbitrary categories, and let $I:C^*\rightarrow \mathcal{C}$ and $J: C^*\rightarrow \mathcal{D}$ be covariant functors. Let $H:\mathcal{C}\rightarrow \mathcal{D}$ be a functor such that there exists a natural isomorphism $\eta :H\circ J \simeq I$. Then we say we can recover $I$ from $J$ through $H$.
\end{definition}

\section{${\rm Cu}_1$ and ${\rm K}_*$}

Cantier has shown that for any unital, separable $C^{*}$-algebras of stable rank one, the invariant ${\rm K}_*$ can be recovered from ${\rm Cu}_1$, in this section, under the real rank zero setting,  we prove the converse.
\begin{definition}
  \cite[Definition 1.2.1]{EG} \rm Let $A$ be a (unital) $\mathrm{C}^{*}$-algebra and set $\mathrm{K}_{*}(A)=\mathrm{K}_{0}(A) \oplus\mathrm{K}_{1}(A).$ Define
  $\mathrm{K}_{*}^{+}(A)=\{ ([p],[u \oplus\left(1_{k}-p\right)]\}$, where $p \in M_{k}(A)$ is a projection, $u \in p M_{k}(A) p$ is a unitary and $\mathbf{1}_{k} \in M_{k}(A)$ is the unit. Note that we also use $[u]$ or $[u]_{{\rm K}_1(A)}$ to denote the ${\rm K}_1$ class  $[u+(1_k-p)]_{{\rm K}_1(A)}$.
\end{definition}

The following proposition is well-known, see \cite[Proposition 4]{LR}.
\begin{proposition}\label{lin inj}
Let
$$
0 \longrightarrow A \longrightarrow E \longrightarrow B \longrightarrow 0
$$
be a short exact sequence of $\mathrm{C}^{*}$-algebras. Let $\delta_{j}: \mathrm{K}_{j}(B) \rightarrow \mathrm{K}_{1-j}(A)$ for $j=0,1$, be the index maps of the sequence.

(i) Assume that $A$ and $B$ have real rank zero. Then the following three conditions are equivalent :

(a) $\delta_{0} \equiv 0$,

(b) $rr(E)=0$,

(c) all projections in $B$ are images of projections in $E$.

(ii) Assume that $A$ and $B$ have stable rank one. Then the following are equivalent:

(a) $\delta_{1} \equiv 0$,

(b) $sr(E)=1$.

If, in addition, $E$ (and $B$ ) are unital then (a) and (b) in (ii) are equivalent to

(c) all unitaries in $B$ are images of unitaries in $E$.

\end{proposition}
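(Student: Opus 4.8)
We only sketch the strategy; the statement is essentially \cite[Proposition~4]{LR}. Both parts are governed by the explicit cycle-level formulas for the boundary maps of the six-term exact sequence. For a projection $q$ in a matrix algebra over $B$ with self-adjoint lift $a$ over $E^{\sim}$, the exponential map is $\delta_{0}([q])=[\exp(2\pi i\,a)]\in{\rm K}_{1}(A)$ (up to sign), which vanishes as soon as $q$ lifts to a genuine projection over $E$. For a unitary $u$ over $B^{\sim}$, the index map is $\delta_{1}([u])=[We_{11}W^{*}]-[e_{11}]\in{\rm K}_{0}(A)$, where $W$ is a unitary over $E^{\sim}$ lifting $\mathrm{diag}(u,u^{*})$ (which lies in the connected component of the identity, hence lifts); note that $We_{11}W^{*}$ is automatically a projection lifting $e_{11}$.

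For part (i) I would establish $(b)\Leftrightarrow(c)$ and $(a)\Leftrightarrow(c)$. For $(b)\Rightarrow(c)$: lift a projection $q$ over $B$ to a positive contraction $a$ over $E$; since $rr(E)=0$, the hereditary subalgebra generated by $(a-\tfrac{2}{3})_{+}$ has an approximate unit of projections, and a sufficiently large such projection $r$ satisfies $\pi(r)\le q$ and $\|\pi(r)-q\|<1$, forcing $\pi(r)=q$. For $(c)\Rightarrow(b)$: run the standard approximate-unit-of-projections argument on hereditary subalgebras of $E$, assembling those of the ideal $A$ and of the quotient $B$ (both real rank zero) along the projection lifts furnished by $(c)$. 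For $(c)\Rightarrow(a)$: every class in ${\rm K}_{0}(B)$ is a difference of classes of projections, which $rr(B)=0$ lets us represent over $B$ itself; these lift by $(c)$, so $\delta_{0}$ annihilates them. The delicate implication is $(a)\Rightarrow(c)$: $\delta_{0}\equiv0$ makes ${\rm K}_{0}(E)\to{\rm K}_{0}(B)$ surjective, so a given projection $q$ over $B$ is ${\rm K}_{0}(B)$-equivalent to $\pi(p)$ for some projection $p$ over $E$; using $rr(B)=0$ one arranges, after stabilizing, that $q$ and $\pi(p)$ become conjugate by a unitary in the identity component over $B^{\sim}$, which---being a finite product of exponentials---lifts to a unitary over $E^{\sim}$ whose conjugate sends $p$ to an exact projection lift of $q$.

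For part (ii) I would prove $(a)\Leftrightarrow(b)$ directly and deduce $(c)$ in the unital case. For $(b)\Rightarrow(a)$: $sr(E)=1$ gives $sr(M_{n}(E^{\sim}))=1$, so every unitary over $B^{\sim}$ lifts to a unitary over $E^{\sim}$ (lift, perturb to an invertible, take its polar part, then correct by an exponential to an exact lift); feeding $W=\mathrm{diag}(\tilde u,\tilde u^{*})$ into the index formula yields $\delta_{1}\equiv0$, which in the unital case is exactly $(c)$. For $(a)\Rightarrow(b)$: given $x\in E$ and $\varepsilon>0$, use $sr(B)=1$ and a norm-preserving lift of the small perturbation to reduce to $\pi(x)$ invertible in $B$; then $x$ is invertible modulo $A$, and after a small perturbation its kernel and cokernel relative to $A$ are honest projections in $A$ whose formal difference has ${\rm K}_{0}(A)$-class equal to $\delta_{1}$ applied to the class of $\pi(x)$ in ${\rm K}_{1}(B)$, hence $0$. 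Since $sr(A)=1$ forces cancellation in the monoid of projections of $A$, these two projections are Murray--von Neumann equivalent in $A$; a partial isometry in $A$ implementing this lets one perturb $x$ by an element of $A$ of norm $O(\varepsilon)$ to an invertible element of $E$, so $sr(E)=1$. In the unital case $(c)\Rightarrow(b)$ follows similarly: a unitary lift of the polar part of $\pi(x)$ times an invertible lift of its modulus differs from $x$ by an element of $1+A\subseteq A^{\sim}$, which is approximable by invertibles because $sr(A)=1$.

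In both parts the single hard step is the same: the boundary map only records a ${\rm K}$-theory class, and the work is to promote the vanishing of that class to an \emph{actual} equivalence compatible with the quotient map---a liftable unitary carrying $\pi(p)$ onto $q$ in part (i), a partial isometry in $A$ trivializing the relative index in part (ii). Real rank zero (respectively stable rank one) is precisely what provides the needed abundance of projections (respectively density of invertibles) together with the cancellation that makes this promotion work, after a controlled stabilization; the remaining estimate-chasing is routine.
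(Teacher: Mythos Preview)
The paper does not prove this proposition at all: it is stated with the remark ``well-known, see \cite[Proposition~4]{LR}'' and no proof is given. Your proposal likewise begins by identifying the result with \cite[Proposition~4]{LR} and then supplies a reasonable sketch of the argument from that reference, so in that sense you and the paper take the same approach, with you additionally filling in what the paper leaves to the citation.
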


\begin{example}\label{example sur not in}\rm
 For any $C^*$-algebra $A$ of stable rank one, ${\rm  Cu}(A)$  has weak cancellation (see \cite{E},\,\cite{RW}), i.e.,  $x+z\ll y+z$ implies $x\leq y$ for $x,y,z\in {\rm  Cu}(A)$, but this is not true for ${\rm Cu}_1(A)$, we present an example here.

 Set
  $$
  E=\{\,f\in M_2(C[0,1])\,|\,f(0)=\left(\begin{array}{cc}
                                    \lambda & 0 \\
                                    0 & \mu
                                  \end{array}\right)
\,{\rm and}\, f(1)=\left(\begin{array}{cc}
                                    \nu & 0 \\
                                    0 & \mu
                                  \end{array}\right)
,\lambda,\mu,\nu\in \mathbb{C}\}.
$$
We have the following short exact sequence
$$
0\rightarrow M_2(C_0(0,1)) \xrightarrow{\iota} E \xrightarrow{\pi} \mathbb{C}\oplus \mathbb{C}\oplus \mathbb{C}\rightarrow0,
$$
where $\iota$ is the natural embedding map and $\pi(f)=(\lambda,\nu,\mu)$ for $f\in E$.
Then one has the six-term exact sequence
$$
0\to \mathrm{K}_0(E)\xrightarrow{\pi_*} \mathbb{Z}\oplus \mathbb{Z}\oplus \mathbb{Z} \xrightarrow{\delta_0} \mathbb{Z}\xrightarrow{\iota_*} \mathrm{K}_1(E)\to 0,
$$
where $\delta_0=(1,-1,0)$.

Such an $E$ is an Elliott-Thomsen algebra, one can see \cite{GLN} for more details. Here, we have ${\rm K}_1(E)=0$ ($\delta_0$ is surjective, and hence, $\iota_*=0$) and $sr(E)=1$ ($E$ is an extension of two stable rank one algebras and $\delta_1\equiv 0 :{\rm K}_1(\mathbb{C}\oplus \mathbb{C}\oplus \mathbb{C})\rightarrow {\rm K}_0(M_2(C_0(0,1))) $, see Proposition \ref{lin inj} (ii)).

Let $q=\left(\begin{array}{cc}
                                    0 & 0 \\
                                    0 & 1
                                  \end{array}\right)\in E$  be a projection,
then ${\rm her}(q)\cong C(\mathbb{T})$ and  ${\rm K}_1(I_{q})={\rm K}_1({\rm her}(q))={\rm K}_1(C(\mathbb{T}))=\mathbb{Z}$.

Take
$$
([q],0),([q],1)\in {\rm Cu}_1(E).
$$
Since the natural embedding map from $I_{q}$ to $E\otimes \mathcal{K}$ induces $\delta_{I_{q}E}:{\rm K}_1(I_{q})\rightarrow {\rm K}_1(E)=0$, then
$$
([q],0)+([1_E],0)=([q]+[1_E],\delta_{I_qE}(0)+0)= ([q]+[1_E],0),
$$
$$
([q],1)+([1_E],0)=([q]+[1_E],\delta_{I_qE}(1)+0)= ([q]+[1_E],0).
$$
But $([q],0)\neq([q],1).$

This means ${\rm Cu}_1(E)$ doesn't satisfy the cancellation of compact elements in the sense of \cite[Proposition 5.16]{L1}. The basic reason is that the natural map $\delta_{I_{q}E}=\mathrm{K}_1(\iota):\,\mathrm{K}_1(I_{q})\to \mathrm{K}_1(E)$ is not injective, though $\iota:\,I_{q}\to E$ itself is injective and
$\mathrm{K}_0(\iota):\,\mathrm{K}_0(I_{q})\to \mathrm{K}_0(E)$ is injective.

Moreover, consider the following map
\begin{eqnarray*}
          % \nonumber % Remove numbering (before each equation)
            \alpha: {\rm Cu}_1(A)_c &\rightarrow & {\rm K}_*^+(A) \\
            ([q],[u]) &\mapsto & ([q],\delta_{I_qA}([u]))
\end{eqnarray*}
In general, $\alpha$ is just a surjective map, not an isomorphism (such as $A=E$). This means the proof of  \cite[Theorem 5.20]{L1} is not entirely correct. We point out that the conclusion is still true, as one can still prove
$
({\rm Gr}(\mathrm{Cu}_1(A)_c),$ $\rho(\mathrm{Cu}_1(A)_c))\cong(\mathrm{K}_*(A),\mathrm{K}_*^+(A))
$
without the injectivity of $\alpha$, where $\rho:\, \mathrm{Cu}_1(A)_c\to {\rm Gr}(\mathrm{Cu}_1(A)_c)$ is the natural map ($\rho(x)=[(x,0)])$.
We will write down a proof and also a refinement version. %(see Theorem \ref{Cu total thm}).

\end{example}

The following result is partially proved in \cite[Theorem 5.20]{L1}, but there is one deficient part left for the proof, now we present our method.

\begin{theorem} \label{cptlem}
Let $A$ be a unital, separable $C^{*}$-algebra with stable rank one. Then
$$
({\rm Gr}(\mathrm{Cu}_1(A)_c),\rho(\mathrm{Cu}_1(A)_c))\cong(\mathrm{K}_*(A),\mathrm{K}_*^+(A)).
$$
Moreover, if $A$ is of real rank zero, then
$$
({\rm Cu}_1(A)_c, ([1_A],0))\cong ({\rm K}_*^+(A),[1_A]).
$$
\end{theorem}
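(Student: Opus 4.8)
The plan is to build the isomorphism $(\mathrm{Gr}(\mathrm{Cu}_1(A)_c),\rho(\mathrm{Cu}_1(A)_c))\cong(\mathrm{K}_*(A),\mathrm{K}_*^+(A))$ through the surjective map $\alpha$ described in Example \ref{example sur not in}, rather than trying to invert $\alpha$ directly (which Example \ref{example sur not in} shows is impossible). First I would recall, using Theorem \ref{mainstr}, that a compact element of $\mathrm{Cu}_1(A)$ has the form $([q],k)$ with $q\in (A\otimes\mathcal{K})_+$ such that $[q]$ is a compact (hence, by stable rank one, a projection class) full element of $\mathrm{Cu}(I_q)$ and $k\in\mathrm{K}_1(I_q)$; moreover $[q]$ ranges over $\mathrm{V}(A)$ up to the identification of $\mathrm{her}(q)$ with a corner, and $\mathrm{K}_1(I_q)\cong\mathrm{K}_1(\mathrm{her}(q))$ by Theorem \ref{herideal}. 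So the monoid $\mathrm{Cu}_1(A)_c$ is exactly $\coprod_{[q]}\mathrm{K}_1(I_q)$ with the twisted addition from Theorem \ref{mainstr}, and $\alpha$ sends $([q],k)$ to $([q],\delta_{I_qA}(k))\in\mathrm{K}_*^+(A)$. I would check (this is essentially in \cite{L1}) that $\alpha$ is a surjective monoid morphism that is positively ordered; the point is only that it need not be injective.

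Next I would compute the Grothendieck group. The key observation is that although $\alpha$ is not injective, its failure of injectivity is ``absorbed'' after passing to Grothendieck groups, because for a large enough projection $r$ the ideal $I_r$ becomes all of $A$ and $\delta_{I_qA}$ factors as $\mathrm{K}_1(I_q)\to\mathrm{K}_1(I_r)\to\mathrm{K}_1(A)$ where the last map is an isomorphism. Concretely: given $([q],k)$ with $\alpha([q],k)=\alpha([q],k')$, i.e. $\delta_{I_qA}(k)=\delta_{I_qA}(k')$, I would add a full projection $([1_A],0)$ to both; since $I_{q\oplus 1_A}=A$, the element $\delta_{I_qA}$ now lands injectively (it IS $\mathrm{K}_1$ of the inclusion into a full corner), so $([q],k)+([1_A],0)=([q],k')+([1_A],0)$ in $\mathrm{Cu}_1(A)_c$ already. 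Hence $\rho(([q],k))=\rho(([q],k'))$ in $\mathrm{Gr}(\mathrm{Cu}_1(A)_c)$, so $\alpha$ descends to a well-defined map $\mathrm{Gr}(\alpha):\mathrm{Gr}(\mathrm{Cu}_1(A)_c)\to\mathrm{K}_*(A)$ which is now injective, and it is surjective because $\alpha$ is. That $\mathrm{Gr}(\alpha)$ carries $\rho(\mathrm{Cu}_1(A)_c)$ onto $\mathrm{K}_*^+(A)$ is immediate from surjectivity of $\alpha$ onto $\mathrm{K}_*^+(A)$ together with the definitions of $\rho$ and of $\mathrm{K}_*^+$. Checking that $\mathrm{Gr}(\alpha)$ and its inverse are order-preserving for the given positive cones is a routine unwinding of the order in Theorem \ref{mainstr} and the definition of $\mathrm{K}_*^+(A)$.

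For the ``moreover'' statement, assume $rr(A)=0$. By Corollary \ref{algebraic cor}, $\mathrm{Cu}_1(A)$ is algebraic, so it is determined by $\mathrm{Cu}_1(A)_c$; but more to the point I need $\alpha$ itself to be an isomorphism in this case. The reason is that real rank zero forces, via Proposition \ref{lin inj}(i), that all the relevant index maps $\delta_0$ vanish for ideal quotients, which is exactly what makes each $\delta_{I_qA}:\mathrm{K}_1(I_q)\to\mathrm{K}_1(A)$ injective on the nose — so no collapsing occurs and $\alpha$ is bijective, and order-preserving in both directions by the same unwinding as above. (Equivalently: in the real rank zero case every $K_0$-class is realized by a projection and every $K_1$ obstruction to cancellation of Example \ref{example sur not in} disappears.) Sending $([1_A],0)$ to $[1_A]$ then pins down the distinguished element.

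The main obstacle I expect is the Grothendieck-group step: one must argue carefully that the ideal lattice ``stabilizes'' after adding $[1_A]$ and that $\delta_{I_{q\oplus 1_A}A}=\mathrm{id}$ up to the canonical identification, so that the non-injectivity of $\alpha$ genuinely dies in $\mathrm{Gr}$; this is the part that was glossed over in \cite[Theorem 5.20]{L1}. A secondary technical point is keeping the two uses of $[u]$ (the genuine $\mathrm{K}_1(\mathrm{her}(q))$-class versus its image $[u+(1_k-q)]$ in $\mathrm{K}_1(A)$) carefully distinguished, since it is precisely the map between them, $\delta_{I_qA}$, whose kernel is the whole issue.
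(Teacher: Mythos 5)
Your proposal is correct and follows essentially the same route as the paper: the same map $\alpha([q],k)=([q],\delta_{I_qA}(k))$, the same surjectivity argument onto $\mathrm{K}_*^+(A)$, the same trick of adding $([1_A],0)$ so that the failure of injectivity of $\alpha$ dies after Grothendieck completion (together with cancellativity of $\mathrm{K}_*^+(A)$ inside the group $\mathrm{K}_*(A)$ for the converse implication), and the same appeal to Proposition \ref{lin inj}(i) to make $\delta_{I_qA}$ injective, hence $\alpha$ bijective, in the real rank zero case. The only slip is your phrase that $\delta_{I_qA}$ ``lands injectively'' after adding $[1_A]$ --- no injectivity is needed there: the addition formula simply sends both $k$ and $k'$ to $\delta_{I_qA}(k)=\delta_{I_qA}(k')$ in $\mathrm{K}_1(A)=\mathrm{K}_1(I_{q\oplus 1_A})$, which is exactly the hypothesis.
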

\begin{proof}
Let $[(a, u)]$ be a compact element in  $\mathrm{Cu}_{1}(A)$, by \cite[Corollary 3.5]{L1},  $[a]$ is a compact element of $\mathrm{Cu}(A)$. Since $A$ has stable rank one, use \cite[Theorem 5.8]{BC}, there exists a projection $p \in A\otimes \mathcal{K}$ such that $[p]=[a]$. Then every compact element in $\mathrm{Cu}_{1}(A)$ can be written as $[(p,u)]$, where $p$ is a projection in $A\otimes \mathcal{K}$ and $u$ is a unitary element in her$(p)$.
From the view of Theorem \ref{mainstr}, $[(p,u)]$ can also be written as $([p],[u]_{\mathrm{K}_{1}(I_p)}).$
(${\mathrm{K}_{1}(I_p)}$ and $\mathrm{K}_{1}(\operatorname{her}(p))$ are naturally isomorphic.)
Then the map
\begin{eqnarray*}
          % \nonumber % Remove numbering (before each equation)
            \alpha: {\rm Cu}_1(A)_c &\rightarrow & {\rm K}_*^+(A) \\
          {([p],[u]_{\mathrm{K}_{1}(I_p)})} & \mapsto & ([p],\delta_{I_pA}([u]_{\mathrm{K}_{1}(I_p)}))
\end{eqnarray*}
is a monoid morphism, where
$
\delta_{I_{p} A}: \mathrm{K}_{1}(I_p) %\stackrel{\mathrm{K}_{1}(i)}
{\longrightarrow} \mathrm{K}_{1}(A).
$
%is a injective map by

Note that for any $([q],[v]_{{\rm K}_1(A)})\in {\rm K}_*^+(A)$, where $q$ is a projection in $A\otimes\mathcal{K}$ and $v$ is a unitary in her$(q)$, $([q],[v]_{{\rm K}_1(I_q)})$ is a compact element in ${\rm Cu}_1(A)_c$. By the inclusion from $I_q$ to $A\otimes\mathcal{K}$, we have $\delta_{I_{q} A}([v]_{{\rm K}_1(I_q)})=[v]_{{\rm K}_1(A)}.$ Then $\alpha$ is surjective.

Suppose that
$$
\alpha([p],[u]_{{\rm K}_1(I_p)})=\alpha([q],[v]_{{\rm K}_1(I_q)}).
$$
Then for $([1_A],0)\in {\rm Cu}_1(A)_c$, we have
$$
([p]+[1_A],\delta_{I_eA}([u]_{{\rm K}_1(I_e)}))=([q]+[1_A],\delta_{I_qA}([v]_{{\rm K}_1(I_q)})),
$$
which is
$$
([p],[u]_{{\rm K}_1(A)})+([1_A],0)=([q],[v]_{{\rm K}_1(A)})+([1_A],0).
$$
Hence, the images of $([p],[u]_{{\rm K}_1(A)})$ and $([q],[v]_{{\rm K}_1(A)})$ in ${\rm Gr}({\rm Cu}_1(A)_c)$ are the same.

 Consider the natural map $\rho:\, \mathrm{Cu}_1(A)_c\to {\rm Gr}(\mathrm{Cu}_1(A)_c)$, if $\rho(x)=\rho(y)$, then there exists $z\in \mathrm{Cu}_1(A)_c$ such that
$x+z=y+z$, hence, $$\alpha(x)+\alpha(z)=\alpha(y)+\alpha(z).$$ Recall that $({\rm K}_*(A),{\rm K}_*^+(A))$ is an ordered group, then $\alpha(x)=\alpha(y)$.

Now we have
$$
{\rm Gr}({\rm Cu}_1(A)_c)\cong{\rm Gr}(\alpha({\rm Cu}_1(A)_c))={\rm Gr}({\rm K}_*^+(A))
$$
and
$$
({\rm Gr}(\mathrm{Cu}_1(A)_c),\rho(\mathrm{Cu}_1(A)_c))\cong(\mathrm{K}_*(A),\mathrm{K}_*^+(A)).
$$
%where the natural map $\rho:\, \mathrm{Cu}_1(A)_c\to {\rm Gr}(\mathrm{Cu}_1(A)_c)$ satisfies that implies $\alpha(x)=\alpha(y)$.

Generally speaking, $\alpha$ is not always injective,  but under the assumption of real rank zero, this is true.

Suppose that
$$
\alpha([p],[u]_{{\rm K}_1(I_p)})=\alpha([q],[v]_{{\rm K}_1(I_q)}).
$$
Then $p\sim_{Cu}q$, and hence, $I_p=I_q$. By Lemma \ref{lin inj}, we have $\delta_{I_pA}$ and $\delta_{I_qA}$ are the same injective map, which means we can identify $[u]_{{\rm K}_1(I_p)},$ $[u]_{{\rm K}_1(A)}$, $[v]_{{\rm K}_1(A)}$  and $[v]_{{\rm K}_1(I_q)}$ as a same element.
%$$
%[u+1-p]_{{\rm K}_1(A)}=[v+1-q]_{{\rm K}_1(A)}.
%$$

Since $A$ has stable rank one, Murray von Neumann equivalence and Cuntz equivalence agree on projections. That is, $([p],[u]_{{\rm K}_1(I_p)})=([q],[v]_{{\rm K}_1(I_q)})$, and hence, $\alpha$ is injective. Therefore, $\alpha$ is an isomorphism.

Note that ${\rm K}_0^+(A)$ is a sub-cone of ${\rm K}_*^+(A)$, it induces an order on ${\rm K}_*^+(A)$, hence, in ${\rm K}_*^+(A)$ we say
$$
([p],[u]_{{\rm K}_1(A)})\leq ([q],[v]_{{\rm K}_1(A)})\,\,{\rm iff}\,\, [p]\leq [q],\,\,[u]_{{\rm K}_1(A)}=[v]_{{\rm K}_1(A)}.
$$
%Suppose that
%$$
%([p],[u])\leq ([q],[u]),$$
%then
%$$\alpha([p],[u])=([p], \delta_{I_pA}([u]))\leq([q], \delta_{I_qA}([u]))=\alpha([q],[u]).$$
Then $\alpha$ becomes an ordered isomorphism from ${\rm Cu}_1(A)_c$ to $\mathrm{K}^{+}_{*}(A)$.

Note that $\mathrm{Cu}_{1}(A)$ is algebraic and completely determined by $\mathrm{Cu}_{1}(A)_{c}$, meanwhile, $\mathrm{K}_{*}(A)$ is the Grothendieck group of $\mathrm{K}^{+}_{*}(A)$. Moreover, $\alpha$  can be regarded as an ordered isomorphism:
$$
({\rm Cu}_1(A)_c, ([1_A],0))\cong ({\rm K}_*^+(A),[1_A]).
$$
This completes the proof.

\end{proof}

%Elliott-Gong-Su constructed (See Example 2.19 of \cite{EGS}) two non-isomorphic real rank zero AD algebras with isomorphism ordered $K_*$-group. By Theorem \ref{Kthm}, these two algebras also have same unitary Cuntz semigroup, then $K_*$ is not a complete invariant for AD (${\rm AH}_d$) $C^*$-algebra of real rank zero, while the total K-theory ${\rm \underline{K}}$ is. This means ${\rm \underline{K}}$ determines the unitary Cuntz semigroup in the real rank zero setting and, indeed, ${\rm \underline{K}}$ works quite well in the classification with this assumption. One can see \cite{G,DG,AELL} for more detail.
\begin{proposition}\label{weakcancel}
   Let $A$ be a unital, separable $C^*$-algebra of stable rank one and real rank zero. Then ${\rm  Cu}_1(A)$  has weak cancellation, i.e.,  $x+z\ll y+z$ implies $x\leq y$ for $x,y,z\in {\rm  Cu}_1(A)$.
\end{proposition}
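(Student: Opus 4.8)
The plan is to reduce the statement about weak cancellation in ${\rm Cu}_1(A)$ to weak cancellation in ${\rm Cu}(A)$ together with the structure of ${\rm K}_1$ on the relevant ideals. Since $A$ has real rank zero, Corollary \ref{algebraic cor} tells us that ${\rm Cu}_1(A)$ is algebraic, so every element is the supremum of an increasing sequence of compact elements, and compact elements are of the form $([p],[u]_{{\rm K}_1(I_p)})$ with $p$ a projection (as established in the proof of Theorem \ref{cptlem}). First I would observe that in an algebraic ${\rm Cu}^\sim$-semigroup, a relation $x \ll y$ forces $x$ to be dominated by a compact element below $y$; more precisely, writing $y = \sup y_n$ with $y_n \ll y_{n+1}$ compact, $x \ll y$ gives $x \le y_n$ for some $n$. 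So it suffices to check the cancellation property on compact elements, i.e. to show: if $x+z \le y+z$ with $x,y,z$ compact, then $x \le y$. (One should be slightly careful: $x+z \ll y+z$ with general $y,z$ reduces, after passing to a compact $z' \le z$ large enough and a compact approximant, to an inequality among compacts; I would spell this out using (O2) and the fact that $\ll$ is preserved under the relevant suprema.)

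Next, using Theorem \ref{cptlem}, the ordered monoid ${\rm Cu}_1(A)_c$ is isomorphic to $({\rm K}_*^+(A), [1_A])$, where the order is $([p],[u]) \le ([q],[v])$ iff $[p] \le [q]$ in ${\rm K}_0(A)$ and $[u]_{{\rm K}_1(A)} = [v]_{{\rm K}_1(A)}$. So the whole question becomes: does $({\rm K}_*^+(A), \le)$ have weak cancellation, i.e. does $x + z \le y + z$ imply $x \le y$ in ${\rm K}_*^+(A)$? Writing $x = ([p],\kappa)$, $y = ([q],\lambda)$, $z = ([r],\mu)$ with $\kappa,\lambda,\mu \in {\rm K}_1(A)$, the hypothesis $x+z \le y+z$ unpacks to $[p]+[r] \le [q]+[r]$ in ${\rm K}_0(A)$ and $\kappa + \mu = \lambda + \mu$ in ${\rm K}_1(A)$. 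Since ${\rm K}_1(A)$ is a group, the second equation immediately gives $\kappa = \lambda$. For the first, I would invoke the fact that ${\rm K}_0(A)$ for a stable rank one $C^*$-algebra is cancellative as an ordered group (Murray--von Neumann equivalence and Cuntz equivalence agree on projections, and the monoid $V(A)$ of projections cancels), so $[p] \le [q]$. Hence $x \le y$ in ${\rm K}_*^+(A)$, which transports back to $x \le y$ in ${\rm Cu}_1(A)_c$.

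The main obstacle, and the step requiring the most care, is the first reduction: passing from an arbitrary inequality $x + z \ll y + z$ in ${\rm Cu}_1(A)$ to a statement purely about compact elements. This uses algebraicity in an essential way — I would pick a $\ll$-increasing sequence of compacts $(z_n)$ with supremum $z$, likewise for $x$ and $y$, use compatibility of addition with suprema (O4) and with $\ll$ (O3) to get $x_m + z_n \ll y + z$ for suitable indices, then use compactness of $x_m + z_n$ to land it below some compact approximant $y_k + z_l$ of $y + z$; after enlarging indices so that $z_n$ appears on both sides one is in the cancellative situation above and concludes $x_m \le y_k \le y$, then takes the supremum over $m$ to get $x \le y$. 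One subtlety is ensuring the same compact $z$-term can be arranged on both sides; this is where one uses that the $(z_n)$ is a single increasing sequence and that $\ll$ lets one push past any finite stage. Once this bookkeeping is done, the algebraic input (cancellation in ${\rm K}_0$ and the group structure of ${\rm K}_1$) finishes the argument cleanly.
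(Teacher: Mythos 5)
Your proof is correct, but it takes a different route from the paper's. The paper argues directly with arbitrary (not necessarily compact) elements: after identifying each ${\rm K}_1(I)$ with its image in ${\rm K}_1(A)$ (injectivity coming from Proposition \ref{lin inj} under real rank zero), it splits the hypothesis via the coordinate description of Theorem \ref{mainstr} into $[a]+[c]\ll[b]+[c]$ in ${\rm Cu}(A)$ and $[u]+[w]=[v]+[w]$ in ${\rm K}_1(A)$, then quotes weak cancellation of ${\rm Cu}(A)$ for stable rank one algebras (Elliott, R\o rdam--Winter) for the first coordinate and the group structure of ${\rm K}_1(A)$ for the second. You instead first reduce to compact elements using algebraicity and then cancel inside ${\rm K}_*^+(A)\cong{\rm Cu}_1(A)_c$; your reduction does work -- from $x+z\ll y+z=\sup_n(y_n+z_n)$ one gets $x_m+z_N\le x+z\le y_N+z_N$ for every compact $x_m\le x$, which is cleaner than the index-juggling you describe -- and at the compact level you only need cancellation in $V(A)$ (equivalently, that the Cuntz and ${\rm K}_0$ orders on projections agree for stable rank one) rather than weak cancellation of all of ${\rm Cu}(A)$. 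The trade-off is that your route leans on real rank zero twice (for algebraicity and, implicitly through the order isomorphism of Theorem \ref{cptlem}, for the injectivity of ${\rm K}_1(I_q)\to{\rm K}_1(A)$), whereas the paper's argument isolates the single place where real rank zero enters and is otherwise uniform in the elements; both are valid under the stated hypotheses.
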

\begin{proof}
For each  ideal $I$ of $A$, by Proposition \ref{lin inj}, we will identify ${\rm K}_1(I)$ with its natural image in ${\rm K}_1(A)$.

  Suppose that
  $$
  ([a],[u]_{{\rm K}_1(I_a)})+([c],[w]_{{\rm K}_1(I_c)})\ll([b],[v]_{{\rm K}_1(I_b)})+([c],[w]_{{\rm K}_1(I_c)}),
  $$
  then from \cite[Proposition 4.3]{L1}, we have
  $$
  [a]+[c]\ll [b]+[c]\,\,\,\,({\rm in\,\,Cu}(A))
  $$
  and
  $$
  [u]_{{\rm K}_1(I_a)}+[w]_{{\rm K}_1(I_c)}= [v]_{{\rm K}_1(I_b)}+[w]_{{\rm K}_1(I_c)}.
  $$

  By the weak cancellation of ${\rm Cu}(A)$ (see \ref{example sur not in}),
  we have $[a]\leq [b]$.  Note that  we also have
  $$
  [u]_{{\rm K}_1(I_a)}= [v]_{{\rm K}_1(I_b)}.
  $$
  Therefore,
  $$
  ([a],[u]_{{\rm K}_1(I_a)})\leq([b],[v]_{{\rm K}_1(I_b)}).
  $$
\end{proof}
\begin{remark}
   In general, weak cancellation may not hold for algebraic Cu-semigroup (Cu$^\sim$-semigroup). For each $k\geq 1$, the semigroup $E_k=\{0,1,\cdots,k,\infty\}$ with the natural order and with $a+b$ defined as $\infty$ if usually one would have  $a+b\geq k+1$ is an algebraic Cu-semigroup (Cu$^\sim$-semigroup), and all elements are compact (see \cite[5.1.16]{APT}). Then $E_k$ doesn't have weak cancellation.
\end{remark}
\begin{notion}\rm (\cite[3.9]{L1})\label{cucomplete}
  Let $S\in {\rm Mon}_{\leq}$, denote ${\rm Cu}^\sim(S):= \gamma^\sim(S,\leq)$ the ${\rm Cu}^{\sim}$-completion of $(S,\leq)$, then the assignment ${\rm Cu}^{\sim}$ from an ordered monoid $S$ to ${\rm Cu}^\sim(S)$ is a functor. Denote $\iota_S:\,S\rightarrow {\rm Cu}^\sim(S)$ the natural embedding map.
\end{notion}
\begin{proposition}{\rm (}\cite[Proposition 3.23]{L1}{\rm )}\label{alg completion}

  (i)Let $M\in {\rm Mon}_{\leq}$. Then ${\rm Cu}^\sim(M)$ is an algebraic ${\rm Cu}^\sim $-semigroup and, moreover, there is a natural identification between $M$ and the order monoid of compact elements of ${\rm Cu}^\sim(M)$.

  (ii)For any algebraic ${\rm Cu}^\sim$-semigroup $S$, we have  ${\rm Cu}^\sim(S_c)\cong S$ naturally as ${\rm Cu}^\sim$-semigroups.
\end{proposition}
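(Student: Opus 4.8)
The plan is to treat this as the ${\rm Cu}^\sim$-analogue of the Cu/POM equivalence recalled in \ref{RE AND CU}, carrying out the algebraic completion construction directly and verifying that dropping positivity costs essentially nothing beyond one extra check. For (i), I would first make the completion explicit: represent ${\rm Cu}^\sim(M)$ as the set of $\leq$-increasing sequences $(a_n)$ in $M$, with $(a_n)\leq(b_m)$ declared to mean that for every $n$ there is an $m$ with $a_n\leq b_m$, and $(a_n)\sim(b_m)$ when this holds in both directions; addition is entrywise, $(a_n)+(b_n):=(a_n+b_n)$. The natural embedding $\iota_M$ sends $a\in M$ to the class of the constant sequence $(a,a,\dots)$. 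Reading off the order on constant sequences shows at once that $\iota_M$ is an order-embedding of monoids, so $M$ is identified with a submonoid of ${\rm Cu}^\sim(M)$; note that no positivity is used here, only compatibility of $\leq$ with $+$.

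Next I would verify the axioms. For (O1) the heart is the standard diagonalisation: given a $\leq$-increasing sequence of classes $x_k=[(a^{(k)}_n)_n]$, the relations $x_k\leq x_{k+1}$ let me choose indices $n_k$ so that the diagonal $b_k:=a^{(k)}_{n_k}$ is $\leq$-increasing, and $[(b_k)]$ is checked to be the supremum of the $x_k$. For (O2), (O3), (O4) I would reduce everything to the compact level. First I would show that $\iota_M(a)$ is always compact, i.e. $\iota_M(a)\ll\iota_M(a)$: if $\iota_M(a)\leq\sup_k y_k$, then, unwinding the diagonal supremum, $a\leq b^{(k)}_{n}$ for some $k,n$, whence $\iota_M(a)\leq y_k$. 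Taking $a=0$ gives $0\ll 0$. Then for any class $x=[(a_n)]$ the sequence $(\iota_M(a_n))_n$ is $\ll$-increasing (each term is compact and dominated by the next) with supremum $x$, giving (O2) and algebraicity in one stroke; (O3) and (O4) follow because sums of compacts are compact ($\iota_M(a)+\iota_M(b)=\iota_M(a+b)$, again an $\iota_M$-image) and the diagonal argument commutes with entrywise addition. Finally, that every compact element arises as some $\iota_M(a)$ follows from (O2): a compact $x=\sup_n\iota_M(a_n)$ must satisfy $x\leq\iota_M(a_N)$ for some $N$, forcing $x=\iota_M(a_N)$. This identifies $({\rm Cu}^\sim(M))_c$ with $M$.

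For (ii), given an algebraic $S\in{\rm Cu}^\sim$, I would define $\Phi:{\rm Cu}^\sim(S_c)\to S$ by $\Phi([(a_n)])=\sup_n a_n$ (the $a_n\in S_c$ being $\leq$-increasing, so the supremum exists by (O1) in $S$). Well-definedness and order-preservation use compactness of the $a_n$: if $(a_n)\leq(b_m)$ then $a_n\leq\sup_m b_m$, and $a_n\ll a_n$ gives $a_n\leq b_{m}$ for some $m$, so $\sup_n a_n\leq\sup_m b_m$; running the same argument both ways yields injectivity, and surjectivity is exactly the algebraicity of $S$. That $\Phi$ is a ${\rm Cu}^\sim$-morphism with ${\rm Cu}^\sim$-inverse (sending $x\in S$ to the class of any $\ll$-increasing sequence of compacts with supremum $x$, which exists by (O2) and is unique up to $\sim$ by the injectivity computation run backwards) follows from the same manipulations. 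Naturality in both parts is then routine: a ${\rm Mon}_\leq$-morphism $f$ (resp. a ${\rm Cu}^\sim$-morphism $g$) is carried to $[(a_n)]\mapsto[(f(a_n))]$ (resp. restricts to $g_c$ on compacts), and one checks that $\iota$ and $\Phi$ intertwine these, so (i) and (ii) together upgrade to the asserted equivalence of categories.

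I expect the only genuinely delicate point to be the diagonalisation underlying (O1): choosing the indices so as to produce a single $\leq$-increasing representative of the supremum, and then confirming it dominates every $x_k$ while being dominated by any common upper bound. Everything else is bookkeeping, and the sole place where the passage from POM to ${\rm Mon}_\leq$ intervenes is the explicit verification $0\ll 0$, which positivity would have made automatic.
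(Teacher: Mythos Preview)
The paper does not supply its own proof of this proposition: it is quoted verbatim from \cite[Proposition~3.23]{L1} and used as a black box. Your reconstruction is the standard argument one would expect there and is correct in outline; the only minor slip is that in the well-definedness step for $\Phi$ you invoke compactness of the $a_n$ unnecessarily (your own definition of $(a_n)\leq(b_m)$ already gives $a_n\leq b_m$ for some $m$), whereas compactness is genuinely needed precisely for injectivity, as you then use it.
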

\begin{notion}\label{sf property}\rm
 Let $S$ be a $\mathrm{Cu}^{\sim}$-semigroup. We say that $S$ is positively directed, if for any $x \in S$, there exists $p_{x} \in S$ such that $x+p_{x} \geq 0$.

Consider the semigroup $S=\mathbb{Z}\cup\{\infty\}$ with the natural order. Then $S$ is a positively directed algebraic Cu$^\sim$-semigroup with $S_c=\mathbb{Z}$, but
$$\rho(S_c)\cap \{-\rho(S_c)
 \}=\mathbb{Z}\neq \{0\},$$
where $\rho:\, S_c\to {\rm Gr}(S_c)$ is the natural map ($\rho(x)=[(x,0)])$.
Hence, $({\rm Gr}(S_c),\rho(S_c))=(\mathbb{Z},\mathbb{Z})$ is not an ordered abelian group.

For any
 separable unital  $C^{*}$-algebra  $A$ of stable rank one, on one hand, $\mathrm{Cu}_{1}(A)$ is positively directed \cite[Lemma 5.2]{L1}; on the other hand, from the facts that
 $\mathrm{K}_*^+(A)\cap\{-\mathrm{K}_*^+(A)\}=\{0\}
$ (\cite[ 1.2.2]{EG})
and that
 $$
({\rm Gr}(\mathrm{Cu}_1(A)_c),\rho(\mathrm{Cu}_1(A)_c))\cong(\mathrm{K}_*(A),\mathrm{K}_*^+(A)),$$
we have $\rho (\mathrm{Cu}_{1}(A)_c)\cap\{-\rho (\mathrm{Cu}_{1}(A)_c)\}=\{0\}$.
\end{notion}
\begin{notion}\rm \label{deffunctors}
The category of ordered groups with ordered unit, written AbGp$_u$, is the category whose objects are ordered groups with order-unit and morphisms are ordered group morphisms that preserve the order-unit.

Let $S$ be a positively directed $\mathrm{Cu}^{\sim}$-semigroup satisfying
$\rho(S_c)\cap \{-\rho(S_c)
 \}=\{0\}.$
Also suppose that $S_{+}$ admits a compact order-unit. We say that $(S, u)$ is a $\mathrm{Cu}^{\sim}$-$semigroup\,\,with\,\,compact \,\,order$-$unit$. Now, a $\mathrm{Cu}^{\sim}$-morphism preserves the order-unit between two $\mathrm{Cu}^{\sim}$-semigroups with compact order-unit $(S, u),(T, v)$ is a $\mathrm{Cu}^{\sim}$-morphism $\alpha: S \longrightarrow T$ such that $\alpha(u) \leq v$.

Denote ${\rm Cu}_u^\sim$ the category whose objects are $\mathrm{Cu}^{\sim}$-semigroups with compact order-unit and morphisms are ${\rm Cu}^\sim$-morphisms that preserve the order-unit.

%The following lemma is based on \cite[3.9]{L1},
%\begin{lemma}
%  The assignment
%  \begin{eqnarray*}
  % \nonumber % Remove numbering (before each equation)
%    G :{\rm AbGp}_u &\rightarrow& {\rm Cu}_u^\sim \\
%    (M,M^+,u) &\rightarrow& ({\rm Cu}^\sim(M^+),u) \\
%    f &\rightarrow& \gamma^\sim(f)
%  \end{eqnarray*}
%is a covariant functor. If we restrict the domain of $G$ to the category of algebraic ${\rm Cu}^\sim$-semigroups with compact unit, then $G$ becomes a faithful functor.
%\end{lemma}
%\begin{theorem}
%  Let $A$ be a $C^*$-algebra of stable rank one and real rank zero. The functor
%  $G: {\rm AbGp}_u\rightarrow  {\rm Cu}_u^\sim$ yields a nature isomorphism $ G\circ K_*\simeq {\rm Cu}_{1,u}$.
%\end{theorem}
Recall the functors defined in \cite[ 5.4]{L1},
 \begin{eqnarray*}
  % \nonumber % Remove numbering (before each equation)
    {\rm K}_* :\,{\rm C}_{sr1}^*&\rightarrow&{\rm AbGp}_u \\
    A &\mapsto&({\rm K}_*(A),{\rm K}_*^+(A), [1_{A}]) \\
    \phi &\mapsto& {\rm K}_*(\phi).
  \end{eqnarray*}
\begin{eqnarray*}
  % \nonumber % Remove numbering (before each equation)
    {\rm Cu}_{1,u} :\, {\rm C}_{sr1}^*&\rightarrow&{\rm Cu}_u^\sim\\
    A&\mapsto&({\rm Cu}_1(A),([1_A],0)) \\
    \phi &\mapsto& {\rm Cu}_1(\phi).
  \end{eqnarray*}\begin{eqnarray*}
  % \nonumber % Remove numbering (before each equation)
    H_* :\,{\rm Cu}_u^\sim&\rightarrow&{\rm AbGp}_u \\
    (S,u) &\mapsto&({\rm Gr}(S_c),\rho(S_c),u) \\
    \alpha &\mapsto& {\rm Gr}(\alpha_c).
  \end{eqnarray*}
 Note that if we restrict the domain of $H_*$ to the full subcategory of unitary Cuntz semigroups of separable, unital $C^*$-algebras of stable rank one and real rank zero,  then Proposition \ref{weakcancel} and \ref{sf property}  imply that $H_*$ is a faithful functor. (The proof of \cite[Lemma 5.19]{L1} holds in this case.)
  \end{notion}

  \begin{theorem}\label{Kthm}
%  Let $A$ be a unital, separable $C^*$-algebra of stable rank one and real rank zero, there is a natural equivalence of functor between ${\rm Cu}_{1,u}$ and ${\rm K}_*$.
   Upon restriction to the class of unital, separable $C^*$-algebras of stable rank one and real rank zero, there are natural equivalences of functors:
  $$
  H_*\circ {\rm Cu}_{1,u}\simeq {\rm K}_*\quad{ and}\quad G\circ {\rm K}_*\simeq {\rm Cu}_{1,u}.
  $$
\end{theorem}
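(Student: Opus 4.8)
The plan is to deduce both equivalences from the object-level isomorphisms of Theorem~\ref{cptlem}, supplemented by a naturality check, and to invoke the real rank zero hypothesis precisely where algebraicity of ${\rm Cu}_1$ is needed. Here $G\colon {\rm AbGp}_u\to {\rm Cu}_u^\sim$ denotes the functor of \cite[5.4]{L1}: it sends an ordered group with order-unit $(\mathrm G,\mathrm G^+,u)$ to the ${\rm Cu}^\sim$-completion $({\rm Cu}^\sim(\mathrm G^+),u)$ of its positive cone (which is a functorial assignment, cf.\ \ref{cucomplete}) and a morphism to the induced ${\rm Cu}^\sim$-morphism. Note ${\rm Cu}^\sim(\mathrm G^+)$ is algebraic by Proposition~\ref{alg completion}(i) and lies in ${\rm Cu}_u^\sim$ since $\mathrm G^+$ consists of positive elements and $\mathrm G^+\cap\{-\mathrm G^+\}=\{0\}$.

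\emph{First equivalence.} Fix $A$ in the class. Unwinding the definitions,
$$
H_*\circ {\rm Cu}_{1,u}(A)=\bigl({\rm Gr}({\rm Cu}_1(A)_c),\rho({\rm Cu}_1(A)_c),([1_A],0)\bigr),
$$
and the first assertion of Theorem~\ref{cptlem} gives an isomorphism $\eta_A$ of this object with $({\rm K}_*(A),{\rm K}_*^+(A),[1_A])={\rm K}_*(A)$ in ${\rm AbGp}_u$, namely the map induced on Grothendieck groups by $\alpha\colon ([p],[u]_{{\rm K}_1(I_p)})\mapsto ([p],\delta_{I_pA}([u]_{{\rm K}_1(I_p)}))$. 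To see that $(\eta_A)_A$ is natural, let $\phi\colon A\to B$ be a morphism in $C^*_{sr1}$. On a compact element represented by a pair $([p],[u])$ with $p$ a projection in $A\otimes\mathcal{K}$, ${\rm Cu}_1(\phi)$ produces $([\phi(p)],[u']_{{\rm K}_1(I_{\phi(p)})})$, where $u'$ is the image of $u$ under the $*$-homomorphism ${\rm her}(p)\to {\rm her}(\phi(p))$ induced by $\phi$. Since $\phi$ restricts to $I_p\to I_{\phi(p)}$ compatibly with the embeddings into $A\otimes\mathcal{K}$ and $B\otimes\mathcal{K}$, functoriality of ${\rm K}_1$ gives $\delta_{I_{\phi(p)}B}\circ {\rm K}_1(\phi|_{I_p})={\rm K}_1(\phi)\circ\delta_{I_pA}$, and together with ${\rm K}_0(\phi)[p]=[\phi(p)]$ this yields $\alpha_B\circ {\rm Cu}_1(\phi)_c={\rm K}_*^+(\phi)\circ\alpha_A$. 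Passing to Grothendieck groups, $\eta_B\circ (H_*\circ {\rm Cu}_{1,u})(\phi)={\rm K}_*(\phi)\circ\eta_A$, so $H_*\circ {\rm Cu}_{1,u}\simeq {\rm K}_*$.

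\emph{Second equivalence.} Now assume in addition $rr(A)=0$. By Corollary~\ref{algebraic cor}, ${\rm Cu}_1(A)$ is an algebraic ${\rm Cu}^\sim$-semigroup, so Proposition~\ref{alg completion}(ii) gives a natural ${\rm Cu}^\sim$-isomorphism ${\rm Cu}^\sim({\rm Cu}_1(A)_c)\cong {\rm Cu}_1(A)$ fixing the order-unit $([1_A],0)$. By the second assertion of Theorem~\ref{cptlem}, $\alpha$ is now an isomorphism $({\rm Cu}_1(A)_c,([1_A],0))\cong({\rm K}_*^+(A),[1_A])$ of ordered monoids with order-unit; applying the ${\rm Cu}^\sim$-completion functor (\ref{cucomplete}) to it and composing, we obtain in ${\rm Cu}_u^\sim$
$$
{\rm Cu}_{1,u}(A)\cong {\rm Cu}^\sim({\rm Cu}_1(A)_c)\cong {\rm Cu}^\sim({\rm K}_*^+(A))=G\circ {\rm K}_*(A).
$$
Naturality of this composite follows from naturality of the identification in Proposition~\ref{alg completion}(ii), functoriality of ${\rm Cu}^\sim$, and naturality of $\alpha$ established above; hence $G\circ {\rm K}_*\simeq {\rm Cu}_{1,u}$.

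\emph{Main obstacle.} The object-level isomorphisms are immediate from Theorem~\ref{cptlem}, Corollary~\ref{algebraic cor} and Proposition~\ref{alg completion}; the real work is the naturality check, and the one genuinely delicate point there is that a $*$-homomorphism $\phi\colon A\to B$ does not carry a fixed ideal of $A$ to a fixed ideal of $B$ but only $I_a$ to $I_{\phi(a)}$, so one must verify that the decomposition of ${\rm Cu}_1$ furnished by Theorem~\ref{mainstr}---in particular the transition maps $\delta_{IJ}$ and the maps $\theta$ of \cite[2.2]{L1}---is compatible with this reassignment of ideals under $\phi$. This is ultimately a bookkeeping argument resting on the functoriality of ${\rm K}_1$ and of ${\rm Cu}$, but it is the step that demands care.
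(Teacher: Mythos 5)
Your proposal is correct, and it reaches the conclusion by a somewhat different organization than the paper. The paper restricts $H_*$ to the full subcategories generated by the images of ${\rm Cu}_{1,u}$ and ${\rm K}_*$, proves that this restriction is full (via the completion functor $\gamma^\sim$ applied to $\xi|_{{\rm K}_*^+(A)}$ and the identifications $i_A$), faithful (via Proposition~\ref{weakcancel} and \ref{sf property}), and dense (via Theorem~\ref{cptlem}), and then invokes the abstract fact that a fully faithful, dense functor is an equivalence to obtain a quasi-inverse $G$. You instead exhibit $G$ explicitly as $(\mathrm{G},\mathrm{G}^+,u)\mapsto({\rm Cu}^\sim(\mathrm{G}^+),u)$ and verify the two natural isomorphisms directly; the ingredients (Theorem~\ref{cptlem}, Corollary~\ref{algebraic cor}, Proposition~\ref{alg completion}, the functoriality of $\gamma^\sim$) are the same ones the paper uses inside its fullness argument, so the computations coincide, but your packaging has a real advantage: you explicitly check that $\alpha_B\circ{\rm Cu}_1(\phi)_c={\rm K}_*^+(\phi)\circ\alpha_A$ for a $*$-homomorphism $\phi$, i.e.\ that the object-level isomorphisms of Theorem~\ref{cptlem} are natural in $A$. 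This step is genuinely needed for the conclusion $H_*\circ{\rm Cu}_{1,u}\simeq{\rm K}_*$ (an equivalence of categories alone does not produce a natural isomorphism between the two composite functors), and the paper leaves it implicit, so your version is if anything more complete. Two small points to tighten: your justification that ${\rm Cu}^\sim(\mathrm{G}^+)$ is positively directed should argue as in \ref{sf property} (for $x=([p],[u])$ take $p_x=([p],-[u])$, so $x+p_x=(2[p],0)\geq 0$; it is not literally true that every element of $\mathrm{G}^+$ is $\geq 0$ in the order induced by ${\rm K}_0^+$), and note that your first equivalence, as you implicitly observe, already holds at the level of objects for stable rank one alone but is only asserted on the real rank zero class, where $\alpha$ moreover becomes injective.
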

\begin{proof}
  We restrict the domain of $H_*$ to the full subcategory whose objects are unitary Cuntz semigroup of separable $C^*$-algebras of stable rank one and real rank zero together with compact order-unit, while the codomain  of $H_*$ is the category of the ${\rm K}_*$ of the same class. We will prove that $H_*$ yields an equivalence of functors between ${\rm Cu}_{1,u}$ and ${\rm K}_*$ under our assumption. We only need to show that the restriction functor of
  $H_*$, which we will still call $H_*$,
  is a full, faithful and dense functor.

  %Since $A$ has stable rank one and real rank zero, by Corollary \ref{algebraic cor}, ${\rm Cu}^{\sim}(A)$ is algebraic, the domain of $H_*$ is the category of algebraic ${\rm Cu}^\sim$-semigroup,
  From the last statement in \ref{deffunctors}, $H_*$ is faithful. For any unital, separable $C^*$-algebra $A$ of stable rank one and real rank zero, %${\rm Cu}_1(A)$ has weak cancellation.
  denote the canonical isomorphism we obtained from
   Lemma \ref{cptlem} as
  $$ \alpha_{A}^*:\,
({\rm Gr}(\mathrm{Cu}_1(A)_c),\rho(\mathrm{Cu}_1(A)_c))\cong(\mathrm{K}_*(A),\mathrm{K}_*^+(A)),
  $$
  %we regard this naturally isomorphism as the identity map, hence %where $\rho$ is an injective map $(\rho(x)=[(x,0)])$.
  which forms
  $$\alpha_A^*:\,
  H_*({\rm Cu}_1(A)_c, ([1_A],0))\cong(\mathrm{K}_*(A),\mathrm{K}_*^+(A), [1_A]).
  $$
  This means that $H_*$ is dense. It remains to show that $H_*$ is full.

  Since $A$ has stable rank one and real rank zero, ${\rm K}_*^+(A)$ is a subset in ${\rm K}_*(A)$, then for any order morphism
  $$
  \xi : ({\rm K}_*(A),{\rm K}_*^+(A), [1_{A}])\rightarrow ({\rm K}_*(B),{\rm K}_*^+(B), [1_{B}])
  $$
  with $\xi|_{{\rm K}_*^+(A)}:{\rm K}_*^+(A)\rightarrow {\rm K}_*^+(B)$, $\xi([1_A])\leq [1_B]$ and
  $$
  \xi|_{{\rm K}_*^+(A)}({\rm K}_0^+(A))\subset {\rm K}_0^+(B).
  $$

  By the functoriality of ${\rm Cu}^\sim$ (see \ref{cucomplete}), $\xi|_{{\rm K}_*^+(A)}$ induces an ordered ${\rm Cu}^\sim$-morphism
  $$
  \gamma^\sim(\xi|_{{\rm K}_*^+(A)}):({\rm Cu}^\sim({\rm K}_*^+(A)), ([1_A],0))\rightarrow({\rm Cu}^\sim({\rm K}_*^+(B)), ([1_B],0)),
  $$
  where the order on ${\rm K}_*^+(A)$ is induced by ${\rm K}_0^{+}(A)$.

  By Proposition \ref{alg completion} and Lemma \ref{cptlem}, we have
  $$
  {\rm Cu}_1(A)\cong {\rm Cu}^{\sim}({\rm Cu}_1(A)_c)
  \cong{\rm Cu}^{\sim}({\rm K}_*^+(A))
  $$
  and
  $$
({\rm Cu}_1(A)_c, ([1_A],0))\cong ({\rm K}_*^+(A),[1_A]).
$$
  Denote $i_A :\,{\rm Cu}_1(A)\rightarrow{\rm Cu}^{\sim}({\rm K}_*^+(A))$ the canonical ${\rm Cu}^\sim$-isomorphism and denote $
\alpha_A:\,({\rm Cu}_1(A)_c, ([1_A],0))\to ({\rm K}_*^+(A),[1_A])
$ the canonical ordered monoid isomorphism.

Note that
  $$
  i_B^{-1} \circ\gamma^\sim(\xi|_{{\rm K}_*^+(A)})\circ i_A:({\rm Cu}_1(A), ([1_A],0))\rightarrow({\rm Cu}_1(B), ([1_B],0))
  $$
  is an ordered ${\rm Cu}^\sim$-morphism.
  After identifying ${\rm Cu}_1(A)_c$ and ${\rm Cu}_1(B)_c$ with ${\rm K}_*^+(A)$ and ${\rm K}_*^+(B)$
through $\alpha_A$ and $ \alpha_B$, respectively, we have
  $$
  (i_B^{-1} \circ\gamma^\sim(\xi|_{{\rm K}_*^+(A)})\circ i_A)_c=\xi|_{{\rm K}_*^+(A)}.
  $$
Using the functoriality of $H_*$, then
  $$
  {\rm Gr}((i_B^{-1} \circ\gamma^\sim(\xi|_{{\rm K}_*^+(A)})\circ i_A)_c): ({\rm K}_*(A),{\rm K}_*^+(A),[1_{A}])\rightarrow ({\rm K}_*(B),{\rm K}_*^+(B), [1_{B}])
  $$
  is an ordered morphism. Now we have %One can conclude that
  $$
  H_*(i_B^{-1} \circ\gamma^\sim(\xi|_{{\rm K}_*^+(A)})\circ i_A)={\rm Gr}((i_B^{-1} \circ\gamma^\sim(\xi|_{{\rm K}_*^+(A)})\circ i_A)_c)={\rm Gr}(\xi|_{{\rm K}_*^+(A)})= \xi.
  $$
  Then $H_*$ is full.

  Therefore, by standard category theory, there exists a functor $G$ such that $H_*\circ G$ and $G\circ H_*$ are naturally equivalent to the respective identities. Then we have
  $$
  H_*\circ {\rm Cu}_{1,u}\simeq {\rm K}_*\quad{\rm and}\quad G\circ {\rm K}_*\simeq {\rm Cu}_{1,u}.
  $$
%  Then there exists a nature ordered group isomorphism
%  $$
%  G(K_*(A),K_*^+(A), [1_{K_*A}])\simeq ({\rm Cu}_1(A), ([1_A],0))
%  $$
%  that preserves the order-unit, hence
%  $$
% G\circ K_*\simeq {\rm Cu}_{1,u}.
%  $$
\end{proof}
\begin{corollary}
\label{Kthm1}
Let $A,\,B$ be unital, separable $C^{*}$-algebra with stable rank one and real rank zero.
Then
$({\rm Cu}_1(A),([1_A],0))\cong({\rm Cu}_1(B),([1_A],0))$
if, and only if,
$({\rm K}_*(A),{\rm K}_*^+(A),[1_A])\cong ({\rm K}_*(B),{\rm K}_*^+(B),[1_B]).$
\end{corollary}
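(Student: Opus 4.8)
The plan is to deduce this immediately from Theorem~\ref{Kthm}, using the elementary category-theoretic fact that a natural equivalence of functors carries isomorphic objects to isomorphic objects, and that a functor which is part of an equivalence of categories both preserves and reflects isomorphisms.

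For the forward implication, I would start from a ${\rm Cu}_u^\sim$-isomorphism $\Phi\colon({\rm Cu}_1(A),([1_A],0))\xrightarrow{\ \cong\ }({\rm Cu}_1(B),([1_B],0))$ (the statement writes $([1_A],0)$ for the codomain, which should of course read $([1_B],0)$). Applying the functor $H_*$ of \ref{deffunctors} gives an isomorphism $H_*(\Phi)$ in ${\rm AbGp}_u$ between $H_*({\rm Cu}_{1,u}(A))$ and $H_*({\rm Cu}_{1,u}(B))$. Composing with the components $\eta_A\colon H_*({\rm Cu}_{1,u}(A))\xrightarrow{\cong}({\rm K}_*(A),{\rm K}_*^+(A),[1_A])$ and $\eta_B$ of the natural equivalence $H_*\circ{\rm Cu}_{1,u}\simeq{\rm K}_*$ furnished by Theorem~\ref{Kthm}, the composite $\eta_B\circ H_*(\Phi)\circ\eta_A^{-1}$ is the desired ${\rm AbGp}_u$-isomorphism $({\rm K}_*(A),{\rm K}_*^+(A),[1_A])\cong({\rm K}_*(B),{\rm K}_*^+(B),[1_B])$.

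For the reverse implication I would argue symmetrically with the functor $G$ of Theorem~\ref{Kthm}: an ${\rm AbGp}_u$-isomorphism $\xi\colon({\rm K}_*(A),{\rm K}_*^+(A),[1_A])\to({\rm K}_*(B),{\rm K}_*^+(B),[1_B])$ yields an isomorphism $G(\xi)$, and the natural equivalence $G\circ{\rm K}_*\simeq{\rm Cu}_{1,u}$ transports it to a ${\rm Cu}_u^\sim$-isomorphism $({\rm Cu}_1(A),([1_A],0))\cong({\rm Cu}_1(B),([1_B],0))$. Alternatively, and more cleanly, one may simply note that $H_*$ (restricted to the relevant subcategories) was shown in the proof of Theorem~\ref{Kthm} to be full, faithful and dense, hence an equivalence of categories, and therefore reflects isomorphisms; applying this to the objects ${\rm Cu}_{1,u}(A)$ and ${\rm Cu}_{1,u}(B)$ and invoking $H_*\circ{\rm Cu}_{1,u}\simeq{\rm K}_*$ yields both directions at once.

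There is no genuine obstacle: the entire content sits in Theorem~\ref{Kthm} (and through it in Lemma~\ref{cptlem} and Proposition~\ref{alg completion}). The only point that warrants a line of care is that the isomorphism $\alpha_A$ of Lemma~\ref{cptlem}, together with the identification ${\rm Cu}_1(A)\cong{\rm Cu}^\sim({\rm Cu}_1(A)_c)$, genuinely matches the distinguished compact order-units $([1_A],0)$ and $[1_A]$, so that the transported maps live in ${\rm Cu}_u^\sim$ and ${\rm AbGp}_u$ rather than merely in ${\rm Cu}^\sim$ and ${\rm AbGp}$; this is exactly what the closing line of the proof of Lemma~\ref{cptlem} records, so the corollary follows at once.
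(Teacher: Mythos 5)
Your proposal is correct and is exactly the intended argument: the corollary is an immediate consequence of the natural equivalences $H_*\circ{\rm Cu}_{1,u}\simeq{\rm K}_*$ and $G\circ{\rm K}_*\simeq{\rm Cu}_{1,u}$ of Theorem~\ref{Kthm} (equivalently, of $H_*$ being full, faithful and dense, hence reflecting isomorphisms), which is why the paper states it without further proof. Your observation that the codomain order-unit should read $([1_B],0)$ is also a correct reading of a typo in the statement.
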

%\begin{theorem}[Example 2.19 of \cite{EGS}]
%  There exist two AD algebras $A$ and $B$ of real rank zero satisfying that
%   $$(K_*(A),K_*^+(A),[1_A])\cong(K_*(A),K_*^+(A),[1_A]),$$ but
%   $$  A\ncong B.$$
%\end{theorem}
\begin{remark}
   It was shown in \cite[Example 2.19]{EGS} that ${\rm K}_*$ is not a complete invariant for AD algebras  of real rank zero (see also \cite[Theorem 3.3]{DL3}). Then Corollary 5.21 in \cite{L1} the author obtained as the classification theorem for real rank zero AD algebras (which is called AH$_d$ algebras in \cite{L1}) in terms of unitary Cuntz semigroup, is not entirely correct.
   % In this paper, we present an example of two non-isomorphic AD algebras of real rank zero with isomorphic unitary Cuntz semigroup, therefore
   He used Theorem 7.3 of \cite{Ell} to get the corollary. However it was pointed out by Elliott-Gong-Su \cite{EGS} that, Theorem 7.3 (and Theorem 7.1) of \cite{Ell} is only true for simple case. %(See example 2.19 of \cite{EGS}).
   Then ${\rm Cu}_1$ is not a complete  invariant for AD algebras (${\rm AH}_d$ algebras).
\end{remark}

%From the above theorem of Elliott-Gong-Su and Theorem \ref{Kthm}, we get the following corollary directly.
%\begin{corollary}
%There exist two AD algebras $A$ and $B$ of real rank zero satisfying that
%   $({\rm Cu}_1(A),([1_A],0))\cong({\rm Cu}_1(B),([1_B],0))$, but$A\ncong B.$
%\end{corollary}

\begin{remark}
It was also proved that in \cite{EGS} the total K-theory is a complete invariant for AD algebras of real rank zero. The total K-theory works quite well in the classification of non-simple $C^*$-algebras (see \cite{G,DG,AELL}). Then it is necessary to give a generalized version of Cuntz semigroup for ${\rm \underline{K}}$ just like ${\rm Cu}_{1}$ for ${\rm K}_*$.
\end{remark}

\section{The total Cuntz semigroup}
In this section, we introduce the total Cuntz semigroup, which is a refinement of the unitary Cuntz semigroup. We show that ${\rm \underline{ Cu}}$ is a functor from the category of unital, separable $C^*$-algebras of stable rank one to the total Cuntz category.

%We begin with the definition of the total K-theory ${\rm \underline{ K}}(A)$. In the setting of \cite{S}, for $p\geq 2$ the mod-$p$ K-theory groups are defined

\begin{notion}\label{def k-total}\rm
 {\bf (The total K-theory) \cite[Section 4]{DG}.} For $n\geq 2$, the mod-$n$ K-theory groups are defined by
$$\mathrm{K}_* (A;\mathbb{Z}_n)=\mathrm{K}_*(A\otimes C_0(W_n)),$$
where $W_n$ denotes the Moore space obtained by attaching the unit disk to the circle by a degree $n$-map, such as $f_n: \mathbb{T}\rightarrow \mathbb{T}$, $e^{{\rm i}\theta}\mapsto e^{{\rm i}n\theta}$. The $C^*$-algebra $C_0(W_n)$ of continuous functions vanishing at the base point is isomorphic to the mapping cone of the canonical map of degree $n$ from $C(\mathbb{T})$ to itself.

 In the setting of \cite{S}, the mod-$n$ $K$-theory groups are defined by
$$
{\rm K}_{i}(A ; \mathbb{Z}_n)={\rm K}_{i}\left(A \otimes C_{0}\left(W_{n}\right)\right),\,\,\,\,i=0,1.
$$

Let ${\rm K}_{*}(A ; \mathbb{Z}_n)={\rm K}_{0}(A ; \mathbb{Z}_n) \oplus {\rm K}_{1}(A ; \mathbb{Z}_n)$. For $n=0$, we set ${\rm K}_{*}(A ; \mathbb{Z}_n)=$ ${\rm K}_{*}(A)$ and for $n=1, {\rm K}_{*}(A ; \mathbb{Z}_n)=0$.

For a $\mathrm{C}^{*}$-algebra $A$, one defines the total K-theory of $A$ by
$$
\underline{{\rm K}}(A)=\bigoplus_{n=0}^{\infty} {\rm K}_{*}(A ; \mathbb{Z}_n) .
$$
It is a $\mathbb{Z}_2 \times \mathbb{Z}^{+}$graded group. It was shown in \cite{S} that the coefficient maps
$$
\begin{gathered}
\rho: \mathbb{Z} \rightarrow \mathbb{Z}_n, \quad \rho(1)=[1], \\
\kappa_{mn, m}: \mathbb{Z}_m \rightarrow \mathbb{Z}_{mn}, \quad \kappa_{m n, m}[1]=n[1], \\
\kappa_{n, m n}: \mathbb{Z}_{mn} \rightarrow \mathbb{Z}_n, \quad \kappa_{n, m n}[1]=[1],
\end{gathered}
$$
induce natural transformations
$$
\rho_{n}^{i}: {\rm K}_{i}(A) \rightarrow {\rm K}_{i}(A ; \mathbb{Z}_n)
$$
$$
\kappa_{m n, m}^{i}: {\rm K}_{i}(A ; \mathbb{Z}_m) \rightarrow {\rm K}_{i}(A ; \mathbb{Z}_{mn}),
$$
$$
\kappa_{n, m n}^{i}: {\rm K}_{i}(A ; \mathbb{Z}_{mn}) \rightarrow {\rm K}_{i}(A ; \mathbb{Z}_n) .
$$
The Bockstein operation
$$
\beta_{n}^{i}: {\rm K}_{i}(A ; \mathbb{Z}_n) \rightarrow {\rm K}_{i+1}(A)
$$
appears in the six-term exact sequence
$$
{\rm K}_{i}(A) \stackrel{\times n}{\longrightarrow} {\rm K}_{i}(A) \stackrel{\rho_{n}^{i}}{\longrightarrow} {\rm K}_{i}(A ; \mathbb{Z} / n) \stackrel{\beta_{n}^{i}}{\longrightarrow} {\rm K}_{i+1}(A) \stackrel{\times n}{\longrightarrow} {\rm K}_{i+1}(A)
$$
induced by the cofibre sequence
$$
A \otimes S C_{0}\left(\mathbb{T}\right) \longrightarrow A \otimes C_{0}\left(W_{n}\right) \stackrel{\beta}{\longrightarrow} A \otimes C_{0}\left(\mathbb{T}\right) \stackrel{n}{\longrightarrow} A \otimes C_{0}\left(\mathbb{T}\right),
$$
where $S C_{0}\left(\mathbb{T}\right)$ is the suspension algebra of $C_{0}\left(\mathbb{T}\right)$.

There is a second six-term exact sequence involving the Bockstein operations. This is induced by a cofibre sequence
$$
A \otimes S C_{0}\left(W_{n}\right) \longrightarrow A \otimes C_{0}\left(W_{m}\right) \longrightarrow A \otimes C_{0}\left(W_{m n}\right) \longrightarrow A \otimes C_{0}\left(W_{n}\right)
$$
and takes the form:
$$
{\rm K}_{i+1}(A ; \mathbb{Z}_n) \stackrel{\beta_{m, n}^{i+1}}{\longrightarrow} {\rm K}_{i}(A ; \mathbb{Z}_m) \stackrel{\kappa_{m n, m}^{i}}{\longrightarrow} {\rm K}_{i}(A ; \mathbb{Z}_{mn}) \stackrel{\kappa_{n, m n}^{i}}{\longrightarrow} {\rm K}_{i}(A ; \mathbb{Z}_n)
$$
where $\beta_{m, n}^{i}=\rho_{m}^{i+1} \circ \beta_{n}^{i}$.

The collection of all the transformations $\rho, \beta, \kappa$ and their compositions is denoted by $\Lambda$. $\Lambda$ can be regarded as the set of morphisms in a category whose objects are the elements of $\mathbb{Z}_2 \times \mathbb{Z}^{+}$. Abusing the terminology $\Lambda$ will be called the category of Bockstein operations. Via the Bockstein operations, $\underline{{\rm K}}(A)$ becomes a $\Lambda$-module. It is natural to consider the group $\operatorname{Hom}_{\Lambda}(\underline{{\rm K}}(A), \underline{{\rm K}}(B))$ consisting of all $\mathbb{Z}_2 \times \mathbb{Z}^{+}$ graded group morphisms which are $\Lambda$-linear, i.e. preserve the action of the category $\Lambda$.

The Kasparov product induces a map
$$
\gamma_{n}^{i}: {\rm K K}(A, B) \rightarrow \operatorname{Hom}\left({\rm K}_{i}(A ; \mathbb{Z} / n), {\rm K}_{i}(B ; \mathbb{Z} / n)\right)
$$
Then $\gamma_{n}=\left(\gamma_{n}^{0}, \gamma_{n}^{1}\right)$ will be a map
$$
\gamma_{n}: {\rm K K}(A, B) \rightarrow \operatorname{Hom}\left({\rm K}_{*}(A ; \mathbb{Z}_n), {\rm K}_{*}(B ; \mathbb{Z}_n)\right).
$$
Note that if $n=0$ then ${\rm K}_{*}(A, \mathbb{Z}_n)={\rm K}_{*}(A)$ and the map $\gamma_{0}$ is the same as the map $\gamma$ from the universal coefficient theorem (UCT) of Rosenberg and Schochet \cite{RS}. We assemble the sequence $\left(\gamma_{n}\right)$ into a map $\Gamma$. Since the Bockstein operations are induced by multiplication with suitable KK elements and since the Kasparov product is associative, we obtain a map
$$
\Gamma: {\rm K K}(A, B) \rightarrow \operatorname{Hom}_{\Lambda}(\underline{{\rm K}}(A), \underline{{\rm K}}(B)) \text {. }
$$
For the sake of simplicity, if $\alpha \in {\rm K K}(A, B)$, then $\Gamma(\alpha)$ will be often denoted by $\alpha_{*}$. %Denote by $\mathcal{N}$ the ``bootstrap'' category of $[\mathrm{Ros} S]$. Using the UCT one can show that ${\rm K}_{*}(A ; \mathbb{Z}_n) \cong K K\left(\mathbb{I}_{n}, A \otimes C\left(S^{1}\right)\right)$ \cite{DL1}. Accordingly one has a definition of $\Lambda$ in that setting \cite{DL2}. The two algebraic constructions are isomorphic.

Assume that $A$ is a separable $C^*$-algebra of stable rank one, the following is a positive cone for total K-theory of $A$ (\cite[Definition 4.6]{DG}):
$$
\underline{\mathrm{K}}(A)_+=\{([e],\mathfrak{u},
\oplus_{p=1}^{\infty}(\mathfrak{s}_p,\mathfrak{s}^p)):\, [e]\in \mathrm{K}_0^+(A),([e],\mathfrak{u},\oplus_{p=1}^{\infty}
(\mathfrak{s}_p,\mathfrak{s}^p))\in \underline{\mathrm{K}}_{I_e}(A)\},
$$
where $\mathfrak{u}\in {\rm K}_1(A)$, $\mathfrak{s}_p\in {\rm K}_0(A;\mathbb{Z}_p)$ and  $\mathfrak{s}^p\in {\rm K}_1(A;\mathbb{Z}_p)$ are equivalent classes,  $I_e$ is the ideal of $A\otimes\mathcal{K}$ generated by $e$ and $\underline{\mathrm{K}}_{I_e}(A)$ is the image of $\underline{\mathrm{K}}({I_e})$ in $\underline{\mathrm{K}}(A)$.
\end{notion}
\begin{theorem}{\rm (\cite[Porposition 4.8--4.9]{DG})}\label{ordertotal}
Suppose that $A$ is of stable rank one and has an approximate unit $(e_n)$ consisting
of projections. Then

(i) $\underline{\mathrm{K}}(A)=\underline{\mathrm{K}}(A)_+-\underline{\mathrm{K}}(A)_+$;

(ii) $\underline{\mathrm{K}}(A)_+\cap\{-\underline{\mathrm{K}}(A)_+\} = \{0\}$, and hence,
$(\underline{\mathrm{K}}(A),\underline{\mathrm{K}}(A)_+)$ is an ordered group;

(iii) For any $x\in \underline{\mathrm{K}}(A)$, there are positive integers $k$, $n$ such that $k[e_n]+x \in \underline{\mathrm{K}}(A)_+$.
\end{theorem}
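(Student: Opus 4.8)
The plan is to prove (iii) first, deduce (i) from it, and establish (ii) by a separate short argument. For (iii), fix $x\in\underline{\mathrm{K}}(A)$. Since $A$ has an increasing approximate unit of projections $(e_n)$, it is the inductive limit of the unital $C^*$-subalgebras $\mathrm{her}(e_n)=e_nAe_n$, and total K-theory commutes with inductive limits (ordinary K-theory does, and $-\otimes C_0(W_n)$ preserves inductive limits), so $x$ lies in the image of $\underline{\mathrm{K}}(e_nAe_n)$ for some $n$. By the remark following Theorem \ref{herideal}, $\mathrm{her}(e_n)$ and the ideal $I_{e_n}$ it generates are stably isomorphic; hence the inclusion $e_nAe_n\hookrightarrow A$ identifies $\underline{\mathrm{K}}(e_nAe_n)$ with $\underline{\mathrm{K}}_{I_{e_n}}(A)$, and so $x\in\underline{\mathrm{K}}_{I_{e_n}}(A)$.

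Now $e_nAe_n$ is unital, so $[e_n]$ is an order-unit of $\mathrm{K}_0(e_nAe_n)$; choose $k\geq 1$ so large that the $\mathrm{K}_0$-component of $x$ (viewed in $\mathrm{K}_0(e_nAe_n)$) plus $(k-1)[e_n]$ lies in $\mathrm{K}_0^+(e_nAe_n)$, and write that component plus $k[e_n]$ as $[r]$ for a projection $r$ over $e_nAe_n$, so that $[r]\geq[e_n]$. Stable rank one gives cancellation of projections, hence on classes of projections the order of $\mathrm{K}_0^+$ agrees with Cuntz subequivalence; thus $e_n\lesssim_{Cu}r$, so $r$ is full in $e_nAe_n$ and therefore $I_r=I_{e_n}$ as ideals of $A$. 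Setting $y:=x+k[e_n]$, its $\mathrm{K}_0$-component is $[r]\in\mathrm{K}_0^+(A)$ and $y\in\underline{\mathrm{K}}_{I_{e_n}}(A)=\underline{\mathrm{K}}_{I_r}(A)$, so $y\in\underline{\mathrm{K}}(A)_+$ by the definition of the positive cone in \ref{def k-total}; this is (iii) with $x+k[e_n]\in\underline{\mathrm{K}}(A)_+$. Part (i) follows: given $x$, choose $k,n$ as above, note that $k[e_n]\in\underline{\mathrm{K}}(A)_+$ (its $\mathrm{K}_0$-component is $k[e_n]\in\mathrm{K}_0^+(A)$, all other components vanish, $0\in\underline{\mathrm{K}}_{I_{e_n}}(A)$, and $I_{k[e_n]}=I_{e_n}$), and conclude $x=(x+k[e_n])-k[e_n]\in\underline{\mathrm{K}}(A)_+-\underline{\mathrm{K}}(A)_+$; the reverse containment is trivial.

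For (ii), suppose $x$ and $-x$ both lie in $\underline{\mathrm{K}}(A)_+$ and let $[e]\in\mathrm{K}_0^+(A)$ be the $\mathrm{K}_0$-component of $x$; then $-[e]$ is the $\mathrm{K}_0$-component of $-x$, so $[e]\in\mathrm{K}_0^+(A)\cap(-\mathrm{K}_0^+(A))$. Since $A$ has stable rank one, $V(A)$ is cancellative and embeds in $\mathrm{K}_0(A)$ with image $\mathrm{K}_0^+(A)$, whence $\mathrm{K}_0^+(A)\cap(-\mathrm{K}_0^+(A))=\{0\}$ and $[e]=0$; cancellation then forces any projection representing $[e]$, and hence $I_e$, to be $0$, so $\underline{\mathrm{K}}_{I_e}(A)=0$ and $x=0$. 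Since moreover $\underline{\mathrm{K}}(A)_+$ is closed under addition (for positive $x,x'$ with $\mathrm{K}_0$-components $[e],[f]$, the sum has $\mathrm{K}_0$-component $[e\oplus f]\in\mathrm{K}_0^+(A)$ and lies in $\underline{\mathrm{K}}_{I_e}(A)+\underline{\mathrm{K}}_{I_f}(A)\subseteq\underline{\mathrm{K}}_{I_{e\oplus f}}(A)$), parts (i) and (ii) exhibit $(\underline{\mathrm{K}}(A),\underline{\mathrm{K}}(A)_+)$ as an ordered abelian group.

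The step I expect to be most delicate is the ideal bookkeeping in (iii): one must ensure that after adding $k[e_n]$ the new $\mathrm{K}_0$-class generates exactly $I_{e_n}$ and not a proper subideal, which is why $k$ is taken large enough that $[r]$ dominates $[e_n]$ rather than merely being positive, and this is precisely where cancellation coming from stable rank one is indispensable. A secondary point is the reduction through Theorem \ref{herideal}, which uses that a stable isomorphism induces an isomorphism on total K-theory; this holds because $\underline{\mathrm{K}}$ is assembled from ordinary K-theory of the tensor products with the fixed coefficient algebras $C_0(W_n)$.
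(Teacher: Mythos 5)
This statement is imported verbatim from Dadarlat--Gong (\cite[Propositions 4.8--4.9]{DG}); the paper gives no proof of its own, so there is no internal argument to compare against. Your reconstruction is correct and is essentially the standard one: reduce to a corner $e_nAe_n$ by continuity of $\underline{\mathrm{K}}$, use Brown's theorem to identify $\underline{\mathrm{K}}(e_nAe_n)$ with $\underline{\mathrm{K}}_{I_{e_n}}(A)$, add enough copies of $[e_n]$ to make the $\mathrm{K}_0$-component not merely positive but dominating $[e_n]$ so that the new class generates exactly $I_{e_n}$, and then read off membership in $\underline{\mathrm{K}}(A)_+$ from Definition \ref{def k-total}; parts (i) and (ii) follow as you say, with stable rank one supplying cancellation so that $\mathrm{K}_0^+(A)\cap(-\mathrm{K}_0^+(A))=\{0\}$ and a projection with trivial class is zero. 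Your identification of the delicate point --- that $k$ must be chosen so that $[r]\geq[e_n]$, forcing $I_r=I_{e_n}$ rather than a proper subideal --- is exactly right and is where the argument would otherwise break.

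One small caveat: the hypothesis only provides an approximate unit of projections, whereas your first step silently upgrades it to an \emph{increasing} one so that $A=\lim_n e_nAe_n$ and the continuity of $\underline{\mathrm{K}}$ (Proposition \ref{k-total continuous}) can be invoked. You should either assume the approximate unit increasing (which is how the result is used in this paper, for separable real rank zero algebras) or justify the passage to an increasing one; without some such remark the claim that $x$ lies in the image of $\underline{\mathrm{K}}(e_nAe_n)$ for a single $n$ is not literally immediate. This does not affect the correctness of the remainder of the argument.
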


\begin{proposition}{\rm (\cite[4.10--4.11]{DG})}\label{k-total continuous}
  $\underline{\mathrm{K}}(\cdot)$ and $\underline{\mathrm{K}}(\cdot)_+$ are continuous functors for $C^*$-algebras of stable rank one.
\end{proposition}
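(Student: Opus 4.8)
The plan is to split the statement into a ``group part'' --- continuity of $\underline{\mathrm{K}}(\cdot)$ as a $\Lambda$-module valued functor --- and an ``order part'' --- continuity of the positive cone --- reducing the first to classical facts and the second to a lifting argument. Fix an inductive system $A=\varinjlim(A_i,\phi_{ij})$ of separable $C^*$-algebras of stable rank one with limit maps $\phi_{i,\infty}\colon A_i\to A$. For the group part I would assemble three ingredients: (i) $\mathrm{K}_*$ is a continuous functor, so $\mathrm{K}_*(A)\cong\varinjlim\mathrm{K}_*(A_i)$ as ordered groups, with $\mathrm{K}_0^+(A)=\bigcup_i\phi_{i,\infty,*}(\mathrm{K}_0^+(A_i))$; (ii) minimal tensoring with the fixed nuclear algebra $C_0(W_n)$ commutes with inductive limits, whence $A\otimes C_0(W_n)\cong\varinjlim(A_i\otimes C_0(W_n))$ and therefore $\mathrm{K}_*(A;\mathbb{Z}_n)\cong\varinjlim\mathrm{K}_*(A_i;\mathbb{Z}_n)$ for every $n$; (iii) a countable direct sum is a colimit, hence commutes with the filtered colimit over $i$. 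Combining these gives a canonical isomorphism $\underline{\mathrm{K}}(A)=\bigoplus_{n=0}^\infty\mathrm{K}_*(A;\mathbb{Z}_n)\cong\varinjlim\underline{\mathrm{K}}(A_i)$ of $\mathbb{Z}_2\times\mathbb{Z}^+$-graded groups; since the coefficient maps $\rho$, $\kappa$ and the Bockstein operations $\beta$ are natural transformations, they are intertwined by the structure maps $\underline{\mathrm{K}}(\phi_{i,\infty})$, so the isomorphism respects the $\Lambda$-module structure.

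For the order part I would show that $(\underline{\mathrm{K}}(A),\underline{\mathrm{K}}(A)_+)$ is the inductive limit of the $(\underline{\mathrm{K}}(A_i),\underline{\mathrm{K}}(A_i)_+)$ in the category of ordered groups, i.e.\ $\underline{\mathrm{K}}(A)_+=\bigcup_i\phi_{i,\infty,*}(\underline{\mathrm{K}}(A_i)_+)$. The inclusion $\supseteq$ is immediate from functoriality, since a $*$-homomorphism $\phi$ sends projections to projections and carries $I_e$ into $I_{\phi(e)}$, so $\underline{\mathrm{K}}(\phi)$ maps $\underline{\mathrm{K}}(A_i)_+$ into $\underline{\mathrm{K}}(A)_+$. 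For $\subseteq$, take $x=([e],\mathfrak{u},\oplus_{p}(\mathfrak{s}_p,\mathfrak{s}^p))\in\underline{\mathrm{K}}(A)_+$, so $[e]\in\mathrm{K}_0^+(A)$ and $x$ lies in the image of $\underline{\mathrm{K}}(I_e)\to\underline{\mathrm{K}}(A)$. By ingredient (i), $[e]=\phi_{i,\infty,*}[\tilde e]$ for some $i$ and some projection $\tilde e$ over $A_i\otimes\mathcal{K}$; since $A$ has stable rank one (hence cancellation of projections in $A\otimes\mathcal{K}$), $e$ and $\phi_{i,\infty}(\tilde e)$ are Murray--von Neumann equivalent in $A\otimes\mathcal{K}$, so they generate the same ideal, and that ideal equals $\varinjlim_{j\ge i}I_{\phi_{ij}(\tilde e)}$ (the $C^*$-algebraic ideal-continuity statement; compare \ref{Ell inc}). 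Each $I_{\phi_{ij}(\tilde e)}$ is a separable $C^*$-algebra of stable rank one, being an ideal of $A_j\otimes\mathcal{K}$, so the group part already proved applies to this subsystem: a preimage of $x$ in $\underline{\mathrm{K}}(I_e)$ lifts to $\tilde x_j\in\underline{\mathrm{K}}(I_{\phi_{ij}(\tilde e)})$ for some $j\ge i$, and its image $x_j\in\underline{\mathrm{K}}(A_j)$ lies in $\underline{\mathrm{K}}_{I_{\phi_{ij}(\tilde e)}}(A_j)$ with $\phi_{j,\infty,*}(x_j)=x$. A final bookkeeping step: the $\mathrm{K}_0$-coordinate of $x_j$ and the class $\phi_{ij,*}[\tilde e]$ have the same image $[e]$ in $\mathrm{K}_0(A)$, hence coincide after passing to some $k\ge j$; replacing $j$ by $k$, the $\mathrm{K}_0$-coordinate of $x_k=\phi_{jk,*}(x_j)$ equals $[\phi_{ik}(\tilde e)]\in\mathrm{K}_0^+(A_k)$, while $x_k$ still lies in $\underline{\mathrm{K}}_{I_{\phi_{ik}(\tilde e)}}(A_k)$, so $x_k\in\underline{\mathrm{K}}(A_k)_+$ and $\phi_{k,\infty,*}(x_k)=x$.

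The graded-$\Lambda$-module part is essentially formal once one has continuity of $\mathrm{K}_*$ and nuclearity of $C_0(W_n)$; the genuine obstacle is the lifting argument for the positive cone, where one must simultaneously lift the $\mathrm{K}_0$-coordinate to a bona fide positive class at a finite stage and lift the full (infinite) mod-$p$ datum so that it lands inside the correct ideal-subgroup $\underline{\mathrm{K}}_{I}(A_k)$. This is exactly where one uses the continuity of the assignment $e\mapsto I_e$ from positive elements to ideals, the continuity of $\underline{\mathrm{K}}$ restricted to ideals, and hence the permanence of separability and stable rank one under passage to ideals and hereditary subalgebras of $A\otimes\mathcal{K}$ (available via Theorem \ref{herideal} together with standard stable-rank permanence).
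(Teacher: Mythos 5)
The paper does not prove this proposition at all: it is imported verbatim from Dadarlat--Gong (\cite[4.10--4.11]{DG}), so there is no internal argument to compare yours against. Judged on its own, your two-part reconstruction is correct and is essentially the argument of the cited source. The group part (continuity of $\mathrm{K}_*$, commutation of $-\otimes C_0(W_n)$ with sequential limits, direct sums commuting with filtered colimits, naturality of $\rho,\kappa,\beta$) is standard and needs no stable-rank hypothesis. The positive-cone part is where the content lies, and your lifting scheme is sound: represent $[e]$ by a projection, lift it to a finite stage, use cancellation (stable rank one) to identify the ideals, observe $I_e={\rm C}^*\text{-}\varinjlim I_{\phi_{ij}(\tilde e)}$ as in \ref{Ell inc}, lift a preimage in $\underline{\mathrm{K}}(I_e)$ along this subsystem, and reconcile the $\mathrm{K}_0$-coordinate at a later stage. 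Two small points of phrasing: the ``full (infinite) mod-$p$ datum'' is in fact finitely supported, since $\underline{\mathrm{K}}$ is a direct sum rather than a product --- this is precisely why a single finite stage suffices, and it is worth saying explicitly; and in the final bookkeeping step one should note that $x_k$ itself is unchanged (one merely observes its $\mathrm{K}_0$-coordinate now equals $[\phi_{ik}(\tilde e)]$ while the whole tuple still factors through $\underline{\mathrm{K}}(I_{\phi_{ik}(\tilde e)})$), which is what places it in $\underline{\mathrm{K}}(A_k)_+$ as defined in \ref{def k-total}. With those clarifications the proof is complete.
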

\begin{notion}\rm \label{theta and delta}
Let $A$ be a separable $C^*$-algebra of stable rank one and let $I$ and $J$ be ideals of $A\otimes \mathcal{K}$ such that $I\subset J$. Denote  $\theta_{IJ}:\,I\to J$ the natural embedding map and denote
$\delta_{IJ}$ the map ${\rm \underline{ K}}(\theta_{IJ})
% $\underline{\mathrm{K}}(\theta_{IJ})
: \underline{\mathrm{K}}(I)\to \underline{\mathrm{K}}(J)$, i.e.,
\begin{align*}
   & \delta_{IJ}([e],\mathfrak{u},\oplus_{p=1}^{\infty}(\mathfrak{s}_p,\mathfrak{s}^p)) \\
   =& ({\rm K}_0(\theta_{IJ})([e]) ,{\rm K}_1(\theta_{IJ})(\mathfrak{u}),\oplus_{p=1}^{\infty}
({\rm K}_0(\theta_{IJ};\mathbb{Z}_p)(\mathfrak{s}_p),{\rm K}_1(\theta_{IJ};\mathbb{Z}_p)(\mathfrak{s}^p))),
\end{align*}
where the maps
$${\rm K}_0(\theta_{IJ};\mathbb{Z}_p): {\rm K}_0(I;\mathbb{Z}_p)\rightarrow {\rm K}_0(J;\mathbb{Z}_p),\,\,{\rm K}_1(\theta_{IJ};\mathbb{Z}_p): {\rm K}_1(I;\mathbb{Z}_p)\rightarrow {\rm K}_1(J;\mathbb{Z}_p)$$
are induced by $\theta_{IJ}$.
%where the symbol $\times$ is the Kasparov product.
%$$\delta_{IJ}([e],\mathfrak{u},\oplus_{p=1}^{\infty}
%(\mathfrak{s}_p,\mathfrak{s}^p))= ({\rm K}_0(\theta_{IJ})([e]),{\rm K}_1(\theta_{IJ})(\mathfrak{u}),\oplus_{p=1}^{\infty}
%(\mathfrak{s}_p\times KK(\theta_{IJ}),\mathfrak{s}^p\times KK(\theta_{IJ})))$$

In this paper, we will always identify
%${\rm K}_0(I)$, ${\rm K}_1(I)$, ${\rm K}_0(I;\mathbb{Z}_p)$, ${\rm K}_1(I;\mathbb{Z}_p)$ and
${\rm K}_1(I)\times\bigoplus_{n=1}^{\infty} {\rm K}_* (I; \mathbb{Z}_p)$ with its natural image in
${\rm \underline{K}}(I)=\bigoplus_{p=0}^{\infty} {\rm K}_{*}(I ; \mathbb{Z}_p)$, i.e.,
each $(\mathfrak{u},\oplus_{p=1}^{\infty}(\mathfrak{s}_p,\mathfrak{s}^p))$ is identified with $(0,\mathfrak{u},\oplus_{p=1}^{\infty}(\mathfrak{s}_p,\mathfrak{s}^p))\in {\rm \underline{K}}(I)$.
Then the restriction of $\delta_{IJ}$ also induces a map  %${\rm K}_0(I)\to {\rm K}_0(J)$, ${\rm K}_1(I)\to{\rm K}_1(J)$, ${\rm K}_0(I;\mathbb{Z}_p)\to{\rm K}_0(J;\mathbb{Z}_p)$, ${\rm K}_1(I;\mathbb{Z}_p)\to{\rm K}_1(J;\mathbb{Z}_p)$ and
${\rm K}_1(I)\times\bigoplus_{p=1}^{\infty} {\rm K}_* (I; \mathbb{Z}_p)\to{\rm K}_1(J)\times\bigoplus_{p=1}^{\infty} {\rm K}_* (J; \mathbb{Z}_p)$, which is  still denoted by $\delta_{IJ}$.

\end{notion}
\begin{definition}\label{cu total def}\rm
  Let $A$ be a separable $C^*$-algebra of stable rank one. % ${\rm Lat}_f(A)=\{\sigma-{\rm unital\, ideals\, of\,} A \}$. Let us
  Define
  $$
  {\rm \underline{Cu}}(A)\triangleq \coprod_{I\in {\rm Lat}_f(A)} {\rm Cu}_f(I)\times {\rm K}_1(I)\times\bigoplus_{p=1}^{\infty} {\rm K}_* (I; \mathbb{Z}_p).
  $$

  %In this paper, we will always identify the set ${\rm K}_1(I)\times\bigoplus_{p=1}^{\infty} {\rm K}_* (I; \mathbb{Z}_p)$ with the set $\{0\}\times {\rm K}_1(I)\times\bigoplus_{p=1}^{\infty} {\rm K}_* (I; \mathbb{Z}_p)$, which is a subgroup of ${\rm \underline{K}}(I)$.

  We equip ${\rm \underline{Cu}}(A)$ with addition and order as follows:
  For any
  $$
  (x,\mathfrak{u},\oplus_{p=1}^{\infty}(\mathfrak{s}_p,\mathfrak{s}^p))\in{\rm Cu}_f(I_x)\times {\rm K}_1(I_x)\times\bigoplus_{p=1}^{\infty} {\rm K}_* (I_x; \mathbb{Z}_p)
  $$
  and
$$
  (y,\mathfrak{v},\oplus_{p=1}^{\infty}(\mathfrak{t}_p,\mathfrak{t}^p))\in{\rm Cu}_f(I_y)\times {\rm K}_1( I_y)\times\bigoplus_{p=1}^{\infty} {\rm K}_* (I_y; \mathbb{Z}_p),
  $$
then
$$
(x,\mathfrak{u},\oplus_{p=1}^{\infty}(\mathfrak{s}_p,\mathfrak{s}^p))
+(y,\mathfrak{v},\oplus_{p=1}^{\infty}(\mathfrak{t}_p,\mathfrak{t}^p))
$$
$$
=(x+y,\delta_{I_xI_{x+y}}(\mathfrak{u},\oplus_{p=1}^{\infty}(\mathfrak{s}_p,\mathfrak{s}^p))
+\delta_{I_yI_{x+y}}(\mathfrak{v},\oplus_{p=1}^{\infty}(\mathfrak{t}_p,\mathfrak{t}^p)))
$$
and
$$
(x,\mathfrak{u},\oplus_{p=1}^{\infty}(\mathfrak{s}_p,\mathfrak{s}^p))
\leq(y,\mathfrak{v},\oplus_{p=1}^{\infty}(\mathfrak{t}_p,\mathfrak{t}^p)),
$$
if
$$\,\,x\leq y\,\,{\rm and}\,\,\delta_{I_xI_y}
(\mathfrak{u},\oplus_{p=1}^{\infty}(\mathfrak{s}_p,\mathfrak{s}^p))=(\mathfrak{v},
\oplus_{p=1}^{\infty}(\mathfrak{t}_p,\mathfrak{t}^p)),
$$
where $\delta_{I_xI_y}$ is the natural map $\underline{\rm K}( I_x)\rightarrow \underline{\rm K}( I_y)$ (see \ref{theta and delta}), $I_x$ and $I_y$ are ideals in $A\otimes\mathcal{K}$ generated by $x$ and $y$, respectively.
%Note that $(\mathfrak{u},\oplus_{p=1}^{\infty}(\mathfrak{s}_p,\mathfrak{s}^p))$ and $(\mathfrak{v}, \oplus_{p=1}^{\infty}(\mathfrak{t}_p,\mathfrak{t}^p))$ are identified as $(0,\mathfrak{u},\oplus_{p=1}^{\infty}(\mathfrak{s}_p,\mathfrak{s}^p))\in {\rm \underline{K}}(I)$ and $(0,\mathfrak{v},\oplus_{p=1}^{\infty}(\mathfrak{t}_p,\mathfrak{t}^p))\in {\rm \underline{K}}(J)$, respectively.
\end{definition}

It is easy to see that $({\rm \underline{Cu}}(A),+,\leq)$ is a partially ordered monoid, %contains ${\rm Cu}(A)$ as sub-monoid,
and ${\rm \underline{Cu}}(A)_+\triangleq\{x\in{\rm \underline{Cu}}(A):\,0\leq x \}={\rm Cu}(A)$.
If $A$ is simple, we will have
$${\rm \underline{Cu}}(A)=({\rm {Cu}}(A)\setminus\{0\})\times {\rm K}_1(A)\times\bigoplus_{p=1}^{\infty} {\rm K}_* (A, \mathbb{Z}_p)\cup \{0,0,\bigoplus_{p=1}^{\infty}(0,0)\}.$$

\begin{proposition}
Let $A$ be a separable $C^*$-algebra of stable rank one, the addition and order for ${\rm \underline{Cu}}(A)$ are compatible, i.e., for any $$(x_1,\mathfrak{u}_1,\oplus_{p=1}^{\infty}(\mathfrak{s}_{1,p},\mathfrak{s}^{1,p}))
\leq(y_1,\mathfrak{v}_1,\oplus_{p=1}^{\infty}(\mathfrak{t}_{1,p},\mathfrak{t}^{1,p}))$$
and
$$(x_2,\mathfrak{u}_2,\oplus_{p=1}^{\infty}(\mathfrak{s}_{2,p},\mathfrak{s}^{2,p}))
\leq(y_2,\mathfrak{v}_2,\oplus_{p=1}^{\infty}(\mathfrak{t}_{2,p},\mathfrak{t}^{2,p})),$$
we have
$$(x_1,\mathfrak{u}_1,\oplus_{p=1}^{\infty}(\mathfrak{s}_{1,p},\mathfrak{s}^{1,p}))+(x_2,\mathfrak{u}_2,\oplus_{p=1}^{\infty}(\mathfrak{s}_{2,p},\mathfrak{s}^{2,p}))$$
$$\leq(y_1,\mathfrak{v}_1,\oplus_{p=1}^{\infty}(\mathfrak{t}_{1,p},\mathfrak{t}^{1,p}))+(y_2,\mathfrak{v}_2,\oplus_{p=1}^{\infty}(\mathfrak{t}_{2,p},\mathfrak{t}^{2,p})).$$
\end{proposition}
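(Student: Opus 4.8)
The plan is to split the verification according to the two clauses defining the order on ${\rm \underline{Cu}}(A)$: a Cuntz part taking place in ${\rm Cu}(A)$, and a total K-theory part. Abbreviate $\mathfrak{a}_i=(\mathfrak{u}_i,\oplus_{p=1}^{\infty}(\mathfrak{s}_{i,p},\mathfrak{s}^{i,p}))$ and $\mathfrak{b}_i=(\mathfrak{v}_i,\oplus_{p=1}^{\infty}(\mathfrak{t}_{i,p},\mathfrak{t}^{i,p}))$ for $i=1,2$, so that the hypotheses read $x_i\le y_i$ in ${\rm Cu}(A)$ together with $\delta_{I_{x_i}I_{y_i}}(\mathfrak{a}_i)=\mathfrak{b}_i$. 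First I would dispose of the Cuntz part: since ${\rm Cu}(A)$ is a positively ordered monoid, $x_1\le y_1$ and $x_2\le y_2$ give $x_1+x_2\le y_1+y_2$. Consequently $I_{x_i}\subseteq I_{y_i}$, and since $I_{x_1+x_2}$ is the ideal generated by $x_1\oplus x_2$, i.e.\ $I_{x_1}\vee I_{x_2}$, we also get $I_{x_1+x_2}\subseteq I_{y_1+y_2}$. This ensures that every $\delta$-map appearing below is defined, all the relevant ideals being members of ${\rm Lat}_f(A)$.

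The only substantive point is then the equality of the total K-theory components, namely
$$
\delta_{I_{x_1+x_2}\,I_{y_1+y_2}}\!\big(\delta_{I_{x_1}I_{x_1+x_2}}(\mathfrak{a}_1)+\delta_{I_{x_2}I_{x_1+x_2}}(\mathfrak{a}_2)\big)
=\delta_{I_{y_1}I_{y_1+y_2}}(\mathfrak{b}_1)+\delta_{I_{y_2}I_{y_1+y_2}}(\mathfrak{b}_2).
$$
I would prove this using two facts. First, $\delta_{I_{x_1+x_2}I_{y_1+y_2}}$ is a group homomorphism, being ${\rm \underline{K}}(\theta_{I_{x_1+x_2}I_{y_1+y_2}})$; so it distributes over the sum. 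Second, the $\delta$-maps compose, $\delta_{JK}\circ\delta_{IJ}=\delta_{IK}$ for nested ideals $I\subseteq J\subseteq K$, which is just functoriality of ${\rm \underline{K}}(\cdot)$ along the inclusions $\theta_{IJ}$ of \ref{theta and delta}, valid verbatim for the restricted maps on ${\rm K}_1(I)\times\bigoplus_{p}{\rm K}_*(I;\mathbb{Z}_p)$ under the identification fixed there. Expanding the left-hand side by additivity and applying the composition law along the chains $I_{x_i}\subseteq I_{x_1+x_2}\subseteq I_{y_1+y_2}$ rewrites it as $\delta_{I_{x_1}I_{y_1+y_2}}(\mathfrak{a}_1)+\delta_{I_{x_2}I_{y_1+y_2}}(\mathfrak{a}_2)$; factoring this time along $I_{x_i}\subseteq I_{y_i}\subseteq I_{y_1+y_2}$ and inserting the hypotheses $\delta_{I_{x_i}I_{y_i}}(\mathfrak{a}_i)=\mathfrak{b}_i$ produces the right-hand side. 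Combined with $x_1+x_2\le y_1+y_2$, this is precisely the inequality $(x_1,\mathfrak{a}_1)+(x_2,\mathfrak{a}_2)\le(y_1,\mathfrak{b}_1)+(y_2,\mathfrak{b}_2)$ demanded.

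I do not expect a genuine obstacle here; the argument is essentially bookkeeping, and the only thing requiring care is keeping track of which ideal of $A\otimes\mathcal{K}$ each K-theory class lives in and checking that the composition law for the $\delta$-maps indeed survives the identifications of \ref{theta and delta}. If a cleaner formulation is wanted, the same computation shows that $(x,\mathfrak{a})\mapsto(x,\delta_{I_x I}(\mathfrak{a}))$ is compatible with enlarging the ambient ideal $I$, so that ${\rm \underline{Cu}}(A)$ is a ``bundle'' of ordered groups over the lattice ${\rm Lat}_f(A)$ and the proposition amounts to the compatibility of its fibrewise-plus-base addition with its order; I would present the direct chase rather than setting up that formalism.
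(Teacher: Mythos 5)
Your proposal is correct and follows essentially the same route as the paper: reduce to $x_1+x_2\le y_1+y_2$ in ${\rm Cu}(A)$ plus an equality of total K-theory components, and verify the latter by additivity of the $\delta$-maps together with their composition law $\delta_{JK}\circ\delta_{IJ}=\delta_{IK}$ (which the paper phrases as the commutativity of the diagram of $\underline{\rm K}$-maps over the nested ideals). The two factorizations you use, through $I_{x_1+x_2}$ and through $I_{y_i}$, are exactly the two paths around that diagram, so there is nothing substantively different to report.
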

\begin{proof}
From assumption, we have
$$
x_1\leq y_1,\,\,x_2\leq y_2,
$$
$$
\delta_{I_{x_1}I_{y_1}}(\mathfrak{u}_1,\oplus_{p=1}^{\infty}(\mathfrak{s}_{1,p},\mathfrak{s}^{1,p}))
=(\mathfrak{v}_1,\oplus_{p=1}^{\infty}(\mathfrak{t}_{1,p},\mathfrak{t}^{1,p}))
$$
and
$$
\delta_{I_{x_2}I_{y_2}}(\mathfrak{u}_2,\oplus_{p=1}^{\infty}(\mathfrak{s}_{2,p},\mathfrak{s}^{2,p}))
=(\mathfrak{v}_2,\oplus_{p=1}^{\infty}(\mathfrak{t}_{2,p},\mathfrak{t}^{2,p})).
$$

Since
$$
(x_1,\mathfrak{u}_1,\oplus_{p=1}^{\infty}(\mathfrak{s}_{1,p},\mathfrak{s}^{1,p}))+(x_2,\mathfrak{u}_2,\oplus_{p=1}^{\infty}(\mathfrak{s}_{2,p},\mathfrak{s}^{2,p}))
$$
$$
=(x_1+x_2,\delta_{I_{x_1}I_{x_1+x_2}}(\mathfrak{u}_1,\oplus_{p=1}^{\infty}(\mathfrak{s}_{1,p},\mathfrak{s}^{1,p}))
+\delta_{I_{x_2}I_{x_1+x_2}}(\mathfrak{u}_2,\oplus_{p=1}^{\infty}(\mathfrak{s}_{2,p},\mathfrak{s}^{2,p})))
$$
and
$$
(y_1,\mathfrak{v}_1,\oplus_{p=1}^{\infty}(\mathfrak{t}_{1,p},\mathfrak{t}^{1,p}))+(y_2,\mathfrak{v}_2,\oplus_{p=1}^{\infty}(\mathfrak{t}_{1,p},\mathfrak{t}^{1,p}))
$$
$$
=(y_1+y_2,\delta_{I_{y_1}I_{y_1+y_2}}(\mathfrak{v}_1,\oplus_{p=1}^{\infty}(\mathfrak{t}_{1,p},\mathfrak{t}^{1,p}))
+\delta_{I_{y_2}I_{y_1+y_2}}(\mathfrak{v}_2,\oplus_{p=1}^{\infty}(\mathfrak{t}_{2,p},\mathfrak{t}^{2,p}))).
$$

Note that the following diagram is naturally commutative.
$$
\xymatrixcolsep{3pc}
\xymatrix{
{\,\,\underline{\rm K}(I_{x_1})\,\,} \ar[rd]_-{} \ar[r]^-{}
& {\,\,\underline{\rm K}(I_{y_1})\,\,}  \ar[rd]^-{}
\\
{\,\, \,\,}
& {\,\,\underline{\rm K}(I_{x_1+x_2}) \,\,}  \ar[r]^-{}
& {\,\,\underline{\rm K}(I_{y_1+y_2}) \,\,}
\\
{\,\, \underline{\rm K}(I_{x_2})\,\,} \ar[r]^-{} \ar[ru]^-{}
& {\,\,\underline{\rm K}(I_{y_2}) \,\,}  \ar[ru]^-{}}
$$
Then we have
%$$
%\delta(I_{x_1+x_2}I_{y_1+y_2})(\delta_{I_{x_1}I_{x_1+x_2}}(0,\mathfrak{u}_1,\oplus_{p=1}^{\infty}(\mathfrak{s}_{1,p},\mathfrak{s}^{1,p}))
%+\delta_{I_{x_2}I_{x_1+x_2}}(0,\mathfrak{u}_2,\oplus_{p=1}^{\infty}(\mathfrak{s}_{2,p},\mathfrak{s}^{2,p})))
%$$
%$$
%=\delta_{I_{y_1}I_{y_1+y_2}}( %\delta_{I_{x_1}I_{y_1}}(0,\mathfrak{v}_1,\oplus_{p=1}^{\infty}(\mathfrak{t}_{1,p},\mathfrak{t}^{1,p})))
%+\delta_{I_{y_2}I_{y_1+y_2}}( %\delta_{I_{x_1}I_{y_1}}(0,\mathfrak{v}_2,\oplus_{p=1}^{\infty}(\mathfrak{t}_{2,p},\mathfrak{t}^{2,p}))).
%$$
%$$
%=\delta_{I_{y_1}I_{y_1+y_2}}(0,\mathfrak{v}_1,\oplus_{p=1}^{\infty}(\mathfrak{t}_{1,p},\mathfrak{t}^{1,p}))
%+\delta_{I_{y_2}I_{y_1+y_2}}((0,\mathfrak{v}_2,\oplus_{p=1}^{\infty}(\mathfrak{t}_{1,p},\mathfrak{t}^{1,p})).
%$$
%$$
%=(0,\mathfrak{v}_1,\oplus_{p=1}^{\infty}(\mathfrak{t}_{1,p},\mathfrak{t}^{1,p}))+(0,\mathfrak{v}_2,\oplus_{p=1}^{\infty}(\mathfrak{t}_{1,p},\mathfrak{t}^{1,p}).
%$$
$$
\delta_{I_{x_1+x_2}I_{y_1+y_2}}(\delta_{I_{x_1}I_{x_1+x_2}}(\mathfrak{u}_1,\oplus_{p=1}^{\infty}(\mathfrak{s}_{1,p},\mathfrak{s}^{1,p}))
+\delta_{I_{x_2}I_{x_1+x_2}}(\mathfrak{u}_2,\oplus_{p=1}^{\infty}(\mathfrak{s}_{2,p},\mathfrak{s}^{2,p})))
$$
$$
=\delta_{I_{y_1}I_{y_1+y_2}}( \delta_{I_{x_1}I_{y_1}}(\mathfrak{v}_1,\oplus_{p=1}^{\infty}(\mathfrak{t}_{1,p},\mathfrak{t}^{1,p})))
+\delta_{I_{y_2}I_{y_1+y_2}}( \delta_{I_{x_1}I_{y_2}}(\mathfrak{v}_2,\oplus_{p=1}^{\infty}(\mathfrak{t}_{2,p},\mathfrak{t}^{2,p})))
$$
\begin{flushleft}
$=\delta_{I_{y_1}I_{y_1+y_2}}((\mathfrak{v}_1,\oplus_{p=1}^{\infty}(\mathfrak{t}_{1,p},\mathfrak{t}^{1,p})))
 +\delta_{I_{y_2}I_{y_1+y_2}}((\mathfrak{v}_2,\oplus_{p=1}^{\infty}(\mathfrak{t}_{2,p},\mathfrak{t}^{2,p})))$
\end{flushleft}
\begin{flushleft}
 $=(\mathfrak{v}_1,\oplus_{p=1}^{\infty}(\mathfrak{t}_{1,p},\mathfrak{t}^{1,p}))+(\mathfrak{v}_2,\oplus_{p=1}^{\infty}(\mathfrak{t}_{2,p},\mathfrak{t}^{2,p}).$
\end{flushleft}

Combining this with
$$
x_1+x_2\leq y_1+y_2\in{\rm {Cu}}(A),
$$
the conclusion is true.

\end{proof}

We now show that $({\rm \underline{Cu}}(A),+,\leq)$ satisfies the axioms in \ref{Cuaxiom}. The proofs are similar to what has been done for the unitary Cuntz semigroup in \cite{L1}.
% in order to study the compact element 写法上可以参考Lc

\begin{lemma}\label{O 0}
Let $A$ be a separable $C^*$-algebra of stable rank one, $x_n\in {\rm Cu}(A)$ be an increasing sequence with supremum $x$. Then for any
$$
\mathfrak{u}\in {\rm K}_1(I_x),\,\, (\mathfrak{s}_p,\mathfrak{s}^p)\in {\rm K}_* (I_x; \mathbb{Z}_p),
$$
there exists $N\in \mathbb{N}$ such that for all $n>N$, there exist
$$
\mathfrak{u}_n\in {\rm K}_1(I_{x_n}),\,\, (\mathfrak{s}_{n,p},\mathfrak{s}^{n,p})\in {\rm K}_* (I_{x_n}; \mathbb{Z}_p)
$$
such that
$$
(x_n,\mathfrak{u}_n,\oplus_{p=1}^{\infty}(\mathfrak{s}_{n,p},\mathfrak{s}^{n,p}))\leq (x,\mathfrak{u},\oplus_{p=1}^{\infty}(\mathfrak{s}_p,\mathfrak{s}^p)).
$$
\end{lemma}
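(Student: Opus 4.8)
The statement asks: given an increasing sequence $x_n$ in ${\rm Cu}(A)$ with supremum $x$, and given a total $\mathrm{K}$-theory element $(\mathfrak{u},\oplus_p(\mathfrak{s}_p,\mathfrak{s}^p))$ living on the ideal $I_x$, we want to realize it (eventually) as the image under $\delta_{I_{x_n}I_x}$ of an element on the smaller ideal $I_{x_n}$, for all $n$ large. The natural approach is to use the continuity of ${\rm \underline K}(\cdot)$ from Proposition \ref{k-total continuous} together with the description of $I_x$ as a $C^*$-inductive limit of the ideals $I_{x_n}$. Concretely, one first recalls from \ref{Ell inc} (applied with $A$ in place of an inductive system — here the relevant system is the chain of hereditary subalgebras, or one passes through a presentation of $A$) that since $x=\sup_n x_n$ with $x_n$ increasing, the generated ideals satisfy $I_x = {\rm C^*}\text{-}\lim\limits_{\longrightarrow} I_{x_n}$ with connecting maps $\theta_{I_{x_n}I_{x_{n+1}}}$.

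\medskip

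\textbf{Main steps.} First I would invoke continuity of ${\rm \underline K}$ (Proposition \ref{k-total continuous}) for the inductive system $(I_{x_n},\theta_{I_{x_n}I_{x_m}})$ with limit $I_x$, which gives
$$
{\rm \underline K}(I_x)\cong \lim_{\longrightarrow}{\rm \underline K}(I_{x_n}).
$$
Because each ${\rm K}_*(\,\cdot\,;\mathbb{Z}_p)$ is a countable group and the index set $\mathbb{N}$ is directed, the element $\mathfrak{u}\in {\rm K}_1(I_x)$ lifts: there exist $N_0$ and $\mathfrak{u}_{N_0}\in {\rm K}_1(I_{x_{N_0}})$ with $\delta_{I_{x_{N_0}}I_x}(\mathfrak{u}_{N_0})=\mathfrak{u}$, and similarly, for each fixed $p$, there exists $N_p$ and a lift of $(\mathfrak{s}_p,\mathfrak{s}^p)$. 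Second — and this is where one must be slightly careful — a single total $\mathrm{K}$-theory element $(\mathfrak{u},\oplus_{p=1}^\infty(\mathfrak{s}_p,\mathfrak{s}^p))\in {\rm \underline K}(I_x)$ is, by the definition in \ref{def k-total}, an element of the direct sum $\bigoplus_{p=0}^\infty{\rm K}_*(I_x;\mathbb{Z}_p)$, so only finitely many coordinates $\mathfrak{s}_p,\mathfrak{s}^p$ are nonzero. Hence there are only finitely many lifting problems, and taking $N=\max\{N_0,N_{p_1},\dots,N_{p_k}\}$ over the finitely many nonzero coordinates, for every $n>N$ we may push all the chosen lifts forward along $\delta_{I_{x_N}I_{x_n}}$ to obtain $\mathfrak{u}_n\in {\rm K}_1(I_{x_n})$ and $(\mathfrak{s}_{n,p},\mathfrak{s}^{n,p})\in {\rm K}_*(I_{x_n};\mathbb{Z}_p)$ (setting the remaining coordinates to $0$) with
$$
\delta_{I_{x_n}I_x}\bigl(\mathfrak{u}_n,\oplus_{p=1}^\infty(\mathfrak{s}_{n,p},\mathfrak{s}^{n,p})\bigr)=\bigl(\mathfrak{u},\oplus_{p=1}^\infty(\mathfrak{s}_p,\mathfrak{s}^p)\bigr).
$$
Third, since $x_n\leq x$ in ${\rm Cu}(A)$ and the displayed $\delta$-equality holds, the definition of the order on ${\rm \underline{Cu}}(A)$ (Definition \ref{cu total def}) immediately yields
$$
(x_n,\mathfrak{u}_n,\oplus_{p=1}^{\infty}(\mathfrak{s}_{n,p},\mathfrak{s}^{n,p}))\leq (x,\mathfrak{u},\oplus_{p=1}^{\infty}(\mathfrak{s}_p,\mathfrak{s}^p)),
$$
which is exactly what is claimed. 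One should also check that $I_{x_n}\in{\rm Lat}_f(A)$ and that $x_n\in{\rm Cu}_f(I_{x_n})$ so that the tuple genuinely lies in the relevant component of ${\rm \underline{Cu}}(A)$; this is automatic since $x_n$ is by construction full in the ideal it generates.

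\medskip

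\textbf{Expected obstacle.} The only real subtlety is the interchange of the direct-sum structure with the inductive limit: one needs that ${\rm \underline K}(I_x)=\bigoplus_p{\rm K}_*(I_x;\mathbb{Z}_p)$ really is the direct sum (so each element has finite support) and that continuity holds coordinatewise, i.e. ${\rm K}_*(\,\cdot\,;\mathbb{Z}_p)$ is continuous for each $p$ — both facts are contained in (or follow routinely from) Proposition \ref{k-total continuous} and the definitions in \ref{def k-total}. A secondary point is making precise that $I_x={\rm C^*}\text{-}\lim I_{x_n}$; this follows from \ref{Ell inc} together with the fact that $I_x\cong{\rm Cu}\text{-}\lim I_{x_n}$ when $x=\sup x_n$, which in turn is a standard consequence of the lattice isomorphism $\Gamma$ between ${\rm Lat}(A)$ and ${\rm Lat}({\rm Cu}(A))$. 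Everything else is bookkeeping with the definition of the order on ${\rm \underline{Cu}}(A)$.
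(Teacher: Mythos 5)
Your proposal is correct and follows essentially the same route as the paper: identify $I_x$ as the $C^*$-inductive limit of the ideals $I_{x_n}$, invoke the continuity of $\underline{\rm K}$ (Proposition \ref{k-total continuous}) to realize $\underline{\rm K}(I_x)$ as the algebraic inductive limit of the $\underline{\rm K}(I_{x_n})$, lift the given element to a finite stage, and conclude from the definition of the order on $\underline{\rm Cu}(A)$. Your extra care about the finite support of the element in the direct sum $\bigoplus_p {\rm K}_*(I_x;\mathbb{Z}_p)$ is a worthwhile elaboration of what the paper compresses into the phrase ``algebraic limit,'' but it is not a different argument.
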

\begin{proof}
  Since $x=\sup_{n}x_n\in {\rm Cu}(A)$, we have
  $
 I_x={\rm C^*-}\lim\limits_{\longrightarrow}I_{x_n}.
  $
  Then by continuity listed in Proposition \ref{k-total continuous}, we have
  $$
  \underline{\rm K}(I_x)=\lim\limits_{\longrightarrow}(\underline{\rm K}(I_{x_n}),\delta_{I_{x_n}I_{x_m}}),
  $$
 where $\underline{\rm K}(I_x)$ is an algebraic limit.

  Hence, there exists $N$ large enough such that for all $n\geq N$, there exist
  $$\mathfrak{u}_n\in {\rm K}_1(I_{x_n})\quad {\rm and}\quad (\mathfrak{s}_{n,p},\mathfrak{s}^{n,p})\in {\rm K}_* (I_{x_n}; \mathbb{Z}_p)$$
  with
   $$\delta_{I_{x_n}I_x}(\mathfrak{u}_n,\oplus_{p=1}^{\infty}(\mathfrak{s}_{n,p},\mathfrak{s}^{n,p}))= (\mathfrak{u},\oplus_{p=1}^{\infty}(\mathfrak{s}_p,\mathfrak{s}^p)).$$
  Now we get
 $$
(x_n,\mathfrak{u}_n,\oplus_{p=1}^{\infty}(\mathfrak{s}_{n,p},\mathfrak{s}^{n,p}))\leq (x,\mathfrak{u},\oplus_{p=1}^{\infty}(\mathfrak{s}_p,\mathfrak{s}^p)).
$$

\end{proof}

\begin{corollary}\label{O 1}
Let $A$ be a separable $C^*$-algebra of stable rank one. Then any increasing sequence $(x_n,\mathfrak{u}_n,\oplus_{p=1}^{\infty}(\mathfrak{s}_{n,p},\mathfrak{s}^{n,p}))$ in ${\rm \underline{Cu}}(A)$ has a supremum $(x,\mathfrak{u},\oplus_{p=1}^{\infty}(\mathfrak{s}_p,\mathfrak{s}^p))$.
In particular, $x=\sup_n x_n$, and for any $n\in \mathbb{N}$,
$$
\delta_{I_{x_n}I_x}(\mathfrak{u}_n,\oplus_{p=1}^{\infty}(\mathfrak{s}_{n,p},\mathfrak{s}^{n,p}))= (\mathfrak{u},\oplus_{p=1}^{\infty}(\mathfrak{s}_p,\mathfrak{s}^p)).
$$
\end{corollary}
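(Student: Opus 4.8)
The plan is to deduce Corollary~\ref{O 1} from Lemma~\ref{O 0} together with the structure theorem \ref{mainstr} for the underlying Cuntz part. First I would observe that the projection of the given increasing sequence onto the ${\rm Cu}(A)$-coordinate gives an increasing sequence $(x_n)$ in ${\rm Cu}(A)$, which has a supremum $x=\sup_n x_n$ by axiom (O1) for ${\rm Cu}(A)$. Since $A$ is separable we have ${\rm Lat}(A)={\rm Lat}_f(A)$, so $x$ is full in some ideal $I_x$ and $I_x={\rm C^*}\text{-}\lim_{\longrightarrow} I_{x_n}$ as in \ref{Ell inc}. By Proposition~\ref{k-total continuous}, $\underline{\rm K}(I_x)=\lim_{\longrightarrow}(\underline{\rm K}(I_{x_n}),\delta_{I_{x_n}I_{x_m}})$, and this is an \emph{algebraic} inductive limit of groups, so every element of $\underline{\rm K}(I_x)$ lifts, and two elements coming from the same stage that agree in $\underline{\rm K}(I_x)$ already agree at some finite stage.

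Next I would construct the candidate supremum. The compatibility conditions for the sequence being increasing read $x_n\le x_{n+1}$ and $\delta_{I_{x_n}I_{x_{n+1}}}(\mathfrak{u}_n,\oplus_p(\mathfrak{s}_{n,p},\mathfrak{s}^{n,p}))=(\mathfrak{u}_{n+1},\oplus_p(\mathfrak{s}_{n+1,p},\mathfrak{s}^{n+1,p}))$. Thus the elements $(\mathfrak{u}_n,\oplus_p(\mathfrak{s}_{n,p},\mathfrak{s}^{n,p}))$, viewed inside $\underline{\rm K}(I_x)$ via $\delta_{I_{x_n}I_x}$, are all equal to a single element, which I define to be $(\mathfrak{u},\oplus_p(\mathfrak{s}_p,\mathfrak{s}^p))\in {\rm K}_1(I_x)\times\bigoplus_{p=1}^\infty {\rm K}_*(I_x;\mathbb{Z}_p)$ (using the identification of \ref{theta and delta}); one must check this lands in the $\underline{\rm K}_{I_x}(A)$-part of $\underline{\rm K}(A)_+$, but that is automatic from functoriality and the definition of the positive cone. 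Set $w=(x,\mathfrak{u},\oplus_p(\mathfrak{s}_p,\mathfrak{s}^p))\in{\rm \underline{Cu}}(A)$. By construction $\delta_{I_{x_n}I_x}(\mathfrak{u}_n,\oplus_p(\mathfrak{s}_{n,p},\mathfrak{s}^{n,p}))=(\mathfrak{u},\oplus_p(\mathfrak{s}_p,\mathfrak{s}^p))$ together with $x_n\le x$, so $(x_n,\mathfrak{u}_n,\oplus_p(\mathfrak{s}_{n,p},\mathfrak{s}^{n,p}))\le w$ for all $n$, i.e.\ $w$ is an upper bound.

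To see $w$ is the least upper bound, suppose $w'=(y,\mathfrak{v},\oplus_p(\mathfrak{t}_p,\mathfrak{t}^p))$ is another upper bound. Then $x_n\le y$ for all $n$, so $x=\sup_n x_n\le y$ by (O1) in ${\rm Cu}(A)$, giving an inclusion $I_x\subset I_y$. Comparing the K-coordinates: for each $n$ the upper-bound condition gives $\delta_{I_{x_n}I_y}(\mathfrak{u}_n,\dots)=(\mathfrak{v},\dots)$, and since $\delta_{I_{x_n}I_y}=\delta_{I_xI_y}\circ\delta_{I_{x_n}I_x}$ by functoriality, we get $\delta_{I_xI_y}(\mathfrak{u},\oplus_p(\mathfrak{s}_p,\mathfrak{s}^p))=(\mathfrak{v},\oplus_p(\mathfrak{t}_p,\mathfrak{t}^p))$. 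Hence $w\le w'$, so $w=\sup_n(x_n,\dots)$. The ``in particular'' clause then records exactly $x=\sup_n x_n$ and $\delta_{I_{x_n}I_x}(\mathfrak{u}_n,\oplus_p(\mathfrak{s}_{n,p},\mathfrak{s}^{n,p}))=(\mathfrak{u},\oplus_p(\mathfrak{s}_p,\mathfrak{s}^p))$.

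The main obstacle I anticipate is purely bookkeeping rather than conceptual: making sure that the identification of ${\rm K}_1(I)\times\bigoplus_p {\rm K}_*(I;\mathbb{Z}_p)$ with its image in $\underline{\rm K}(I)$ is used consistently, that the ``algebraic limit'' statement in Proposition~\ref{k-total continuous} genuinely supplies both the surjectivity (to define $(\mathfrak{u},\oplus_p(\mathfrak{s}_p,\mathfrak{s}^p))$) and the injectivity-at-a-finite-stage facts needed, and that the order on ${\rm \underline{Cu}}(A)$ from Definition~\ref{cu total def} is invoked with the correct ideals $I_{x_n}, I_x, I_y$. Lemma~\ref{O 0} already does the essential lifting work; the corollary is essentially its globalization, so beyond these identifications no new difficulty should arise.
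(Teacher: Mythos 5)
Your proposal is correct and follows essentially the same route as the paper's proof: take $x=\sup_n x_n$ via (O1) in ${\rm Cu}(A)$, use $I_x={\rm C^*}\text{-}\lim I_{x_n}$ and the continuity of $\underline{\rm K}$ to define $(\mathfrak{u},\oplus_p(\mathfrak{s}_p,\mathfrak{s}^p))$ as the common image of the compatible system, and verify the least-upper-bound property by factoring $\delta_{I_{x_n}I_y}=\delta_{I_xI_y}\circ\delta_{I_{x_n}I_x}$. In fact you make explicit the functoriality step that the paper dismisses as ``obvious,'' so no further comment is needed.
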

\begin{proof}
Since ${\rm Cu}(A)$ satisfies (O1), there exists $x\in {\rm Cu}(A)$ such that $x=\sup_n x_n$, and   $$
 I_x={\rm C^*-}\lim_{\longrightarrow}I_{x_n}.
  $$ Then
$$
\underline{\rm K}(I_x)=\lim_{\longrightarrow}(\underline{\rm K}(I_{x_n}),\delta_{I_{x_n}I_{x_m}}).
$$

Note that
$$
\delta_{I_{x_n}I_{x_n+1}}(\mathfrak{u}_n,\oplus_{p=1}^{\infty}
(\mathfrak{s}_{n,p},\mathfrak{s}^{n,p}))= (\mathfrak{u}_{n+1},\oplus_{p=1}^{\infty}(\mathfrak{s}_{n+1,p},\mathfrak{s}^{n+1,p})).
$$
We may choose $(0,\mathfrak{u},\oplus_{p=1}^{\infty}
(\mathfrak{s}_p,\mathfrak{s}^p))\in \underline{\rm K}(I_x)$ be the limit
$$
(0,\mathfrak{u},\oplus_{p=1}^{\infty}(\mathfrak{s}_p,\mathfrak{s}^p)):=\lim_{\longrightarrow}
\delta_{I_{x_n}I_{x}}(0,\mathfrak{u}_n,\oplus_{p=1}^{\infty}(\mathfrak{s}_{n,p},\mathfrak{s}^{n,p}))
$$
Then for any $n\in\mathbb{N}$,
$$(x_n,\mathfrak{u}_n,\oplus_{p=1}^{\infty}(\mathfrak{s}_{n,p},\mathfrak{s}^{n,p}))
\leq(x,\mathfrak{u},\oplus_{p=1}^{\infty}(\mathfrak{s}_p,\mathfrak{s}^p)).
$$

Now we check that $(x,\mathfrak{u},\oplus_{p=1}^{\infty}(\mathfrak{s}_p,\mathfrak{s}^p))$ is in fact the supremum of the sequence, choose any $(y,\mathfrak{v},\oplus_{p=1}^{\infty}(\mathfrak{t}_p,\mathfrak{t}^p))\geq (x_n,\mathfrak{u}_n,\oplus_{p=1}^{\infty}(\mathfrak{s}_{n,p},\mathfrak{s}^{n,p}))$ for all $n\in \mathbb{N}$,
then $x=\sup_{n\in\mathbb{N}} x_n \leq y$ and
$$
\delta_{I_{x_n}I_{y}}(\mathfrak{u}_n,\oplus_{p=1}^{\infty}(\mathfrak{s}_{n,p},\mathfrak{s}^{n,p}))= (\mathfrak{v},\oplus_{p=1}^{\infty}(\mathfrak{t}_p,\mathfrak{t}^p)).
$$

It is obvious that
$$
\delta_{I_{x}I_{y}}(\mathfrak{u},\oplus_{p=1}^{\infty}(\mathfrak{s}_p,\mathfrak{s}^p))= (\mathfrak{v},\oplus_{p=1}^{\infty}(\mathfrak{t}_p,\mathfrak{t}^p)).
$$
Hence,
$$
(x,\mathfrak{u},\oplus_{p=1}^{\infty}(\mathfrak{s}_p,\mathfrak{s}^p))\leq (y,\mathfrak{v},\oplus_{p=1}^{\infty}(\mathfrak{t}_p,\mathfrak{t}^p)).
$$
\end{proof}

\begin{theorem}\label{jin bijiao}
Let $A$ be a separable $C^*$-algebra of stable rank one and let $$(x,\mathfrak{u},\oplus_{p=1}^{\infty}(\mathfrak{s}_p,\mathfrak{s}^p)), (y,\mathfrak{v},\oplus_{p=1}^{\infty}(\mathfrak{t}_p,\mathfrak{t}^p))\in \underline{\rm Cu}(A).$$
 Then
$$(x,\mathfrak{u},\oplus_{p=1}^{\infty}(\mathfrak{s}_p,\mathfrak{s}^p))\ll (y,\mathfrak{v},\oplus_{p=1}^{\infty}(\mathfrak{t}_p,\mathfrak{t}^p)).$$
 if and only if $x\ll y$ in ${\rm Cu}(A)$ and $$\delta_{I_xI_y}(\mathfrak{u},\oplus_{p=1}^{\infty}
 (\mathfrak{s}_p,\mathfrak{s}^p))=(\mathfrak{v},\oplus_{p=1}^{\infty}(\mathfrak{t}_p,\mathfrak{t}^p))
.$$
\end{theorem}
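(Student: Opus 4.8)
The plan is to reduce Theorem~\ref{jin bijiao} to three ingredients already in hand: the first coordinate is controlled by $\mathrm{Cu}(A)$; the functor $\underline{\mathrm{K}}(\cdot)$, together with the summand $\mathrm{K}_1(\cdot)\times\bigoplus_{p=1}^{\infty}\mathrm{K}_*(\cdot;\mathbb{Z}_p)$ that actually occurs in $\underline{\mathrm{Cu}}$, is continuous, i.e.\ carries a $C^*$-inductive limit to the \emph{algebraic} inductive limit (Proposition~\ref{k-total continuous}, applied also to the summand); and suprema in $\underline{\mathrm{Cu}}(A)$ were computed coordinatewise in Corollary~\ref{O 1}. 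To keep the notation light I would write $\xi$ for $(\mathfrak u,\oplus_{p=1}^{\infty}(\mathfrak s_p,\mathfrak s^p))$ and $\eta$ for $(\mathfrak v,\oplus_{p=1}^{\infty}(\mathfrak t_p,\mathfrak t^p))$, so the claim becomes: $(x,\xi)\ll(y,\eta)$ iff $x\ll y$ in $\mathrm{Cu}(A)$ and $\delta_{I_xI_y}(\xi)=\eta$.

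For the forward direction, I would first observe that $\ll$ implies $\le$ (test the definition against the constant sequence $(y,\eta),(y,\eta),\dots$, whose supremum exists by Corollary~\ref{O 1}), so the order relation of Definition~\ref{cu total def} already forces $x\le y$ and $\delta_{I_xI_y}(\xi)=\eta$; it then remains only to deduce $x\ll y$. Given an increasing sequence $(x_n)$ in $\mathrm{Cu}(A)$ with $y\le\tilde x:=\sup_n x_n$, the ideal $I_{\tilde x}$ is the $C^*$-inductive limit of the $I_{x_n}$, so the summand at $I_{\tilde x}$ is the algebraic colimit of the summands at the $I_{x_n}$; I would pick $N$ and a lift $\zeta_N$ in the summand of $\underline{\mathrm{K}}(I_{x_N})$ with $\delta_{I_{x_N}I_{\tilde x}}(\zeta_N)=\delta_{I_yI_{\tilde x}}(\eta)$, set $\zeta_n=\delta_{I_{x_N}I_{x_n}}(\zeta_N)$ for $n\ge N$, and note that $((x_n,\zeta_n))_{n\ge N}$ is then increasing in $\underline{\mathrm{Cu}}(A)$ with supremum $(\tilde x,\delta_{I_yI_{\tilde x}}(\eta))\ge(y,\eta)$ by Corollary~\ref{O 1}. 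Applying $(x,\xi)\ll(y,\eta)$ produces $k$ with $(x,\xi)\le(x_k,\zeta_k)$, hence $x\le x_k$, which is what $x\ll y$ demands.

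For the reverse direction, assume $x\ll y$ and $\delta_{I_xI_y}(\xi)=\eta$, and take an increasing sequence $((z_n,\eta_n))$ in $\underline{\mathrm{Cu}}(A)$ with $(y,\eta)\le\sup_n(z_n,\eta_n)=:(z,\eta_\infty)$. By Corollary~\ref{O 1}, $z=\sup_n z_n$ and $\delta_{I_{z_n}I_z}(\eta_n)=\eta_\infty$ for all $n$, while the comparison gives $y\le z$ and $\delta_{I_yI_z}(\eta)=\eta_\infty$. Since $x\ll y\le\sup_n z_n$, there is $N$ with $x\le z_n$ for $n\ge N$, so $\delta_{I_xI_{z_n}}$ is defined; functoriality of the $\delta$-maps along $I_x\subseteq I_{z_n}\subseteq I_z$ and $I_x\subseteq I_y\subseteq I_z$ then yields $\delta_{I_{z_n}I_z}\big(\delta_{I_xI_{z_n}}(\xi)\big)=\delta_{I_xI_z}(\xi)=\delta_{I_yI_z}\big(\delta_{I_xI_y}(\xi)\big)=\delta_{I_yI_z}(\eta)=\eta_\infty=\delta_{I_{z_n}I_z}(\eta_n)$. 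Because $\underline{\mathrm{K}}(I_z)$ (and its summand) is the algebraic colimit of the $\underline{\mathrm{K}}(I_{z_m})$, this equality already holds at a finite stage: there is $m\ge N$ with $\delta_{I_xI_{z_m}}(\xi)=\delta_{I_{z_n}I_{z_m}}\big(\delta_{I_xI_{z_n}}(\xi)\big)=\delta_{I_{z_n}I_{z_m}}(\eta_n)=\eta_m$, so $(x,\xi)\le(z_m,\eta_m)$, and since $m$ indexes the given sequence this proves $(x,\xi)\ll(y,\eta)$.

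The step I expect to demand care rather than ideas is the bookkeeping with ideals: one must use that the inclusions among $I_x, I_y, I_z, I_{z_n}$ give genuinely commuting diagrams of the maps $\delta$, that by \ref{theta and delta} these maps restrict to the summand $\mathrm{K}_1\times\bigoplus_p\mathrm{K}_*(\cdot;\mathbb{Z}_p)$ so that every lift and every stabilized class remains a legitimate entry of $\underline{\mathrm{Cu}}(A)$ with vanishing $\mathrm{K}_0$-part, and that the $C^*$-inductive limit really does produce an \emph{algebraic} colimit of total K-theory, so that both ``every class comes from a finite stage'' and ``a coincidence of classes already occurs at a finite stage'' are at our disposal. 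Granting this, the argument is the exact analogue of the proof of \cite[Proposition 4.3]{L1} for $\mathrm{Cu}_1$, with $\mathrm{K}_1$ replaced throughout by the full K-theoretic summand.
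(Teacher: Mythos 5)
Your proof is correct and follows essentially the same route as the paper's: both directions reduce the claim to the corresponding statement for the first coordinate in $\mathrm{Cu}(A)$ together with the continuity of $\underline{\mathrm K}$ along the inductive system of ideals $I_{z_n}\to I_z$, exactly as in the paper. In fact your write-up is slightly more careful in two places --- you test $\ll$ against arbitrary increasing sequences with $y\le\sup$ rather than only those with supremum equal to $y$, and in the reverse direction you correctly pass to a later finite stage of the algebraic colimit to obtain $\delta_{I_xI_{z_m}}(\xi)=\eta_m$, whereas the paper asserts this identity at the current stage $n$ even though the maps $\delta_{I_{z_n}I_z}$ need not be injective, so the colimit argument you supply is precisely what is needed to justify that step.
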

\begin{proof}
  Suppose that $(x,\mathfrak{u},\oplus_{p=1}^{\infty}
  (\mathfrak{s}_p,\mathfrak{s}^p))\ll (y,\mathfrak{v},\oplus_{p=1}^{\infty}(\mathfrak{t}_p,\mathfrak{t}^p))$, then
 $$
 \delta_{I_xI_y}(\mathfrak{u},\oplus_{p=1}^{\infty}
 (\mathfrak{s}_p,\mathfrak{s}^p))=(\mathfrak{v},\oplus_{p=1}^{\infty}(\mathfrak{t}_p,\mathfrak{t}^p)),
$$
we only need to prove $x\ll y$.

Let $(y_n)_n$ be an increasing sequence in ${\rm Cu}(A)$ with supremum $y$. Then
$$
\underline{\rm K}(I_y)=\lim_{\longrightarrow}(\underline{\rm K}(I_{y_n}),\delta_{I_{y_n}I_{y_m}}).
$$
There exists $n_0\in \mathbb{N}$ such that for any $n\geq n_0$,
   $$
  \delta_{I_{y_n}I_{y}}(\mathfrak{v}_n,\oplus_{p=1}^{\infty}(\mathfrak{t}_{n,p},\mathfrak{t}^{n,p})=  (\mathfrak{v},\oplus_{p=1}^{\infty}(\mathfrak{t}_p,\mathfrak{t}^p)),
  $$
where $(\mathfrak{v}_n,\oplus_{p=1}^{\infty}(\mathfrak{t}_{n,p},\mathfrak{t}^{n,p}))$ is identified with $(0,\mathfrak{v}_n,\oplus_{p=1}^{\infty}(\mathfrak{t}_{n,p},\mathfrak{t}^{n,p}))\in \underline{\rm K}(I_{y_n}).$

Then $((y_n,\mathfrak{v}_n,\oplus_{p=1}^{\infty}(\mathfrak{t}_{n,p},\mathfrak{t}^{n,p})))_{n\geq n_0}$ is an increasing sequence in  ${\rm \underline{Cu}}(A)$ with supremum $(y,\mathfrak{v},\oplus_{p=1}^{\infty}(\mathfrak{t}_p,\mathfrak{t}^p)).$ From assumption, there exists $N\in \mathbb{N}$ ($N\geq n_0$) such that for any $n\geq N$,
$$
(x,\mathfrak{u},\oplus_{p=1}^{\infty}(\mathfrak{s}_p,\mathfrak{s}^p))\leq (y_n,\mathfrak{v}_n,\oplus_{p=1}^{\infty}(\mathfrak{t}_{n,p},\mathfrak{t}^{n,p})).
$$
Then we have $x\leq y_n$ for all $n\geq N$, that is, $x\ll y.$

Conversely, suppose that $((y_n,\mathfrak{v}_n,\oplus_{p=1}^{\infty}(\mathfrak{t}_{n,p},\mathfrak{t}^{n,p})))_n$ is an increasing sequence in ${\rm \underline{Cu}}(A)$ with supremum $(y,\mathfrak{v},\oplus_{p=1}^{\infty}(\mathfrak{t}_p,\mathfrak{t}^p)).$  Then $(y_n)_n$ is an increasing sequence in
%$ y_n,\mathfrak{v}_n,\oplus_{p=1}^{\infty}(\mathfrak{t}_{n,p},\mathfrak{t}^{n,p}))$ in
$ {\rm {Cu}}(A)$ with supremum ${y}.$ Then there exists $N$ large enough such that for any $n> N$,
$$
x\leq y_n\leq y,\quad I_x\subset I_{y_n}\subset I_y
$$
and
$$
\delta_{I_{y_n}I_y}(\mathfrak{v}_n,\oplus_{p=1}^{\infty}(\mathfrak{t}_{n,p},\mathfrak{t}^{n,p}))= (\mathfrak{v},\oplus_{p=1}^{\infty}(\mathfrak{t}_p,\mathfrak{t}^p)).
$$
where $(\mathfrak{v}_n,\oplus_{p=1}^{\infty}(\mathfrak{t}_{n,p},\mathfrak{t}^{n,p})$ is identified as $(0,\mathfrak{v}_n,\oplus_{p=1}^{\infty}(\mathfrak{t}_{n,p},\mathfrak{t}^{n,p})\in \underline{\rm K}(I_{y_n}).$

Now we have
$$\delta_ {I_xI_{y_{n}}}(\mathfrak{u},\oplus_{p=1}^{\infty}
(\mathfrak{s}_p,\mathfrak{s}^p))=(\mathfrak{v}_{n},\oplus_{p=1}^{\infty}(\mathfrak{t}_{n,p},\mathfrak{t}^{n,p})),
$$
this means that
$$
(x,\mathfrak{u},\oplus_{p=1}^{\infty}(\mathfrak{s}_p,\mathfrak{s}^p))\leq(y_n,\mathfrak{v}_n,\oplus_{p=1}^{\infty}(\mathfrak{t}_{n,p},\mathfrak{t}^{n,p}))\quad {\rm for\,\, all\,\,} n\geq N.
$$
Hence,
$$
(x,\mathfrak{u},\oplus_{p=1}^{\infty}(\mathfrak{s}_p,\mathfrak{s}^p))\ll(y,\mathfrak{v},\oplus_{p=1}^{\infty}(\mathfrak{t}_p,\mathfrak{t}^p)).
$$
\end{proof}
\begin{corollary}\label{O 2}
  Let $A$ be a separable $C^*$-algebra of stable rank one, then $(x,\mathfrak{u},\oplus_{p=1}^{\infty}(\mathfrak{s}_p,\mathfrak{s}^p))$  is compact in ${\rm \underline{Cu}}(A)$ if and only if $x$ is compact in ${\rm Cu}(A)$.
\end{corollary}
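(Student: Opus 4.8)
The statement to be proved is Corollary~\ref{O 2}: an element $(x,\mathfrak{u},\oplus_{p=1}^{\infty}(\mathfrak{s}_p,\mathfrak{s}^p))$ is compact in ${\rm \underline{Cu}}(A)$ if and only if $x$ is compact in ${\rm Cu}(A)$. The plan is to deduce this directly from the characterization of the compact containment relation on ${\rm \underline{Cu}}(A)$ established in Theorem~\ref{jin bijiao}, applied with the two elements equal.

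First I would apply Theorem~\ref{jin bijiao} with $y=x$, $\mathfrak{v}=\mathfrak{u}$, $\mathfrak{t}_p=\mathfrak{s}_p$, $\mathfrak{t}^p=\mathfrak{s}^p$. By definition, $(x,\mathfrak{u},\oplus_{p=1}^{\infty}(\mathfrak{s}_p,\mathfrak{s}^p))$ is compact exactly when it is way-below itself, and Theorem~\ref{jin bijiao} says this holds if and only if $x\ll x$ in ${\rm Cu}(A)$ and $\delta_{I_xI_x}(\mathfrak{u},\oplus_{p=1}^{\infty}(\mathfrak{s}_p,\mathfrak{s}^p))=(\mathfrak{u},\oplus_{p=1}^{\infty}(\mathfrak{s}_p,\mathfrak{s}^p))$. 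The second condition is automatic, since $\delta_{I_xI_x}$ is ${\rm \underline{K}}$ applied to the identity embedding $I_x\hookrightarrow I_x$, hence the identity map on ${\rm \underline{K}}(I_x)$ and in particular the identity on the subgroup ${\rm K}_1(I_x)\times\bigoplus_{p=1}^{\infty}{\rm K}_*(I_x;\mathbb{Z}_p)$ under the identification fixed in \ref{theta and delta}. Therefore the compactness of $(x,\mathfrak{u},\oplus_{p=1}^{\infty}(\mathfrak{s}_p,\mathfrak{s}^p))$ reduces exactly to $x\ll x$, i.e.\ to $x$ being compact in ${\rm Cu}(A)$, which is the claim.

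I do not anticipate any real obstacle here: all the content has already been isolated in Theorem~\ref{jin bijiao}, and the only thing to check beyond that is the triviality that $\delta_{I_xI_x}$ is the identity, which is immediate from functoriality of ${\rm \underline{K}}$ (or of ${\rm K}_1$ and the mod-$p$ groups). If one prefers a self-contained argument avoiding Theorem~\ref{jin bijiao}, the forward direction follows because the coordinate projection $(x,\mathfrak{u},\oplus_{p=1}^{\infty}(\mathfrak{s}_p,\mathfrak{s}^p))\mapsto x$ is order-preserving and sends suprema of increasing sequences to suprema (Corollary~\ref{O 1}), so it preserves the way-below relation; the reverse direction uses that an increasing sequence dominating $(x,\mathfrak{u},\oplus_{p=1}^{\infty}(\mathfrak{s}_p,\mathfrak{s}^p))$ has first coordinate dominating $x$, so compactness of $x$ makes it be reached at a finite stage, and Lemma~\ref{O 0} together with the algebraic-limit description of ${\rm \underline{K}}(I_x)$ in Proposition~\ref{k-total continuous} shows the higher coordinates stabilize there as well. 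Either way the proof is short.
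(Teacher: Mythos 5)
Your proposal is correct and is exactly the argument the paper intends: the corollary is stated immediately after Theorem~\ref{jin bijiao} with no separate proof, precisely because one applies that theorem with the element compared to itself and observes that $\delta_{I_xI_x}$, being induced by the identity embedding $I_x\hookrightarrow I_x$, is the identity. Nothing further is needed.
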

\begin{corollary}\label{alg total cu}
  Let $A$ be a separable $C^*$-algebra of stable rank one, then $A$ is of real rank zero if and only if ${\rm \underline{Cu}}(A)$ is algebraic.
\end{corollary}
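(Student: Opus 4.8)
The plan is to reduce \Cref{alg total cu} to the corresponding statement for ${\rm Cu}(A)$, namely that $A$ has real rank zero if and only if ${\rm Cu}(A)$ is algebraic, which is exactly \Cref{algebraic cor}. The bridge between the two statements is \Cref{O 2}: an element $(x,\mathfrak{u},\oplus_{p=1}^{\infty}(\mathfrak{s}_p,\mathfrak{s}^p))$ is compact in ${\rm \underline{Cu}}(A)$ precisely when $x$ is compact in ${\rm Cu}(A)$.

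\medskip

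First I would treat the forward implication. Assume $rr(A)=0$, so by \Cref{algebraic cor} every element of ${\rm Cu}(A)$ is the supremum of a $\ll$-increasing sequence of compact elements. Given an arbitrary $(x,\mathfrak{u},\oplus_{p=1}^{\infty}(\mathfrak{s}_p,\mathfrak{s}^p))\in {\rm \underline{Cu}}(A)$, write $x=\sup_n x_n$ with each $x_n$ compact in ${\rm Cu}(A)$ and $x_n\ll x_{n+1}$. Since $x=\sup_n x_n$ we have $I_x={\rm C^*\text{-}}\lim_{\longrightarrow} I_{x_n}$, hence by \Cref{k-total continuous} the total K-theory $\underline{\rm K}(I_x)$ is the algebraic inductive limit of the $\underline{\rm K}(I_{x_n})$; so there is an $N$ and elements $(\mathfrak{u}_n,\oplus_{p=1}^{\infty}(\mathfrak{s}_{n,p},\mathfrak{s}^{n,p}))\in {\rm K}_1(I_{x_n})\times\bigoplus_{p\ge1}{\rm K}_*(I_{x_n};\mathbb{Z}_p)$, for $n\ge N$, which map consistently onto $(\mathfrak{u},\oplus_{p=1}^{\infty}(\mathfrak{s}_p,\mathfrak{s}^p))$ under the connecting maps $\delta_{I_{x_n}I_{x_m}}$ and $\delta_{I_{x_n}I_x}$ — this is essentially the construction already carried out in \Cref{O 0} and \Cref{O 1}. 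Then $\big((x_n,\mathfrak{u}_n,\oplus_{p=1}^{\infty}(\mathfrak{s}_{n,p},\mathfrak{s}^{n,p}))\big)_{n\ge N}$ is an increasing sequence in ${\rm \underline{Cu}}(A)$; by \Cref{jin bijiao} it is in fact $\ll$-increasing (since $x_n\ll x_{n+1}$ and the K-theoretic data is compatible), each term is compact by \Cref{O 2}, and its supremum is $(x,\mathfrak{u},\oplus_{p=1}^{\infty}(\mathfrak{s}_p,\mathfrak{s}^p))$ by \Cref{O 1}. Hence ${\rm \underline{Cu}}(A)$ is algebraic.

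\medskip

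For the converse, suppose ${\rm \underline{Cu}}(A)$ is algebraic. Take any $x\in {\rm Cu}(A)$; then $(x,0,\oplus_{p=1}^{\infty}(0,0))$ (using the zero class in ${\rm K}_1(I_x)$ and in each ${\rm K}_*(I_x;\mathbb{Z}_p)$, which is legitimate once $x$ is full in $I_x$) lies in ${\rm \underline{Cu}}(A)$, so it is the supremum of an increasing sequence $(x_n,\mathfrak{u}_n,\oplus_{p=1}^{\infty}(\mathfrak{s}_{n,p},\mathfrak{s}^{n,p}))$ of compact elements. By \Cref{O 2} each $x_n$ is compact in ${\rm Cu}(A)$, and by \Cref{O 1} $x=\sup_n x_n$. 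Thus every element of ${\rm Cu}(A)$ is a supremum of compact elements, i.e.\ ${\rm Cu}(A)$ is algebraic, and \Cref{algebraic cor} gives $rr(A)=0$.

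\medskip

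I do not expect a serious obstacle here: the result is a soft corollary of \Cref{O 2}, \Cref{jin bijiao}, and \Cref{algebraic cor}. The one point that needs a little care is the $\ll$-increasing property in the forward direction — one must check that the lifts of the K-theory data produced by the inductive-limit argument can be chosen so that consecutive terms satisfy the equality $\delta_{I_{x_n}I_{x_{n+1}}}(\mathfrak{u}_n,\oplus_p(\mathfrak{s}_{n,p},\mathfrak{s}^{n,p}))=(\mathfrak{u}_{n+1},\oplus_p(\mathfrak{s}_{n+1,p},\mathfrak{s}^{n+1,p}))$, which is exactly what the algebraic inductive limit furnishes, and then invoke \Cref{jin bijiao}. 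A minor subtlety is making sure, in the converse, that $x$ is full in the ideal it generates so that $(x,0,\oplus_p(0,0))$ is a bona fide element of ${\rm \underline{Cu}}(A)$; for separable $A$ this is automatic since ${\rm Lat}(A)={\rm Lat}_f(A)$.
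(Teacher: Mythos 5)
Your proof is correct and follows exactly the route the paper intends: the corollary is stated without proof immediately after Corollary \ref{O 2}, and the implicit argument is precisely your combination of Corollary \ref{O 2} (compactness in ${\rm \underline{Cu}}(A)$ reduces to compactness of the first coordinate), Lemma \ref{O 0}/Corollary \ref{O 1} (lifting the K-theoretic data along the inductive limit $\underline{\rm K}(I_x)=\lim \underline{\rm K}(I_{x_n})$), and Corollary \ref{algebraic cor}. The two points of care you flag (compatibility of the lifts under $\delta_{I_{x_n}I_{x_{n+1}}}$, and fullness of $x$ in $I_x$ via ${\rm Lat}(A)={\rm Lat}_f(A)$ for separable $A$) are handled correctly.
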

Then we obtain the following theorem.
\begin{theorem}
  Let $A$ be a separable $C^*$-algebra of stable rank one, then $({\rm \underline{Cu}}(A),+,\leq)$ satisfies axioms {\rm (O1),(O2),(O3),} and {\rm (O4)}.
\end{theorem}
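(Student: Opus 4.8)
The plan is to verify the four axioms (O1)--(O4) one at a time, leaning heavily on the corresponding properties of ${\rm Cu}(A)$ and on the continuity of the total K-theory functor $\underline{\rm K}$ (Proposition \ref{k-total continuous}), since ${\rm \underline{Cu}}(A)$ is built as a "twisted product" of ${\rm Cu}(A)$ with the groups $\underline{\rm K}(I)$ over the lattice of ideals. Most of the technical content has in fact already been assembled in the preceding lemmas and corollaries, so the proof is mainly a matter of citing them.

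First, (O1): every increasing sequence $(x_n,\mathfrak{u}_n,\oplus_{p}(\mathfrak{s}_{n,p},\mathfrak{s}^{n,p}))$ in ${\rm \underline{Cu}}(A)$ has a supremum. This is exactly Corollary \ref{O 1}: using that ${\rm Cu}(A)$ satisfies (O1) one gets $x=\sup_n x_n$ with $I_x = {\rm C}^*\text{-}\lim I_{x_n}$, and then the algebraic inductive limit $\underline{\rm K}(I_x)=\lim(\underline{\rm K}(I_{x_n}),\delta_{I_{x_n}I_{x_m}})$ produces the K-theoretic component of the supremum.

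Next, (O2): every element is the supremum of a $\ll$-increasing sequence. Given $(x,\mathfrak{u},\oplus_{p}(\mathfrak{s}_p,\mathfrak{s}^p))$, use (O2) in ${\rm Cu}(A)$ to pick a $\ll$-increasing sequence $(x_n)$ with $\sup_n x_n = x$. Since $I_x = {\rm C}^*\text{-}\lim I_{x_n}$ and $\underline{\rm K}$ is continuous with algebraic limit, the element $(\mathfrak{u},\oplus_{p}(\mathfrak{s}_p,\mathfrak{s}^p))\in\underline{\rm K}(I_x)$ lifts, for $n$ large, to elements $(\mathfrak{u}_n,\oplus_p(\mathfrak{s}_{n,p},\mathfrak{s}^{n,p}))\in\underline{\rm K}(I_{x_n})$ compatible along the connecting maps — this is Lemma \ref{O 0} (discard the finitely many small indices). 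The resulting sequence $(x_n,\mathfrak{u}_n,\oplus_p(\mathfrak{s}_{n,p},\mathfrak{s}^{n,p}))$ is increasing with supremum our element by Corollary \ref{O 1}, and it is $\ll$-increasing by Theorem \ref{jin bijiao} (each $x_n\ll x_{n+1}$ and the K-components are matched by the connecting maps). For (O3), suppose $(x_1,\dots)\ll(y_1,\dots)$ and $(x_2,\dots)\ll(y_2,\dots)$; by Theorem \ref{jin bijiao} this means $x_i\ll y_i$ with the K-components related by $\delta$, so $x_1+x_2\ll y_1+y_2$ by (O3) in ${\rm Cu}(A)$, and the commuting-square diagram already drawn in the compatibility proposition shows the K-components of the sums are related by $\delta_{I_{x_1+x_2}I_{y_1+y_2}}$; apply Theorem \ref{jin bijiao} again to conclude the sums are $\ll$-related. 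Finally (O4): if $(x_n,\dots)$ and $(y_n,\dots)$ are increasing with suprema $(x,\dots)$ and $(y,\dots)$, then $(x_n+y_n,\dots)$ is increasing (compatibility proposition), $\sup_n(x_n+y_n)=x+y$ in ${\rm Cu}(A)$ by (O4) there, and since $I_{x_n+y_n}\to I_{x+y}$ as a $C^*$-limit, Corollary \ref{O 1} identifies the supremum of the sequence in ${\rm \underline{Cu}}(A)$; checking its K-component equals that of $(x+y,\dots)+(\text{stuff})$ is a direct diagram chase through the inductive limit $\underline{\rm K}(I_{x+y})=\lim\underline{\rm K}(I_{x_n+y_n})$.

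I do not expect a genuine obstacle: the only place requiring care is bookkeeping the "eventually defined" nature of the K-theoretic lifts in (O2) and (O4) — the liftings $\mathfrak{u}_n$, $\mathfrak{s}_{n,p}$ exist only for $n$ beyond some threshold $N$, so one must pass to the tail of the sequence, which does not affect suprema or the $\ll$-relation. Once that is handled via Lemma \ref{O 0} and Corollary \ref{O 1}, each axiom reduces cleanly to its ${\rm Cu}(A)$-counterpart together with the naturality and continuity of $\underline{\rm K}$, so the proof is short.
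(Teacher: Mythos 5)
Your proposal is correct and follows essentially the same route as the paper's proof: (O1) is Corollary \ref{O 1}, (O2) combines (O2) of ${\rm Cu}(A)$ with Lemma \ref{O 0} and Theorem \ref{jin bijiao}, and (O3), (O4) reduce to the corresponding axioms of ${\rm Cu}(A)$ together with the compatibility of addition and the naturality of the maps $\delta_{IJ}$, exactly as in the paper. The one caveat you flag yourself --- that the lifts provided by Lemma \ref{O 0} must be chosen compatibly along the connecting maps and only for a tail of indices --- is handled the same way (implicitly) in the paper's argument.
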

\begin{proof}

  (O1): It is exactly Corollary {\ref{O 1}}.

  (O2): $(x,\mathfrak{v},\oplus_{p=1}^{\infty}(\mathfrak{s}_p,\mathfrak{s}^p))\in {\rm \underline{Cu}}(A),$ from the (O2) of ${\rm Cu}(A)$, we have a $\ll$-increasing sequence $(x_n)_n$ in  ${\rm Cu}(A)$. Then by
  Lemma  {\ref{O 0}} and Theorem \ref{jin bijiao}, we achieve (O2) of $({\rm \underline{Cu}}(A), +, \leq)$.

  (O3): Let $$
(x_1,\mathfrak{u}_1,\oplus_{p=1}^{\infty}(\mathfrak{s}_{1,p},
\mathfrak{s}^{1,p}))\ll(y_1,\mathfrak{v}_1,\oplus_{p=1}^{\infty}(\mathfrak{t}_{1,p},\mathfrak{t}^{1,p}))
$$ and $$
(x_2,\mathfrak{u}_2,\oplus_{p=1}^{\infty}
(\mathfrak{s}_{2,p},\mathfrak{s}^{2,p}))\ll
(y_2,\mathfrak{v}_2,\oplus_{p=1}^{\infty}(\mathfrak{t}_{2,p},\mathfrak{t}^{2,p})).
$$ We have
$$
(x_1,\mathfrak{u}_1,\oplus_{p=1}^{\infty}
(\mathfrak{s}_{1,p},\mathfrak{s}^{1,p}))+
(x_2,\mathfrak{u}_2,\oplus_{p=1}^{\infty}(\mathfrak{s}_{2,p},\mathfrak{s}^{2,p}))
$$
$$\leq
(y_1,\mathfrak{v}_1,\oplus_{p=1}^{\infty}(\mathfrak{t}_{1,p},\mathfrak{t}^{1,p}))
+(y_2,\mathfrak{v}_2,\oplus_{p=1}^{\infty}(\mathfrak{t}_{2,p},\mathfrak{t}^{2,p})).
$$
From (O3) of Cu$(A)$, we also have $x_1+x_2\ll y_1+y_2$, therefore, by Theorem \ref{jin bijiao}, we get (O3) for $({\rm \underline{Cu}}(A), +,\leq)$.

 (O4): Let $((x_n,\mathfrak{u}_n,\oplus_{p=1}^{\infty}(\mathfrak{s}_{n,p},\mathfrak{s}^{n,p})))_n$
and $((y_n,\mathfrak{v}_n,\oplus_{p=1}^{\infty}(\mathfrak{t}_{n,p},\mathfrak{t}^{n,p})))_n$ be two increasing sequences in ${\rm \underline{Cu}}(A)$. Let
$$(x,\mathfrak{u},\oplus_{p=1}^{\infty}(\mathfrak{s}_p,\mathfrak{s}^p))=\sup\limits_{n\in\mathbb{N}}
(x_n,\mathfrak{u}_n,\oplus_{p=1}^{\infty}(\mathfrak{s}_{n,p},\mathfrak{s}^{n,p}))$$
and
$$(y,\mathfrak{v},\oplus_{p=1}^{\infty}(\mathfrak{t}_{p},\mathfrak{t}^{p}))
=\sup_{n\in\mathbb{N}}(y_n,\mathfrak{v}_n,\oplus_{p=1}^{\infty}(\mathfrak{t}_{n,p},\mathfrak{t}^{n,p})).
$$
From (O4) of Cu$(A)$, we have $\sup_{n\in\mathbb{N}}\{x_n+y_n\}=x+y$.
By the compatibility  of order $\leq$ and addition, we know
$$
((x_n,\mathfrak{u}_n,\oplus_{p=1}^{\infty}(\mathfrak{s}_{n,p},\mathfrak{s}^{n,p}))+
(y_n,\mathfrak{v}_n,\oplus_{p=1}^{\infty}(\mathfrak{t}_{n,p},\mathfrak{t}^{n,p})))
$$
is also an increasing sequence.
Note that
$$
\delta_{I_{x_n}I_{x}}(\mathfrak{u}_n,\oplus_{p=1}^{\infty}
(\mathfrak{s}_{n,p},\mathfrak{s}^{n,p}))=
(\mathfrak{u},\oplus_{p=1}^{\infty}(\mathfrak{s}_p,\mathfrak{s}^p))
$$
and
$$
\delta_{I_{y_n}I_{y}}(\mathfrak{v}_n,\oplus_{p=1}^{\infty}(\mathfrak{t}_{n,p},\mathfrak{t}^{n,p}))=
(\mathfrak{v},\oplus_{p=1}^{\infty}(\mathfrak{t}_{p},\mathfrak{t}^{p})).
$$
This implies
$$
\delta_{I_{x_n+y_n}I_{x+y}}(
\delta_{I_{x_n}I_{x_n+y_n}}(\mathfrak{u}_n,\oplus_{p=1}^{\infty}(\mathfrak{s}_{n,p},\mathfrak{s}^{n,p}))
+
\delta_{I_{y_n}I_{x_n+y_n}}(\mathfrak{v}_n,\oplus_{p=1}^{\infty}(\mathfrak{t}_{n,p},\mathfrak{t}^{n,p})))
$$$$=
\delta_{I_{x}I_{x+y}}(\mathfrak{u},\oplus_{p=1}^{\infty}(\mathfrak{s}_p,\mathfrak{s}^p))
+\delta_{I_{y}I_{x+y}}(\mathfrak{v},\oplus_{p=1}^{\infty}(\mathfrak{t}_{p},\mathfrak{t}^{p})).
$$
Using Corollary \ref{O 1}, we get (O4) for $({\rm \underline{Cu}}(A),+, \leq)$.

\end{proof}
\begin{definition}\rm
Define the total Cuntz category ${\rm \underline{Cu}}$ as follows:
$$
{\rm Ob}({\rm \underline{Cu}})=\{\,S\in {\rm Cu}^\sim\,|\,{\rm Gr}(S_c)\,\,{\rm is\,\, a\,\, \Lambda-module}\,\};
$$
let $X,Y\in {\rm \underline{Cu}}$, we say the map $\phi:\,X\to Y$ is a ${\rm \underline{Cu}}$-morphism if $\phi$ satisfies the following two conditions:

(1) $\phi$ is a ${\rm {Cu}}$-morphism, i.e., $\phi$ is ${\rm Mon}_{\leq}$-morphism and preserves suprema of increasing sequences and the compact containment relation.

(2) The induced Grothendieck map ${\rm Gr}(\phi_c): {\rm Gr}(X_c) \rightarrow {\rm Gr}(Y_c)$ is $\Lambda$-linear, i.e, ${\rm Gr}(\phi_c)$ is a morphism of the category $\Lambda$.

\end{definition}

%\textcolor[rgb]{1.00,0.00,0.00}{Need to prove Cu is a functor}
%\begin{proposition}
%  Let $A,B$ be $C^*$-algebras and $\varphi:A\rightarrow B$ be a %$*$-homomorphism. Then the induced map:
%  \begin{eqnarray*}
%  % \nonumber % Remove numbering (before each equation)
%    {\rm \underline{Cu}}(\varphi):{\rm \underline{Cu}}(A) &\rightarrow& {\rm \underline{Cu}}(B) \\
%    (x,u,\oplus_{p=1}^{\infty}(\mathfrak{s}_p,\mathfrak{s}^p)) &\rightarrow& (\varphi(x),\varphi(u),\oplus_{p=1}^{\infty}(\varphi(\mathfrak{s}_p),\varphi(\mathfrak{s}^p)))
%  \end{eqnarray*}
%  is a ${\rm \underline{Cu}}$-morphism.
%\end{proposition}

\begin{remark}\label{mor remark}
Let $A$ and $B$ be $C^*$-algebras and let $\psi:\, A\to B$ be a $*$-homomorphism.
We still denote $\psi\otimes {\rm id}_\mathcal{K}$ by $\psi$. Let $x\in {\rm Cu}(A)$ and $y\in {\rm Cu}(B)$, suppose that $\psi(I_x)\subset I_y$, where $I_x$ and $I_y$ are ideals of $A\otimes \mathcal{K}$ and $B\otimes \mathcal{K}$ generated by $x$ and $y$, respectively.
Denote the restriction map $\psi|_{I_x\to I_y}:\,I_x\to I_y$ by $\psi_{I_x I_y}$.

Then $\psi$ induces the two morphisms:
$$
{\rm \underline{K}}(\psi):\,{\rm \underline{K}}(A)\to{\rm \underline{K}}(B)
\quad{\rm and}
\quad {\rm \underline{K}}(\psi_{I_x I_y}):\,{\rm \underline{K}}(I_x)\to{\rm \underline{K}}(I_y).$$

In general, ${\rm \underline{K}}(\psi_{I_x I_y})$ is not the restriction of ${\rm \underline{K}}(\psi)$, as ${\rm \underline{K}}(I_x)$ is not a subgroup of ${\rm \underline{K}}(A)$.

In particular, we define a map ${\rm \underline{Cu}}(\psi):\,{\rm \underline{Cu}}(A)\to {\rm \underline{Cu}}(B)$ by
$$
{\rm \underline{Cu}}(\psi)(x,\mathfrak{u},\oplus_{p=1}^{\infty}(\mathfrak{s}_p,\mathfrak{s}^p))=({\rm {Cu}}(\psi)(x),{\rm \underline{K}}(\psi_{I_x I_{{\rm {Cu}}(\psi)(x)}})(\mathfrak{u},\oplus_{p=1}^{\infty}(\mathfrak{s}_p,\mathfrak{s}^p))).
$$
%(Note that $(\mathfrak{u},\oplus_{p=1}^{\infty}(\mathfrak{s}_p,\mathfrak{s}^p))$ and ${\rm \underline{K}}(\psi_{I_x I_{{\rm {Cu}}(\psi)(x)}})(\mathfrak{u},\oplus_{p=1}^{\infty}(\mathfrak{s}_p,\mathfrak{s}^p))$ are identified with $(0,\mathfrak{u},\oplus_{p=1}^{\infty}(\mathfrak{s}_p,\mathfrak{s}^p))\in {\rm \underline{K}}(I_x)$ and $(0,{\rm \underline{K}}(\psi_{I_x I_{{\rm {Cu}}(\psi)(x)}})(\mathfrak{u},\oplus_{p=1}^{\infty}(\mathfrak{s}_p,\mathfrak{s}^p)))\in {\rm \underline{K}}(I_{{\rm {Cu}}(\psi)(x)})$, respectively.)
(Note that $(\mathfrak{u},\oplus_{p=1}^{\infty}(\mathfrak{s}_p,\mathfrak{s}^p))$ is identified with $(0,\mathfrak{u},\oplus_{p=1}^{\infty}(\mathfrak{s}_p,\mathfrak{s}^p))\in {\rm \underline{K}}(I_x)$.)

It is easy to check that
$${\rm \underline{Cu}}(\psi)(x,\mathfrak{u},\oplus_{p=1}^{\infty}(\mathfrak{s}_p,\mathfrak{s}^p))\leq (y,\mathfrak{v},\oplus_{p=1}^{\infty}(\mathfrak{t}_p,\mathfrak{t}^p))$$
if and only if
$
{\rm {Cu}}(\psi)(x)\leq y\in {\rm {Cu}}(B)$ {\rm and}
$$
{\rm \underline{K}}(\psi_{I_x I_y})(0,\mathfrak{u},\oplus_{p=1}^{\infty}(\mathfrak{s}_p,\mathfrak{s}^p))=
(0,\mathfrak{v},\oplus_{p=1}^{\infty}(\mathfrak{t}_p,\mathfrak{t}^p))\in {\rm \underline{K}}(I_y).
$$
%$$\underline{\rm K}(\psi)(0,u,\oplus_{p=1}^{\infty}(\mathfrak{s}_p,\mathfrak{s}^p))=
%(0,v,\oplus_{p=1}^{\infty}(\mathfrak{t}_p,\mathfrak{t}^p))\in \underline{\rm K}(I_y).
%$$

%In order to simplify the notations, we will always write
%$${\rm \underline{K}}(\psi)(\mathfrak{u},\oplus_{p=1}^{\infty}(\mathfrak{s}_p,\mathfrak{s}^p))=
%(\mathfrak{v},\oplus_{p=1}^{\infty}(\mathfrak{t}_p,\mathfrak{t}^p))\in {\rm \underline{K}}(I_y)
%$$
%instead of
%$$
%{\rm \underline{K}}(\psi_{I_x I_y})(\mathfrak{u},\oplus_{p=1}^{\infty}(\mathfrak{s}_p,\mathfrak{s}^p))=
%(\mathfrak{v},\oplus_{p=1}^{\infty}(\mathfrak{t}_p,\mathfrak{t}^p)),
%$$
%where $(x,\mathfrak{u},\oplus_{p=1}^{\infty}(\mathfrak{s}_p,\mathfrak{s}^p))\in {\rm \underline{Cu}}(A)$
%and
%$(y,\mathfrak{v},\oplus_{p=1}^{\infty}(\mathfrak{t}_p,\mathfrak{t}^p))\in {\rm \underline{Cu}}(B)$.
\end{remark}

%\begin{lemma}
%Let $A,B$ be $C^*$-algebras and $\phi:A\rightarrow B$ be a $*$-homomorphism. Then for any
%$$(x,u,\oplus_{p=1}^{\infty}(\mathfrak{s}_p,\mathfrak{s}^p))\in {\rm \underline{Cu}}(A),\,
%(y,v,\oplus_{p=1}^{\infty}(\mathfrak{t}_p,\mathfrak{t}^p))\in {\rm \underline{Cu}}(B),$$
%we have
%$${\rm \underline{Cu}}(\phi)(x,u,\oplus_{p=1}^{\infty}(\mathfrak{s}_p,\mathfrak{s}^p))\leq (y,v,\oplus_{p=1}^{\infty}(\mathfrak{t}_p,\mathfrak{t}^p))$$
%if and only if
%$$
%{\rm {Cu}}(\phi)(x)\leq y\in {\rm {Cu}}(B)$$ {\rm and}
%$$\underline{\rm K}(\phi)(0,u,\oplus_{p=1}^{\infty}(\mathfrak{s}_p,\mathfrak{s}^p))=
%(0,v,\oplus_{p=1}^{\infty}(\mathfrak{t}_p,\mathfrak{t}^p))\in \underline{\rm K}(I_y).
%$$
%\end{lemma}
%\begin{proof}
%This is true, because
%$$\underline{K}(\psi_{I_x I_y})=
%\delta_{I_{{\rm {Cu}}(\psi)(x)}I_y}\circ
%\underline{K}(\psi_{I_x I_{\rm {Cu}(\psi)(x)}}.
%$$
%\end{proof}

\begin{theorem}\label{Cu total thm}
  The assignment
    \begin{eqnarray*}
  % \nonumber % Remove numbering (before each equation)
   {\rm \underline{Cu}}:\, C_{sr1}^* & \rightarrow & {\rm \underline{Cu}}  \\
    A &\mapsto & {\rm \underline{Cu}}(A) \\
    \phi &\mapsto & {\rm \underline{Cu}}(\phi)
  \end{eqnarray*}
 from the category of unital, separable $C^*$-algebras of stable rank one to the category   ${\rm \underline{Cu}}$ is a functor.

 In particular, we have
 $$
 ({\rm Gr}({\rm \underline{Cu}}(A)_c),\rho({\rm \underline{Cu}}(A)_c))\cong({\rm \underline{K}}(A),{\rm \underline{K}}(A)_+),
 $$
 where $\rho:\, S_c\to {\rm Gr}(S_c)$ is the natural map ($\rho(x)=[(x,0)])$.
\end{theorem}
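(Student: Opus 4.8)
The plan is to treat the displayed isomorphism first, since it simultaneously certifies that ${\rm \underline{Cu}}(A)$ belongs to the category ${\rm \underline{Cu}}$, and then to verify that ${\rm \underline{Cu}}(\phi)$ is a ${\rm \underline{Cu}}$-morphism and that the assignment is functorial. By the preceding theorem, ${\rm \underline{Cu}}(A)$ satisfies (O1)--(O4), and Corollary~\ref{O 2} gives $0\ll 0$, so ${\rm \underline{Cu}}(A)\in{\rm Cu}^\sim$; it remains to compute its Grothendieck group. By Corollary~\ref{O 2} and \cite[Theorem~5.8]{BC}, every compact element of ${\rm \underline{Cu}}(A)$ has the form $([e],\mathfrak{u},\oplus_p(\mathfrak{s}_p,\mathfrak{s}^p))$ with $e$ a projection in $A\otimes\mathcal{K}$, $\mathfrak{u}\in{\rm K}_1(I_e)$, and $(\mathfrak{s}_p,\mathfrak{s}^p)\in{\rm K}_*(I_e;\mathbb{Z}_p)$. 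I would then define
$$
\alpha\colon{\rm \underline{Cu}}(A)_c\to\underline{\rm K}(A)_+,\qquad \alpha\big([e],\mathfrak{u},\oplus_p(\mathfrak{s}_p,\mathfrak{s}^p)\big)=\delta_{I_e,\,A\otimes\mathcal{K}}\big([e]_{{\rm K}_0(I_e)},\mathfrak{u},\oplus_p(\mathfrak{s}_p,\mathfrak{s}^p)\big),
$$
using $\delta_{I_e,\,A\otimes\mathcal{K}}=\underline{\rm K}(\theta_{I_e,\,A\otimes\mathcal{K}})$ as in \ref{theta and delta} and identifying $\underline{\rm K}(A\otimes\mathcal{K})$ with $\underline{\rm K}(A)$. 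That $\alpha$ lands in $\underline{\rm K}(A)_+$ is immediate from the description of $\underline{\rm K}(A)_+$ in \ref{def k-total} (its ${\rm K}_0$-component is the projection class $[e]$, and it lies in $\underline{\rm K}_{I_e}(A)$), and that $\alpha$ is a monoid morphism follows from functoriality of $\underline{\rm K}$ together with the commuting triangle $\delta_{I_{e+f},\,A}\circ\delta_{I_e,\,I_{e+f}}=\delta_{I_e,\,A}$, just as in the compatibility proposition above and in the proof of Theorem~\ref{cptlem}.

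Next I would show $\alpha$ is surjective: given an element of $\underline{\rm K}(A)_+$ with ${\rm K}_0$-component $[e]\in{\rm K}_0^+(A)$, choose a projection $e$ representing it; by definition the element equals $\delta_{I_e,\,A}(\tilde\xi)$ for some $\tilde\xi\in\underline{\rm K}(I_e)$, and after subtracting from $\tilde\xi$ the element whose ${\rm K}_0(I_e)$-component is (the ${\rm K}_0(I_e)$-component of $\tilde\xi$ minus $[e]_{{\rm K}_0(I_e)}$) and which is zero elsewhere — an element killed by $\delta_{I_e,\,A}$ — one may assume $\tilde\xi$ has ${\rm K}_0(I_e)$-component $[e]$, so $\tilde\xi$ is the image of a compact element of ${\rm \underline{Cu}}(A)$. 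Hence ${\rm Gr}(\alpha)\colon{\rm Gr}({\rm \underline{Cu}}(A)_c)\to{\rm Gr}(\underline{\rm K}(A)_+)$ is surjective, and since $\underline{\rm K}(A)=\underline{\rm K}(A)_+-\underline{\rm K}(A)_+$ by Theorem~\ref{ordertotal}(i) (and $\underline{\rm K}(A)_+$ is cancellative), ${\rm Gr}(\underline{\rm K}(A)_+)=\underline{\rm K}(A)$. For injectivity I would mimic Theorem~\ref{cptlem}: if $\alpha(x)=\alpha(y)$, add the compact element $([1_A],0,\oplus_p(0,0))$ to both; since $1_A$ is full, $I_{e+1_A}=A\otimes\mathcal{K}$, the sums simplify, and $x+([1_A],0,\oplus_p(0,0))$ is determined by the pair consisting of $[e]+[1_A]\in{\rm Cu}(A)$ and the ${\rm K}_1\!\oplus\!(\text{mod-}p)$ part of $\delta_{I_e,\,A}(\mathfrak{u},\oplus_p(\mathfrak{s}_p,\mathfrak{s}^p))$. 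Equality of $\alpha$-values forces $[e]=[f]$ in ${\rm K}_0(A)$, hence $e\sim f$ and $[e]=[f]$ in ${\rm Cu}(A)$ by cancellation of projections in stable rank one, and it forces the remaining components to agree; thus $x+([1_A],0,\oplus_p(0,0))=y+([1_A],0,\oplus_p(0,0))$, so $\rho(x)=\rho(y)$. Therefore ${\rm Gr}(\alpha)$ is an isomorphism carrying $\rho({\rm \underline{Cu}}(A)_c)$ onto $\underline{\rm K}(A)_+$; transporting along it makes ${\rm Gr}({\rm \underline{Cu}}(A)_c)$ a $\Lambda$-module, so ${\rm \underline{Cu}}(A)\in{\rm Ob}({\rm \underline{Cu}})$.

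For functoriality, fix $\phi\colon A\to B$ in $C^*_{sr1}$. Since $\phi(I_x)\subseteq I_{{\rm Cu}(\phi)(x)}$, the restriction $*$-homomorphism, hence ${\rm \underline{Cu}}(\phi)$ of Remark~\ref{mor remark}, is well defined; that it is a ${\rm Mon}_{\le}$-morphism follows from naturality of $\underline{\rm K}$ applied to the square of ideal inclusions and $\phi$ (additivity, as in the compatibility proposition) and from the ``if and only if'' criterion in Remark~\ref{mor remark} (order preservation). Preservation of suprema I would get from the explicit description of suprema in Corollary~\ref{O 1}, the continuity of $\underline{\rm K}$ on stable-rank-one algebras (Proposition~\ref{k-total continuous}), and the fact that ${\rm Cu}(\phi)$ is a Cu-morphism; preservation of $\ll$ follows from Theorem~\ref{jin bijiao} together with preservation of $\ll$ by ${\rm Cu}(\phi)$. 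Finally, under the isomorphisms of the previous paragraph for $A$ and $B$, the map ${\rm Gr}({\rm \underline{Cu}}(\phi)_c)$ is carried to $\underline{\rm K}(\phi)\colon\underline{\rm K}(A)\to\underline{\rm K}(B)$ (a naturality check on $\alpha$ and the maps $\delta_{\,\cdot\,\cdot}$), which is $\Lambda$-linear because the transformations in $\Lambda$ are natural; hence ${\rm \underline{Cu}}(\phi)$ is a ${\rm \underline{Cu}}$-morphism, and ${\rm Gr}({\rm \underline{Cu}}(A)_c)\cong\underline{\rm K}(A)$ naturally. Compatibility with composition and identities is then immediate from functoriality of ${\rm Cu}$ and $\underline{\rm K}$ and from $(\psi\circ\phi)_{I_xI_z}=\psi_{I_yI_z}\circ\phi_{I_xI_y}$.

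I expect the main obstacle to be the verification that ${\rm \underline{Cu}}(\phi)$ preserves suprema of increasing sequences: although ${\rm Cu}(\phi)$ and $\underline{\rm K}(\phi)$ are individually well behaved, one must reconcile a single $*$-homomorphism with the inductive-limit presentation $\underline{\rm K}(I_x)=\varinjlim\underline{\rm K}(I_{x_n})$ that defines suprema in ${\rm \underline{Cu}}$, keeping the commuting squares of the maps $\delta_{\,\cdot\,\cdot}$ and $\underline{\rm K}(\phi_{\,\cdot\,\cdot})$ straight; this is the point where Proposition~\ref{k-total continuous} and Corollary~\ref{O 1} carry real weight rather than mere bookkeeping, and also where care is needed so that the identification ${\rm Gr}({\rm \underline{Cu}}(\cdot)_c)\cong\underline{\rm K}(\cdot)$ is genuinely natural in the $C^*$-algebra.
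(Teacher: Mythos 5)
Your proposal is correct and follows essentially the same route as the paper: the same comparison map $\alpha\colon{\rm \underline{Cu}}(A)_c\to\underline{\rm K}(A)_+$, the same trick of adding $([1_A],0,\oplus_p(0,0))$ to pass to the Grothendieck group (using the cancellativity of $\underline{\rm K}(A)_+$ from Theorem~\ref{ordertotal}), and the same identification of ${\rm Gr}({\rm \underline{Cu}}(\phi)_c)$ with $\underline{\rm K}(\phi)$ to obtain $\Lambda$-linearity. You merely supply more detail than the paper at two points it treats as immediate, namely the surjectivity of $\alpha$ and the verification that ${\rm \underline{Cu}}(\phi)$ is a ${\rm Cu}^\sim$-morphism.
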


\begin{proof}
The key point is to prove
  $
  {\rm Gr}({\rm \underline{Cu}}(A)_c)\cong{\rm \underline{K}}(A)$ (canonically), where ${\rm Gr}(\cdot)$ is the Grothendick group procedure.

  Since $A$ has stable rank one, any compact element in ${\rm Cu}(A)$ can be raised by projections \cite[Theorem 5.8]{BC}, then by Corollary \ref{O 2}, every compact element in ${\rm {Cu}}(A)$ has the form $([e],u,\oplus_{p=1}^{\infty}(\mathfrak{s}_p,\mathfrak{s}^p))$, where $e$ is a projection in $A\otimes \mathcal{K}$.

  Define
  \begin{eqnarray*}
          % \nonumber % Remove numbering (before each equation)
            \alpha: {\rm \underline{Cu}}(A)_c &\rightarrow & {\rm \underline{K}}(A)_+ \\
            ([e],\mathfrak{u},\oplus_{p=1}^{\infty}(\mathfrak{s}_p,\mathfrak{s}^p)) &\mapsto & ([e],\delta_{I_eA}(\mathfrak{u},\oplus_{p=1}^{\infty}(\mathfrak{s}_p,\mathfrak{s}^p))).
\end{eqnarray*}
From the definition of $\underline{\rm K}(A)_+$ we listed in \ref{def k-total}, $\alpha$ is a surjective monoid morphism.
  ($\alpha$ is just surjective, not necessarily injective. The $C^*$-algebra $E$ in  Example \ref{example sur not in} is such an algebra.)

  Note that ${\rm {K}}_0^+(A)$ is a sub-cone of $ {\rm \underline{K}}(A)_+$, it induces an order on ${\rm \underline{K}}(A)_+$, that is, for  $ (f,\overline{f}),(g,\overline{g})\in{\rm \underline{K}}(A)_+$ ($f,g\in {\rm K}_0^+(A)$ and $\overline{f},\overline{g}\in {\rm K}_1(A)\times\oplus_{p=1}^{\infty}{\rm K}_*(A;\mathbb{Z}_p)$), we say
$
(f,\overline{f})\leq(g,\overline{g}),
$
if
$
 f\leq g\,\,{\rm and} \,\,\overline{f}=\overline{g}.
$

%$$(x,u,\oplus_{p=1}^{\infty}(\mathfrak{s}_p,\mathfrak{s}^p))\leq (y,v,\oplus_{p=1}^{\infty}(\mathfrak{t}_p,\mathfrak{t}^p))$$
%if
%$$
% x\leq y\quad{\rm and} \quad(0,u,\oplus_{p=1}^{\infty}(\mathfrak{s}_p,\mathfrak{s}^p))= %(0,v,\oplus_{p=1}^{\infty}(\mathfrak{t}_p,\mathfrak{t}^p)).
%$$
Then $\alpha$ is an ordered morphism.

  Suppose that
  $$
  \alpha([e],\mathfrak{u},\oplus_{p=1}^{\infty}(\mathfrak{s}_p,\mathfrak{s}^p))
  =\alpha([q],\mathfrak{v},\oplus_{p=1}^{\infty}(\mathfrak{t}_p,\mathfrak{t}^p)),
  $$
  which is
%  if and only if
%  $$
%  ([p],u,\oplus_{p=1}^{\infty}(\mathfrak{s}_p,\mathfrak{s}^p))
%  -(0,0,\oplus_{p=1}^{\infty}(0,0))
%  =([q],v,\oplus_{p=1}^{\infty}(\mathfrak{t}_p,\mathfrak{t}^p))-(0,0,\oplus_{p=1}^{\infty}(0,0)).
%  $$
%  On the forward direction, we have
  $$
  ([e],\delta_{I_eA}(\mathfrak{u},\oplus_{p=1}^{\infty}( \mathfrak{s}_p,\mathfrak{s}^p)))=
  ([q],\delta_{I_qA}(\mathfrak{v},\oplus_{p=1}^{\infty}( \mathfrak{t}_p,\mathfrak{t}^p))).
  $$
  Then for $([1_A],0,\oplus_{p=1}^{\infty}(0,0))\in  {\rm \underline{Cu}}(A)_c$,
  in ${\rm \underline{Cu}}(A)_c$, we have
  $$
  ([1_A]+[e],\delta_{I_eA}(\mathfrak{u},\oplus_{p=1}^{\infty}(\mathfrak{s}_p,\mathfrak{s}^p)))
  $$$$=
  ([1_A]+[q],\delta_{I_qA}(\mathfrak{v},\oplus_{p=1}^{\infty}(\mathfrak{t}_p,\mathfrak{t}^p))),
  $$
  which is
  $$
  ([1_A],0,\oplus_{p=1}^{\infty}(0,0))+([e],\mathfrak{u},
  \oplus_{p=1}^{\infty}(\mathfrak{s}_p,\mathfrak{s}^p))
  $$$$=
  ([1_A],0,\oplus_{p=1}^{\infty}(0,0))+([q],\mathfrak{v},
  \oplus_{p=1}^{\infty}(\mathfrak{t}_p,\mathfrak{t}^p)).
  $$

  This means that for any $x,y\in {\rm \underline{Cu}}(A)_c$, if $\alpha(x)=\alpha(y)$, under the Grothendieck construction, the difference between $x$ and $y$ vanishes.

  Consider the natural map $\rho:\, \mathrm{\underline {Cu}}(A)_c\to {\rm Gr}(\mathrm{\underline {Cu}}(A)_c)$, if $\rho(x)=\rho(y)$, then there exists $z\in \mathrm{\underline {Cu}}(A)_c$ such that
$x+z=y+z$, hence, $$\alpha(x)+\alpha(z)=\alpha(y)+\alpha(z),$$ by \ref{ordertotal}, then $\alpha(x)=\alpha(y)$.

  Then we have
  $$
  {\rm Gr}({\rm \underline{Cu}}(A)_c )\cong{\rm Gr}(\alpha({\rm \underline{Cu}}(A)_c))= {\rm Gr}({\rm \underline{K}}(A)_+)={\rm \underline{K}}(A)
  $$
 and
 $$
 ({\rm Gr}({\rm \underline{Cu}}(A)_c),\rho({\rm \underline{Cu}}(A)_c))\cong({\rm \underline{K}}(A),{\rm \underline{K}}(A)_+).
 $$

  Since  ${\rm \underline{K}}(A)$ is a $\Lambda$-module, then ${\rm Gr}({\rm \underline{Cu}}(A)_c)$ is also a $\Lambda$-module, now we have ${\rm \underline{Cu}}(A)\in {\rm \underline{Cu}}$. Suppose that $\phi:A\rightarrow B$ is a $*$-homomorphism, it is obvious that the induced map
  $
  {\rm \underline{Cu}}(\phi): {\rm \underline{Cu}}(A)\rightarrow  {\rm \underline{Cu}}(B)
  $ is a ${\rm {Cu}}^\sim$-morphism.

%  Note that we have the canonical isomorphisms
 For any $A\in C^*_{sr1}$, write the canonical isomorphism induced by $\alpha$ as
  $$
 \alpha_A^*:\,({\rm Gr}({\rm \underline{Cu}}(A)_c),\rho({\rm \underline{Cu}}(A)_c))\to({\rm \underline{K}}(A),{\rm \underline{K}}(A)_+).
 $$
% and$$
% \alpha_B^*:\,({\rm Gr}({\rm \underline{Cu}}(B)_c),\rho({\rm \underline{Cu}}(B)_c))\to({\rm \underline{K}}(B),{\rm \underline{K}}(B)_+).
%  $$
  We identify ${\rm Gr}({\rm \underline{Cu}}(A)_c )$ and ${\rm Gr}({\rm \underline{Cu}}(B)_c )$ with
  ${\rm \underline{K}}(A)$ and ${\rm \underline{K}}(B)$ through $\alpha_A^*$ and $\alpha_B^*$, respectively. Then the Grothendieck map
  $$
  {\rm Gr}(\phi_c): {\rm \underline{K}}(A)\rightarrow {\rm \underline{K}}(B)
  $$
  is exactly the map ${\rm \underline{K}}(\phi)$, and hence, is a $\Lambda$-linear map.

  Then  ${\rm \underline{Cu}}(\phi)$ is a ${\rm \underline{Cu}}^\sim$-morphism.

% \begin{eqnarray*}
 % \nonumber % Remove numbering (before each equation)
%    &&  ([1_A],0,(0,0))+([p],u,\oplus_{p=1}^{\infty}(\mathfrak{s}_p,\mathfrak{s}^p)) \\
%    &=& ([1_A]+[p],\delta_{I_pA}(u),( \delta_{I_pA}(\mathfrak{s}_p),\delta_{I_pA}(\mathfrak{s}^p)))\\
%    &=& ([1_A]+[q],\delta_{I_pA}(v),( \delta_{I_pA}(\mathfrak{t}_p),\delta_{I_pA}(t^p)))\\
%    &=& ([1_A],0,(0,0))+([q],v,\oplus_{p=1}^{\infty}(\mathfrak{t}_p,t^p))
% \end{eqnarray*}

%  "$\Leftarrow$": By , there exists $h\in P(A\otimes K)$, and $([h],w,\oplus_{p=1}^{\infty}(r_p,r^p))\in {\rm {Cu}}(A)_c$ such that
%  $$  ([p],u,\oplus(\mathfrak{s}_p,\mathfrak{s}^p))+([h],w,\oplus_{p=1}^{\infty}(r_p,r^p))=([q],v,\oplus_{p=1}^{\infty}(\mathfrak{t}_p,t^p)).$$
%  i.e.,
%  $$
%  ([p]+[h],\delta_{I_{[p]}I_{[q]+[h]}}(u+w,\oplus_{p=1}^{\infty}(\mathfrak{s}_p+r_p,\mathfrak{s}^p+r^p) ))
%  $$
%  $$
%  =([q]+[h],\delta_{I_{[q]}I_{[q]+[h]}}(v+w,\oplus_{p=1}^{\infty}(\mathfrak{t}_p+r_p,t^p+r^p) )).
%  $$
% Then by assumption, we have $[p]=[q]$, and note that
% $$
% \delta_{I_{[q]+[h]}A}\circ\delta_{I_{[p]}I_{[q]+[h]}}=\delta_{I_{[p]A}},
% $$
%we have
%$$
%\delta_{I_{[p]}A}(u+w,\oplus(\mathfrak{s}_p+r_p,\mathfrak{s}^p+r^p))
%=\delta_{I_{[q]}A}(v+w,\oplus_{p=1}^{\infty}(\mathfrak{t}_p+r_p,t^p+r^p))
%$$
%which means that
%$$
%\delta_{I_{[p]}A}(u,\oplus(\mathfrak{s}_p,\mathfrak{s}^p))=\delta_{I_{[q]}A}(v,\oplus_{p=1}^{\infty}(\mathfrak{t}_p,t^p)).
%$$

%In general, we have
% $$
%  {\rm Gr}({\rm {Cu}}(A)_c)={\rm Gr}(\alpha({\rm {Cu}}(A)_c))= {\rm Gr}({\rm \underline{K}}(A)_+)={\rm \underline{K}}(A).
%  $$

\end{proof}

Now we give another picture of the total Cuntz semigroup, which seems
%One may feel the map ${\rm \underline{Cu}}(\phi)$ a little odd and  prefer
a natural construction like $\lesssim_1$ given in \cite[3.1]{L1}.% We list them here.
\begin{notion}\label{cutnz new}\rm
(\cite{Cu1}) For each $C^{*}$-algebra $A$, Cuntz defined the $C^{*}$-algebra $Q A=A * A$ as the free product of $A$ by itself. $Q A$ is generated as a $C^{*}$-algebra by $\{\iota(a), \bar{\iota}(a) \mid a \in A\}$ with respect to the largest $C^*$-norm, where $\iota$ and $\bar{\iota}$ denote the inclusions of two copies of $A$ into the free product. He also defined $q A$ as the ideal of $Q A$ generated by the differences $\{\iota(a)-\bar{\iota}(a) \mid a \in A\}$ and showed that for any  $C^*$-algebras $A, B$,
$$
{\rm  KK}(A,B)=[qA,B\otimes \mathcal{K}],
$$
where $[qA,B\otimes \mathcal{K}]$ consists of all the homotopy classes of homomorphisms from $qA$ to $B\otimes \mathcal{K}$.

Let $\psi:\,A\to B$ be a homomorphism, denote $SA$  the suspension algebra of $A$, i.e., $A\otimes C_0(0,1)$ and set $S\psi=\psi\otimes {\rm id}_{C_0(0,1)}.$ Then we have
%$$
%K_*(A;\mathbb{Z}_p)=K_0(A;\mathbb{Z}_p)\oplus K_1(A;\mathbb{Z}_p)
%=KK(\mathbb{C},A\otimes  C_0(W_p))\oplus KK(\mathbb{C},SA\otimes  C_0(W_p)).
%$$
 \begin{eqnarray*}
  % \nonumber % Remove numbering (before each equation)
   {\rm K}_*(A;\mathbb{Z}_p) & = & {\rm K}_0(A;\mathbb{Z}_p)\oplus {\rm K}_1(A;\mathbb{Z}_p)  \\
     &= & {\rm  KK}(\mathbb{C},A\otimes  C_0(W_p))\oplus {\rm  KK}(\mathbb{C},SA\otimes  C_0(W_p))\\
     &= & [q\mathbb{C},A\otimes  C_0(W_p)\otimes\mathcal{K}]\oplus [q\mathbb{C},SA\otimes  C_0(W_p)\otimes\mathcal{K}].
  \end{eqnarray*}
\end{notion}
\begin{definition}\rm

Let $A$ be a separable $C^*$-algebras of stable rank one.
Let $a,b\in A\otimes\mathcal{K}$, let $u,v$ be unitary elements of $I_a^\sim$ and $I_b^\sim$, respectively and
let
$$s_p:\,q\mathbb{C}\to I_a\otimes C_0(W_p),\quad
t_p:\,q\mathbb{C}\to I_b\otimes C_0(W_p)$$
and
$$s^p:\,q\mathbb{C}\to SI_a\otimes C_0(W_p),\quad t^p:\,{\mathbb{I}}_p\to SI_b\otimes C_0(W_p)$$ be homomorphisms for $p=1,2,\cdots$.

We say $(a,u,\oplus_{p=1}^{\infty}(s_p,s^p))\lesssim_T(b,v,\oplus_{p=1}^{\infty}(t_p,t^p))$,
if
$$
a\lesssim_{Cu} b,\quad [\theta_{I_aI_b}^\sim(u)]=[v]\in {\rm K}_1(I_b),
$$
$$
[(\theta_{I_aI_b}\otimes {\rm id}_{C_0(W_p)})\circ s_p)]=[t_p]\in {\rm K}_0(I_b,\mathbb{Z}_p)$$and
$$
[(S\theta_{I_aI_b}\otimes {\rm id}_{C_0(W_p)})\circ s^p]=[t^p]\in {\rm K}_1(I_b,\mathbb{Z}_p),\quad p=1,2,\cdots,
$$
where $\theta_{I_aI_b}$ (see \ref{theta and delta}) is the natural embedding map from $I_a$ to $I_b$. ($\theta_{I_aI_b}$ induces the map $\delta_{I_aI_b}$, i.e.,
${\rm \underline{K}}(\theta_{I_aI_b})=\delta_{I_aI_b}$.)
\end{definition}
It is obvious that the relation $\lesssim_T$ is reflexive and transitive.
\begin{definition}\rm
Let $A$ be a separable $C^*$-algebra of stable rank one.
Denote $\mathfrak{T}(A)$ the set of the all tuples $(a,u,\oplus_{p=1}^{\infty}(s_p,s^p))$, where $a\in (A\otimes\mathcal{K})_+$, $u\in \mathcal{U}(I_a^\sim)$ and $s_p\in {\rm Hom}(q\mathbb{C}, I_a\otimes C_0(W_p))$, $s^p\in {\rm Hom}(q\mathbb{C}, SI_a\otimes C_0(W_p))$.

By antisymmetrizing the $\lesssim_T$ relation, we define an equivalent relation $\sim_T$ on $\mathfrak{T}(A)$ called the total Cuntz equivalence, and denote $[(a,u,\oplus_{p=1}^{\infty}(s_p,s^p))]$ the equivalent class of $(a,u,\oplus_{p=1}^{\infty}(s_p,s^p))$. We construct a semigroup of $A$ as follows:
$$
\mathcal{T}(A):=\{[(a,u,\oplus_{p=1}^{\infty}(s_p,s^p))]\mid (a,u,\oplus_{p=1}^{\infty}(s_p,s^p))\in \mathfrak{T}(A)\}/\sim_T.
$$

For any two $[(a,u,\oplus_{p=1}^{\infty}(s_p,s^p))],[(b,v,\oplus_{p=1}^{\infty}(t_p,t^p))]\in \mathcal{T}(A)$, we say $$[(a,u,\oplus_{p=1}^{\infty}(s_p,s^p))]\leq [(b,v,\oplus_{p=1}^{\infty}(t_p,t^p))],$$ if
$$(a,u,\oplus_{p=1}^{\infty}(s_p,s^p))\lesssim_T (b,v,\oplus_{p=1}^{\infty}(t_p,t^p)).$$

The following addition is well-defined and compatible with the relation $\leq$ on $\mathcal{T}(A)$:
$$
[(a,u,\oplus_{p=1}^{\infty}(s_p,s^p))]+[(b,v,\oplus_{p=1}^{\infty}(t_p,t^p))]
$$$$
:=[(a\oplus b,u\oplus v,\oplus_{p=1}^{\infty}(\underline{s_p+t_p},\overline{s^p+t^p})],
$$
where $$\underline{s_p+t_p}=(\theta_{I_aI_{a\oplus b}}\otimes {\rm id}_{C_0(W_p)})\circ s_p\oplus (\theta_{I_bI_{a\oplus b}}\otimes {\rm id}_{C_0(W_p)})\circ t_p$$ and
$$\overline{s^p+t^p}=(S\theta_{I_aI_{a\oplus b}}\otimes {\rm id}_{C_0(W_p)})\circ s^p \oplus (S\theta_{I_bI_{a\oplus b}}\otimes {\rm id}_{C_0(W_p)})\circ t^p.$$

Note that $[(0,1_{\mathbb{C}},\oplus_{p=1}^{\infty}(0,0))]$ is the neutral element and we obtain a partially ordered monoid $(\mathcal{T}(A),+,\leq)$.
\end{definition}
%\begin{theorem}{\rm (}\cite[Thoerem 5.2]{DG}\label{sigma lifting}
%For any $p\geq 1$ and any $\sigma$-unital $C^*$-algebra $A$, the map
%$$
%[\mathbb{I}_p,A\otimes\mathcal{K}]\to KK(\mathbb{I}_p,A)
%$$
%is bijection,
%where $[\mathbb{I}_p,A\otimes\mathcal{K}]$ consists of all the homotopy classes of homomorphisms from $\mathbb{I}_p$ to $A\otimes\mathcal{K}$. (We mention  that $KK(\mathbb{I}_1,A)$ is trivial as 0.)
%\end{theorem}
%Now we show:
\begin{theorem}
Let $A$ be a  separable $C^*$-algebra of stable rank one. We have a ${\rm \underline{Cu}}$-isomorphism
$$\xi:\,\mathcal{T}(A)\to {\rm \underline{Cu}}(A).$$
\end{theorem}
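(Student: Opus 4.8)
The plan is to identify $\xi$ with the tautological assignment sending the $\sim_T$-class of a tuple to its system of Cuntz- and $\mathrm{K}$-theoretic invariants,
$$
\xi\big([(a,u,\oplus_p(s_p,s^p))]\big)=\big([a],\,[u]_{{\rm K}_1(I_a)},\,\oplus_p([s_p],[s^p])\big),
$$
where $[s_p]\in{\rm K}_0(I_a;\mathbb{Z}_p)$ and $[s^p]\in{\rm K}_1(I_a;\mathbb{Z}_p)$ are the classes determined by $s_p,s^p$ via the $q\mathbb{C}$-picture of $\mathrm{KK}$ recalled in \ref{cutnz new}; since $I_{[a]}=I_a\in{\rm Lat}_f(A)$, the target lies in ${\rm Cu}_f(I_a)\times{\rm K}_1(I_a)\times\bigoplus_p{\rm K}_*(I_a;\mathbb{Z}_p)\subseteq{\rm \underline{Cu}}(A)$. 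The first task is to observe that $\sim_T$ has been designed precisely so that $\xi$ is well posed and an order-embedding. Indeed, $\delta_{I_aI_b}={\rm \underline{K}}(\theta_{I_aI_b})$ acts on the ${\rm K}_1$- and ${\rm K}_*(\cdot\,;\mathbb{Z}_p)$-coordinates exactly by $u\mapsto[\theta_{I_aI_b}^\sim(u)]$, $s_p\mapsto[(\theta_{I_aI_b}\otimes{\rm id})\circ s_p]$ and $s^p\mapsto[(S\theta_{I_aI_b}\otimes{\rm id})\circ s^p]$, so a term-by-term comparison of the definition of $\lesssim_T$ with Definition \ref{cu total def} yields
$$
(a,u,\oplus_p(s_p,s^p))\lesssim_T(b,v,\oplus_p(t_p,t^p))\ \Longleftrightarrow\ \xi([(a,u,\oplus_p(s_p,s^p))])\le\xi([(b,v,\oplus_p(t_p,t^p))]).
$$
Hence $\sim_T$ corresponds to equality in ${\rm \underline{Cu}}(A)$, so $\xi$ is a well-defined injection and reflects and preserves the order.

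Next I would check additivity and surjectivity of $\xi$. Additivity reduces, via $[a\oplus b]=[a]+[b]$ and $I_{a\oplus b}=I_a+I_b=I_{[a]+[b]}$, to the identity $[u\oplus v]=\delta_{I_aI_{a\oplus b}}([u])+\delta_{I_bI_{a\oplus b}}([v])$ in ${\rm K}_1(I_{a\oplus b})$ and, likewise, to the fact that an orthogonal direct sum of $q\mathbb{C}$-homomorphisms represents the sum of the corresponding mod-$p$ classes; this is exactly how the addition on $\mathcal{T}(A)$ and the terms $\underline{s_p+t_p}$, $\overline{s^p+t^p}$ were arranged. For surjectivity, given $(x,\mathfrak{u},\oplus_p(\mathfrak{s}_p,\mathfrak{s}^p))\in{\rm \underline{Cu}}(A)$ one picks $a\in(A\otimes\mathcal{K})_+$ with $[a]=x$, whence $I_a=I_x$; since $A$, and therefore $A\otimes\mathcal{K}$ and its ideal $I_a$, has stable rank one, $\mathfrak{u}$ is represented by a unitary in $\mathcal{U}(I_a^\sim)$, and by Cuntz's $q\mathbb{C}$-picture of $\mathrm{KK}$ the classes $\mathfrak{s}_p,\mathfrak{s}^p$ are represented by homomorphisms $s_p\colon q\mathbb{C}\to I_a\otimes C_0(W_p)$ and $s^p\colon q\mathbb{C}\to SI_a\otimes C_0(W_p)$. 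The resulting tuple is then a $\xi$-preimage.

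Finally I would upgrade $\xi$ to a ${\rm \underline{Cu}}$-isomorphism. By the previous steps $\xi$ is a bijective monoid homomorphism that is an order-isomorphism; since in any ${\rm Cu}^\sim$-semigroup the compact containment relation and suprema of increasing sequences are determined by the order alone, $\xi$ and $\xi^{-1}$ automatically preserve $\ll$ and suprema, so $\xi$ is a ${\rm Cu}^\sim$-isomorphism and $\mathcal{T}(A)$ satisfies (O1)--(O4). Moreover $\xi$ carries compact elements to compact elements, and — using that over a stable-rank-one algebra every compact element of ${\rm Cu}(A)$ comes from a projection — it is compatible on compacts with the canonical identifications ${\rm Gr}({\rm \underline{Cu}}(A)_c)\cong{\rm \underline{K}}(A)$ of Theorem \ref{Cu total thm}; hence ${\rm Gr}(\xi_c)$ becomes the identity of ${\rm \underline{K}}(A)$ and is in particular $\Lambda$-linear, so ${\rm \underline{Cu}}(A)\in{\rm \underline{Cu}}$ forces $\mathcal{T}(A)\in{\rm \underline{Cu}}$ and $\xi$ is an isomorphism there.

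The main obstacle is the surjectivity step: one must know that every ${\rm K}_1$-class of the non-unital, generally non-stable ideal $I_a$ is realized by a unitary in its minimal unitization (not merely in a matrix amplification), and that every mod-$p$ class is realized by a $*$-homomorphism out of $q\mathbb{C}$ into $I_a\otimes C_0(W_p)$, resp. $SI_a\otimes C_0(W_p)$, rather than into its stabilization. This is exactly where stable rank one of $I_a$ and the precise form of Cuntz's picture of $\mathrm{KK}$ are needed; everything else in the argument is bookkeeping or a direct unwinding of definitions.
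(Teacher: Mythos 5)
Your proposal is correct and follows essentially the same route as the paper: the same tautological map $\xi$, well-definedness and order-embedding by matching $\lesssim_T$ term-by-term with the order of Definition \ref{cu total def}, additivity via ${\rm \underline{K}}(\theta_{I_aI_b})=\delta_{I_aI_b}$, surjectivity by lifting $x$ to a positive element, $\mathfrak{u}$ to a unitary in $I_a^\sim$, and the mod-$p$ classes to homomorphisms out of $q\mathbb{C}$, and finally upgrading the ordered-monoid isomorphism to a ${\rm Cu}^\sim$-isomorphism (the paper cites \cite[Lemma 4.1]{L1} for this last step) so that ${\rm Gr}(\xi_c)$ is the identity on ${\rm \underline{K}}(A)$ and hence $\Lambda$-linear.
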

\begin{proof}
It has been shown that ${\rm \underline{Cu}}(A)$ is an object in  ${\rm \underline{Cu}}$, and hence, ${\rm \underline{Cu}}(A)$ is also an object in ${\rm {Cu}}^\sim$.

Set
$$
\xi([(a,u,\oplus_{p=1}^{\infty}(s_p,s^p))])=([a],\mathfrak{u},\oplus_{p=1}^{\infty}
(\mathfrak{s}_p,\mathfrak{s}^p)),
$$
where
$a\in (A\otimes\mathcal{K})_+$, $u\in \mathcal{U}(I_a^\sim)$, $s_p\in {\rm Hom}(q\mathbb{C}, I_a\otimes C_0(W_p))$, $s^p\in {\rm Hom}(q\mathbb{C}, SI_a\otimes C_0(W_p))$,
$[a]\in {\rm Cu}(A),$ $\mathfrak{u}=[u]\in {\rm K}_1(I_a),$ $\mathfrak{s}_p=[s_p]\in {\rm K}_0(I_a;\mathbb{Z}_p)$ and $\mathfrak{s}^p=[s_p]\in {\rm K}_1(I_a;\mathbb{Z}_p).$

It is immediate that $\xi$ is a well-defined set map from $\mathcal{T}(A)$ to ${\rm \underline{Cu}}(A)$, which is order preserving and injective.
The fact that  ${\rm \underline{K}}(\theta_{I_aI_b})=\delta_{I_aI_b}$ for $a\lesssim_{Cu} b$ implies that $\xi$ is additive.

We show the surjectivity.
Given any $(x,\mathfrak{u},\oplus_{p=1}^{\infty}(\mathfrak{s}_p,\mathfrak{s}^p))\in {\rm \underline{Cu}}(A)$, by Definition \ref{cu total def},
we have
$$
  (x,\mathfrak{u},\oplus_{p=1}^{\infty}(\mathfrak{s}_p,\mathfrak{s}^p))\in{\rm Cu}_f(I_x)\times {\rm K}_1(I_x)\times\bigoplus_{p=1}^{\infty} {\rm K}_* (I_x; \mathbb{Z}_p).
  $$
Then we can lift $x$ to $a\in (A\otimes \mathcal{K})_+$ with
$x=[a]$ and $I_x=I_a$.
Next we lift $\mathfrak{u}$ to $u\in \mathcal{U}(I_a^\sim)$ with $[u]_{{\rm K}_1(I_a)}=\mathfrak{u}$.
For any $p\geq 1$, we have
$$
{\rm K}_* (I_a; \mathbb{Z}_p)\cong {\rm K}_0(I_a;\mathbb{Z}_p)\oplus {\rm K}_1(I_a;\mathbb{Z}_p)$$
and
$$
\mathfrak{s}_p\in {\rm K}_0(I_a;\mathbb{Z}_p),\quad\mathfrak{s}^p\in {\rm K}_1(I_a;\mathbb{Z}_p).$$

Then by \ref{cutnz new}, there exist $s_p\in {\rm Hom}(q\mathbb{C}, I_a\otimes C_0(W_p))$, $s^p\in {\rm Hom}(q\mathbb{C}, SI_a\otimes C_0(W_p))$, satisfying that
$$
[s_p]=\mathfrak{s}_p\in {\rm K}_0(I_b;\mathbb{Z}_p),\quad[s^p]=\mathfrak{s}^p\in {\rm K}_1(I_b;\mathbb{Z}_p).
$$
Now we have
$$\xi([(a,u,\oplus_{p=1}^{\infty}(s_p,s^p))])=
(x,\mathfrak{u},\oplus_{p=1}^{\infty}(\mathfrak{s}_p,\mathfrak{s}^p)).$$

That is, we have $\xi$ is an ordered monoid isomorphism, by \cite[Lemma 4.1]{L1}, $\xi$ becomes a ${\rm Cu}^\sim$-isomorphism and $\mathcal{T}(A)\in {\rm Cu}^\sim$. This implies that
$$
{\rm Gr}(\xi_c):{\rm Gr}(\mathcal{T}(A)_c)\rightarrow {\rm Gr}({\rm \underline{ Cu}}(A)_c)
$$
is a group isomorphic map and in fact, ${\rm Gr}(\xi_c)$ is the identity map of ${\rm \underline{K}}(A)$, which is of course $\Lambda$-linear. This completes the proof.

\end{proof}
\begin{remark}
Thus, using this new picture, the  ${\rm \underline{Cu}}$-morphism ${\rm \underline{Cu}}(\psi)$ induced by a homomorphism $\psi$ in Remark \ref{mor remark} and Theorem \ref{Cu total thm} can also be written as
$$
{\rm \underline{Cu}}(\psi)([(a,u,\oplus_{p=1}^{\infty}(s_p,s^p))])$$
$$=[(\psi(a),\psi^\sim(u),\oplus_{p=1}^{\infty}
( (\psi_{I_aI_{\psi(a)}}\otimes{\rm id}_{ C_0(W_p)})\circ s_p,(S\psi_{I_aI_{\psi(a)}}\otimes{\rm id}_{ C_0(W_p)})\circ s^p))].
$$
%where $(\phi\otimes{\rm id}_\mathcal{K}\otimes {\rm id}_{C_0(W_p)})\circ s_p$ and
%$(S\phi\otimes{\rm id}_\mathcal{K}\otimes {\rm id}_{C_0(W_p)})\circ s^p$
%are considered as homomorphisms from $q\mathbb{C}$ to
%$I_{\phi(a)}\otimes C_0(W_p)$ and $SI_{\phi(a)}\otimes C_0(W_p)$, respectively;
%not the ones from $q\mathbb{C}$ to
%$B\otimes \mathcal{K}\otimes C_0(W_p)$ and $SB\otimes \mathcal{K}\otimes C_0(W_p)$, respectively.
 \end{remark}

\section{Continuity of the functor \underline{Cu}}
%As the last part of this paper, we show that ${\rm \underline{Cu}}_u$ is a continuous funtor.

%It is shown in that ${\rm Cu}_1$ is a continuous functor from $C^*$-algebra to ${\rm Cu}^\sim$-semigroup, now we define the subcategory $\underline{\rm Cu}$-semigroup and

It is proved in section 3.8 of \cite{L1} that ${\rm Cu}_1$ is a  continuous functor, now we show that $\underline{\rm Cu}$ is also a continuous functor from the category of $C^*$-algebras of stable rank one to the category ${\rm \underline{Cu}}$.

\begin{definition}\label{eve-ins}\rm
%Let $(S_i,\phi_{ij})$ be a inductive system in ${\rm Cu}^\sim$-category. We say $(s_i)_{i\geq k_0}$ is an eventually-increasing sequence of $(S_i,\phi_{ij})$, if
%we have
%$$
%s_i\in S_i,~s_j\in S_j~~{\rm and}~~\phi_{ij}(s_i)\prec s_j,\quad \forall j\geq i\geq k_0.
%$$
Let $S_1\xrightarrow{\beta_{12}} S_2\xrightarrow{\beta_{23}} \cdots$ be a sequence in the category ${\rm Cu}^\sim$. We say $(s_1,s_2,\cdots)$ is an eventually-increasing sequence if, $s_1\in S_1$, $s_2\in S_2,\cdots$ and there exists $k\in \mathbb{N}$ such that for any $j>i>k$, %the image of $s_i\in S_i$ is less than or equal to $s_j$, written
$\beta_{ij}(s_i)\leq s_j$. Denote $ \mathcal{S}$ the collection of all the eventually-increasing sequences.

Let $(s_i),(t_i)\in \mathcal{S}$, define the addition operation
$$
(s_i)+(t_i)=(s_i+t_i),
$$
(it is easy to see that $(s_i+t_i)$ belongs to $\mathcal{S}$) and the pre-order relation
 $$
 (s_i)\leq (t_i),
 $$
if there exists $k\in \mathbb{N}$ such that for any $i\geq k$ and $x\in S_i$ with $x\ll s_i$, there is an $m\geq i$ such that $\beta_{ij}(x)\ll t_j$ (in $S_j$) for any $j\geq m$. We say $ (s_i)\sim (t_i)$, if $ (s_i)\leq (t_i)$ and $ (t_i)\leq (s_i)$.

For convenience, for any $i\in\mathbb{N}$ and $s\in S_i$, we will denote all those $\beta_{ij}(s)$ by just $s$ for all $j\geq i$.
\end{definition}
\begin{remark}
  Every increasing sequence defined in \ref{Ell inc} is an eventually-increasing sequence. Every eventually-increasing sequence is equivalent to a sequence of the form
  $$
  (0,0,\cdots,0,s_k,s_{k+1},\cdots),
  $$
  where $s_k\leq s_{k+1}\leq \cdots$. We may denote this sequence by $(s_i)_{i\geq k}$ (the notation $i\geq k$ means  this sequence is increasing from $i=k$).

The eventually-increasing for ${\rm Cu}^\sim$-category is inspired by \cite[Theorem 2]{CEI}.% the inductive limit of $C^*$-algebras just like that did in. % , which we mentioned in \ref{Ell inc}.
Note that for particulary cases in ${\rm Cu}^\sim$-category, we may even have $\phi_{k_0-1\,k_0}$ $(S_{k_0-1})\cap s_{k_0}^\leq=\varnothing,$ where $s_{k_0}^\leq\triangleq\{s\in S_{k_0}:\,s\leq s_{k_0}\}$. This means the notion of eventually-increasing is necessary.

To build such an example, one can pick $S_1=\mathbb{N}\cup \{\infty\}$ and
$$S_i=(((\mathbb{N}\backslash\{0\})\cup \{\infty\})\times \mathbb{Z}^{i-1})\cup\{\underbrace{(0,0,\cdots,0)}_i\},\quad {\rm for~ } i\geq 2.$$
For $(m_1,m_2,\cdots,m_i),~(n_1,n_2,\cdots,n_i)\in S_i$,
we say $$(m_1,m_2,\cdots,m_i)\leq(n_1,n_2,\cdots,n_i),$$ if $m_1\leq n_1$ and $m_k=n_k$ for any $k=2,3,\cdots,i$.
Set $$\phi_{ij}((m_1,m_2,\cdots,m_i))=(m_1,m_2,\cdots,m_i,0,\cdots,0)\quad {\rm for }\quad  j> i\geq 1.$$
Then $(S_i,\phi_{ij})$ forms a sequence in the category ${\rm Cu}^\sim$.

Let $s_j=(1,0,\cdots,0,1)$. Now we have $\phi_{ij}(S_{i})\cap s_{j}^\leq=\varnothing$, $j>i\geq1$.
This  is  the reason we use the ``eventually-increasing sequence'' to replace the ``increasing sequence'' for the inductive system in ${\rm Cu}^\sim$-category. (Denote $X_i$ to be the one point union of $i-1$ circles, one can pick
$S_i'={\rm {Cu}_1}(C(X_i))$, for $i\geq 1$, which is also a similar example.)
\end{remark}
\begin{proposition}\label{compatibility leq}
With the assumptions in Definition \ref{eve-ins}, the pre-order relation on $\mathcal{S}$ is reflexive, transitive and compatible with the addition.
\end{proposition}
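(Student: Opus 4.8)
The plan is to verify reflexivity, transitivity and additive compatibility in turn, using throughout three standard facts: each connecting map $\beta_{ij}$ is a ${\rm Cu}^\sim$-morphism and is therefore order-preserving, additive and $\ll$-preserving; the connecting maps compose, $\beta_{jl}\circ\beta_{ij}=\beta_{il}$; and each $S_i$ satisfies the axioms (O1)--(O4), so in particular $a\ll b\leq c$ implies $a\ll c$, every element is a supremum of a $\ll$-increasing sequence by (O2), and $+$ is compatible with $\ll$ and with suprema of increasing sequences by (O3)--(O4).

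For reflexivity I would fix $(s_i)\in\mathcal{S}$ with an eventual-increase threshold $k_0$ and use $k_0+1$ as the pre-order threshold: if $i\geq k_0+1$ and $x\ll s_i$, then for every $j\geq i+1$ one has $\beta_{ij}(x)\ll\beta_{ij}(s_i)\leq s_j$, whence $\beta_{ij}(x)\ll s_j$, so $m=i+1$ works. For transitivity, given $(s_i)\leq(t_i)$ with threshold $k_1$ and $(t_i)\leq(r_i)$ with threshold $k_2$, I would take $\max(k_1,k_2)$ as threshold: for $x\ll s_i$ the first relation supplies $m_1\geq i$ with $\beta_{ij}(x)\ll t_j$ for all $j\geq m_1$; feeding $\beta_{i m_1}(x)\ll t_{m_1}$ into the second relation supplies $m_2\geq m_1$ with $\beta_{m_1 j}(\beta_{i m_1}(x))\ll r_j$ for $j\geq m_2$, and since $\beta_{m_1 j}\circ\beta_{i m_1}=\beta_{ij}$ this is exactly $\beta_{ij}(x)\ll r_j$ for $j\geq m_2$.

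The substantive step is additive compatibility; by transitivity and commutativity of $+$ it is enough to show that $(s_i)\leq(t_i)$ implies $(s_i)+(r_i)\leq(t_i)+(r_i)$ for any $(r_i)\in\mathcal{S}$. Here I would fix $i$ beyond the relevant thresholds and take $x\ll s_i+r_i$; applying (O2) in $S_i$ I pick $\ll$-increasing sequences $(a_n)_n$, $(b_n)_n$ with $\sup_n a_n=s_i$ and $\sup_n b_n=r_i$, so that $a_n\ll s_i$ and $b_n\ll r_i$ for all $n$. By (O3) the sequence $(a_n+b_n)_n$ is $\ll$-increasing, and by (O4) its supremum is $s_i+r_i$, so $x\leq a_n+b_n$ for some $n$. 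Now $a_n\ll s_i$ lets me use $(s_i)\leq(t_i)$ to get $m_1\geq i$ with $\beta_{ij}(a_n)\ll t_j$ for $j\geq m_1$, while $b_n\ll r_i$ together with the eventual increase of $(r_i)$ gives $\beta_{ij}(b_n)\ll\beta_{ij}(r_i)\leq r_j$, hence $\beta_{ij}(b_n)\ll r_j$, for all $j>i$. Then for $j$ large, monotonicity and additivity of $\beta_{ij}$ together with (O3) give $\beta_{ij}(x)\leq\beta_{ij}(a_n)+\beta_{ij}(b_n)\ll t_j+r_j$, i.e. $\beta_{ij}(x)\ll(t_i+r_i)_j$, which is what $(s_i)+(r_i)\leq(t_i)+(r_i)$ demands.

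I expect additive compatibility to be the only point needing genuine care, since an element way-below $s_i+r_i$ need not split as a sum of an element way-below $s_i$ and one way-below $r_i$; the device that gets around this is to approximate \emph{both} summands from below by $\ll$-increasing sequences and then use (O4) to recognize $\sup_n(a_n+b_n)$ as $s_i+r_i$ and (O3) to keep the way-below relation under addition. Reflexivity and transitivity, by contrast, are routine manipulations of the way-below relation and the functoriality of the connecting maps.
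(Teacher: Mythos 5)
Your proof is correct and follows essentially the same route as the paper: the substantive step (compatibility with addition) uses the identical device of approximating both summands from below by $\ll$-increasing sequences via (O2), recognizing the supremum of the termwise sums via (O4), and preserving $\ll$ under addition via (O3). The only cosmetic differences are that you reduce to one-sided compatibility and chain by transitivity where the paper handles both coordinates simultaneously, and that you spell out reflexivity and transitivity, which the paper dismisses as immediate.
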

\begin{proof}
The reflexivity and transitivity  of the pre-order relation on $\mathcal{S}$ is immediate, we only need to check the compatiblity with the addition.

Suppose we have $(s_i)\leq (g_i)$ and $(t_i)\leq (h_i)$ in $\mathcal{S}$ and we will show that $(s_i+t_i)\leq (g_i+h_i)$.
Assume that $(s_i),~(t_i)$ and $(s_i+t_i)$ are all increasing from $i=k$. Then given any $i\geq k$ and $x\ll s_i+t_i$.
Note that $s_i,~t_i\in S_i$, there exist two rapidly increasing sequences $(s_i^n)_n$ and $(t_i^n)_n$ in $S_i$ with
$$
\sup_{n \in \mathbb{N}} s_i^n=s_i\quad {\rm and}\quad \sup_{n \in \mathbb{N}} t_i^n=t_i.
$$
Then denote $l_i^n=s_i^n+t_i^n$, we have
$$
\sup_{n \in \mathbb{N}}l_i^n=s_i+t_i.
$$
There exists an $n_0$ such that for all $n\geq n_0$, we have $x\leq l_i^n$.
In particular,
$$
x\leq s_i^{n_0}+t_i^{n_0}\ll s_i^{n_0+1}+t_i^{n_0+1}.
$$
Now $s_i^{n_0+1}\ll s_i$ and $t_i^{n_0+1}\ll t_i$ in $S_i$. By Definition \ref{eve-ins}, we may find a common $m\geq i$
such that $s_i^{n_0+1}\ll g_m$ and $t_i^{n_0+1}\ll h_m$ in $S_m$. Then
$$
x\ll s_i^{n_0+1}+t_i^{n_0+1}\ll g_m+h_m.
$$
Hence,
$$
(s_i+t_i)\leq (g_i+h_i).
$$
This ends the proof.

\end{proof}
With a similar proof of the first part of \cite[Theorem 2]{CEI}, we have the following:

%\begin{definition}\label{jin bijiao total}
%Let $s=(s_i)_{i\geq k_1}$ and $t=(t_j)_{j\geq k_2}$ be two eventually-increasing sequences of $(S_i,\phi_{ij})$, we say $s\prec t$, if
%$\forall i\geq k_1$ and $x\in S_i$ with $x\ll s_i$, eventually we have $\phi(x)\ll t_j$, for all large enough $j$. We say $s\sim t$, if $s\prec t$ and $t\prec s$.
%\end{definition}
%Denote ${\rm Eis}(S_i,\phi_{ij})=\{s=(s_i)_{i\geq k_0}:\, s$~~is~~the~~ eventually-increasing~~sequence~~of~~$(S_i,\phi_{ij})\}$.
\begin{lemma}\label{fang ell}
Let $S_1\rightarrow S_2\rightarrow\cdots $ be a sequence in the category ${\rm Cu}^\sim$. Then
$$
{\rm Cu^\sim-}\lim_{\longrightarrow}S_i\cong \mathcal{S}/\sim.
$$
\end{lemma}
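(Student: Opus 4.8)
The plan is to mimic the classical proof of \cite[Theorem 2]{CEI} (the statement that ${\rm Cu}$ preserves sequential inductive limits), adapted to the ${\rm Cu}^\sim$-setting where ``increasing'' must be replaced by ``eventually-increasing''. Write $S_\infty$ for the ${\rm Cu}^\sim$-inductive limit $\varinjlim S_i$, with canonical morphisms $\beta_{i\infty}\colon S_i\to S_\infty$, and let $T:=\mathcal{S}/\!\sim$ with the addition and order of Definition~\ref{eve-ins}; Proposition~\ref{compatibility leq} already gives that $T$ is a preordered monoid. First I would verify directly that $T$ is an object of ${\rm Cu}^\sim$: (O1) is obtained by a diagonal argument — given an increasing sequence of classes $[(s_i^{(n)})_i]_n$ in $T$, one extracts (after relabeling so each representative is genuinely increasing from index $n$) a diagonal eventually-increasing sequence $(s_n^{(n)})_n$ whose class is the supremum, exactly as in \cite{CEI}; (O2) follows because each $s_i$ in $S_i\in{\rm Cu}^\sim$ is a supremum of a $\ll$-increasing sequence, and these can be threaded into a $\ll$-increasing sequence of eventually-increasing sequences using the characterization of $\ll$ in $T$; (O3) and (O4) reduce to Proposition~\ref{compatibility leq} together with the diagonal construction for (O1). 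I would also record the key computation: $[(s_i)_i]\ll [(t_i)_i]$ in $T$ if and only if there is $k$ and, for $i\ge k$, an $m\ge i$ with $\beta_{im}(s_i)\le t_m$ — i.e. the ``one-step'' version of the order — which is the ${\rm Cu}^\sim$-analogue of the way-below relation in the Cuntz limit.

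Next I would construct the comparison morphisms. For each $i$ define $\lambda_i\colon S_i\to T$ by $\lambda_i(s)=[(0,\dots,0,s,\beta_{i,i+1}(s),\beta_{i,i+2}(s),\dots)]$ (the sequence that is $0$ before index $i$ and equals the forward images of $s$ from index $i$ on). One checks $\lambda_i$ is a ${\rm Cu}^\sim$-morphism and $\lambda_{i+1}\circ\beta_{i,i+1}=\lambda_i$ using the definition of $\sim$, so by the universal property of $S_\infty$ there is an induced ${\rm Cu}^\sim$-morphism $\Phi\colon S_\infty\to T$ with $\Phi\circ\beta_{i\infty}=\lambda_i$. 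In the other direction, define $\Psi\colon T\to S_\infty$ on a representative $(s_i)_i$ that is increasing from index $k$ by $\Psi([(s_i)_i])=\sup_{i\ge k}\beta_{i\infty}(s_i)$, where the supremum exists by (O1) in $S_\infty$ because $(\beta_{i\infty}(s_i))_{i\ge k}$ is increasing. One verifies this is independent of the chosen representative (two equivalent eventually-increasing sequences have cofinally interleaving $\ll$-below elements, so the suprema agree), that $\Psi$ is additive and order-preserving, and — the step needing the ${\rm Cu}^\sim$-axioms — that $\Psi$ preserves suprema of increasing sequences and the relation $\ll$; the latter uses the one-step characterization of $\ll$ in $T$ recorded above and the fact that in $S_\infty$ every element and every $\ll$-pair is ``visible at a finite stage'' (this is the content of the inductive-limit description of ${\rm Cu}^\sim$, analogous to \cite[Lemma 3.8]{TV} / \cite[Corollary 3.2.9]{APT}).

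Finally I would show $\Phi$ and $\Psi$ are mutually inverse. The composite $\Psi\circ\Phi\circ\beta_{i\infty}=\Psi\circ\lambda_i=\beta_{i\infty}$ for every $i$ (again from (O1): $\sup_{j\ge i}\beta_{j\infty}(\beta_{ij}(s))=\beta_{i\infty}(s)$), so $\Psi\circ\Phi=\mathrm{id}_{S_\infty}$ on the union of the images of the $\beta_{i\infty}$, hence on all of $S_\infty$ by (O2) and continuity. For the other composite, $\Phi\circ\Psi([(s_i)_i])$ for $(s_i)_i$ increasing from $k$ equals $\Phi(\sup_{i\ge k}\beta_{i\infty}(s_i))=\sup_{i\ge k}\lambda_i(s_i)$, and one checks this supremum in $T$ is precisely $[(s_i)_i]$ by comparing $\ll$-below elements on both sides. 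I expect the main obstacle to be the verification that $\Psi$ (equivalently $\Phi$) preserves the compact-containment relation in both directions — concretely, translating ``$x\ll\Psi([(t_i)_i])$ in $S_\infty$'' back into a finite-stage statement about the $t_i$'s, and conversely — since this is where the subtlety of eventually-increasing sequences (the possibility that $\beta_{k-1,k}(S_{k-1})\cap s_k^{\le}=\varnothing$, flagged in the remark preceding the lemma) really interacts with the way-below relation; everything else is a faithful transcription of the \cite{CEI} argument with $0$-padding inserted to accommodate the non-directedness at early stages.
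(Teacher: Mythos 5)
Your overall architecture coincides with the paper's: both verify that $\mathcal{S}/\sim$ satisfies (O1)--(O4) via the same diagonal and interleaving constructions (the paper's Steps 0--4), both use the $0$-padded constant sequences as the canonical maps $S_i\to\mathcal{S}/\sim$, and both identify $\mathcal{S}/\sim$ with the limit by sending an eventually-$\ll$-increasing representative to the supremum of the images of its entries. (The paper packages this as a direct verification of the universal property against an arbitrary target $T$, which simultaneously yields existence of the limit, whereas you take $S_\infty$ as given and build an explicit inverse pair; that difference is cosmetic.)

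There is, however, one genuine error, and it sits exactly at the step you yourself identify as the crux. The ``key computation'' you record is false: you claim $[(s_i)]\ll[(t_i)]$ in $T$ iff there is $k$ such that for each $i\ge k$ there is some $m\ge i$ with $\beta_{im}(s_i)\le t_m$. Taking $(s_i)=(t_i)$ an increasing sequence and $m=i$ shows this condition is always satisfied, so your characterization would make every element of $\mathcal{S}/\sim$ compact --- already false for $S_i=[0,\infty]$ with identity connecting maps. What you have written characterizes (a strengthening of) the relation $\le$, not $\ll$; the difference between the two is precisely the uniformity of the stage $m$. The correct statement, which is what the paper's Step 3 actually establishes, is that for $(t_i)$ eventually-$\ll$-increasing one has $r\ll[(t_i)]$ iff $r\le\lambda_j(t_j)$ for a \emph{single} index $j$, where $\lambda_j(t_j)$ denotes the class of the sequence that is constantly (the forward images of) $t_j$ from index $j$ on; equivalently, $[(t_i)]$ is the supremum of the rapidly increasing sequence of these truncated-constant classes. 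With that correction your verification that $\Phi$ and $\Psi$ preserve compact containment goes through. A smaller point: your argument that $\Psi\circ\Phi=\mathrm{id}$ ``on the union of the images, hence everywhere by (O2) and continuity'' presupposes that every element of the abstract limit $S_\infty$ is a supremum of elements coming from the $S_i$, which is not available a priori; the clean route is to invoke the uniqueness clause of the universal property of $S_\infty$ applied to the compatible family $\beta_{i\infty}$.
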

\begin{proof}
%It is routine that the relation $\leq$ defined on $\mathcal{S}$ is reflexive and transitive.
We divide the proof into several steps.

{\bf Step 0 :} As a trick, let us show that any eventually-increasing sequence is equivalent with an eventually-$\ll$-increasing (eventually and rapidly increasing) sequence.
That is, given an eventually-increasing sequence $(s_i)$, there is an eventually-increasing sequence $(t_i)$ with $t_i\ll t_{i+1}$ for all large enough $i$ satisfying that
$$
 (s_i)\sim (t_i).
 $$
To achieve this, we can assume that the given eventually-increasing sequence $(s_i)$ ($s_i$ is increasing from $i=k$) is an increasing sequence ($k=1$), otherwise, we just begin constructing $(t_i)$ from the $k$th coordinate.

Note that for each $i\geq1$,
$s_i\in S_i$. As $S_i$ belongs to the $ {\rm Cu^\sim}$-category, by (O2), there exists a $\ll$-increasing sequence $\left(s_{i}^n\right)_{n \in \mathbb{N}}$ in $S_i$ such that $\sup_{n \in \mathbb{N}} s_{i}^n=s_i$.
For $i=1$, we keep the $\ll$-increasing sequence $\left(s_{1}^n\right)$ in $S_1$ fixed;
for $i=2$, from the fact $s_{1}^{n}\ll s_1\leq s_2$, we can pick a sub-sequence $\left(s_{2}^{n_j}\right)$ of $\left(s_{2}^n\right)$ in $S_2$ such that
$$
s_{1}^{j}\ll s_{2}^{n_j},\quad \forall j\in \mathbb{N},
$$
and we still denote this new sub-sequence  $\left(s_{2}^{n_j}\right)$ by $\left(s_{2}^n\right)$;
inductively,
we can always assume we have
$$
s_{i}^{n}\ll s_{i+1}^{n},\quad \forall i,n\in \mathbb{N}.
$$

Note that
$$
s_{i}^{i}\ll s_{i+1}^{i}\ll s_{i+1}^{i+1},\quad \forall i\in \mathbb{N},
$$
By choosing the diagonal sequence,
$$
t_i=s_i^i,\quad \forall i\in \mathbb{N}
$$
we get a rapidly increasing sequence $(t_i)$.

Now we show $(s_i)\sim (t_i)$. On one hand, if we have $s\ll t_i$ for some $i$, we get $s\ll s_i^i\ll s_i\leq s_j$, for all $j\geq i$. That is, $(t_i)\leq (s_i)$.
On the other hand, if we have $s\ll s_i$ for some $i$, from the above construction ($\sup_{n \in \mathbb{N}} s_{i}^n=s_i$), there is a large $n_0$ ($n_0\geq i$), such that  $s\ll s_i^{n_0}$. Then $s\ll s_i^{n_0}\ll s_{n_0}^{n_0}\ll s_{j}^{j}=t_j$, for all $j\geq n_0$. Hence, $(s_i)\leq (t_i)$.

Let us now show that $\mathcal{S}/\sim$ is an object in ${\rm Cu}^\sim$-category. This means we must show that $\mathcal{S}/\sim$ is an ordered monoid (Step 1), that each increasing sequence in $\mathcal{S}/\sim$ has a supremum (Step 2---(O1)), that each element in $\mathcal{S}/\sim$ is a supremum of a rapidly increasing sequence (Step 3---(O2)), and finally, the relations $\leq$ and $\ll$ and the operation of passing to the supremum of an increasing sequence are compatible with addition (Step 4---not only (O3), (O4)).

{\bf Step 1 :} We have a reflexive and transitive relation $\leq$ on $\mathcal{S}$, then the induced relation on $\mathcal{S}/\sim$, which we still write as $\leq$ is also reflexive and transitive, i.e., $\leq$ is an order relation on $\mathcal{S}/\sim$. From the fact that the pre-order relation $\leq$ and addition $+$ on $\mathcal{S}$ are compatible (Proposition \ref{compatibility leq}), we have the induced addition $+$ on $\mathcal{S}/\sim$ is well-defined.
Of course, the class of $(0,0,0,\cdots)$ is the unique zero element and this proves that $\mathcal{S}/\sim$ is an ordered monoid (not necessarily positive).

{\bf Step 2 :} Suppose that $s^1\leq s^2\leq s^3\leq \cdots$ is an increasing sequence in $\mathcal{S}/\sim$. Then by Step 0, for each $s^i$,
we pick an eventually-$\ll$-increasing sequence $(s_n^i)_{n\geq k_i}$ such that $[(s_n^i)]=s^i$, where $s_n^i\in S_n$ for any $n$. (Here, we assume that each $(s_n^i)_{n\geq k_i}$ is $\ll$-increasing from $n=k_i$.) For $(s^1_n)$,  it is $\ll$-increasing for $i\geq i_1=k_1$, then $s_{i_1}^1\ll s_{i_1+1}^1$. As $s^1\leq s^2$, there exists an integer $i_2\geq \max\{i_1+1,k_2\}$, such that
$$
s_{i_1}^1\ll s_{i_2}^2\,\,\,\,({\rm in}\,\,S_{i_2}).
$$
Next, we begin with the elements $s_{i_2}^1$ and $s_{i_2}^2$. %$s_{i_2}^1\ll s_{i_2+1}^1$ and $s_{i_2}^2\ll s_{i_2+1}^2$.
Since $s^1\leq s^3$ and $s^2\leq s^3$, there exist an $i_3\geq \max\{ i_2+1,k_3 \}$ such that
$$
s_{i_2}^1\ll s_{i_3}^3\quad {\rm and}\quad s_{i_2}^2\ll s_{i_3}^3 \,\,\,\,({\rm in}\,\,S_{i_3}).
$$
Inductively, for any $m\in \mathbb{N}$,
we have $i_{m+1}\geq \max\{ i_m+1,k_{m+1}\}$ with
$$
s_{i_m}^j\ll s_{i_{m+1}}^{m+1},\quad j=1,2,\cdots,m.
$$

 Set
$$
s_j=
\begin{cases}
  0, & \mbox{if } 1\leq j< i_1 \\
  s_{i_1}^1\,\,({\rm in}\,\,S_j), & \mbox{if } i_1\leq j< i_2 \\
  s_{i_2}^2\,\,({\rm in}\,\,S_j), & \mbox{if } i_2\leq j< i_3 \\
  \vdots & \vdots
\end{cases}.
$$
Then $(s_1,s_2,\cdots)$ is an eventually-increasing sequence (not necessary a rapid one), we denote $s$ to be the class of this sequence in $\mathcal{S}/\sim$ and claim that $s=\sup_{i\in \mathbb{N}} s^i$.

%We construct the following sequence in $\mathcal{S}$,
%The following is an eventually-increasing sequence (not necessary a rapid one),
%$$
%(s_n)\triangleq(\underbrace{0,0,\cdots,0}_{i_1-1},\underbrace{s^1_{i_1},s^1_{i_1},\cdots,s^1_{i_1}}_{i_2-i_1},
%\underbrace{s^2_{i_2},s^2_{i_2},\cdots,s^2_{i_2}}_{i_3-i_2},\cdots,
%\underbrace{s^m_{i_m},s^m_{i_m},\cdots,s^m_{i_m}}_{i_{m+1}-i_m},\cdots)
%$$
%and we denote $s$ to be the class of it in $\mathcal{S}/\sim$ and claim that $s=\sup_{i\in \mathbb{N}} s^i$.
 Note first that, for each $j$ and each $n$ ($n\geq k_j$), $s_n^j\leq s_{i_{j+n}}$, hence, $s^i\leq s$, and second, that if $ s^1,s^2,\cdots\leq t=(t_1,t_2,\cdots)$, then for each $i$, if $r\ll s_i$ in $S_i$, eventually $r\ll t_j$, so $s\leq t$.

%It is quite easy to check that $(s^i_n)\leq (s_n)$, i.e., $s^i\leq s$.
%If we have $(s^i_n)\leq (t_n)$ for all $i\in \mathbb{N}$, one may directly get %$(s_n)\leq (t_n)$.

{\bf Step 3 :} We say $t\ll s$ in  $\mathcal{S}/\sim$, if for any increasing sequence $(s^n)$ in $\mathcal{S}/\sim$ with $\sup_{n\in \mathbb{N}}s^n=s$, there exists an $n_0\in\mathbb{N}$, such that $t\leq s^n$ for all $n\geq n_0$.

Suppose we have an $s\in\mathcal{S}/\sim$, then by Step 0, we have an eventually-$\ll$-increasing sequence $(s_i)$ representing $s$, i.e., $[(s_i)]=s$. We assume that $(s_i)$ is  $\ll$-increasing from $i=k$ and let

$$
s^1=[(\underbrace{0,0,\cdots,0}_{k},s_{k+1},s_{k+1},\cdots)],
$$
$$
s^2=[(\underbrace{0,0,\cdots,0}_{k},s_{k+1},s_{k+2},s_{k+2},\cdots)],
$$
$$
\cdots
$$
$$
s^n=[(\underbrace{0,0,\cdots,0}_{k},s_{k+1},s_{k+2},\cdots,s_{k+n},s_{k+n},\cdots)],
$$
$$
\cdots
$$
Now we prove that  the sequence $(s^1,s^2,\cdots)$ is a rapidly increasing sequence in $\mathcal{S}/\sim$ and has supremum $s$.% (It is rapidly increasing because for any $i\geq k$, we have $s_i\ll s_{i+1}$, not only in $s_{i+1}$ but also in $S_j$ for $j\leq i+1$.)
%Firstly, we show that $s^n\ll s^{n+1}$ for any given $n\in\mathbb{N}$. It is easy to see that $s^n\leq s^{n+1}$ from the construction.

To see that it is rapidly increasing. Suppose that we have an increasing sequence $(t^j)$ in  $\mathcal{S}/\sim$ with $\sup_{j\in \mathbb{N}} t^j= s^{n+1}$. For each $t^j\in \mathcal{S}/\sim$,
we pick an eventually-$\ll$-increasing sequence $(t_1^j,t_2^j,t_3^j\cdots)$ from this class, where $t_i^j\in S_i$.
Then set $i_1=k+1$, we repeat the construction in Step 2, we have
$$
[(\underbrace{0,0,\cdots,0}_{i_1-1},\underbrace{t^1_{i_1},t^1_{i_1},
\cdots,t^1_{i_1}}_{i_2-i_1},\cdots,\underbrace{t^j_{i_j},t^j_{i_j},
\cdots,t^j_{i_j}}_{i_{j+1}-i_j},\cdots)]= s^{n+1}.
$$
In particular, $$ (\underbrace{0,0,\cdots,0}_{k},s_{k+1},\cdots,s_{k+n+1},s_{k+n+1},\cdots)
\leq
(\underbrace{0,0,\cdots,0}_{i_1-1},\underbrace{t^1_{i_1},t^1_{i_1},\cdots,t^1_{i_1}}_{i_2-i_1},\cdots).
$$

Since $s_{k+n}\ll s_{k+n+1}$, by Definition \ref{eve-ins}, we have an $i_m$ such that
$$
s_{k+n}\ll t^m_{i_m} \quad ({\rm in}\,\, S_{i_m}).
$$
Then for all $j\geq m$, we have
$$
s_{k+n}\ll  t^m_{i_m}\ll t^j_{i_j},
$$
which implies $s^n\leq t^j$ for all $j\geq m$. Hence $$s^n\ll s^{n+1}.$$

%To see that the supremum of the sequence is equal to $s$.
%Let $g\in \mathcal{S}/\sim$ be given with
%$g\geq s^i$ for all $i$, and  let us prove $g\geq s$.

%Choose any eventually increasing sequence $(g_i)$, with $g_i\in S_i$, representing $g$. For each $i\geq k$, $s_i\ll s_{i+1}$ in $S_{i+1}$ and hence, as
%$$
%(\underbrace{0,0,\cdots,0}_{k},s_{k+1},s_{k+2},\cdots,s_{k+n},s_{k+n},\cdots)\leq (g_1,g_2,\cdots),
%$$
%then we have
%$$
%s_i\ll g_j\quad {\rm in} \,\,S_j,
%$$
%for all sufficiently large $j$. This shows that
%$$
%(s_1,s_2,\cdots)\leq (g_1,g_2,\cdots),
%$$
%and so $s\leq g$. Then $\sup_{n\in \mathbb{N}} s^n= s$.

Now we show that $\sup_{n\in \mathbb{N}} s^n= s$. Note that $s_i\ll s_{i+1}$ for all $i\geq k$,
by the construction in Step 2, $\sup_{n\in \mathbb{N}} s^n$ contains a representing element as
$$
(\underbrace{0,0,\cdots,0}_{k},s_{k+1},s_{k+2},s_{k+3},\cdots),
$$
which is equivalent with $(s_i)$.  We do have $\sup_{n\in \mathbb{N}} s^n= s$.

Here, we add one more remark. Suppose that $(r_1,r_2,r_3,\cdots)$ is an eventually-$\ll$-increasing sequence in $\mathcal{S}$ with $r_i\in S_i$,
$[(r_1,r_2,r_3,\cdots)]=r$ and $(r_i)$ is $\ll$-increasing from $i=k$. If we regard $r_i$ as the element $[(\underbrace{0,\cdots,0}_{i-1},r_i,r_i,\cdots)]$ in $\mathcal{S}/\sim$, then
$$
(r_k,r_{k+1},r_{k+2},\cdots)
$$
is a $\ll$-increasing sequence in $\mathcal{S}/\sim$, and hence by the above construction, it has supremum
$$
[(\underbrace{0,0,\cdots,0}_{k-1},r_k,r_{k+1},\cdots)]=r,
$$
and we may simply write $r=\sup_{i\geq k}r_i$.

{\bf Step 4 :}
Firstly, as we mentioned in Step 1 that the addition on $\mathcal{S}/\sim$ is well-defined, the compatibility of the relation $\leq$ and addition on $\mathcal{S}/\sim$ just comes from the compatibility of the relation $\leq$ and addition on $\mathcal{S}$ (Proposition \ref{compatibility leq}).

Secondly, we show that $\mathcal{S}/\sim$ satisfies (O4).
Suppose that we have two increasing sequence $(s^i),~(t^i)$ in  $\mathcal{S}/\sim$ with $\sup_{i\in \mathbb{N}} s^i= s$ and $\sup_{i\in \mathbb{N}} t^i= t$. Note that $(s^i+t^i)$ is also an increasing sequence in  $\mathcal{S}/\sim$.
We do the construction in  Step 2 to $(s^i)$, $(t^i)$ and $(s^i+t^i)$, we may find a common sequence $(i_n)_{n\in \mathbb{N}}$ such that
$$
s=[(\underbrace{0,0,\cdots,0}_{i_1-1},\underbrace{s^1_{i_1},\cdots,s^1_{i_1}}_{i_2-i_1},
\underbrace{s^2_{i_2},\cdots,s^2_{i_2}}_{i_3-i_2},\cdots,\underbrace{s^m_{i_m},\cdots,s^m_{i_m}}_{i_{m+1}-i_m},\cdots)],
$$

$$
t=[(\underbrace{0,0,\cdots,0}_{i_1-1},\underbrace{t^1_{i_1},\cdots,t^1_{i_1}}_{i_2-i_1},
\underbrace{t^2_{i_2},\cdots,t^2_{i_2}}_{i_3-i_2},\cdots,\underbrace{t^m_{i_m},\cdots,t^m_{i_m}}_{i_{m+1}-i_m},\cdots)]
$$
and
$$
\sup_{i\in \mathbb{N}}\{s^i+t^i\}=[(\underbrace{0,\cdots,0}_{i_1-1},\underbrace{s^1_{i_1}+t^1_{i_1},\cdots}_{i_2-i_1},
\cdots,\underbrace{s^m_{i_m}+t^m_{i_m},\cdots,s^m_{i_m}+t^m_{i_m}}_{i_{m+1}-i_m},\cdots)].
$$
%Then we have
%$$
%s+t\leq \sup_{i\in \mathbb{N}}\{s^i+t^i\}\leq \sup_{i\in \mathbb{N}}s^i+\sup_{i\in \mathbb{N}}t^i=s+t,
%$$
At once, from the definition of addition on $\mathcal{S}/\sim$, we get $$\sup_{i\in \mathbb{N}}\{s^i+t^i\}=s+t=\sup_{i\in \mathbb{N}}s^i+\sup_{i\in \mathbb{N}}t^i.$$

Thirdly, we prove the compatibility of $\ll$ with addition.
Assume $s\ll g$ and $t\ll h$ in $\mathcal{S}/\sim$, we will show that $s+t\ll g+h$.
We do the construction in Step 3 for both $g$ and $h$, we have two rapidly increasing sequences
$$
g^n=[(\underbrace{0,0,\cdots,0}_{k},g_{k+1},g_{k+2},\cdots,g_{k+n},g_{k+n},\cdots)],\quad n\in \mathbb{N}
$$
and
$$
h^n=[(\underbrace{0,0,\cdots,0}_{k},h_{k+1},h_{k+2},\cdots,h_{k+n},h_{k+n},\cdots)],\quad n\in \mathbb{N}.
$$
with $\sup_{n\in \mathbb{N}}g^n=g$ and $\sup_{n\in \mathbb{N}}h^n=h$ in $\mathcal{S}/\sim$.
Then there is an $n_0$ such that
$$s\leq g^{n_0}\ll g^{n_0+1}\leq g$$
and
$$t\leq h^{n_0}\ll h^{n_0+1}\leq h.$$
%$$
%s_i\leq g_{j}\ll g_{j+1}
%\quad
%{\rm and}\quad
%t_i\leq h_{j}\ll h_{j+1}
%$$
%for any $j\geq n_i$.
Note that
$$g^{n_0}+h^{n_0}=[(\underbrace{0,0,\cdots,0}_{k},f_{k+1},f_{k+2},\cdots,f_{k+n_0},f_{k+n_0},\cdots)]$$
and
$$g^{n_0+1}+h^{n_0+1}=[(\underbrace{0,0,\cdots,0}_{k},f_{k+1},f_{k+2},
\cdots,f_{k+n_0+1},f_{k+n_0+1},\cdots)],$$
where $f_n=g_n+h_n$ for any $n\in\mathbb{N}$.
Still from proof of Step 3, we have $g^{n_0}+h^{n_0}\ll g^{n_0+1}+h^{n_0+1}$.
Then from compatibility of the order relation and addition
 on $\mathcal{S}/\sim$ (the first part of Step 4), we have
$$
s+t\leq g^{n_0}+h^{n_0}\ll g^{n_0+1}+h^{n_0+1}\leq g+h.
$$

%Then from compatibility of the order relation and addition, we have
%$$
%s_i+t_i\leq g_{j}+h_{j}\ll g_{j+1}+h_{j+1},
%$$
%this proves that
%$$
%s+t\leq g^j+h^j\ll g^{j+1}+h^{j+1}\leq g+h.
%$$

Now let us complete the proof by proving that the object $\mathcal{S}/\sim$ in ${\rm Cu}^\sim$-category is the inductive limit of the given sequence $S_1\rightarrow S_2\rightarrow\cdots $ in this category. We must show that for every object $T$ in ${\rm Cu}^\sim$-category and every compatible sequence of maps $S_1\to T$, $S_2\to T$, $\cdots$, there exists a unique compatible map $\mathcal{S}/\sim\to T$.
$$
\xymatrix{
&&&T\\
S_1\ar[r]\ar[urrr] & S_2\ar[r]\ar[urr]&\cdots
\ar[r]& \mathcal{S}/\sim \ar[u]_{!}
}
$$
Of course, for this to make sense we must have maps $S_i\to \mathcal{S}/\sim$ for all $i$ compatible with maps $S_i\to S_{i+1}$. For each $i$, and each $s\in S_i$,
note that the sequence
$$
(\underbrace{0,0,\cdots,0}_{i-1},{s,s,\cdots,s},\cdots)
$$
is an eventually-increasing sequence and therefore represents an element of $\mathcal{S}/\sim$. For each fixed $i$, note that for any $j\geq i$, the eventually-increasing sequence $(\underbrace{0,0,\cdots,0}_{j-1},s,s,\cdots,s,\cdots)$ is equivalent to the one with $j=i$, this follows immediately from Definition \ref{eve-ins}.
This shows that the maps  $S_1\to \mathcal{S}/\sim$, $S_2\to \mathcal{S}/\sim$, $\cdots$ obtained in this way are compatible with the given sequence. It is easy to see that all these maps are ${\rm Cu}^\sim$-morphisms.

Note that the maps $S_i\to T$ preserve the order relation and are compatible as set maps. The definition of a (set) map $\mathcal{S}/\sim\to T$ is immediate if one restricts to eventually-$\ll$-increasing representative sequences
for elements of $\mathcal{S}/\sim$ (Step 0).
(If $(s_1,s_2,\cdots)$ represents $s$ with $s_i\in S_i$ and $s_k\ll s_{k+1}\ll s_{k+2}\ll\cdots$ for some $k$, let us aim to map $s$ into
the supremum of increasing sequence in $T$ consisting of the images of $s_k, s_{k+1}, s_{k+2},\cdots$ by the maps $S_{k}\to T$, $S_{k+1}\to T$, $S_{k+2}\to T$, $\cdots$; this of course makes sense if the sequence $(s_1,s_2,\cdots)$ is just  eventually-increasing.
If $(s_1',s_2',\cdots)$ is a second eventually-$\ll$-increasing representative sequence of $s$, assume that $s_k'\ll s_{k+1}'\ll s_{k+2}'\ll\cdots$ for the same $k$ as above.
Then for each $i\geq k$, as $s_i\ll s_{i+1}$, there is a large enough $m$ ($m\geq k$) we have $s_i\ll s_m'$. This implies for any $i\geq k$, we have $s_i\leq \sup_{j\geq k} s_j'$ (all these are images and elements in $T$), and hence
$\sup_{i\geq k} s_i\leq \sup_{j\geq k} s_j'$ in $T$.
By symmetry, the suprema of the images of
$s_k, s_{k+1}, s_{k+2},\cdots$ and of $s_k', s_{k+1}', s_{k+2}',\cdots$ in $T$ are equal.

Let us check that the set map $\mathcal{S}/\sim\to T$ thus defined is compatible with the maps $S_i\to T$ (Step 5, i.e., that the diagram is commutative as a diagram of set maps), and that it is a morphism in the category ${\rm Cu}^\sim$ (Step 6), and that it is unique with these properties (Step 7).

{\bf Step 5 :}
To show compatibility of $S_i\to T$ with $\mathcal{S}/\sim\to T$, we must show, for each fixed $i$,
that if $s\in S_i$ then the image of $s$ in $T$ by the given map $S_i\to T$ is the same as the image of $s$ in $T$ by the composed map
$s_i\to \mathcal{S}/\sim\to T$. By definition, the image of $s$ in $\mathcal{S}/\sim$ is represented by the sequence $(\underbrace{0,\cdots,0}_{i-1},s,s,\cdots)$;
However, in order to compute the image of this element of $\mathcal{S}/\sim$ in $T$, we must represent it by an eventually-$\ll$-increasing sequence:
let us use the sequence $(\underbrace{0,\cdots,0}_{i-1},r_i,r_{i+1},\cdots)$, where $r_i\ll r_{i+1}\ll r_{i+2}\ll\cdots$ is a rapidly increasing sequence
in $S_i$ with supremum $s$.
By definition, the corresponding element in $T$ is the supremum of the (increasing sequence of) images
of the elements $r_i\in S_i,~r_{i+1}\in S_{i+1},\cdots$ by the maps $S_i\to T$, $S_{i+1}\to T,\cdots$, equivalently (by commutativity) of the images of $r_i,r_{i+1}\cdots\in S_i$ by the map $S_i\to T$, and as this map preserves increasing sequential suprema, the supremum in
$T$ in question is just the image of $s$, by the map $S_i\to T$, as desired.

{\bf Step 6 :}
To show that the map $\mathcal{S}/\sim\to T$ is a ${\rm Cu}^\sim$-morphism, we must show that it preserves addition (1), preserves the order relation (2), preserves suprema of increasing sequences (3), and preserves the order-theoretic relation $\ll$ listed in Step 3 (4),
in terms of the two notions just mentioned (the order relation $\leq$ and the operation of sequential increasing supremum).
Let us address these issues, briefly, in turn.

 (1) Given two eventually-$\ll$-increasing sequences $(r_i)$ and $(s_i)$ with $r_i, s_i\in S_i$ for each $i$, we may suppose both of these two sequences are  $\ll$-increasing from $i=k$. To check that the sum in $\mathcal{S}/\sim$ maps into the sum of images in $T$ it is enough to recall what these images are, and that the operation of passing to the supremum of an increasing sequence in $T$ is compatible with addition in $T$.

(2) To check that the relation $[(r_1,r_2,\cdots)]\leq[(s_1,s_2,\cdots)]$ in $\mathcal{S}/\sim$
(with $(r_i)$, $(s_i)$ are eventually-$\ll$-increasing) leads to the same relation between the images in $T$, we may suppose both of these two sequences are  $\ll$-increasing from $i=k$. Then for any $i\geq k$, as $r_i\ll r_{i+1}$, by Definition \ref{eve-ins}, we have a large $m$ ($m\geq k$) with
$r_i\ll s_m$ in $S_m$, from the properties of the map $S_m\rightarrow T$, the image of $r_i$ in $T$ is compactly contained in the image of $s_m$ in $T$, %it is obviously that the image of $[(r_1,r_2,\cdots)]$ in $T$ is smaller than the image of $[(s_1,s_2,\cdots)]$ in $T$. The map $\mathcal{S}/\sim\to T$ preserves the order relation.
which means that $r_i\leq \sup_{j\geq k}s_j$ in $T$. Further, we have $\sup_{i\geq k}r_i\leq \sup_{j\geq k}s_j$ in $T$, in other words, $[(r_1,r_2,\cdots)]\leq[(s_1,s_2,\cdots)]$ in $T$. %(See the last remark in Step 3.)

(3) Using the same notations in Step 2,  let $s^1\leq s^2\leq s^3\leq \cdots$ be an increasing sequence in $\mathcal{S}/\sim$ with supremum $s$, and for each $i$, let $(s_n^i)_{n\geq k_i}$ be an eventually-$\ll$-increasing sequence such that $[(s_n^i)]=s^i$, where $s_n^i\in S_n$ for any $n$. We have an eventually-increasing sequence $(s_1,s_2,\cdots)$ (not necessary a rapid one) as follows: %From the same assumption of Step 2 ($s^1\leq s^2\leq s^3\leq \cdots$ is an increasing sequence in $\mathcal{S}/\sim$), we have
%the following is an eventually-increasing sequence (not necessary a rapid one).
$$
(s_n)=(\underbrace{0,0,\cdots,0}_{i_1-1},\underbrace{s^1_{i_1},\cdots,s^1_{i_1}}_{i_2-i_1},
\underbrace{s^2_{i_2},\cdots,s^2_{i_2}}_{i_3-i_2},\cdots,\underbrace{s^m_{i_m},\cdots,s^m_{i_m}}_{i_{m+1}-i_m},\cdots),
$$
where $s_n\in S_n$ and $[(s_1,s_2,\cdots)]=s$.
%Denote $s$ to be the class of $(s_n)$ in $\mathcal{S}/\sim$, then $s=\sup_{i\in \mathbb{N}} s^i$.
We mention that $s^1_{i_1}\ll s^2_{i_2}\ll s^3_{i_3}\ll\cdots$. Suppose we have an eventually-$\ll$-increasing sequence $(t_n)$ ($\ll$-increasing from $n=k$) with  $[(t_n)]=s$ (Step 0), denote $\dot{s}$ by the image of $s$ in $T$, use the compatibility of the maps $S_i\rightarrow T$ and Definition \ref{eve-ins}, we can prove that $\sup_{j\geq 1}\dot{s}_{i_j}^j= \sup_{n\geq k}\dot{t}_n=\dot{s}$ in $T$ and that $\sup_{j\geq 1}\dot{s}_{i_j}^j=\sup_{j\geq 1}\sup_{n\geq i_j}\dot{s}^j_n=\sup_{j\geq 1}\dot{s}^j$ in $T$, i.e., the supremum of sequence of the images of $s^j$ in $T$ is indeed the image of $s$ in $T$.

(4) %Let $(r_1,r_2,\cdots)$ and $(s_1,s_2,\cdots)$ be two eventually-$\ll$-increasing sequences $(r_1,r_2,\cdots)$ and $(s_1,s_2,\cdots)$ with $r_i, s_i\in S_i$ for each i, satisfying $[(r_1,r_2,\cdots)]\ll [(s_1,s_2,\cdots)]$.
%We may  also suppose both of these two sequences are  $\ll$-increasing from $i=k$.
%We hope to show that $\sup_{i\geq k}r_i\ll\sup_{i\geq k}s_i$ in $T$.
%Note that $[(r_1,r_2,\cdots)]\ll [(s_1,s_2,\cdots)]$ implies that  $\sup_{i\geq k}r_i\ll\sup_{i\geq k}s_i$ in $\mathcal{S}/\sim$.
%There exists an $i_0$ ($i_0\geq k$) such that $\sup_{i\geq k}r_i \leq s_{i_0}\ll  s_{i_0+1}\leq\sup_{i\geq k}s_i$ in $T$, which complete this step. (The map $S_{i_0+1}\to T$ preserves $\ll$ relation and the map $\mathcal{S}/\sim\to T$ preserves $\leq$ relation.)
Let $(r_1,r_2,\cdots)$ and $(s_1,s_2,\cdots)$ be two eventually-$\ll$-increasing sequences with $r_i, s_i\in S_i$ for each $i$, satisfying $r\ll s,$
 where $$
[(r_1,r_2,\cdots)]=r,\quad[(s_1,s_2,\cdots)]=s.$$

We may  also suppose both of these two sequences are  $\ll$-increasing from $i=k$.
We must show that the supremum of the images of $r_k,r_{k+1},\cdots$ in $T$ is compactly contained in the supremum of the images of $s_1,s_2,\cdots$ in $T$, in other words,  if $\dot{r}$ denotes the image of $r=\sup_{i\geq k}r_i$ in $T$, and $\dot{s}$ denotes the image of  $s=\sup_{i\geq k}s_i$ in $T$, then $\dot{r}\ll\dot{s}$.

Since $r\ll s$, we have $r\leq s_i$ in $\mathcal{S}/\sim$ for some $i\geq k$. Since the map $\mathcal{S}/\sim\rightarrow T$ preserves the relation $\leq$ (Step 6(2)), it follows that $\dot{r}\leq \dot{s_i}$ in $T$. Since $s_i\ll s_{i+1}$, not only in $\mathcal{S}/\sim$ but also in $S_{i+1}$, it follows from the given map  $S_{i+1}\rightarrow T$ preserves the relation $\ll$ that $\dot{s_i}\ll \dot{s}_{i+1}$ in $T$. Since $s_{i+1}\leq s$ in $\mathcal{S}/\sim$, we have $\dot{s}_{i+1}\leq \dot{s}$ in $T$, and hence
$\dot{r}\leq \dot{s_i}\ll\dot{s}_{i+1}\leq \dot{s},$
then $\dot{r}\ll\dot{s}$ in $T$.

%$\sup_{i\geq k}r_i\ll\sup_{i\geq k}s_i$ in $T$.
%Note that $[(r_1,r_2,\cdots)]\ll [(s_1,s_2,\cdots)]$ implies that  $\sup_{i\geq k}r_i\ll\sup_{i\geq k}s_i$ in $\mathcal{S}/\sim$.
%There exists an $i_0$ ($i_0\geq k$) such that $\sup_{i\geq k}r_i \leq s_{i_0}\ll  s_{i_0+1}\leq\sup_{i\geq k}s_i$ in $T$, which complete this step. (The map $S_{i_0+1}\to T$ preserves $\ll$ relation and the map $\mathcal{S}/\sim\to T$ preserves $\leq$ relation.)

{\bf Step 7 :}
We have shown that the map $\mathcal{S}/\sim\to T$ preserves $\leq$ relation (Step 6(2)) and  preserves suprema of increasing sequences (Step 6(3)). Note that from Step 0, for any $s\in\mathcal{S}/\sim$, we can pick an eventually-$\ll$-increasing sequences $(s_1,s_2,\cdots)$ ($\ll$-increasing from $i=k$)
to represent $s$. From the last remark in Step 3, we have $\sup_{i\geq k}s_i= s$ in $\mathcal{S}/\sim$. %(with a same proof of Step 2).
%(Note that the map $S_i\to \mathcal{S}/\sim$ maps $s_i$ into the class of $(\underbrace{0,0,\cdots,0}_{i-1},{s_i,s_i,\cdots})$.)
Note that the choice of $\dot{ s}_i$ in $T$ is unique by the given map $S_i\rightarrow T$.
From the uniqueness of the $\sup_{i\geq k}\dot{s}_i$ in $T$, we obtain the  uniqueness of the map $\mathcal{S}/\sim\to T$.

\end{proof}

\begin{remark} \label{limitpro}
Let  $(S_i,\beta_{ij})$ be an inductive system in ${\rm Cu}^\sim$-category, and
$
S=\mathcal{S}/\sim.
$
Suppose we have an object $T\in{\rm Cu^\sim-}$category satisfying that for any $j\geq i\in\mathbb{N}$,
the following diagram is commutative.
$$
\xymatrixcolsep{5pc}
\xymatrix{
{\,\, S_i\,\,} \ar[r]^-{\psi_{i}} \ar[rd]_-{\beta_{ij}}
& {\,\, T\,\,}
\\
{\,\,\,\,}
& {\,\,S_j\,\,} \ar[u]^-{\psi_j}}
$$
Then by universal property of inductive limit, there is a unique ${\rm Cu^\sim-}$map $\omega$ such that the following is also commutative for any $i\in\mathbb{N}$.
$$
\xymatrixcolsep{5pc}
\xymatrix{
{\,\, S_i\,\,} \ar[r]^-{\psi_{i}} \ar[rd]_-{\beta_{i\infty}}
& {\,\, T\,\,}
\\
{\,\,\,\,}
& {\,\,S\,\,} \ar[u]^-{\exists !\omega}}
$$

Given $s=(s_i)_{i\geq k_0}\in\mathcal{S}$, for each ${i\geq k_0}$, we have
$$
\beta_{i\infty}(s_i)=[(\beta_{ij}(s_{i}))_{j\geq i}],
$$
and hence from the commutativity,
$$
\psi_i(s_i)=\omega([(\beta_{ij}(s_{i}))_{j\geq i}]).
$$
Then as
$$
[s]=\sup_{i\geq k_0} \beta_{i\infty}(s_i)
$$
and $\omega$ preserves the suprema, we have
$$
\omega([s])=\omega(\sup_{i\geq k_0} \beta_{i\infty}(s_i))=\sup_{i\geq k_0} \omega(\beta_{i\infty}(s_i))=\sup_{i\geq k_0}\psi_i(s_i).
$$
\end{remark}

\begin{proposition}\label{cu total con}
Let $(S_i,\beta_{ij})$ be an inductive system in $\underline{{\rm Cu}}$-category, and
Then $$
{\rm {\rm Cu}^\sim-}\lim_{\longrightarrow}S_i={\rm \underline{Cu}-}\lim_{\longrightarrow}S_i.
$$
\end{proposition}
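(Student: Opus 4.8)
The plan is to show that the object $S:={\rm Cu}^\sim\text{-}\varinjlim S_i$, which exists and is realised concretely as $\mathcal S/\!\sim$ by Lemma~\ref{fang ell}, together with its canonical cocone of ${\rm Cu}^\sim$-morphisms $\beta_{i\infty}\colon S_i\to S$ (Remark~\ref{limitpro}), is in fact the inductive limit of $(S_i,\beta_{ij})$ inside the category ${\rm \underline{Cu}}$. Concretely, I must check three things: (a) $S$ is an object of ${\rm \underline{Cu}}$, i.e.\ ${\rm Gr}(S_c)$ carries a $\Lambda$-module structure; (b) each $\beta_{i\infty}$ is a ${\rm \underline{Cu}}$-morphism, i.e.\ ${\rm Gr}((\beta_{i\infty})_c)$ is $\Lambda$-linear; and (c) for every $T\in{\rm \underline{Cu}}$ and every compatible cocone of ${\rm \underline{Cu}}$-morphisms $\psi_i\colon S_i\to T$, the unique ${\rm Cu}^\sim$-morphism $\omega\colon S\to T$ with $\omega\circ\beta_{i\infty}=\psi_i$ supplied by Remark~\ref{limitpro} is again a ${\rm \underline{Cu}}$-morphism.

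The technical heart, from which (a)--(c) follow formally, is the identification of the ordered monoid of compact elements of the limit. \textbf{Claim:} the canonical map $\varinjlim\bigl((S_i)_c,(\beta_{ij})_c\bigr)\to S_c$ is an isomorphism of ordered monoids. It is well defined because ${\rm Cu}^\sim$-morphisms preserve the relation $\ll$, hence carry compact elements to compact elements. For surjectivity, take $x\in S_c$; by Step~0 of the proof of Lemma~\ref{fang ell} I may represent $x$ by an eventually-$\ll$-increasing sequence $(t_i)_{i\ge k}$, and then $x=\sup_{i\ge k}\beta_{i\infty}(t_i)$. Since $x\ll x$ there is an $n$ with $\beta_{n\infty}(t_n)=x$; then setting $a:=\beta_{n,n+1}(t_n)\in S_{n+1}$ one has $a\ll t_{n+1}$ in $S_{n+1}$ while both $a$ and $t_{n+1}$ have image $x$ in $S$. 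Feeding the two eventually-constant sequences representing $a$ and $t_{n+1}$ into the explicit definition of $\sim$ in Definition~\ref{eve-ins}, and testing with the compactly contained element $a$, forces $\beta_{n+1,q}(a)$ to be compact in $S_q$ for all sufficiently large $q$; since $\beta_{q\infty}\bigl(\beta_{n+1,q}(a)\bigr)=x$, this exhibits $x$ in the image of $(S_q)_c$. That the map is an order embedding is the same unwinding: if $c,d\in(S_j)_c$ with $\beta_{j\infty}(c)\le\beta_{j\infty}(d)$ in $S$, then applying Definition~\ref{eve-ins} with the compact element $c$ yields $\beta_{jq}(c)\le\beta_{jq}(d)$ in $S_q$ for all large $q$, i.e.\ $[c]\le[d]$ in $\varinjlim(S_i)_c$.

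Granting the Claim, the remainder is routine. The Grothendieck-group functor is a left adjoint, hence preserves sequential inductive limits, so ${\rm Gr}(S_c)\cong\varinjlim{\rm Gr}((S_i)_c)$ compatibly with the maps ${\rm Gr}((\beta_{i\infty})_c)$. Each ${\rm Gr}((S_i)_c)$ is a $\Lambda$-module and each ${\rm Gr}((\beta_{ij})_c)$ is $\Lambda$-linear, and the category of $\Lambda$-modules has inductive limits computed on underlying graded groups; hence $\varinjlim{\rm Gr}((S_i)_c)$ is a $\Lambda$-module with the ${\rm Gr}((\beta_{i\infty})_c)$ as $\Lambda$-linear structure maps. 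Transporting this structure across the isomorphism proves (a) and (b). For (c), the identity $\omega\circ\beta_{i\infty}=\psi_i$ gives ${\rm Gr}(\omega_c)\circ{\rm Gr}((\beta_{i\infty})_c)={\rm Gr}((\psi_i)_c)$, which is $\Lambda$-linear for each $i$; since every element of ${\rm Gr}(S_c)=\varinjlim{\rm Gr}((S_i)_c)$ lies in the image of some ${\rm Gr}((\beta_{i\infty})_c)$, a short diagram chase shows that ${\rm Gr}(\omega_c)$ intertwines the $\Lambda$-actions, so $\omega$ is a ${\rm \underline{Cu}}$-morphism. This yields $S={\rm \underline{Cu}}\text{-}\varinjlim S_i$, and hence the asserted equality of the two limits.

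I expect the main obstacle to be the Claim, and specifically its surjectivity half: the statement that every compact element of the ${\rm Cu}^\sim$-limit is \emph{already realised, compactly, at a finite stage}. This cannot be read off from the compact-containment relation alone, since a ${\rm Cu}^\sim$-morphism may send a non-compact element to a compact one; one has to exploit the explicit form of the equivalence relation on eventually-increasing sequences in Definition~\ref{eve-ins}, in the same spirit as Step~0 of the proof of Lemma~\ref{fang ell}. Once that identification of $S_c$ is secured, the passage to Grothendieck groups and $\Lambda$-modules is purely formal.
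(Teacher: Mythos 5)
Your proposal is correct and follows the same overall strategy as the paper's proof: identify the compact elements of the ${\rm Cu}^\sim$-limit with the algebraic inductive limit of the $(S_i)_c$, pass to Grothendieck groups, endow the resulting colimit with the $\Lambda$-module structure inherited from the $\mathrm{Gr}((S_i)_c)$, and deduce $\Lambda$-linearity of the induced map $\omega$ from the fact that every element of $\mathrm{Gr}(S_c)$ comes from a finite stage. The one place where you go beyond the paper is the Claim that $\varinjlim\bigl((S_i)_c,(\beta_{ij})_c\bigr)\to S_c$ is an ordered-monoid isomorphism: the paper asserts $S_c=\varinjlim (S_i)_c$ by citing the closing remark of Step 3 of Lemma \ref{fang ell}, but that remark only says each element of $\mathcal S/\!\sim$ is the supremum of images of finite-stage elements, which (as you correctly point out) shows a compact $x$ equals $\beta_{n\infty}(t_n)$ for some $n$ without showing $x$ is the image of a \emph{compact} element at a finite stage. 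Your device of comparing the two eventually-constant sequences built from $a=\beta_{n,n+1}(t_n)$ and $t_{n+1}$, and feeding the relation $a\ll t_{n+1}$ into the definition of $\le$ in Definition \ref{eve-ins} to force $\beta_{n+1,q}(a)\ll\beta_{n+1,q}(a)$ for large $q$, is a genuine and correct repair of this gap, and the order-embedding half is handled by the same unwinding. The remaining formal steps (Grothendieck as a left adjoint, colimits of $\Lambda$-modules computed on underlying groups, the diagram chase for $\mathrm{Gr}(\omega_c)$) match what the paper does implicitly via Remark \ref{limitpro} and the universal property.
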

\begin{proof}
Denote $S={\rm {\rm Cu}^\sim-}\lim\limits_{\longrightarrow}S_i.$
From the last remark of Step 3 in Lemma \ref{fang ell}, we have $$S_c=\lim_{\longrightarrow}(S_i)_c$$ is an algebraic limit, then $${\rm Gr}(S_c)=\lim_{\longrightarrow}{\rm Gr}((S_i)_c)$$ is also an algebraic limit, which becomes a $\Lambda$-module.

Suppose $T$ is an object in $\underline{{\rm Cu}}$-category with compatible $\underline{{\rm Cu}}$-morphisms $\psi_i:\,S_i\to T$. Denote the ${\rm Cu}^\sim$-morphism we obtained in Lemma \ref{fang ell} by $\omega$.
$$
\xymatrix{
&&&T\\
S_1\ar[r]\ar[urrr] & S_2\ar[r]\ar[urr]&\cdots
\ar[r]& S \ar[u]_{\omega}
}
$$
Since each ${\rm Gr}((\psi_i)_c):\,{\rm Gr}((S_i)_c)\to {\rm Gr}(T_c)$ is $\Lambda$-linear and compatible with
each ${\rm Gr}((\beta_{ij})_c):\,{\rm Gr}((S_i)_c)\to {\rm Gr}((S_j)_c)$, we have
${\rm Gr}(\omega_c):\,{\rm Gr}(S_c)\to {\rm Gr}(T_c)$ is $\Lambda$-linear.

\end{proof}

\begin{theorem}
Let $A=\lim\limits_{\longrightarrow}(A_i,\phi_{ij})$ be an inductive system in the category $C^*_{sr1}$. Then
$$
{\rm \underline {Cu}-}\lim_{\longrightarrow}{\rm \underline{Cu}}(A_i)\cong
{\rm \underline{Cu}}(A).
$$
\end{theorem}
\begin{proof}
Since $({\rm \underline{Cu}}(A_i), {\rm \underline{Cu}}(\phi_{ij}))$ is an inductive system in the category ${\rm \underline{Cu}}$,  %and hence we can also regard them as objects in ${\rm Cu}^\sim$-category. Then we consider $({\rm \underline{Cu}}(A_i),{\rm \underline{Cu}}(\phi_{ij}))$ as a inductive system in ${\rm Cu}^\sim$-category.
denote
$$
S={\rm \underline{Cu}}-\lim_{\longrightarrow}{\rm \underline{Cu}}(A_i).
$$
%We need only to show that $S={\rm \underline{Cu}}(A)$.
%Further more, if we can prove $S={\rm \underline{Cu}}(A)$ as object in ${\rm Cu}^\sim$-category, we will automatically get
%$$
%{\rm \underline{Cu}}(A)=S={\rm \underline{Cu}}^\sim-\lim_{i\to %\infty}{\rm \underline{Cu}}(A_i).
%$$
%(Note that $$Gr({\rm \underline{Cu}}(A)_c)=\underline{K}(A)=\lim_{i\to \infty}\underline{K}(A_i)=\lim_{i\to \infty}Gr({\rm \underline{Cu}}(A_i)_c)).$$
%Then $S$ is the inductive limit in the category ${\rm Cu}^\sim$ of the sequence
%$$5{\rm \underline{Cu}}(A_1)\rightarrow {\rm \underline{Cu}}(A_2)\rightarrow\cdots.
%$$
For ${\rm \underline{Cu}}(A)$ in ${\rm \underline{Cu}}$-category and the compatible sequence of maps
$$
{\rm \underline{Cu}}(\phi_{i\infty}):\,{\rm \underline{Cu}}(A_i)\to{\rm \underline{Cu}}(A),$$
%  $\forall i\in\mathbb{N}$, denote the canonical map from ${\rm \underline{Cu}}(A_i)$ to $S$ by $\Phi_i$. As for any $i\in\mathbb{N}$, ${\rm \underline{Cu}}(\phi_{i,\infty}):\,{\rm \underline{Cu}}(A_i)\to{\rm \underline{Cu}}(A)$ is a ${\rm Cu}^\sim$-morphism,
by Proposition \ref{cu total con}, there exists a unique compatible ${\rm \underline{Cu}}$-morphism $\omega:\,S\to {\rm \underline{Cu}}(A)$ such that the following diagram is commutative:
$$
\xymatrixcolsep{5pc}
\xymatrix{
{\,\,{\rm \underline{Cu}}(A_i)\,\,} \ar[r]^-{{\rm \underline{Cu}}(\phi_{i\infty})} \ar[rd]_-{}
& {\,\,{\rm \underline{Cu}}(A)\,\,}
\\
{\,\,\,\,}
& {\,\,S\,\,} \ar[u]^-{\exists !\omega}}
$$
Now we are going to prove that $\omega$ is a ${\rm \underline{Cu}}$-isomorphism.

Firstly, we show that $\omega$ is surjective. For any $ (x,\mathfrak{u},\oplus_{p=1}^{\infty}(\mathfrak{s}_p,\mathfrak{s}^p))\in {\rm \underline{Cu}}(A)$,  by \ref{Ell inc}, there exists an increasing sequence $(x_1,x_2,\cdots)$ with $x_1\in {\rm {Cu}}(A_1)$, $x_2\in {\rm {Cu}}(A_2), \cdots$  such that
$$
x=[(x_1,x_2,\cdots)],
$$
Let $I_{x_i}$ be the ideal of $A_i$  generated by $x_i\in{\rm Cu}(A_i)$,  and let $I_x$ be the ideal of $A$ generated by $x\in{\rm Cu}(A)$. Then
$$
I_{x_1}\to I_{x_2}\to\cdots \to I_x\lhd A,
$$
with $I_x={\rm C}^*-\lim I_{x_i}.$

Note that $(0,\mathfrak{u},\oplus_{p=1}^{\infty}(\mathfrak{s}_p,\mathfrak{s}^p))\in {\underline{\rm K}}(I_x)$, by the continuity of $ \underline{\rm K}$, there exist $k\in \mathbb{N}$ and
$(0,\mathfrak{u}_{k},\oplus_{p=1}^{\infty}(s_{k,p},s^{k,p}))\in {\underline{\rm K}}(I_{x_{k}})$ satisfying
$$
\underline{\rm K}(\phi_{{k\infty}{I_{x_k}I_x}})(0,\mathfrak{u}_{k},\oplus_{p=1}^{\infty}(\mathfrak{s}_{k,p},\mathfrak{s}^{k,p}))=
(0,\mathfrak{u},\oplus_{p=1}^{\infty}(\mathfrak{s}_p,\mathfrak{s}^p))\in {\underline{\rm K}}(I_{x}).
$$
and for any $i>k$, pick
$$
(0,\mathfrak{u}_{i},\oplus_{p=1}^{\infty}(\mathfrak{s}_{i,p},\mathfrak{s}^{i,p})):=\underline{\rm K}(\phi_{{ki}I_{x_k}I_{x_i}})(0,\mathfrak{u}_{k},\oplus_{p=1}^{\infty}(\mathfrak{s}_{k,p},\mathfrak{s}^{k,p}))\in {\underline{\rm K}}(I_{x_i}).
$$
%(Recall that the above notation
%comes from Remark \ref{mor remark}, otherwise, we will have to
%write  $\underline{\rm K}(\phi_{k\infty})$ as $\underline{\rm K}((\phi_{k\infty})_{I_{x_k}I_x})$.)
Then we obtain an eventually-increasing sequence
$((x_j,\mathfrak{u}_{j},\oplus_{p=1}^{\infty}(\mathfrak{s}_{j,p},\mathfrak{s}^{j,p})))_j$.
%(if $j< k$, one can choose any $(x_j,\mathfrak{u}_{j},\oplus_{p=1}^{\infty}(\mathfrak{s}_{j,p},\mathfrak{s}^{j,p}))\in {\rm \underline{Cu}^\sim}(A_j)$).
Set
$$
s=((x_j,\mathfrak{u}_{j},\oplus_{p=1}^{\infty}(\mathfrak{s}_{j,p},\mathfrak{s}^{j,p})))_{j\geq k},
$$
By Lemma \ref{fang ell}, we have $[s]\in S$.

Then by the commutativity of
$$
\xymatrixcolsep{5pc}
\xymatrix{
{\,\,{\rm \underline{Cu}}(A_i)\,\,} \ar[r]^-{{\rm \underline{Cu}}(\phi_{i,\infty})} \ar[rd]_-{\Phi_i}
& {\,\,{\rm \underline{Cu}}(A)\,\,}
\\
{\,\,\,\,}
& {\,\,S\,\,} \ar[u]^-{\exists !\omega},}
$$
we get
$$
\omega([s])=
(x,\mathfrak{u},\oplus_{p=1}^{\infty}(\mathfrak{s}_p,\mathfrak{s}^p)).
$$

Secondly, we show $\omega$ is injective.
Let $f=(x_i,\mathfrak{u}_{i},\oplus_{p=1}^{\infty}(\mathfrak{s}_{i,p},\mathfrak{s}^{i,p}))_{i\geq k_1}$ and
$g=(y_j,\mathfrak{v}_{j},\oplus_{p=1}^{\infty}(\mathfrak{t}_{j,p},t^{j,p}))_{j\geq k_2}$
be two eventually-increasing sequences of inductive system $({\rm \underline{Cu}}(A_i),{\rm \underline{Cu}}(\phi_{ij}))$ in the category ${\rm Cu}^\sim$. Suppose $\omega([f])=\omega([g])\in{\rm \underline{Cu}}(A)$,
we need only to prove $f\sim g$.

%Choose $x,y\in {\rm Cu}(A)$ such that
%$$
%x=[(x_i)]=\sup \{{\rm {Cu}}(\phi_{i,\infty})(x_i)\},\,\,y=[(y_j)]=\sup \{{\rm %{Cu}}(\phi_{j,\infty})(y_j)\}.
%$$

From Remark \ref{limitpro}, we have
$$
\sup_{i\geq k_1} {\rm \underline{Cu}}(\phi_{i\infty})(x_i,\mathfrak{u}_{i},
\oplus_{p=1}^{\infty}(\mathfrak{s}_{i,p},\mathfrak{s}^{i,p}))=
\sup_{j\geq k_2} {\rm \underline{Cu}}(\phi_{j\infty})(y_j,\mathfrak{v}_{j},
\oplus_{p=1}^{\infty}(\mathfrak{t}_{j,p},t^{j,p})),$$
then
$$
[(x_i)]=\sup {\rm {Cu}}(\phi_{i\infty})(x_i)
=
\sup {\rm {Cu}}(\phi_{j\infty})(y_j)=[(y_j)],\,\,{\rm i.e.},~(x_i)\sim (y_i).
$$
(Here we take $x_i=0$ for $i\leq k_1-1$ and $y_j=0$ for $j\leq k_2-1$.)

Denote $z=[(x_i)]=[(y_i)]\in {\rm Cu}(A)$, then
$$
I_{x_1}\to I_{x_2}\to\cdots \to I_z\lhd A,
$$
$$
I_{y_1}\to I_{y_2}\to\cdots \to I_z\lhd A
$$
and
$$
I_z={\rm C^*-}\lim I_{x_i}={\rm C^*-}\lim I_{y_i}.$$

Now we have
$$
\xymatrixcolsep{3pc}
\xymatrix{
{\,\,\underline{\mathrm{K}}(I_{x_1})\,\,}  \ar[r]^-{\varphi_{12}^*}
& {\,\,\underline{\mathrm{K}}(I_{x_2})\,\,}  \ar[r]^-{}
& {\,\,\cdots\,\,} \ar[r]^-{}
& {\,\,\underline{\mathrm{K}}(I_{z})\,\,}\ar[d]_-{=}
 \\
{\,\,\underline{\mathrm{K}}(I_{y_1})\,\,} \ar[r]_-{\psi_{12}^*}
& {\,\,\underline{\mathrm{K}}(I_{y_2}) \,\,} \ar[r]_-{}
& {\,\,\cdots \,\,} \ar[r]_-{}
& {\,\,\underline{\mathrm{K}}(I_{z})\,\,},}
$$
where the  maps $\varphi_{ij}^*$ and $\psi_{ij}^*$ are in fact $\underline{\mathrm{K}}(\phi_{{ij}{I_{x_i}I_{x_j}}})$ and $\underline{\mathrm{K}}(\phi_{{ij}{I_{y_i}I_{y_j}}})$, respectively.

As  $(x_i)\leq (y_i)$, by \ref{Ell inc}, there exists $k\in \mathbb{N}$ such that for any $i\geq k$ and $c\in {\rm {Cu}}(A_i)$ with $c\ll x_i$, there exists $m_c$ such that ${\rm Cu} (\phi_{ij})(c) \ll y_j$ (in ${\rm {Cu}}(A_j)$), for all $j\geq m_c$.
%Hence,
%$$
%I_{{\rm Cu}(\phi_{ij})(c)}\vartriangleleft I_{y_j}\vartriangleleft A_j,\quad \forall j\geq m_c.
%$$

Suppose we have $(c,\mathfrak{w}_i,\oplus_{p=1}^{\infty}(\mathfrak{r}_{i,p},\mathfrak{r}^{i,p}))\in{\rm \underline{Cu}}(A_i) $ with
$$(c,\mathfrak{w}_i,\oplus_{p=1}^{\infty}(\mathfrak{r}_{i,p},\mathfrak{r}^{i,p}))\ll
(x_i,\mathfrak{u}_{i},\oplus_{p=1}^{\infty}(\mathfrak{s}_{i,p},\mathfrak{s}^{i,p})),$$
then we have
$$
\delta_{I_cI_{x_i}}(\mathfrak{w}_i,\oplus_{p=1}^{\infty}(\mathfrak{r}_{i,p},\mathfrak{r}^{i,p}))
=(\mathfrak{u}_i,\oplus_{p=1}^{\infty}(\mathfrak{s}_{i,p},\mathfrak{s}^{i,p})), \,\,\forall i\geq k.
$$

Now we have the natural commutative diagram:
$$
\xymatrixcolsep{3pc}
\xymatrix{
{\,\,I_{c}\,\,} \ar[d]_-{\phi_{{ij}I_{c} I_{{\rm Cu}(\phi_{ij})(c)}}} \ar[r]^-{}
& {\,\,I_{x_i}\,\,}  \ar[r]^-{}
& {\,\,I_{z}.\,\,}
\\
{\,\, I_{{\rm Cu}(\phi_{ij})(c)}\,\,} \ar[r]^-{}
& {\,\,I_{y_j} \,\,}  \ar[ru]^-{}}
$$
Then the following diagram commutates at the level of $\underline{\rm K}$:
$$
\xymatrixcolsep{3pc}
\xymatrix{
{\,\,\underline{\mathrm{K}}(I_{c})\,\,} \ar[d]_-{\underline{\rm K}(\phi_{{ij}I_{c} I_{{\rm Cu}(\phi_{ij})(c)}})} \ar[r]^-{\delta_{I_c I_{x_i}}}
& {\,\,\underline{\mathrm{K}}(I_{x_i})\,\,}  \ar[r]^-{}
& {\,\,\underline{\mathrm{K}}(I_{z})\,\,}.
\\
{\,\,\underline{\mathrm{K}}(I_{{\rm Cu}(\phi_{ij})(c)}) \,\,} \ar[r]^-{\delta_{I_{{\rm Cu}(\phi_{ij})(c)} I_{y_j}}}
& {\,\,\underline{\mathrm{K}}(I_{y_j}) \,\,}  \ar[ru]^-{}}
$$

Note that
$$
{\rm \underline{K}}(\phi_{{i\infty}{I_{x_i}I_z}})(0,\mathfrak{u}_{i},\oplus_{p=1}^{\infty}(\mathfrak{s}_{i,p},\mathfrak{s}^{i,p}))
=
{\rm \underline{K}}(\phi_{{j\infty}{I_{y_j}I_z}})(0,\mathfrak{v}_{j},\oplus_{p=1}^{\infty}(\mathfrak{t}_{j,p},\mathfrak{t}^{j,p})\in {\rm \underline{K}}(I_z),
$$
then there exists $m\geq m_c$ such that
$$
\delta_{I_{{\rm Cu}(\phi_{ij})(c)} I_{y_j}}\circ\underline{\rm K}(\phi_{{ij}I_{c} I_{{\rm Cu}(\phi_{ij})(c)}})(0,\mathfrak{w}_i,\oplus_{p=1}^{\infty}(\mathfrak{r}_{i,p},\mathfrak{r}^{i,p}))
$$$$=
(0,\mathfrak{v}_{j},\oplus_{p=1}^{\infty}(\mathfrak{t}_{j,p},\mathfrak{t}^{j,p}))\in {\rm  \underline{K}}(I_{y_j})
$$
holds for any $j\geq m$.

  Recall that $c\ll y_j$, by Theorem \ref{jin bijiao}, we have
$${\rm \underline{Cu}} (\phi_{ij})(c,\mathfrak{w}_i,\oplus_{p=1}^{\infty}(\mathfrak{r}_{i,p},\mathfrak{r}^{i,p}))\ll
(y_j,\mathfrak{v}_{j},\oplus_{p=1}^{\infty}(\mathfrak{t}_{j,p},\mathfrak{t}^{j,p}))
$$
holds for any $j\geq m$. Then we get $f\leq g.$

Similarly, $(y_i)\leq (x_i)$ will imply $g\leq f$, then $\omega$ is injective.

Now we have $\omega:\, S\cong{\rm \underline{Cu}}(A)$ as objects in the category ${\rm {Cu}}^\sim$.
By  Theorem \ref{k-total continuous}, Theorem \ref{Cu total thm} and Proposition \ref{cu total con},
we have
$${\rm Gr}(S_c)\cong\lim_{\longrightarrow}{\rm \underline{K}}(A_i)={\rm \underline{K}}(A).$$
Still by Theorem \ref{Cu total thm}, we have ${\rm Gr}({\rm \underline{Cu}}(A)_c)$ is also canonically isomorphic with ${\rm \underline{K}}(A)$.
Then Gr($\omega_c$) works as the identity map on ${\rm \underline{K}}(A)$, which is $\Lambda$-linear, and hence, we have $S\cong{\rm \underline{Cu}}(A)$  as objects in the category ${\rm \underline{Cu}}$. This completes the proof.

\end{proof}

\section{Recover the total K -theroy}
In this  section, we show that ${\rm \underline{ Cu}}$ contains more information than $\underline{\rm K}$, hence, ${\rm \underline{ Cu}}$ is a complete invariant for certain real rank zero algebras.

\begin{notion}\rm
We say $(S,u)$ is a ${\rm \underline{Cu}}$-$semigroup$ $with$ $order$-$unit$ if $(S,u)$ is a positively directed ${\rm \underline{Cu}}$-semigroup satisfying $\rho(S_c)\cap \{-\rho(S_c)
 \}=\{0\}$ with a compact order-unit. Now a ${\rm \underline{Cu}}$-morphism $f:S\rightarrow T$ with $f(u)\leq v$ will be called a ${\rm \underline{Cu}}_u$-morphism between $(S,u)$ and $(T,v)$.
Denote ${\rm \underline{Cu}}_u$
as the category whose objects are ${\rm \underline{Cu}}$-semigroups with order-unit and morphisms are
${\rm \underline{Cu}}_u$-morphisms.% that preserve the order-unit.
%A ${\rm \underline{Cu}}^\sim$-morphism between
%two ${\rm \underline{Cu}}^\sim$-semigroups with compact order-unit $(S,u),(T,v)$ is a ${\rm \underline{Cu}}^\sim$-morphism $\alpha:S\rightarrow T$ such that $\alpha(u)\leq v$.
\end{notion}

After adding a unit,  we transform  Theorem \ref{Cu total thm} into the following:

\begin{lemma}
  The assignment
  \begin{eqnarray*}
  % \nonumber % Remove numbering (before each equation)
    {\rm \underline{Cu}}_u:\,  C_{sr1}^* & \rightarrow & {\rm \underline{Cu}}_u  \\
    A &\mapsto & ({\rm \underline{Cu}}(A),([1_A],0,\oplus_{p=1}^{\infty}(0,0))) \\
    \phi &\mapsto & {\rm \underline{Cu}}(\phi)
  \end{eqnarray*}
  from the category of unital, separable $C^*$-algebras of stable rank one to the category ${\rm \underline{Cu}}^\sim$ is a covariant functor.
\end{lemma}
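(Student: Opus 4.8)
The plan is to derive this from Theorem~\ref{Cu total thm}, which already shows that $\underline{\rm Cu}$ is a covariant functor from $C^*_{sr1}$ to the category $\underline{\rm Cu}$; the only extra work is to check that the designated base point $u_A:=([1_A],0,\oplus_{p=1}^{\infty}(0,0))$ makes $\underline{\rm Cu}(A)$ an object of $\underline{\rm Cu}_u$ and that $\underline{\rm Cu}(\phi)$ respects base points.

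First I would verify that $(\underline{\rm Cu}(A),u_A)$ is a $\underline{\rm Cu}$-semigroup with order-unit. Since $A$ is unital, $1_A$ is a full projection, so $[1_A]$ is a full compact element of ${\rm Cu}(A)$; by Corollary~\ref{O 2} the element $u_A$ is compact in $\underline{\rm Cu}(A)$, and it is an order-unit for $\underline{\rm Cu}(A)_+={\rm Cu}(A)$ because $[1_A]$ is full in ${\rm Cu}(A)$. For the condition $\rho(S_c)\cap\{-\rho(S_c)\}=\{0\}$, Theorem~\ref{Cu total thm} gives $({\rm Gr}(\underline{\rm Cu}(A)_c),\rho(\underline{\rm Cu}(A)_c))\cong(\underline{\rm K}(A),\underline{\rm K}(A)_+)$, and $\underline{\rm K}(A)_+\cap\{-\underline{\rm K}(A)_+\}=\{0\}$ by Theorem~\ref{ordertotal}(ii), which applies since the unital algebra $A$ has an approximate unit of projections. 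Finally, $\underline{\rm Cu}(A)$ is positively directed: given $(x,\mathfrak{u},\oplus_{p=1}^{\infty}(\mathfrak{s}_p,\mathfrak{s}^p))$, set $p_x:=([1_A],-\delta_{I_xA}(\mathfrak{u},\oplus_{p=1}^{\infty}(\mathfrak{s}_p,\mathfrak{s}^p)))$, which is a legitimate element of $\underline{\rm Cu}(A)$ because no positivity is imposed on the K-theoretic coordinates in Definition~\ref{cu total def} and these form a group; since $1_A$ is full one has $I_{x+1_A}=A\otimes\mathcal{K}$, $\delta_{I_xI_{x+1_A}}=\delta_{I_xA}$ and $\delta_{I_{1_A}I_{x+1_A}}={\rm id}$, so the K-part of $(x,\mathfrak{u},\oplus_{p=1}^{\infty}(\mathfrak{s}_p,\mathfrak{s}^p))+p_x$ vanishes and the sum equals $([1_A]+x,0,\oplus_{p=1}^{\infty}(0,0))\geq 0$. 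Hence $(\underline{\rm Cu}(A),u_A)\in\underline{\rm Cu}_u$.

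Next I would check that for a $*$-homomorphism $\phi\colon A\to B$ in $C^*_{sr1}$ the morphism $\underline{\rm Cu}(\phi)$ is a $\underline{\rm Cu}_u$-morphism. By Theorem~\ref{Cu total thm} it is already a $\underline{\rm Cu}$-morphism, so it remains to see $\underline{\rm Cu}(\phi)(u_A)\leq u_B$. Unwinding the definition in Remark~\ref{mor remark}, $\underline{\rm Cu}(\phi)(u_A)=([\phi(1_A)],0,\oplus_{p=1}^{\infty}(0,0))$, since $\underline{\rm K}$ sends the zero element to the zero element; as $\phi(1_A)$ is a projection dominated by $1_B$ we have $[\phi(1_A)]\leq[1_B]$ in ${\rm Cu}(B)$, and the K-part condition $\delta_{I_{\phi(1_A)}I_{1_B}}(0,\oplus_{p=1}^{\infty}(0,0))=(0,\oplus_{p=1}^{\infty}(0,0))$ holds trivially, so $\underline{\rm Cu}(\phi)(u_A)\leq u_B$.

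Lastly, the identities $\underline{\rm Cu}({\rm id}_A)={\rm id}$ and $\underline{\rm Cu}(\psi\circ\phi)=\underline{\rm Cu}(\psi)\circ\underline{\rm Cu}(\phi)$ are inherited verbatim from the functoriality of $\underline{\rm Cu}$ established in Theorem~\ref{Cu total thm}, as base points play no role in composition. I do not expect a genuine obstacle here; the only slightly delicate point is the verification that $\underline{\rm Cu}(A)$ is positively directed with a compact order-unit, which is exactly where unitality of $A$ together with Theorems~\ref{ordertotal} and \ref{Cu total thm} are used.
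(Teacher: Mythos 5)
Your proposal is correct and follows essentially the same route as the paper: both reduce everything to Theorem \ref{Cu total thm} and Theorem \ref{ordertotal}, checking positive directedness, the condition $\rho(S_c)\cap\{-\rho(S_c)\}=\{0\}$, and the compact order-unit. The only difference is cosmetic: for positive directedness the paper takes $p_x=(x,-\mathfrak{u},\oplus_{p=1}^{\infty}(-\mathfrak{s}_p,-\mathfrak{s}^p))$, giving $(2x,0,\oplus_{p=1}^{\infty}(0,0))\geq 0$ directly (since $I_x=I_{2x}$), whereas you route the negated K-data through $\delta_{I_xA}$ into the fibre over $[1_A]$ — both witnesses are valid, and your additional explicit checks (compactness of $u_A$, $\underline{\rm Cu}(\phi)(u_A)\leq u_B$) are details the paper leaves implicit.
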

\begin{proof}
  Note that $[1_A]$ is an order unit of
  ${\rm \underline{K}}(A)_+$.

  For any
  $
  (x,\mathfrak{u},\oplus_{p=1}^{\infty}(\mathfrak{s}_p,\mathfrak{s}^p))\in%{\rm Cu}_f(I_x)\times {\rm K}_1(I_x)\times\bigoplus_{p=1}^{\infty} {\rm K}_* (I_x, \mathbb{Z}_p)\subset
  {\rm \underline{Cu}}(A),
  $
  $$
  (x,\mathfrak{u},\oplus_{p=1}^{\infty}(\mathfrak{s}_p,\mathfrak{s}^p))+(x,-\mathfrak{u},\oplus_{p=1}^{\infty}(-\mathfrak{s}_p,-\mathfrak{s}^p))
  =(2x,0,\oplus_{p=1}^{\infty}(0,0))\geq 0
  $$
  implies that ${\rm \underline{Cu}}(A)$ is positively directed.

  From  Theorem \ref{Cu total thm}, for any unital, separable $C^*$-algebra $A$ with stable rank one, we have
 $$
 ({\rm Gr}({\rm \underline{Cu}}(A)_c),\rho({\rm \underline{Cu}}(A)_c), ([1_A],0,\oplus_{p=1}^{\infty}(0,0)))\cong({\rm \underline{K}}(A),{\rm \underline{K}}(A)_+,[1_A]),
 $$
where $\rho:\, {\rm \underline{Cu}}(A)_c\to {\rm Gr}({\rm \underline{Cu}}(A)_c)$ is the natural map $(\rho(x)=[(x,0)]).$
From Theorem \ref{ordertotal}, we have
 $\rho({\rm \underline{K}}(A)_+)\cap \{-\rho({\rm \underline{K}}(A)_+)
 \}=\{0\},$ then
  $\rho({\rm \underline{Cu}}(A)_c)\cap \{-\rho({\rm \underline{Cu}}(A)_c)
 \}=\{0\}.$

In conclusion, we have $({\rm \underline{Cu}}(A),([1_A],0,\oplus_{p=1}^{\infty}(0,0)))$ is a ${\rm \underline{Cu}}$-semigroup with order-unit
and ${\rm \underline{Cu}}(\phi)$ is a ${\rm \underline{Cu}}_u$-morphism.

\end{proof}
Now we can recover the total K-theory from the total Cuntz semigroup. %${\rm \underline{K}}$ from ${\rm \underline{Cu}}$.
\begin{proposition}\label{recover prop}
  The assignment
\begin{eqnarray*}
% \nonumber % Remove numbering (before each equation)
  \underline{H}:\,  {\rm \underline{Cu}}_u &\rightarrow& \Lambda-{\rm module} \\
  (S,u) &\mapsto & ({\rm Gr}(S_c),\rho(S_c),[u])\\
  \phi &\mapsto & {\rm Gr}(\phi_c),
\end{eqnarray*}
is a functor, where $\rho:\, S_c\to {\rm Gr}(S_c)$ is the natural map ($\rho(x)=[(x,0)])$.

 The functor $\underline{H}$ yields a natural isomorphism $\underline{H}\circ {\rm \underline{Cu}}_u\simeq  {\rm \underline{K}}$, which means, for any unital, separable $C^*$-algebras $A,B$ with stable rank one, if
$$
({\rm \underline{Cu}}(A),([1_A],0,\oplus_{p=1}^{\infty}(0,0)))\cong
({\rm \underline{Cu}}(B),([1_B],0,\oplus_{p=1}^{\infty}(0,0))),
$$
then
$$
 ({\rm \underline{K}}(A),{\rm \underline{K}}(A)_+,[1_A])\cong ({\rm \underline{K}}(B),{\rm \underline{K}}(B)_+,[1_B]).
$$
\end{proposition}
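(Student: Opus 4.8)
The plan is to follow the same pattern as for the functor $H_*$ in \ref{deffunctors} and Theorem \ref{Kthm}: first check that $\underline{H}$ is a well-defined functor, then transport along the canonical isomorphism of Theorem \ref{Cu total thm} to obtain the natural equivalence, and finally read off the displayed implication.

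First I would verify well-definedness on objects. If $(S,u)$ is a ${\rm \underline{Cu}}$-semigroup with order-unit, then by the definition of the category ${\rm \underline{Cu}}$ the group ${\rm Gr}(S_c)$ is a $\Lambda$-module; the subset $\rho(S_c)$ is a positive cone, and the standing hypothesis $\rho(S_c)\cap\{-\rho(S_c)\}=\{0\}$ makes $({\rm Gr}(S_c),\rho(S_c))$ an ordered abelian group, of which $[u]=\rho(u)$ is an order-unit because $u$ is a compact order-unit of $S$. On morphisms, a ${\rm \underline{Cu}}_u$-morphism $\phi\colon(S,u)\to(T,v)$ restricts to a monoid morphism $\phi_c\colon S_c\to T_c$ (a ${\rm Cu}$-morphism preserves $\ll$, hence compactness), the induced map ${\rm Gr}(\phi_c)$ is $\Lambda$-linear by the definition of a ${\rm \underline{Cu}}$-morphism, it carries $\rho(S_c)$ into $\rho(T_c)$, and it sends $[u]$ below $[v]$ since $\phi(u)\le v$; this is exactly the verification already made for $H_*$. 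Functoriality of $\phi\mapsto{\rm Gr}(\phi_c)$ is immediate from $(\psi\circ\phi)_c=\psi_c\circ\phi_c$ and the functoriality of ${\rm Gr}$.

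Next I would produce the natural isomorphism $\underline{H}\circ{\rm \underline{Cu}}_u\simeq{\rm \underline{K}}$. For each $A\in C^*_{sr1}$, Theorem \ref{Cu total thm} gives the canonical isomorphism $\alpha_A^*\colon({\rm Gr}({\rm \underline{Cu}}(A)_c),\rho({\rm \underline{Cu}}(A)_c))\cong({\rm \underline{K}}(A),{\rm \underline{K}}(A)_+)$ sending the class of $([1_A],0,\oplus_{p=1}^{\infty}(0,0))$ to $[1_A]$; unwinding the definition, this is precisely an isomorphism $\underline{H}({\rm \underline{Cu}}_u(A))\cong({\rm \underline{K}}(A),{\rm \underline{K}}(A)_+,[1_A])$ in the target category. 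Naturality is the assertion that, for every $*$-homomorphism $\phi\colon A\to B$, the square with sides $\alpha_A^*$, $\alpha_B^*$, $\underline{H}({\rm \underline{Cu}}_u(\phi))={\rm Gr}({\rm \underline{Cu}}(\phi)_c)$ and ${\rm \underline{K}}(\phi)$ commutes; but this is exactly the last statement proved inside Theorem \ref{Cu total thm}, namely that after the identifications along $\alpha_A^*$ and $\alpha_B^*$ the Grothendieck map ${\rm Gr}({\rm \underline{Cu}}(\phi)_c)$ coincides with ${\rm \underline{K}}(\phi)$. Hence $(\alpha_A^*)_A$ is a natural isomorphism of functors.

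The remaining implication is then purely formal: an isomorphism $({\rm \underline{Cu}}(A),([1_A],0,\oplus_{p=1}^{\infty}(0,0)))\cong({\rm \underline{Cu}}(B),([1_B],0,\oplus_{p=1}^{\infty}(0,0)))$ is an isomorphism in ${\rm \underline{Cu}}_u$, so the functor $\underline{H}$ sends it to an isomorphism $\underline{H}({\rm \underline{Cu}}_u(A))\cong\underline{H}({\rm \underline{Cu}}_u(B))$, and composing with $\alpha_A^*$ and $(\alpha_B^*)^{-1}$ yields $({\rm \underline{K}}(A),{\rm \underline{K}}(A)_+,[1_A])\cong({\rm \underline{K}}(B),{\rm \underline{K}}(B)_+,[1_B])$. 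The one genuinely non-formal ingredient is the identity ${\rm Gr}({\rm \underline{Cu}}(\phi)_c)={\rm \underline{K}}(\phi)$ used for naturality, which requires tracing the surjection $\alpha\colon{\rm \underline{Cu}}(A)_c\to{\rm \underline{K}}(A)_+$ of Theorem \ref{Cu total thm} through the description of ${\rm \underline{Cu}}(\phi)$ in Remark \ref{mor remark}; I expect this to be the only step needing care, and it has essentially been carried out already in the proof of Theorem \ref{Cu total thm}. Everything else rests on Theorems \ref{ordertotal} and \ref{Cu total thm} and the definitions of ${\rm \underline{Cu}}$ and ${\rm \underline{Cu}}_u$.
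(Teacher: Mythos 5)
Your proposal is correct and follows essentially the same route as the paper: verify that $\underline{H}$ is well defined on objects and morphisms using the definitions of ${\rm \underline{Cu}}$ and ${\rm \underline{Cu}}_u$, then invoke the canonical isomorphism $\alpha_A^*$ from Theorems \ref{ordertotal} and \ref{Cu total thm} to obtain the natural equivalence. Your treatment of naturality (reducing it to the identity ${\rm Gr}({\rm \underline{Cu}}(\phi)_c)={\rm \underline{K}}(\phi)$ established inside Theorem \ref{Cu total thm}) is slightly more explicit than the paper's, but it is the same argument.
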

\begin{proof}
Let $(S, u) \in \mathrm{\underline{Cu}}_{u}$. Then $({\rm Gr}(S_c),\rho(S_c),[u])$ is a $\Lambda$-module with order-unit. Now let $\phi: S \longrightarrow T$ be a $\mathrm{\underline{Cu}}_{u}$-morphism between two $\mathrm{\underline{Cu}}$-semigroups with order-unit $(S, u),(T, v)$.
It follows that $\phi_{c}: S_{c} \longrightarrow T_{c}$ is a ${\rm Mon}_\leq$-morphism, and the induced Grothendieck map ${\rm Gr}\left(\phi_{c}\right): {\rm Gr}\left(S_{c}\right) \longrightarrow {\rm  Gr}\left(T_{c}\right)$ is $\Lambda$-linear such that ${\rm Gr}\left(\phi_{c}\right)\left(S_{c}\right) \subseteq T_{c}$. Finally, using that $\phi(u) \leq \phi(v)$, we obtain ${\rm Gr}\left(\phi_{c}\right)(u) \leq v$. We conclude that $\underline{H}$ is a well-defined functor.

From Theorem \ref{ordertotal} and  Theorem \ref{Cu total thm}, for any $A\in C_{sr1}^*$, we have
 $$\alpha^*_A:\,
 ({\rm Gr}({\rm \underline{Cu}}(A)_c),\rho({\rm \underline{Cu}}(A)_c), ([1_A],0,\oplus_{p=1}^{\infty}(0,0)))\cong({\rm \underline{K}}(A),{\rm \underline{K}}(A)_+,[1_A]).
 $$
This means we do recover ${\rm \underline{K}}$ from ${\rm \underline{Cu}}$ as we anticipated.

\end{proof}

In general, $\underline{H}$ is not faithful, but it can be faithful if we restrict the domain of $\underline{H}$ to a suitable subcategory of $C_{sr1}^*$. To achieve a total version of Theorem \ref{Kthm}, we list the following.
\begin{definition}\rm
We say that an ideal $I$ in a $C^*$-algebra $A$ is $\mathrm{K}$-$pure$ if both sequences
$$0\to  \mathrm{K}_\delta(I ) \to\mathrm{K}_\delta(A)\to \mathrm{K}_\delta(A/I ) \to 0,\quad \delta=0,1$$
are pure exact. We say a $C^*$-algebra $A$ is $\mathrm{K}$-$pure$, if all ideals in $A$ are $\mathrm{K}$-$pure$.
\end{definition}
\begin{lemma}\label{k-pure lemma}%{\rm (}{\cite[Section 4]{DE}}{\rm )}\label{k-pure lemma}
If an ideal $I$ in a $C^*$-algebra $A$ is $\mathrm{K}$-$pure$,
then
for any $p \geq 2$, the sequences
$$0 \to {\rm K}_*(I ; \mathbb{Z}_p) \to {\rm K}_*(A; \mathbb{Z}_p) \to {\rm K}_*(A/I ; \mathbb{Z}_p) \to 0$$
are exact.
\end{lemma}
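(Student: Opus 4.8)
The plan is to feed the given K-pure short exact sequence for $I\lhd A$ into the mod-$p$ universal coefficient (Bockstein) machinery and then conclude by the standard algebraic fact that a pure-exact sequence of abelian groups remains exact after tensoring by $\mathbb{Z}_p$ and after applying $\operatorname{Tor}(\cdot,\mathbb{Z}_p)$. First I would recall the defining six-term exact sequence for mod-$p$ K-theory from \ref{def k-total}, namely that for each $\delta=0,1$ there is a natural exact sequence
$$
0\to \mathrm{K}_\delta(A)\otimes\mathbb{Z}_p \xrightarrow{\rho_p^\delta} \mathrm{K}_\delta(A;\mathbb{Z}_p) \to \operatorname{Tor}(\mathrm{K}_{\delta+1}(A),\mathbb{Z}_p)\to 0,
$$
obtained by splicing the long exact sequence
$\mathrm{K}_\delta(A)\xrightarrow{\times p}\mathrm{K}_\delta(A)\xrightarrow{\rho_p^\delta}\mathrm{K}_\delta(A;\mathbb{Z}_p)\xrightarrow{\beta_p^\delta}\mathrm{K}_{\delta+1}(A)\xrightarrow{\times p}\mathrm{K}_{\delta+1}(A)$; here $\mathrm{K}_\delta(A)\otimes\mathbb{Z}_p=\operatorname{coker}(\times p)$ and $\operatorname{Tor}(\mathrm{K}_{\delta+1}(A),\mathbb{Z}_p)=\ker(\times p)$ on $\mathrm{K}_{\delta+1}(A)$. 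This UCT square is natural in $A$, so applying it to $I$, $A$, and $A/I$ produces a $3\times 3$ diagram whose columns are these short exact sequences and whose rows are the $\otimes\mathbb{Z}_p$ and $\operatorname{Tor}(\cdot,\mathbb{Z}_p)$ versions of the K-theory sequence of the pair.

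The key step is then purely homological. Because $I$ is K-pure, the sequences $0\to\mathrm{K}_\delta(I)\to\mathrm{K}_\delta(A)\to\mathrm{K}_\delta(A/I)\to0$ are pure exact for $\delta=0,1$. A pure exact sequence of abelian groups stays exact under any additive functor of the form $-\otimes M$ — in particular $-\otimes\mathbb{Z}_p$ — and moreover, since $\operatorname{Tor}_1^{\mathbb{Z}}(G,\mathbb{Z}_p)$ sits in the six-term $\operatorname{Tor}$ sequence and purity means the connecting map $\operatorname{Tor}(\mathrm{K}_\delta(A/I),\mathbb{Z}_p)\to \mathrm{K}_\delta(I)\otimes\mathbb{Z}_p$ vanishes, one obtains that
$$
0\to \mathrm{K}_\delta(I)\otimes\mathbb{Z}_p\to \mathrm{K}_\delta(A)\otimes\mathbb{Z}_p\to \mathrm{K}_\delta(A/I)\otimes\mathbb{Z}_p\to 0
$$
and
$$
0\to \operatorname{Tor}(\mathrm{K}_\delta(I),\mathbb{Z}_p)\to \operatorname{Tor}(\mathrm{K}_\delta(A),\mathbb{Z}_p)\to \operatorname{Tor}(\mathrm{K}_\delta(A/I),\mathbb{Z}_p)\to 0
$$
are both exact for $\delta=0,1$. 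So in the $3\times 3$ diagram the outer two rows are short exact, and the three columns are short exact. A diagram chase (the nine lemma, or rather its extension allowing one row to be inferred from the other two and the columns) then forces the middle row
$0\to\mathrm{K}_\delta(I;\mathbb{Z}_p)\to\mathrm{K}_\delta(A;\mathbb{Z}_p)\to\mathrm{K}_\delta(A/I;\mathbb{Z}_p)\to 0$
to be exact; taking the direct sum over $\delta=0,1$ gives the asserted exactness of $0\to \mathrm{K}_*(I;\mathbb{Z}_p)\to\mathrm{K}_*(A;\mathbb{Z}_p)\to\mathrm{K}_*(A/I;\mathbb{Z}_p)\to 0$.

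I expect the main obstacle to be bookkeeping rather than anything deep: one must be careful that the natural transformation $\rho_p$ and the Bockstein $\beta_p$ genuinely make the relevant squares commute (this is exactly the naturality of the cofibre sequence $A\otimes SC_0(\mathbb{T})\to A\otimes C_0(W_p)\to A\otimes C_0(\mathbb{T})$ recalled in \ref{def k-total}), and that the identification of $\operatorname{coker}(\times p)$ with $\mathrm{K}_\delta(A)\otimes\mathbb{Z}_p$ and of $\ker(\times p)$ with $\operatorname{Tor}(\mathrm{K}_{\delta+1}(A),\mathbb{Z}_p)$ is carried out compatibly across the three algebras $I,A,A/I$. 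Once the $3\times 3$ diagram is set up with commuting squares, the conclusion is immediate from the characterization of purity as "$\operatorname{Tor}$ connecting maps vanish", so no nontrivial new input beyond the definitions and the standard homological algebra of pure sequences is needed.
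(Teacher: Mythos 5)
Your argument is correct, but it is organized quite differently from the paper's. The paper takes the six-term exact sequence in mod-$p$ K-theory associated to $0\to I\to A\to A/I\to 0$ as given (it holds since $-\otimes C_0(W_p)$ preserves the extension), observes that this sequence splits into the two desired short exact sequences as soon as $\mathrm{K}_\delta(\iota;\mathbb{Z}_p)$ is injective for $\delta=0,1$, and then proves that injectivity by a direct chase in the Bockstein ladder: a class $a_1$ killed by $\mathrm{K}_\delta(\iota;\mathbb{Z}_p)$ has vanishing Bockstein (by injectivity of $\mathrm{K}_{1-\delta}(\iota)$), hence lifts to $a_2\in\mathrm{K}_\delta(I)$ whose image in $\mathrm{K}_\delta(A)$ is divisible by $p$; the purity condition $p\,\mathrm{K}_\delta(I)=\mathrm{K}_\delta(I)\cap p\,\mathrm{K}_\delta(A)$ then makes $a_2$ divisible by $p$ inside $\mathrm{K}_\delta(I)$, so $a_1=\rho^I_\delta(a_2)=0$. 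You instead package the Bockstein sequence into the natural universal-coefficient short exact sequences, use the algebraic characterization of purity (exactness is preserved by $-\otimes\mathbb{Z}_p$, and the $\operatorname{Tor}$ connecting map vanishes, so the $\operatorname{Tor}(-,\mathbb{Z}_p)$ row is also short exact), and conclude with the $3\times 3$ lemma. Your route avoids invoking the mod-$p$ six-term sequence of the extension altogether and obtains injectivity, middle exactness, and surjectivity all at once from the nine lemma; the paper's route needs less homological machinery but leans on the known exactness of the mod-$p$ sequence of the pair and only has to supply injectivity. The one hypothesis of the nine lemma you should state explicitly is that the middle row is a complex, which holds because the composite $I\to A\to A/I$ is zero; with that noted, your proof is complete.
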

\begin{proof}
For  any $p \geq 2$, we have the following well-known commutative diagram induced by the natural embedding  $\iota:\,I\to A$.
$$
\xymatrixcolsep{3pc}
\xymatrix{
{\mathrm{K}_\delta(I)} \ar[d]_-{\mathrm{K}_\delta(\iota)} \ar[r]^-{\times p}
& {\mathrm{K}_\delta(I)} \ar[d]_-{\mathrm{K}_\delta(\iota)} \ar[r]^-{\rho_\delta^I}
& {\mathrm{K}_\delta(I; \mathbb{Z}_p)} \ar[d]_-{\mathrm{K}_\delta(\iota;\mathbb{Z}_p)} \ar[r]^-{}
& %\ar[r]_-{}
 {\mathrm{K}_{1-\delta}(I)} \ar[d]_-{\mathrm{K}_{1-\delta}(\iota)}
 \\
{\mathrm{K}_\delta(A)} \ar[r]_-{\times p}
& {\mathrm{K}_\delta(A)} \ar[r]_-{\rho_\delta^A}
& {\mathrm{K}_\delta(A; \mathbb{Z}_p)} \ar[r]_-{}
& %\ar[r]_-{}
{\mathrm{K}_{1-\delta}(A)}.
}
$$
By assumption, we have  $\mathrm{K}_\delta(\iota)$ are injective for $\delta=0,1$.
Note that the following diagram is exact, hence, what we need to show is $\mathrm{K}_\delta(\iota;\mathbb{Z}_p)$ are injective maps for both $\delta=0,1$.
$$
\xymatrixcolsep{3pc}
\xymatrix{
{\mathrm{K}_\delta(I; \mathbb{Z}_p)}  \ar[r]^-{\mathrm{K}_\delta(\iota;\mathbb{Z}_p)}
& {\mathrm{K}_\delta(A; \mathbb{Z}_p)} \ar[r]^-{}
& %\ar[r]_-{}
 {\mathrm{K}_\delta(A/I; \mathbb{Z}_p)} \ar[d]_-{}
 \\
{\mathrm{K}_{1-\delta}(A/I; \mathbb{Z}_p)} \ar[u]_-{}
& {\mathrm{K}_{1-\delta}(A; \mathbb{Z}_p)} \ar[l]_-{}
& %\ar[r]_-{}
{\mathrm{K}_{1-\delta}(I; \mathbb{Z}_p)} \ar[l]_-{\mathrm{K}_{1-\delta}(\iota;\mathbb{Z}_p)}.
}
$$
Given $a_1\in\mathrm{K}_\delta(I; \mathbb{Z}_p)$ with $\mathrm{K}_\delta(\iota;\mathbb{Z}_p)(a_1)=0$,
the standard diagram chasing gives $a_2\in\mathrm{K}_\delta(I)\subset \mathrm{K}_\delta(A)$ and $a_3\in \mathrm{K}_\delta(A)$ such that
$$
\rho_\delta^I(a_2)=a_1\quad {\rm and}\quad
p\times a_3=a_2.
$$
Since
$$0\to  \mathrm{K}_\delta(I ) \to\mathrm{K}_\delta(A)\to \mathrm{K}_\delta(A/I ) \to 0,\quad \delta=0,1$$
are pure exact, i.e., for $p\geq 2$ we have
$$
p\times \mathrm{K}_\delta(I )=\mathrm{K}_\delta(I )\cap (p\times\mathrm{K}_\delta(A )),
$$
there is an $a_4\in\mathrm{K}_\delta(I )$ such that $p\times a_4= a_2$. By exactness, we have $a_1=0$.

\end{proof}
\begin{notion}\rm (\cite[2.2-2.3]{GJL})
 An A$\mathcal{HD}$ algebra is an inductive limit of finite direct sums of the form $M_n(\mathbb{I}_p^\sim)$ and $PM_n(C(X))P$, where $\mathbb{I}_p$ is the Elliott-Thomsen dimension-drop interval algebra
and $X$ is one of the following finite connected CW complexes: $\{pt\},~\mathbb{T},~[0, 1],~T_{II,k}.$ $P\in M_n(C(X))$ is a projection and $T_{II,k}$ is the 2-dimensional connected simplicial complex with $H^1(T_{II,k})=0$ and $H^2(T_{II,k})=\mathbb{Z}/k\mathbb{Z}$. (In \cite{DG}, this class is called ASH algebras.)
\end{notion}

\begin{proposition}{\rm (}\cite[Proposition 4.4]{DE}{\rm )}\label{de prop}
Suppose an extension
$$0 \to I \to  A \to  A/I \to 0$$
is given. If $A$ is an A$\mathcal{HD}$ algebra of real rank zero, we have

(i) $I$ and $A/I$ are A$\mathcal{HD}$ algebras of real rank zero.

(ii) The ideal $I$ in $A$ is $\mathrm{K}$-$pure$.

(iii) For any $p \geq 2$, the sequences
$$0 \to {\rm K}_*(I ; \mathbb{Z}_p) \to {\rm K}_*(A; \mathbb{Z}_p) \to {\rm K}_*(A/I ; \mathbb{Z}_p) \to 0$$
are pure exact.
\end{proposition}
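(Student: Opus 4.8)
\noindent\emph{Proof proposal.} I would prove the three assertions in order, the third reducing to the first two by a continuity argument.

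\smallskip
\noindent\emph{Assertion (i).} That $rr(A/I)=0$ and $rr(I)=0$ is standard: quotients, and hereditary $C^*$-subalgebras (in particular closed two-sided ideals), of a real rank zero $C^*$-algebra have real rank zero, by Brown and Pedersen. For the approximately homogeneous dimension-drop structure, I would write $A=\varinjlim(A_n,\phi_n)$ with each $A_n$ a finite direct sum of building blocks $M_k(\mathbb{I}_p^\sim)$ and $PM_k(C(X))P$, $X\in\{pt,\mathbb{T},[0,1],T_{II,k}\}$, put $I_n:=\phi_{n,\infty}^{-1}(I)\lhd A_n$, and use that $I=\varinjlim I_n$ and $A/I=\varinjlim(A_n/I_n)$ with the induced connecting maps. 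Since $I_n$ is a finite direct sum of closed ideals of building blocks and $A_n/I_n$ a finite direct sum of quotients of building blocks, one only has to check, case by case, that such ideals and quotients are A$\mathcal{HD}$ algebras: an ideal of $M_k(\mathbb{I}_p^\sim)$ is a matrix algebra over an ideal of $\mathbb{I}_p^\sim$ --- a suspension of matrices, or a one-dimensional noncommutative CW complex of dimension-drop type, hence A$\mathcal{HD}$ --- while an ideal, respectively a quotient, of $PM_k(C(X))P$ has the form $PM_k(C_0(U))P$ with $U\subseteq X$ open, respectively $QM_k(C(Y))Q$ with $Y\subseteq X$ closed, again A$\mathcal{HD}$ for each of the four allowed $X$. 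Since the class of A$\mathcal{HD}$ algebras is closed under countable inductive limits, $I$ and $A/I$ are A$\mathcal{HD}$ algebras of real rank zero.

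\smallskip
\noindent\emph{Assertions (ii) and (iii).} By (i), $I$, $A$ and $A/I$ are real rank zero A$\mathcal{HD}$ algebras, hence have stable rank one (part of the structure theory of real rank zero ASH algebras, cf.\ \cite{DG,GJL}). Applying Proposition \ref{lin inj}(i) to the extension $0\to I\to A\to A/I\to 0$, all three terms having real rank zero, kills the exponential map $\mathrm{K}_0(A/I)\to\mathrm{K}_1(I)$; applying Proposition \ref{lin inj}(ii), all three having stable rank one, kills the index map $\mathrm{K}_1(A/I)\to\mathrm{K}_0(I)$. Hence the six-term exact sequence breaks into short exact sequences $0\to\mathrm{K}_\delta(I)\to\mathrm{K}_\delta(A)\to\mathrm{K}_\delta(A/I)\to 0$, $\delta=0,1$. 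To upgrade these to \emph{pure} exactness, and to obtain the mod-$p$ statement at the same time, I would use real rank zero more forcefully: since $A$ then has the ideal property, the inductive system can be rearranged compatibly with $I$ so that $I=\varinjlim I_n$ with each $I_n$ a direct summand of $A_n$. Then each finite-stage algebra extension $0\to I_n\to A_n\to A_n/I_n\to 0$ is split, so the sequences obtained by applying $\mathrm{K}_*(-)$ and $\mathrm{K}_*(-\otimes C_0(W_p))=\mathrm{K}_*(-;\mathbb{Z}_p)$ are split exact sequences of finitely generated abelian groups. Passing to the inductive limit --- legitimate by continuity of $\mathrm{K}$-theory and of $\mathrm{K}$-theory with coefficients --- and using that a direct limit of split exact sequences of abelian groups is pure exact, one concludes that both $0\to\mathrm{K}_\delta(I)\to\mathrm{K}_\delta(A)\to\mathrm{K}_\delta(A/I)\to 0$ and $0\to\mathrm{K}_*(I;\mathbb{Z}_p)\to\mathrm{K}_*(A;\mathbb{Z}_p)\to\mathrm{K}_*(A/I;\mathbb{Z}_p)\to 0$ are pure exact; exactness of the latter is consistent with Lemma \ref{k-pure lemma}.

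\smallskip
\noindent\emph{Main obstacle.} Bare short exactness everywhere is soft once Proposition \ref{lin inj} is available; the genuine content is \emph{purity}, which fails for arbitrary extensions, so structural input is unavoidable. The crux is therefore the claim that the inductive system representing $A$ can be rechosen so that the ideal $I$ becomes a limit of direct summands $I_n\lhd A_n$ (or, more weakly, so that the finite-stage $\mathrm{K}$-theory and mod-$p$ $\mathrm{K}$-theory sequences split). This is exactly where real rank zero enters in an essential way --- through the ideal property together with a semiprojectivity / approximate-intertwining argument used to readjust the building blocks --- and I expect it to be the main hurdle; the case analysis in (i) is routine by comparison. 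One could alternatively try to prove directly that the mod-$n$ $\mathrm{K}$-theory six-term sequences split (a mod-$n$ analogue of Proposition \ref{lin inj}), which would then force purity of the integral sequences by a Bockstein diagram chase, but establishing that splitting appears to require comparable structural input.
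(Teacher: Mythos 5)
First, note that the paper does not prove this proposition at all: it is quoted directly from Dad\u{a}rlat--Eilers \cite[Proposition 4.4]{DE}, so there is no in-paper argument to compare yours against. Judged on its own merits, your proposal has a genuine gap in each half.

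In (i), the claim that ideals and quotients of the building blocks are again A$\mathcal{HD}$ is false. Every building block $M_n(\mathbb{I}_p^\sim)$ and $PM_n(C(X))P$ is unital, so every A$\mathcal{HD}$ algebra, being an inductive limit of finite direct sums of these, carries an (increasing) approximate unit of projections, namely the images of the units of the finite stages. But the ideal $C_0([0,1))\lhd C([0,1])$ is projectionless, hence not A$\mathcal{HD}$; the same applies to $M_k(C_0(0,1))$ and to most proper ideals of $\mathbb{I}_p^\sim$ and of $M_k(C(\mathbb{T}))$. Moreover, the finite-stage ideals $I_n=\phi_{n,\infty}^{-1}(I)$ need not have real rank zero even when $A$ and $I$ do (a non-simple real rank zero A$\mathbb{T}$ algebra already exhibits projectionless $I_n$ with A$\mathcal{HD}$ limit $I$), so your case-by-case reduction cannot be repaired at the level of the given system. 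The real rank zero hypothesis must enter the \emph{structural} part of (i), not only the real-rank computation --- for instance through an approximate unit of projections $(p_k)$ of $I$, writing $I=\varinjlim \overline{p_kAp_k}$ and analysing corners, or through the ideal/quotient results of \cite{Ell} and \cite{DG} that \cite{DE} actually relies on. You use real rank zero only to conclude $rr(I)=rr(A/I)=0$, which is the easy half.

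In (ii)--(iii), the derivation of plain exactness of the integral sequences from Proposition \ref{lin inj} is fine (granting stable rank one of real rank zero A$\mathcal{HD}$ algebras, which deserves a reference), and the observation that a direct limit of split exact sequences is pure exact is correct. But the entire burden of the statement --- purity, and exactness with coefficients --- is placed on the claim that the inductive system can be rechosen so that each $I_n$ is a direct summand of $A_n$, and you give no argument for this. The ideal property does not yield it: $I_n$ decomposes as $\bigoplus_j J_{n,j}$ with $J_{n,j}$ a possibly proper nonzero ideal of the $j$-th block, and forcing these to be summands is precisely the semiprojectivity/intertwining work that constitutes the proof in \cite{DE}. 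As written, (ii) and (iii) are reduced to an unproved assertion of essentially the same depth as the proposition itself, which is consistent with the authors' choice to cite the result rather than reprove it.
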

That is, all  A$\mathcal{HD}$ algebras of real rank zero are K-pure.
\begin{theorem}{\rm (}\cite[Proposition 4.12]{DG}{\rm )}\label{gong ideal preserving}
Let $A$, $B$ be separable $C^*$-algebras of real rank zero and
stable rank one. Let $\phi:\, {\rm \underline{K}}(A)\to {\rm \underline{K}}(B)$ be a morphism of $\mathbb{Z}_2\times \mathbb{Z}^+$-graded
groups. The following are equivalent.

(i) $\phi({\rm \underline{K}}(A)_+)\subset{\rm \underline{K}}(B)_+.$

(ii) $\phi({\rm {K}}_0^+(A))\subset{\rm {K}_0^+}(B)$ and $\phi(\underline{\mathrm{K}}_{I}(A)) \subset
\underline{\mathrm{K}}_{I'}(B)$ for all ideals $I$ in $A$.
\end{theorem}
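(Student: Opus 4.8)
The plan is to prove the two implications separately, both of them resting on the standard fact that for a $C^*$-algebra of real rank zero the closed two-sided ideals correspond bijectively to the order ideals of ${\rm K}_0$. Concretely: since $rr(A)=0$ passes to quotients, Proposition \ref{lin inj}(i) forces the index map $\delta_0$ of each extension $0\to I\to A\to A/I\to 0$ to vanish, so ${\rm K}_0(I)\to{\rm K}_0(A)$ is injective and ${\rm K}_0(I)$ is an order ideal of ${\rm K}_0(A)$; moreover, for a projection $e\in A\otimes\mathcal K$ the group ${\rm K}_0(I_e)$ is exactly the order ideal of ${\rm K}_0(A)$ generated by $[e]$, and (using that an ideal of an ideal is an ideal) the ideal generated by $e$ is the same whether computed in $A$ or in any ideal containing $e$. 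Granting the first condition in (ii), $\phi$ restricts to an order morphism ${\rm K}_0(A)\to{\rm K}_0(B)$, hence carries the order ideal ${\rm K}_0(I)$ into a well-determined order ideal of ${\rm K}_0(B)$, which corresponds to the ideal $I'\lhd B$ appearing in (ii); for $I=I_e$ this gives $I'=I_{\phi([e])}$, the ideal of $B$ generated by a projection representing $\phi([e])\in{\rm K}_0^+(B)$ (such a projection exists since $sr(B)=1$, by \cite[Theorem 5.8]{BC}).

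For (ii)$\Rightarrow$(i): let $x=([e],\mathfrak u,\oplus_{p=1}^\infty(\mathfrak s_p,\mathfrak s^p))\in{\rm \underline K}(A)_+$; by the description of ${\rm \underline K}(\cdot)_+$ recalled in \ref{def k-total} this means $[e]\in{\rm K}_0^+(A)$ and $x\in{\rm \underline K}_{I_e}(A)$. Applying the two parts of (ii) with the ideal $I=I_e$ gives $\phi([e])\in{\rm K}_0^+(B)$ and $\phi(x)\in{\rm \underline K}_{I'}(B)={\rm \underline K}_{I_{\phi([e])}}(B)$. Since $\phi$ is a morphism of graded groups, its ${\rm K}_0$-component sends the ${\rm K}_0$-part of $x$ to $\phi([e])$, so $\phi(x)$ meets both requirements in the definition of ${\rm \underline K}(B)_+$, whence $\phi(x)\in{\rm \underline K}(B)_+$.

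For (i)$\Rightarrow$(ii): the first assertion is immediate, since ${\rm K}_0^+(A)$, viewed inside ${\rm \underline K}(A)$, lies in ${\rm \underline K}(A)_+$ (take $\mathfrak u=0$ and all $\mathfrak s_p=\mathfrak s^p=0$), so $\phi({\rm K}_0^+(A))\subset{\rm \underline K}(B)_+$, and $\phi$ maps the ${\rm K}_0(A)$-summand into the ${\rm K}_0(B)$-summand, giving $\phi({\rm K}_0^+(A))\subset{\rm K}_0^+(B)$. For the ideal statement, fix $I\lhd A$; since $rr(A)=0$ and $A$ is separable, $I$ is a separable $C^*$-algebra of stable rank one with an increasing approximate unit of projections $(e_n)$, and we let $I'\lhd B$ be the ideal generated by projections representing the classes $\phi([e_n])$. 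By Theorem \ref{ordertotal}(i) applied to $I$, ${\rm \underline K}(I)={\rm \underline K}(I)_+-{\rm \underline K}(I)_+$, so it suffices (as $\phi$ is a group morphism) to show $\phi(\iota_*(y))\in{\rm \underline K}_{I'}(B)$ for each $y\in{\rm \underline K}(I)_+$, $\iota:I\hookrightarrow A$. Writing $y=([f],\dots)$ with $f$ a projection over $I$, one checks $\iota_*(y)\in{\rm \underline K}(A)_+$ (here the identification of $I_f$ computed in $I$ versus in $A$ is what is needed), so by (i) $\phi(\iota_*(y))\in{\rm \underline K}(B)_+$, and its ${\rm K}_0$-component is $\phi([f])$, hence $\phi(\iota_*(y))\in{\rm \underline K}_{I_{\phi([f])}}(B)$. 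Finally $[f]\le k[e_n]$ in ${\rm K}_0^+(I)$ for suitable $k,n$ (as $(e_n)$ is an approximate unit of projections), so $\phi([f])\le k\phi([e_n])$ and therefore $I_{\phi([f])}\subset I_{\phi([e_n])}\subset I'$, giving $\phi(\iota_*(y))\in{\rm \underline K}_{I'}(B)$.

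The step I expect to require the most care, and the place where real rank zero is genuinely used, is the bookkeeping linking ``${\rm \underline K}$-support on an ideal'' with positivity of the ${\rm K}_0$-component: one must know that ideals are detected by their projections and match order ideals of ${\rm K}_0$, that ${\rm \underline K}(I)$ is generated by its positive cone (Theorem \ref{ordertotal}(i), itself relying on an approximate unit of projections and $sr=1$), and that $I_f$ is unambiguous. The remaining points — that a graded morphism sends the ${\rm K}_0$-component of $\phi(x)$ to $\phi$ of the ${\rm K}_0$-component of $x$, and that domination of ${\rm K}_0^+$-classes yields inclusion of the ideals they generate — are routine.
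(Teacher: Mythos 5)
This statement is quoted in the paper from \cite[Proposition 4.12]{DG} and is not proved there, so there is no internal proof to compare against; your argument has to be judged as a reconstruction of the Dadarlat--Gong proof, and as such it is essentially correct and follows the standard route: use real rank zero plus stable rank one to identify ideals with order ideals of ${\rm K}_0$, pin down the (otherwise ambiguous) meaning of $I'$ as the ideal corresponding to the order ideal generated by $\phi({\rm K}_0^+(I))$, and reduce everything to positive elements via ${\rm \underline K}(I)={\rm \underline K}(I)_+-{\rm \underline K}(I)_+$ (Theorem \ref{ordertotal}(i)) together with the observation that $I_f$ is the same computed in $I$ or in $A$. Both implications go through as you wrote them.

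One small misattribution worth fixing: the injectivity of ${\rm K}_0(I)\to{\rm K}_0(A)$ does not follow from the vanishing of $\delta_0:{\rm K}_0(A/I)\to{\rm K}_1(I)$ (which is what Proposition \ref{lin inj}(i) gives and which instead yields injectivity of ${\rm K}_1(I)\to{\rm K}_1(A)$); it follows from the vanishing of the other index map $\delta_1:{\rm K}_1(A/I)\to{\rm K}_0(I)$, which is Proposition \ref{lin inj}(ii) and uses $sr(A)=1$. Since both hypotheses are in force, the conclusion you need is still available, so this is cosmetic rather than a gap. Your disambiguation of $I'$ is not optional but genuinely necessary: as literally stated, condition (ii) would be vacuous with $I'=B$, so spelling out that $I'$ is determined by $I$ through $\phi|_{{\rm K}_0}$ and checking the two directions use compatible choices (namely $I'=I_{\phi([e])}$ when $I=I_e$) is exactly the right thing to do.
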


\begin{theorem}\label{totalKthm}\label{total-Kthm}
  %Let $A$ be a unital, separable, $\mathrm{K}$-$pure$ (or A$\mathcal{HD}$) algebra with stable rank one  and real rank zero.Then there is a natural equivalence of functor between ${\rm \underline{Cu}}_{u}$ and ${\rm \underline{K}}$. Therefore, for these algebras, ${\rm \underline{K}}$ is a classifying functor if, and only if, so is ${\rm \underline{Cu}}_{u}$.
  Upon restriction to the class of unital, separable $\mathrm{K}$-$pure$ (or A$\mathcal{HD}$) $C^*$-algebras of stable rank one and real rank zero, there are natural equivalences of functors:
    $$
  \underline{H}\circ \underline{\rm Cu}_{u}\simeq \underline{{\rm K}}\quad{ and}\quad   \underline{G}\circ \underline{{\rm K}}\simeq \underline{\rm Cu}_{u}.
 $$
  Therefore, for these algebras, ${\rm \underline{K}}$ is a classifying functor if, and only if, so is ${\rm \underline{Cu}}_{u}$.
\end{theorem}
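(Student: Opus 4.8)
To prove Theorem \ref{total-Kthm} I would follow the same template as the proof of Theorem \ref{Kthm}, upgrading every step from the graded level to the total level, with the K-pure hypothesis supplying exactly the extra information needed to make the Grothendieck functor $\underline{H}$ faithful and full. First I would fix the restricted categories: let $\mathcal{C}$ be the full subcategory of $C^*_{sr1}$ consisting of unital, separable, K-pure $C^*$-algebras of real rank zero (by Proposition \ref{de prop}, A$\mathcal{HD}$ algebras of real rank zero form a subclass of these, so it suffices to argue for K-pure algebras). Restrict $\underline{H}$ to the full subcategory of $\underline{\rm Cu}_u$ whose objects are $\underline{\rm Cu}_u(A)$ for $A\in\mathcal{C}$, and its codomain to the corresponding $\Lambda$-modules with order-unit. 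The goal is to show this restricted $\underline{H}$ is full, faithful, and dense; then standard category theory (an equivalence of categories has a quasi-inverse $\underline{G}$) gives $\underline{H}\circ\underline{\rm Cu}_u\simeq\underline{\rm K}$ and $\underline{G}\circ\underline{\rm K}\simeq\underline{\rm Cu}_u$, and the last ``classifying functor'' clause is then formal.

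\textbf{Density and faithfulness.} Density is immediate from Theorem \ref{Cu total thm}: for $A\in\mathcal{C}$, the canonical isomorphism $\alpha_A^*:({\rm Gr}(\underline{\rm Cu}(A)_c),\rho(\underline{\rm Cu}(A)_c),([1_A],0,\oplus(0,0)))\cong(\underline{\rm K}(A),\underline{\rm K}(A)_+,[1_A])$ exhibits every object in the codomain as the image of an object in the domain. For faithfulness, the key point is that on this subcategory the algebra $A$ has real rank zero, so by Corollary \ref{alg total cu} $\underline{\rm Cu}(A)$ is algebraic, hence (via Proposition \ref{alg completion}, or rather its total-Cuntz analogue) $\underline{\rm Cu}(A)$ is completely determined by its submonoid of compact elements $\underline{\rm Cu}(A)_c$; and by Theorem \ref{Cu total thm} together with Theorem \ref{ordertotal}(ii), $\underline{\rm Cu}(A)_c\hookrightarrow {\rm Gr}(\underline{\rm Cu}(A)_c)$ injectively (since $\rho(\underline{\rm Cu}(A)_c)\cap\{-\rho(\underline{\rm Cu}(A)_c)\}=\{0\}$). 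Thus a $\underline{\rm Cu}_u$-morphism $\phi:\underline{\rm Cu}(A)\to\underline{\rm Cu}(B)$ is recovered from $\phi_c$, which is recovered from ${\rm Gr}(\phi_c)=\underline{H}(\phi)$; this is the exact analogue of \cite[Lemma 5.19]{L1} and \ref{deffunctors}, and I would just remark that the same argument applies verbatim.

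\textbf{Fullness — the main obstacle.} The hard part is fullness: given a $\Lambda$-linear order-unit morphism $\xi:(\underline{\rm K}(A),\underline{\rm K}(A)_+,[1_A])\to(\underline{\rm K}(B),\underline{\rm K}(B)_+,[1_B])$, I must produce a $\underline{\rm Cu}_u$-morphism $\Phi:\underline{\rm Cu}(A)\to\underline{\rm Cu}(B)$ with ${\rm Gr}(\Phi_c)=\xi$. The obstruction is that $\underline{\rm Cu}(A)$ is a \emph{coproduct} indexed by $\mathrm{Lat}_f(A)$ of pieces $\mathrm{Cu}_f(I)\times{\rm K}_1(I)\times\bigoplus_p{\rm K}_*(I;\mathbb{Z}_p)$, and the $\mathrm{K}$-groups appearing are those of the \emph{ideals} $I$, which are not literally subgroups of $\underline{\rm K}(A)$; so to define $\Phi$ on each piece I need $\xi$ to be ``ideal-preserving'' in the sense of Theorem \ref{gong ideal preserving}(ii), i.e. $\xi({\rm K}_0^+(A))\subset{\rm K}_0^+(B)$ and $\xi(\underline{\rm K}_I(A))\subset\underline{\rm K}_{I'}(B)$ for the corresponding ideal $I'$. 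But Theorem \ref{gong ideal preserving} says precisely that positivity $\xi(\underline{\rm K}(A)_+)\subset\underline{\rm K}(B)_+$ is \emph{equivalent} to this ideal-preserving property — and positivity is part of the data of $\xi$ being a morphism of $\Lambda$-modules with positive cone. This is where K-purity enters: it is needed (via Lemma \ref{k-pure lemma} / Proposition \ref{de prop}) to guarantee that for each ideal $I\lhd A$ the maps $\underline{\rm K}(I)\to\underline{\rm K}(A)$ are injective with image $\underline{\rm K}_I(A)$ behaving functorially under $\Lambda$, so that $\xi$ restricts to a well-defined $\Lambda$-linear map $\underline{\rm K}(I)\to\underline{\rm K}(I')$ on each ideal. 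Concretely I would: (1) use weak cancellation of $\mathrm{Cu}(A)$ and \cite[Theorem 5.8]{BC} to reduce to defining $\Phi$ on compact elements $([e],\mathfrak{u},\oplus_p(\mathfrak{s}_p,\mathfrak{s}^p))$; (2) set $\Phi([e],\cdots)=(\Phi_0([e]),\text{restriction of }\xi\text{ to }\underline{\rm K}(I_e)\to\underline{\rm K}(I_{\Phi_0([e])}))$, where $\Phi_0:\mathrm{Cu}(A)\to\mathrm{Cu}(B)$ is the $\mathrm{Cu}$-morphism obtained from $\xi|_{{\rm K}_0^+}$ via the equivalence of Theorem \ref{Kthm}-type reasoning at the level of $\mathrm{Cu}$ of real rank zero algebras (lifting $[e]\mapsto$ a projection realizing $\xi[e]$); (3) check this is order-preserving and additive using the commuting-square diagrams for ideal inclusions as in Definition \ref{cu total def} and Remark \ref{mor remark}; (4) extend to all of $\underline{\rm Cu}(A)$ by suprema using the algebraic-completion functor $\mathrm{Cu}^\sim$ and Corollary \ref{alg total cu}, exactly as in the proof of Theorem \ref{Kthm}; (5) verify ${\rm Gr}(\Phi_c)=\xi$ on compacts and hence everywhere. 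Steps (3) and the verification that the ideal-restrictions of $\xi$ are consistent across the lattice (functoriality of $I\mapsto\underline{\rm K}_I$, which is where Lemma \ref{k-pure lemma} and Theorem \ref{gong ideal preserving} do the real work) are the technical heart; the rest is bookkeeping parallel to Section 2. Finally, the ``classifying functor'' equivalence is immediate: a natural isomorphism of functors $\underline{H}\circ\underline{\rm Cu}_u\simeq\underline{\rm K}$ means $\underline{\rm Cu}(A)\cong\underline{\rm Cu}(B)$ iff $\underline{\rm K}(A)\cong\underline{\rm K}(B)$ (as objects with order-unit), so one is a complete invariant on the class iff the other is.
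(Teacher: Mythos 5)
Your proposal is correct and follows the same overall strategy as the paper: restrict $\underline{H}$ to the relevant full subcategory, prove it is full, faithful and dense, and invoke the existence of a quasi-inverse $\underline{G}$; the role you assign to K-purity (via Lemma \ref{k-pure lemma}, forcing the maps $\delta_{I_eA}$ to be injective so that the comparison map $\alpha:\underline{\rm Cu}(A)_c\to\underline{\rm K}(A)_+$ becomes an ordered-monoid isomorphism) is exactly the role it plays in the paper, and your density and faithfulness arguments match the paper's.

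The one place you diverge is the fullness step. You treat the coproduct-over-ideals structure of $\underline{\rm Cu}(A)$ as the main obstacle and propose a piecewise construction, invoking Theorem \ref{gong ideal preserving} to see that positivity of $\xi$ forces ideal-preservation, then assembling and extending by suprema. The paper avoids all of this: once $\alpha$ is known to be an ordered-monoid isomorphism $\underline{\rm Cu}(A)_c\cong\underline{\rm K}(A)_+$ (with the order induced by ${\rm K}_0^+$), Proposition \ref{alg completion} gives $\underline{\rm Cu}(A)\cong{\rm Cu}^\sim(\underline{\rm K}(A)_+)$, and the desired lift is produced in one stroke as $i_B^{-1}\circ\gamma^\sim(\phi|_{\underline{\rm K}(A)_+})\circ i_A$, exactly as in the proof of Theorem \ref{Kthm}; the ideal-compatibility you worry about is already encoded in the definition of $\underline{\rm K}(A)_+$ as tuples lying in $\underline{\rm K}_{I_e}(A)$, so Theorem \ref{gong ideal preserving} is never needed in the proof (the paper uses it only in a subsequent remark). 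Your route would work, but it re-derives by hand what the completion functor gives for free.
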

\begin{proof}
The proof is as same as we did for $\mathrm{K}_*$ and $\mathrm{Cu}_1$.
Note that the domain will be the full subcategory of ${\rm \underline{Cu}}_{u}$ whose objects are all those $({\rm \underline{Cu}}(A),([1_A],0,\oplus_{p=1}^{\infty}(0,0)))$ ($A$ is a unital, separable, $\mathrm{K}$-$pure$ (or A$\mathcal{HD}$) algebra with stable rank one  and real rank zero.), while the codomain is category of all the ${\rm \underline{K}}$-invariants for the same class of $C^*$-algebras.
We only need to show that the corresponding restriction functor of $\underline{H}$, which we will still call $\underline{H}$, is a full, faithful and dense functor.

  It was shown in Theorem \ref{Cu total thm} that
  $$
  \alpha: {\rm \underline{Cu}}(A)_c\rightarrow {\rm \underline{K}}(A)_+,
  $$
  $$
  ([e],\mathfrak{u},\oplus_{p=1}^{\infty}(\mathfrak{s}_p,\mathfrak{s}^p))\mapsto ([e],\delta_{I_eA}(\mathfrak{u},\oplus_{p=1}^{\infty}( \mathfrak{s}_p,\mathfrak{s}^p)))
  $$
  is surjective. We will show that $\alpha$ is injective.

 Suppose that
  $$
  \alpha([e],\mathfrak{u},\oplus_{p=1}^{\infty}
  (\mathfrak{s}_p,\mathfrak{s}^p))=\alpha([q],\mathfrak{v},\oplus_{p=1}^{\infty}(\mathfrak{t}_p,\mathfrak{t}^p)),
  $$
which is
  $$
  ([e],\delta_{I_eA}(\mathfrak{u},\oplus_{p=1}^{\infty}( \mathfrak{s}_p,\mathfrak{s}^p)))=
  ([q],\delta_{I_pA}(\mathfrak{v},\oplus_{p=1}^{\infty}( \mathfrak{t}_p,\mathfrak{t}^p))).
  $$

  Since $A$ is $\mathrm{K}$-pure, for any ideal $I$ of $A$, Lemma \ref{k-pure lemma} shows that the group $\mathrm{K}_*(I ; \mathbb{Z}_p)$ is a sub-group of $\mathrm{K}_*(A ; \mathbb{Z}_p)$, for all $ p\geq 2$, then
  both $\delta_{I_eA}$ and $\delta_{I_pA}$ are the same injective map.
Then we have
   $$
  ([e],\mathfrak{u},\oplus_{p=1}^{\infty}(\mathfrak{s}_p,\mathfrak{s}^p))=([q],\mathfrak{v},\oplus_{p=1}^{\infty}(\mathfrak{t}_p,\mathfrak{t}^p)).
  $$
  Thus $\alpha$ is an ordered isomorphism. Combining this with Proposition \ref{recover prop}, we have
  % $$ \underline{\alpha}_A\,:\,
%({\rm Gr}({\rm \underline{Cu}}(A)_c),\rho({\rm \underline{Cu}}(A)_c))\cong({\rm \underline{K}}(A),{\rm \underline{K}}(A)_+).
% $$
%  Hence,
   $$ \alpha^*_A\,:\,
   \underline{H}({\rm \underline{Cu}}(A)_c, ([1_A],0,\oplus_{p=1}^{\infty}(0,0)))\cong({\rm \underline{K}}(A),{\rm \underline{K}}(A)_+,[1_A]).
 $$
  This means that $\underline{H}$ is dense.

  To show that $\underline{H}$ is faithful. Let $\phi,\psi:  {\rm \underline{Cu}}(A)\longrightarrow {\rm \underline{Cu}}(B)$ be two $\mathrm{\underline{Cu}}_{u}$-morphisms such that $ \underline{H}(\phi)=\underline{H}(\psi)$. Then ${\rm Gr}(\phi_c)={\rm Gr}(\psi_c)$, and hence,
$$
{\rm Gr}(\phi_c)\mid_{\rho_A({\rm \underline{Cu}}(A)_c)}={\rm Gr}(\psi_c)\mid_{\rho_A({\rm \underline{Cu}}(A)_c)}:\,\rho_A({\rm \underline{Cu}}(A)_c)\to\rho_B({\rm \underline{Cu}}(B)_c),
$$
where $\rho_A$ and $\rho_B$ are
%Note that for both ${\rm \underline{Cu}}(A)_c$ and ${\rm \underline{Cu}}(B)_c$, the corresponding $\rho$ are injective maps.
%(the same proof of Proposition \ref{weakcancel} will show that both ${\rm \underline{Cu}}(A)$ and ${\rm \underline{Cu}}(B)$ have weak cancellation),
the corresponding maps:
$$
\rho_A: {\rm \underline{Cu}}(A)_c\rightarrow\rho_A({\rm \underline{Cu}}(A)_c)\quad{\rm and}\quad
\rho_B: {\rm \underline{Cu}}(B)_c\rightarrow\rho_B({\rm \underline{Cu}}(B)_c).
$$
Recall that ${\rm \underline{K}}(A)_+$ and  ${\rm \underline{K}}(B)_+$ are subsets of ${\rm \underline{K}}(A)$ and ${\rm \underline{K}}(B)$, respectively. Combining this with ${\rm \underline{Cu}}(A)_c\cong {\rm \underline{K}}(A)_+$ and ${\rm \underline{Cu}}(B)_c\cong {\rm \underline{K}}(B)_+$, we have $\rho_A$ and $\rho_B$ are injective, which implies $\phi_c=\psi_c$. Note that any morphism between algebraic ${\rm Cu}^\sim$-semigroups is entirely determined by its restriction to compact elements, then we get $\phi=\psi$.

  Now we prove that $\underline{H}$ is full. Suppose we have an ordered morphism
  $$
  \phi: ({\rm \underline{K}}(A),{\rm \underline{K}}(A)_+,[1_A])\rightarrow ({\rm \underline{K}}(B),{\rm \underline{K}}(B)_+,[1_B]),
  $$
  which is $\Lambda$-linear. Then
  $$
  \phi|_{{\rm \underline{K}}(A)_+}: {\rm \underline{K}}(A)_+\rightarrow {\rm \underline{K}}(B)_+, \,\,\phi([1_A])\leq [1_B],
  $$
  and
  $$
  \phi|_{{\rm \underline{K}}(A)_+}({\rm K}_0(A)_+)\subset {\rm K}_0(B)_+.
  $$

  By the functoriality of ${\rm Cu}^\sim$ (see \ref{cucomplete}), $\phi|_{{\rm \underline{K}}(A)_+}$ induces a ${\rm Cu}_u^\sim$-morphism
  \begin{align*}
    \gamma^\sim(\phi|_{{\rm \underline{K}}(A)_+})  \,:\, & ({\rm Cu}^\sim({\rm \underline{K}}^+(A)), ([1_A],0,\oplus_{p=1}^{\infty}(0,0))) \\
     & \rightarrow({\rm Cu}^\sim({\rm \underline{K}}^+(B)), ([1_B],0,\oplus_{p=1}^{\infty}(0,0))),
  \end{align*}
  where the order on ${\rm \underline{K}}(A)_+$ is induced by ${\rm K}_0^+(A)$.

  By the above results and Proposition \ref{alg completion}, we have
  $$
  {\rm \underline{Cu}}(A)\cong {\rm Cu}^{\sim}({\rm \underline{Cu}}(A)_c)
  \cong{\rm Cu}^{\sim}({\rm \underline{K}}(A)_+).
  $$
  For each %unital, separable, K-pure $A$ of stable rank one and real rank zero,
   $A$ in our assumption, denote  by $i_A :{\rm \underline{Cu}}(A)\rightarrow {\rm Cu}^{\sim}({\rm \underline{K}}(A))$ the canonical ${\rm Cu}^\sim$-isomorphism
  and denote by $
 \alpha_A: {\rm \underline{Cu}}(A)_c\rightarrow {\rm \underline{K}}(A)_+
  $ the canonical ordered monoid isomophism.

  Then
  \begin{align*}
    i_B^{-1}\circ\gamma^\sim(\phi|_{{\rm \underline{K}}(A)_+})\circ i_A :\, &
    ({\rm \underline{Cu}}(A),([1_A],0,\oplus_{p=1}^{\infty}(0,0))) \\
     & \rightarrow ({\rm  \underline{Cu}}(B), ([1_B],0,\oplus_{p=1}^{\infty}(0,0)))
  \end{align*}
  is an ordered ${\rm Cu}^\sim$-morphism.
  After identifying ${\rm \underline{Cu}}(A)_c$ and ${\rm \underline{Cu}}(B)_c$ with ${\rm \underline{K}}(A)_+$ and $ {\rm \underline{K}}(B)_+$ through
  $\alpha_A$ and $\alpha_B,$ respectively, we have
  $$
  (i_B^{-1}\circ\gamma^\sim(\phi|_{{\rm \underline{K}}(A)_+})\circ i_A)_c=\phi|_{{\rm \underline{K}}(A)_+}.
  $$

  Note that ${\rm \underline{Cu}}(A),$ ${\rm \underline{Cu}}(B)\in {\rm \underline{Cu}}$ and
$
    {\rm Gr}(i_B^{-1}\circ\gamma^\sim(\phi|_{{\rm \underline{K}}(A)_+})\circ i_A)_c)=\phi
$
  is $\Lambda$-linear by assumption.  Then $i_B^{-1}\circ\gamma^\sim(\phi|_{{\rm \underline{K}}(A)_+})\circ i_A$ is a ${\rm \underline{Cu}}_u^\sim$-morphism.

  Using the functoriality of $\underline{H}$,  we obtain
  $$
  \underline{H}(i_B^{-1}\circ\gamma^\sim(\phi|_{{\rm \underline{K}}(A)_+})\circ i_A)={\rm Gr}(i_B^{-1}\circ\gamma^\sim(\phi|_{{\rm \underline{K}}(A)_+})\circ i_A)_c= {\rm Gr}(\phi|_{{\rm \underline{K}}(A)_+})=\phi.
  $$
  Then $\underline{H}$ is full.

 Therefore, by standard category theory, there exists a functor $\underline{G}$ such that $\underline{H}\circ \underline{G}$ and $\underline{G}\circ \underline{H}$ are naturally equivalent to the respective identities. Then we have
  $$
  \underline{H}\circ \underline{\rm Cu}_{u}\simeq \underline{{\rm K}}\quad{\rm and}\quad   \underline{G}\circ \underline{{\rm K}}\simeq \underline{\rm Cu}_{u}.
 $$\end{proof}
\begin{corollary}
  By restricting to the category ${\rm \underline{Cu}}_u$, we can recover ${\rm \underline{K}}$ from ${\rm \underline{Cu}}_u$ through ${ \underline{H}}$. A fortiori, we have ${\rm \underline{Cu}}_u$ is a complete invariant for unital A$\mathcal{HD}$ algebras of real rank zero
 with slow dimension growth (see Theorem 9.1 in \cite{DG}).
\end{corollary}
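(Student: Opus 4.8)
The plan is to obtain both halves of the corollary directly from the machinery already assembled, the only external ingredient being the classification theorem of \cite{DG}. Nothing new needs to be proved; the work lies in quoting the right statements and checking that the relevant classes of $C^*$-algebras are nested correctly.

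For the first assertion, recall that, by the notion of recovery introduced in Section 1, the claim ``${\rm \underline{K}}$ can be recovered from ${\rm \underline{Cu}}_u$ through $\underline{H}$'' means exactly that there is a natural isomorphism $\underline{H}\circ {\rm \underline{Cu}}_u\simeq {\rm \underline{K}}$. This is precisely Proposition \ref{recover prop}: the natural transformation is the family $\{\alpha^*_A\}$, built by composing the canonical identification ${\rm Gr}({\rm \underline{Cu}}(A)_c)\cong {\rm \underline{K}}(A)$ of Theorem \ref{Cu total thm} with the ordered-group identification of Theorem \ref{ordertotal}, and it is natural in $A$ because $\alpha$ is defined uniformly on compact elements and intertwines ${\rm \underline{Cu}}(\phi)_c$ with ${\rm \underline{K}}(\phi)$. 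So the first sentence of the corollary requires no further argument; note in particular that it holds on all of $C^*_{sr1}$, not merely in the real rank zero case.

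For the second assertion, I would first verify that the class of unital A$\mathcal{HD}$ algebras of real rank zero with slow dimension growth is contained in the class to which Theorem \ref{totalKthm} applies: such an algebra is unital, separable, of real rank zero, has stable rank one, and by Proposition \ref{de prop}(ii) is $\mathrm{K}$-pure (equivalently, one may simply appeal to the ``A$\mathcal{HD}$'' clause in the hypothesis of Theorem \ref{totalKthm}). Theorem \ref{totalKthm} then supplies mutually inverse equivalences $\underline{H},\underline{G}$ between the relevant full subcategory of ${\rm \underline{Cu}}_u$ and the corresponding full subcategory of $\Lambda$-modules with order-unit, so in particular $({\rm \underline{Cu}}(A),([1_A],0,\oplus_{p=1}^{\infty}(0,0)))\cong({\rm \underline{Cu}}(B),([1_B],0,\oplus_{p=1}^{\infty}(0,0)))$ if and only if $({\rm \underline{K}}(A),{\rm \underline{K}}(A)_+,[1_A])\cong({\rm \underline{K}}(B),{\rm \underline{K}}(B)_+,[1_B])$, and morphisms transport back and forth as well. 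Invoking Theorem 9.1 of \cite{DG}, which states that the scaled ordered total K-theory classifies unital A$\mathcal{HD}$ algebras of real rank zero with slow dimension growth, and transporting this classification across $\underline{G}$, one concludes that ${\rm \underline{Cu}}_u$ is a complete invariant --- indeed a classifying functor --- for this class.

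Since all the substantive content lives in Proposition \ref{recover prop}, Theorem \ref{totalKthm} and \cite{DG}, I do not anticipate a genuine obstacle. The two points deserving care are: (i) confirming that ``A$\mathcal{HD}$ of real rank zero with slow dimension growth'' really lies inside the scope of Theorem \ref{totalKthm}, i.e.\ the stable rank one property and $\mathrm{K}$-purity, both of which follow from Proposition \ref{de prop} together with the known structure of A$\mathcal{HD}$ algebras; and (ii) observing that, because $\underline{H}$ and $\underline{G}$ are honest inverse equivalences of categories and not merely isomorphism-detecting functors, the morphism-lifting part of ``classifying functor'' --- and not only the complete-invariant part --- transfers from ${\rm \underline{K}}$ to ${\rm \underline{Cu}}_u$.
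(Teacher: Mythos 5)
Your proposal is correct and matches the paper's intended argument: the paper offers no separate proof of this corollary, since the first assertion is exactly Proposition \ref{recover prop} and the second follows by combining Theorem \ref{totalKthm} (whose hypotheses the relevant A$\mathcal{HD}$ class satisfies, via Proposition \ref{de prop} and the known stable rank one property of such algebras) with Theorem 9.1 of \cite{DG}. Your two flagged points of care are exactly the right ones, and both are handled as you describe.
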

\begin{remark}
Suppose that
$\phi:\, {\rm \underline{K}}(A)\to {\rm \underline{K}}(B)$ is a isomorphism of $\mathbb{Z}_2\times \mathbb{Z}^+$-graded
groups satisfying
$\phi({\rm \underline{K}}(A))_+={\rm \underline{K}}(B)_+.$ Then by Theorem \ref{gong ideal preserving}, we get $\phi({\rm {K}}_0^+(A))={\rm {K}_0^+}(B)$ and $\phi(\underline{\mathrm{K}}_{I}(A)) =
\underline{\mathrm{K}}_{J}(B)$ for each ideal $I$ in $A$ and the relative $J$ in $B$.

Note that for any ideal $I$ of $A$, Lemma \ref{k-pure lemma} shows that the map $\mathrm{K}_*(I ; \mathbb{Z}_p)$ is a sub-group of $\mathrm{K}_*(A ; \mathbb{Z}_p)$, for all $ p\geq 2$. So do $J$ and $B$. Thus we have the mod-$p$ diagrams commutative for all $p\geq 2$ and $\delta=0,1$.

%\begin{table}{Mod-$p$ diagram}
  \begin{displaymath}
\xymatrixcolsep{1.2pc}
\xymatrix{
{\rm K}_\delta(I) \ar[rr]\ar[dr]\ar@{=}[ddd] && {\rm K}_\delta(I;\mathbb{Z}_n) \ar[rr]\ar[dr]\ar@{=}[ddd]  && {\rm K}_{1-\delta}(I) \ar[dr]\ar@{=}[ddd]\\
&{\rm K}_\delta(A) \ar@{=}[d]\ar[rr]&&{\rm K}_{\delta}(A;\mathbb{Z}_n) \ar@{=}[d]\ar[rr]&&{\rm K}_{1-\delta}(A)\ar@{=}[d]\\
&{\rm K}_\delta(B) \ar[rr]&&{\rm K}_\delta(B;\mathbb{Z}_n) \ar[rr]&&{\rm K}_{1-\delta}(B)  \\
{\rm K}_\delta(J) \ar[rr]\ar[ur] && {\rm K}_\delta(J;\mathbb{Z}_n) \ar[rr]\ar[ur]   && {\rm K}_{1-\delta}(J) \ar[ur]
}
\end{displaymath}
%\caption*{}%\caption{ Mod-$p$ diagram}\label{xxx}
%\end{table}
\end{remark}
%The rest part is as same as we did for $\mathrm{K}_*$ and $\mathrm{Cu}_1$.

%\end{proof}

\begin{remark}
For any unital, separable $C^*$-algebras with stable rank one and real rank zero,  the unitary Cuntz semigroup can be recovered functorially from the invariant ${\rm K}_*$ (Theorem \ref{Kthm}). But we don't know whether the total version Theorem \ref{total-Kthm} is still true or not, if we delete the assumption of ``$\mathrm{K}$-$pure$ (or  A$\mathcal{HD}$ algebras)''.

The key point is that the sequences
$$0 \to {\rm K}_i(I )\to {\rm K}_i(A) \to {\rm K}_i(A/I ) \to 0$$
are always exact;
while for any $p \geq 2$, we are not sure whether the sequences
$$0 \to {\rm K}_*(I ; \mathbb{Z}_p) \to {\rm K}_*(A; \mathbb{Z}_p) \to {\rm K}_*(A/I ; \mathbb{Z}_p) \to 0$$
are exact or not.

These above exact sequences will lead the commutativity of mod-$p$ diagram for $\delta=0,1$,  which will generalize Theorem \ref{total-Kthm}.

One more observation is that if we replaced ``$\mathrm{K}$-pure (or A$\mathcal{HD}$ algebras)'' by ``UCT'', then by \cite{RS}, we have
$$
{\rm {K}_*}(I; \mathbb{Z}_p)\cong{\rm {K}_*}(J; \mathbb{Z}_p).
$$
The problem is that we don't know whether this isomorphism is compatible with the other maps in the above mod-$p$ diagram.%, while all arrows with solid line are natural maps.
%  \begin{displaymath}
%\xymatrixcolsep{1.2pc}
%\xymatrix{
%{\rm K}_\delta(I) \ar[rr]\ar[dr]\ar@{=}[ddd] && {\rm K}_\delta(I;\mathbb{Z}_n) \ar[rr]\ar@{->}[dr]\ar@{==}[ddd]  && {\rm K}_{1-\delta}(I) \ar[dr]\ar@{=}[ddd]\\
%&{\rm K}_\delta(A) \ar@{=}[d]\ar[rr]&&{\rm K}_{\delta}(A;\mathbb{Z}_n) \ar@{=}[d]\ar[rr]&&{\rm K}_{1-\delta}(A)\ar@{=}[d]\\
%&{\rm K}_\delta(B) \ar[rr]&&{\rm K}_\delta(B;\mathbb{Z}_n) \ar[rr]&&{\rm K}_{1-\delta}(B)
%\\
%{\rm K}_\delta(J) \ar[rr]\ar[ur] && {\rm K}_\delta(J;\mathbb{Z}_n) \ar[rr]\ar@{->}[ur]   && {\rm K}_{1-\delta}(J) \ar[ur]}
%\end{displaymath}

At last, we have the following question.
\end{remark}
\begin{question}\label{q1}
 Let $A$ be a unital, separable $C^*$-algebra of stable rank one and real rank zero, does $({\rm\underline{K}}(A),{\rm\underline{K}}(A)_+)$ determine ${\rm\underline{Cu}}(A)$?
\end{question}

Note that if we replace the definition of ${\rm\underline{Cu}}(A)$  by
$$
{\rm \underline{Cu}}^I(A)\triangleq \coprod_{I\in {\rm Lat}_f(A)} {\rm Cu}_f(I)\times {\rm K}_1^{I}(A)\times\bigoplus_{p=1}^{\infty} {\rm K}_*^I (A; \mathbb{Z}_p),
$$
where
${\rm K}_1^{I}(A)$ and  ${\rm K}_*^I (A; \mathbb{Z}_p)$ are images of ${\rm K}_1(I)$ and ${\rm K}_* (I; \mathbb{Z}_p)$ in ${\rm K}_1(A)$ and ${\rm K}_* (A, \mathbb{Z}_p)$, respectively.

We point that ${\rm\underline{Cu}}^I(A)$ is also a continuous invariant, it is routine to get that
$
{\rm \underline{Cu}}^I(A)$ and $({\rm\underline{K}}(A),{\rm\underline{K}}(A)_+)
$
are equivalent invariants for any unital, separable $C^*$-algebras of stable rank one and  real rank zero.
And  for both simple case and  A$\mathcal{HD}$ of real rank zero case, it is obvious that $
{\rm \underline{Cu}}^I$ and $
{\rm \underline{Cu}}$ coincide with each other.

As supplementary, we raise one more question.

\begin{question}\label{q2}
  Let $A$ be a unital,  separable $C^*$-algebra of stable rank one, when do we have ${\rm \underline{Cu}}^I(A)$ and ${\rm \underline{Cu}}(A)$ determine each other?
\end{question}

Under the real rank zero setting, an affirmative answer to Question \ref{q2} will give an affirmative answer to Question \ref{q1}. But in general, these questions remain open for us.

\section*{Acknowledgements}
The research of first author was supported by NNSF of China (No.:12101113,
No.:11920101001) and the Fundamental Research Funds for the Central Universities (No.:2412021QD001). The second author was supported by NNSF of China (No.:12101102).

\end{document}